\begin{document}
\numberwithin{equation}{section}

\def\1#1{\overline{#1}}
\def\2#1{\widetilde{#1}}
\def\3#1{\widehat{#1}}
\def\4#1{\mathbb{#1}}
\def\5#1{\frak{#1}}
\def\6#1{{\mathcal{#1}}}

\newcommand{\de}{\partial}
\newcommand{\R}{\mathbb R}
\newcommand{\al}{\alpha}
\newcommand{\tr}{\widetilde{\rho}}
\newcommand{\tz}{\widetilde{\zeta}}
\newcommand{\tv}{\widetilde{\varphi}}
\newcommand{\tO}{\widetilde{\Omega}}
\newcommand{\hv}{\hat{\varphi}}
\newcommand{\tu}{\tilde{u}}
\newcommand{\usc}{{\sf usc}}
\newcommand{\tF}{\tilde{F}}
\newcommand{\debar}{\overline{\de}}
\newcommand{\Z}{\mathbb Z}
\newcommand{\C}{\mathbb C}
\newcommand{\Po}{\mathbb P}
\newcommand{\zbar}{\overline{z}}
\newcommand{\G}{\mathcal{G}}
\newcommand{\So}{\mathcal{U}}
\newcommand{\Ko}{\mathcal{K}}
\newcommand{\U}{\mathcal{U}}
\newcommand{\B}{\mathbb B}
\newcommand{\oB}{\overline{\mathbb B}}
\newcommand{\Cur}{\mathcal D}
\newcommand{\Dis}{\mathcal Dis}
\newcommand{\Levi}{\mathcal L}
\newcommand{\SP}{\mathcal SP}
\newcommand{\Sp}{\mathcal Q}
\newcommand{\Ma}{\mathcal M}
\newcommand{\Co}{\mathcal C}
\newcommand{\Hol}{{\sf Hol}(\mathbb H, \mathbb C)}
\newcommand{\Aut}{{\sf Aut}(\mathbb D)}
\newcommand{\D}{\mathbb D}
\newcommand{\oD}{\overline{\mathbb D}}
\newcommand{\oX}{\overline{X}}
\newcommand{\loc}{L^1_{\rm{loc}}}
\newcommand{\loci}{L^\infty_{\rm{loc}}}
\newcommand{\la}{\langle}
\newcommand{\ra}{\rangle}
\newcommand{\thh}{\tilde{h}}
\newcommand{\N}{\mathbb N}
\newcommand{\kd}{\kappa_D}
\newcommand{\Ha}{\mathbb H}
\newcommand{\ps}{{\sf Psh}}
\newcommand{\tg}{\widetilde{\gamma}}

\newcommand{\subh}{{\sf subh}}
\newcommand{\harm}{{\sf harm}}
\newcommand{\ph}{{\sf Ph}}
\newcommand{\tl}{\tilde{\lambda}}
\newcommand{\ts}{\tilde{\sigma}}

\renewcommand{\epsilon}{\varepsilon}
\renewcommand{\kappa}{\varkappa}
\newcommand{\lam}{\lambda}

\newcommand{\eps}{\epsilon}
\newcommand{\del}{\delta}
\newcommand{\Om}{\Omega}
\newcommand{\LL}{\mathcal{L}}
\newcommand{\LM}{\mathcal{LM}}
\newcommand{\cl}{\text{cl}}

\newcommand{\mg}{\color{magenta}}

\definecolor{blue-violet}{rgb}{0.54, 0.17, 0.89}

\def\cD{{\mathcal D}}

\def\v{\varphi}
\def\Re{{\sf Re}\,}
\def\Im{{\sf Im}\,}

\def\dist{{\rm dist}}
\def\const{{\rm const}}
\def\rk{{\rm rank\,}}
\def\id{{\sf id}}
\def\aut{{\sf aut}}
\def\Aut{{\sf Aut}}
\def\CR{{\rm CR}}
\def\GL{{\sf GL}}
\def\U{{\sf U}}

\def\la{\langle}
\def\ra{\rangle}

\newtheorem{theorem}{Theorem}[section]
\newtheorem{lemma}[theorem]{Lemma}
\newtheorem{proposition}[theorem]{Proposition}
\newtheorem{corollary}[theorem]{Corollary}

\theoremstyle{definition}
\newtheorem{definition}[theorem]{Definition}
\newtheorem{example}[theorem]{Example}

\theoremstyle{remark}
\newtheorem{remark}[theorem]{Remark}
\numberwithin{equation}{section}

%

\title[Complete frequencies]{Complete frequencies for Koenigs domains}

\author[F. Bracci]{Filippo Bracci}
\author[E. A. Gallardo-Guti\'errez]{Eva A. Gallardo-Guti\'errez}
\author[D. Yakubovich]{Dmitry Yakubovich}

\address{F. Bracci\newline
Dipartimento Di Matematica\newline
Universit\`{a} di Roma \textquotedblleft Tor Vergata\textquotedblright\ \newline
Via Della Ricerca Scientifica 1, 00133 \newline
Roma, Italy}
\email{fbracci@mat.uniroma2.it}
\thanks{The first author (F. Bracci) is partially supported by PRIN (2022) Real and Complex Manifolds: Geometry and holomorphic dynamics Ref:2022AP8HZ9, by GNSAGA of INdAM and  by the MUR Excellence Department Project 2023-2027 MatMod@Tov CUP:E83C23000330006 awarded to the Department of Mathematics, University of Rome Tor Vergata}

\address{E. A. Gallardo-Guti\'errez \newline Departamento de An\'alisis Matem\'atico y\newline
Matem\'atica Aplicada \newline
Facultad de CC. Matem\'aticas,
\newline Universidad Complutense de
Madrid, \newline
 Plaza de Ciencias 3, 28040 Madrid,  Spain
 \newline
and Instituto de Ciencias Matem\'aticas ICMAT(CSIC-UAM-UC3M-UCM),
\newline Madrid,  Spain }
\email{eva.gallardo@mat.ucm.es}

\address{
D. V. Yakubovich\newline
Departamento de Matem\'aticas,\newline  Universidad Aut\'onoma de Madrid,\newline
Cantoblanco, 28049 Madrid, Spain }
\email {dmitry.yakubovich@uam.es}
\thanks{The second author (E. A. Gallardo-Guti\'errez) and the third author (D. Yakubovich) are partially supported by Plan Nacional  I+D grant no. PID2022-137294NB-I00, Spain.
\newline
The authors also acknowledge support from ``Severo Ochoa Programme for Centres of Excellence in R\&D'' (CEX2019-000904-S \& CEX2023-001347) of the
Ministry of Economy and Competitiveness of Spain and by the European Regional Development
Fund and from the Spanish
National Research Council, through the ``Ayuda extraordinaria a
Centros de Excelencia Severo Ochoa'' (20205CEX001).
}

\subjclass[2010]{47B33, 47D06, 30C45}
\keywords{Semigroups of analytic functions, semigroups of composition operators,
Infinitesimal generators, completeness of exponentials, weak-star generators, approximation in function spaces}

\begin{abstract}
We provide a complete characterization of those non-elliptic semigroups of holomorphic self-maps of the unit disc for which the linear span of eigenvectors of the generator of the corresponding semigroup of composition operators is weak-star dense in $H^\infty$. We also give some necessary and some sufficient conditions for completeness in $H^p$. This problem is equivalent to the completeness of the corresponding exponential functions in
$H^\infty$ (in the weak-star sense) or in $H^p$ of the Koenigs domain of the semigroup. As a tool needed for the results, we introduce and study discontinuities of semigroups of holomorphic self-maps of the unit disc.
\end{abstract}

\maketitle
\tableofcontents

\section{Introduction}

The investigation into the density of a natural class of functions in classical $H^p$-spaces represents a significant problem, akin to a broader exploration of classical approximation theorems such as those by Runge and Mergelyan \cite{FF}. Notably, this inquiry serves as fundamental extensions of classical approximation theorems. Several notable contributions have been made to this field, as evidenced by the works of \cite{Ak0, Ak, Bre, Do, Ru-Sh, Vo1, Vo2}, particularly concerning polynomial approximation. Nikolskii's monograph \cite{Nik} stands out as a valuable source of results in this regard.

One compelling avenue of investigation involves the study of weak-star density of polynomials in $H^\infty(G)$ for simply connected domains $G$, leading to the conceptualization of ``weak-star generators of $H^\infty$". This notion, fully characterized in topological terms by Sarason \cite{Sa, Sa1}, has proven to be instrumental not only in understanding the density of polynomials but also in the broader context of operator theory (see \cite{Bo}). The exploration of this topic is of paramount importance, linking classical approximation theorems to modern operator theory developments.

In this paper, we extend the ongoing investigation by exploring approximation  achieved through the frequencies associated with holomorphic flows (semigroups), providing a complete picture for the weak-star topology of $H^\infty$ and sufficient conditions in $H^p$.

In order to better describe our results, we recall that every continuous semigroup $(\varphi_t)_{t\geq 0}$ of holomorphic self-maps of the unit disc $\D$, or, for short, a semigroup in $\D$, gives rise to a semigroup $\{C_{\varphi_t}\}_{t\geq 0}$ of strongly continuous composition operators in the classical Hardy space of the unit disc $H^p(\D)$, $p\in[1,\infty)$.

In case the semigroup is non-elliptic, the eigenvalues of the generator $\Gamma$ of $\{C_{\varphi_t}\}_{t\geq 0}$ are those $\lambda\in\C$ such that $e^{\lambda h}\in H^p(\D)$, where $h$ is the Koenigs function of $(\varphi_t)_{t\geq 0}$. In other words, if $\Omega=h(\D)$ is the Koenigs domain of $(\varphi_t)_{t\geq 0}$, $\lambda\in\C$ is an eigenvalue of $\Gamma$ if and only if $e^{\lambda z}\in H^p(\Omega)$. Such $\lambda$'s  will be called {\sl frequencies}. We refer the reader to Section~\ref{Sec:Preliminaries} for definitions and details.

Betsakos \cite{Be} conducted a meticulous study, employing harmonic measure theory and considering the nature of $(\varphi_t)_{t\geq 0}$ (whether hyperbolic, parabolic with a positive hyperbolic step, or parabolic with a zero hyperbolic step), aimed to determine the values of $\lambda\in\C$ for which $e^{\lambda z}\in H^p(\Omega)$ and provided detailed estimates.

The aim of this paper is to study {\sl completeness} of such exponentials. More precisely, for $p\in [1,+\infty]$, let
\[
\mathcal E_p(\Omega):=\hbox{span}\{e^{\lambda z}: \lambda\in\C, e^{\lambda z}\in H^p(\Omega)\}.
\]
The precise questions we address here are:

\medskip

\begin{itemize}
\item Given $p\in [1,\infty)$, when is $\mathcal E_p(\Omega)$  dense in $H^p(\Omega)$?
\item For $p=\infty$,  when is $\mathcal E_\infty(\Omega)$  weak-star dense in $H^\infty(\Omega)$?
\end{itemize}
\medskip

In case $\mathcal E_p(\Omega)$ is dense in $H^p(\Omega)$ (or $\mathcal E_\infty(\Omega)$  weak-star dense in $H^\infty(\Omega)$)  we say that the semigroup $(\varphi_t)_{t\geq 0}$ has $p$-complete (or $\infty$-weak-star complete) frequencies.

In this paper we completely characterize when $\mathcal E_\infty(\Omega)$ is weak-star dense in $H^\infty(\Omega)$, in terms of  the topological properties of the Koenigs domain,  its geometrical properties and the dynamical properties of the (non-elliptic) semigroup. We also provide sufficient conditions, and some necessary, for $p$-completeness.

It is here worth emphasizing that when $(\varphi_t)_{t\geq 0}$ is elliptic, similar queries consistently yield negative results. This is due to the existence of a common fixed point for all the functions associated to frequencies of the infinitesimal generator of the composition operator semigroups.

In order to address the previous questions, we start  this paper with a careful study of the set $\mathcal S(h)\subset\partial\D$ of boundary discontinuities of the Koenigs function $h$ of the semigroup $(\varphi_t)_{t\geq 0}$ (see Subsection~\ref{subsect:discont} and Subsection~\ref{Subsect:bdd-disc} for details). Here $\partial\D$ denotes the boundary of $\D$. Except at the Denjoy-Wolff point, the discontinuities of $h$ correspond to the discontinuities of $(\varphi_t)_{t\geq 0}$. Among such discontinuities, we focus on the {\sl unbounded ones}, $\mathcal S_u(h)$ for which (with the exception of the Denjoy-Wolff point)
%
%
the impression of the corresponding prime end contains $\infty$.
The Denjoy-Wolff point can be a discontinuity in the hyperbolic and parabolic of positive hyperbolic step cases. If this is the case, and only in the hyperbolic case, the impression in $\C$ of the corresponding prime end  under $h$, is contained in the union of two lines. We say that the Denjoy-Wolff point is a {\sl simple discontinuity} if the impression does not coincide with the union of the two lines.

Also, we introduce {\sl Cantor combs} of discontinuities, which are discontinuities on $\partial\D$ which form a Cantor set  and whose image under the Koenigs function looks like a comb (see Section~\ref{Sec:char1}).

An important role in our characterization is also played by maximal contact arcs (see Subsection~\ref{Subsec:maximal-arcs}), introduced and studied in \cite{BG}. Roughly speaking, a maximal contact arc is an  open arc on $\partial\D$ such that the semigroup extends holomorphically through it and flows the arc from the starting point to the final point. A maximal contact arc is exceptional if its final point is the Denjoy-Wolff point.

Every Koenigs domain $\Omega$ of a non-elliptic semigroup can be described as $$\{x+iy\in \C: x>\psi(y)\},$$ for some upper semicontinuous function $\psi:I\to [-\infty,+\infty)$ defined on an open interval $I\subseteq(-\infty,+\infty)$. We let $\tilde{\psi}$ to be the upper semicontinuous regularization of the lower semicontinuous regularization of $\psi$ (see Definition~\ref{Def:psi-tilde}). The functions $\psi$ and $\tilde\psi$ encode the geometry of $\Omega$.

With these definitions at hand, we can state our main results. We start with the hyperbolic case:

\begin{theorem}\label{Thm:main-hyp}
Let $(\varphi_t)_{t\geq 0}$ be a hyperbolic semigroup in $\D$ with Denjoy-Wolff point $\tau\in\partial\D$. Let $h$ be the Koenigs  function, $\Omega=h(\D)$ the Koenigs  domain  and let $\psi:I\to \R$ be the defining function of $\Omega$.   The following conditions are equivalent:
\begin{enumerate}
\item $(\varphi_t)_{t\geq 0}$ has $\infty$-weak-star complete frequencies, that is, $\mathcal E_\infty(\Omega)$ is weak-star dense in $H^\infty(\Omega)$,
\item $\hbox{Int}(\overline{\Omega})=\Omega$ and $\C\setminus \overline{\Omega}$ has at most two connected components,
\item $\psi(y)=\tilde{\psi}(y)$ for every $y\in I$ and either $\liminf_{y\to y_0}\psi(y)>-\infty$ for all $y_0\in \overline{I}$ or there exist $a\leq b$ such that $[a,b]\subset \overline{I}$,  $\liminf_{y\to y_0}\psi(y)=-\infty$ for all $y_0\in [a,b]$ and $\liminf_{I\ni y\to y_1}\psi(y)>-\infty$ for all $y_1\in\overline{I}\setminus [a,b]$,
\item the semigroup $(\varphi_t)_{t\geq 0}$ satisfies  the following conditions:
\begin{itemize}
\item[(a)] if $A, B$ are non-exceptional maximal contact arcs for $(\varphi_t)_{t\geq 0}$ with final points $x_1(A)$ and $x_1(B)$ then   $x_1(A)\neq x_1(B)$,
\item[(b)] $\mathcal S(h)$ does not contain any Cantor comb,
\item[(c)] one  of the following (mutually excluding) conditions holds:
\begin{itemize}
\item[(c.1)] $(\varphi_t)_{t\geq 0}$ has at most one boundary fixed point other than $\tau$ and no unbounded discontinuities,
\item [(c.2)] $(\varphi_t)_{t\geq 0}$ has  one and only one boundary fixed point $\sigma\in\partial\D\setminus\{\tau\}$, $\mathcal S_u(h)=\{\xi\}$ with $\xi\neq \tau$, and there exists a maximal contact arc starting from $\sigma$ and ending at $\xi$,
\item [(c.3)] $(\varphi_t)_{t\geq 0}$ has  one and only one boundary fixed point $\sigma\in\partial\D\setminus\{\tau\}$ which is a  boundary regular fixed point, $\mathcal S_u(h)=\{\xi_1, \xi_2\}$ with $\xi_j\neq \tau$, and for $j=1,2$,
there is a maximal contact arc starting from $\sigma$ and ending at $\xi_j$.
\item[(c.4)]  $(\varphi_t)_{t\geq 0}$ has no boundary fixed points other than $\tau$ and $\mathcal S_u(h)=\{\xi\}$, with $\xi\neq \tau$,
\item[(c.5)]  $(\varphi_t)_{t\geq 0}$ has no boundary fixed points other than $\tau$ and $\mathcal S_u(h)=\{\xi_1, \xi_2\}$, with $\xi_j\neq \tau$, $j=1,2$ and there exists a maximal contact arc  starting from $\xi_1$ and ending at $\xi_2$,
\item[(c.6)]  $(\varphi_t)_{t\geq 0}$ has no boundary fixed points other than $\tau$, $\mathcal S_u(h)=\{\tau\}$ and $\tau$ is a simple unbounded discontinuity.
\end{itemize}
\end{itemize}
\end{enumerate}
Moreover, if one---and hence any---of the above conditions
(1)--(4) holds, then $(\varphi_t)_{t\geq 0}$ has $p$-complete frequencies, that is, $\mathcal E_p(\Omega)$ is dense in $H^p(\Omega)$ for all $p\in[1,\infty)$.
\end{theorem}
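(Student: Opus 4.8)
The plan is to prove the equivalences (1)$\Leftrightarrow$(2)$\Leftrightarrow$(3)$\Leftrightarrow$(4) and then the final implication about $p$-completeness. I would organize the argument around condition (2), which is the cleanest to work with, and treat it as the hub.

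\medskip

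\textbf{Step 1: Reduction to an approximation statement on $\Omega$.} First I recall that since the semigroup is non-elliptic, the exponentials $e^{\lambda z}$ with $e^{\lambda z}\in H^p(\Omega)$ are precisely the eigenvectors of the generator $\Gamma$ of $(C_{\varphi_t})$, so the question is literally: is $\mathcal E_\infty(\Omega)$ weak-star dense in $H^\infty(\Omega)$? Because $\Omega$ is a half-plane-like domain $\{x>\psi(y)\}$, the exponentials $e^{\lambda z}$ that lie in $H^p(\Omega)$ are those with $\Re\lambda$ in a suitable range depending on the growth of $\psi$; this is the content of Betsakos's estimates quoted in the introduction, and I would extract from them the precise admissible set of frequencies. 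The key structural fact to exploit is that $\Omega$ is translation-friendly in the real direction: for $\delta>0$, $\Omega+\delta\subset\Omega$, and convolution/translation arguments let one approximate by functions holomorphic on a neighborhood of $\overline\Omega$.

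\medschip

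\textbf{Step 2: (2)$\Rightarrow$(1), the sufficiency direction.} Assuming $\mathrm{Int}(\overline\Omega)=\Omega$ and $\C\setminus\overline\Omega$ has at most two components, I would argue as follows. Each component of $\C\setminus\overline\Omega$ is (essentially) a closed half-plane-like region. Using a conformal map of $\Omega$ onto $\D$ together with Sarason's characterization of weak-star generators of $H^\infty$ \cite{Sa, Sa1}, weak-star density of $\mathcal E_\infty(\Omega)$ in $H^\infty(\Omega)$ will follow once I show that the algebra generated by the admissible exponentials separates points in the right way and that the complement has the correct topological shape; the hypothesis on the number of connected components of $\C\setminus\overline\Omega$ is exactly what makes $z\mapsto e^{-z}$ (or a pair $e^{-z}, e^{-\mu z}$) a weak-star generator of $H^\infty(\Omega)$ in the sense of Sarason. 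The hypothesis $\mathrm{Int}(\overline\Omega)=\Omega$ is what guarantees that $H^\infty(\Omega)=H^\infty(\mathrm{Int}(\overline\Omega))$ so that approximation from a slightly larger domain is legitimate. Once weak-star density in $H^\infty$ is established, $p$-completeness for $p\in[1,\infty)$ follows: polynomials in the generating exponentials are dense in $H^\infty$ weak-star, hence the same linear span is norm-dense in $H^p(\Omega)$ for $p<\infty$ by a standard duality/Mergelyan-type passage (every $H^p$ function is a bounded-in-$H^p$ limit of bounded functions, and weak-star density plus uniform $H^p$-bounds upgrades to norm density). This is precisely the "Moreover" clause, so it comes for free from this step.

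\medskip

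\textbf{Step 3: (1)$\Rightarrow$(2), necessity.} For the converse I would prove the contrapositive in two pieces. If $\mathrm{Int}(\overline\Omega)\neq\Omega$, then there is a point $z_0\in\mathrm{Int}(\overline\Omega)\setminus\Omega$; every $e^{\lambda z}\in H^p(\Omega)$ extends holomorphically across a neighborhood of $z_0$ (indeed to all of $\C$), so every weak-star limit of their span also extends there, but $H^\infty(\Omega)$ contains functions genuinely singular at $z_0$ (one can take a conformal map or a Riemann-map-based peak function), giving non-density. If $\C\setminus\overline\Omega$ has three or more components, I would produce a bounded holomorphic function on $\Omega$ that cannot be approximated: using Sarason's theory again, the exponentials generate a weak-star closed subalgebra whose maximal ideal space, by the several-component structure, is strictly smaller than that of $H^\infty(\Omega)$, so some $f\in H^\infty(\Omega)$ lies outside the weak-star closure of $\mathcal E_\infty(\Omega)$ — concretely, one separates two "holes" by a function that the exponentials (which, being entire, cannot distinguish bounded holes well enough) cannot reach. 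I expect \textbf{this direction to be the main obstacle}: one must quantify exactly how many independent exponentials survive in $H^p(\Omega)$ (it is at most a half-plane's worth, roughly a single real-parameter family, possibly with one extra imaginary direction), and show that with at most two complement components this is enough but with three it is not. The heart is a careful Sarason-style computation of the weak-star closed algebra generated by $\{e^{-z}\}$ (or $\{e^{-z},e^{-\mu z}\}$) inside $H^\infty$ of a multiply-"slit" half-plane.

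\medskip

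\textbf{Step 4: (2)$\Leftrightarrow$(3) and (2)$\Leftrightarrow$(4).} These are translations of the same condition into, respectively, the language of the defining function $\psi$ and the dynamics of the semigroup. For (2)$\Leftrightarrow$(3): $\mathrm{Int}(\overline\Omega)=\Omega$ is equivalent to $\psi=\tilde\psi$ on $I$ by the very definition of $\tilde\psi$ (Definition~\ref{Def:psi-tilde}) as the usc-regularization of the lsc-regularization, which geometrically fills in exactly the points that are interior to the closure but not in $\Omega$; and "$\C\setminus\overline\Omega$ has at most two components" translates into the dichotomy on $\liminf\psi$: either $\psi$ stays finite near every boundary value of $I$ (one component, the "left" side, possibly plus the complement of a vertical strip giving two) or $\psi\to-\infty$ exactly on a single interval $[a,b]$ (the domain opens up to a full half-plane over that interval, splitting the complement into two pieces). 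For (2)$\Leftrightarrow$(4): by the correspondence between boundary discontinuities of $h$ and discontinuities/fixed points of the semigroup (Subsections~\ref{subsect:discont}, \ref{Subsect:bdd-disc}) and the theory of maximal contact arcs from \cite{BG}, each failure mode of (2) is matched to a configuration in (a)–(c): a Cantor comb in $\mathcal S(h)$ produces infinitely many complement components (violating (b)), two non-exceptional contact arcs with the same final point produce an extra slit (violating (a)), and the enumeration (c.1)–(c.6) is exactly the list of boundary-fixed-point / unbounded-discontinuity configurations that keep $\C\setminus\overline\Omega$ within two components while keeping $\mathrm{Int}(\overline\Omega)=\Omega$. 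I would verify each case by drawing the image under $h$ of the relevant prime ends and counting components, using that in the hyperbolic case the Denjoy-Wolff prime end maps into a union of two parallel lines. This bookkeeping is lengthy but routine once Steps 1–3 are in place.
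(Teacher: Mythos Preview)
Your global architecture---take (2) as the hub, prove (1)$\Leftrightarrow$(2) by function-theoretic means, and treat (2)$\Leftrightarrow$(3)$\Leftrightarrow$(4) as geometric bookkeeping---matches the paper. Step~4 is indeed lengthy but routine and corresponds to Theorem~\ref{Thm:Cantor-no-approx}, Corollary~\ref{Cor:connex-in-strip} and Theorem~\ref{Thm:connect-strip}. The $p$-completeness clause is also handled as you say (Lemma~\ref{Lem:approx-weak-approx-p}).

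Where your proposal diverges, and where there is a genuine gap, is in Steps~2--3. The paper does \emph{not} try to show directly that $e^{-z}$ is a weak-star generator of $H^\infty(\Omega)$. Instead it inserts a pivot domain: since $\Omega\subset\mathbb S$ for a strip $\mathbb S$, one first proves (Theorem~\ref{Thm:hyperbolic-group}) that $\mathrm{span}\{e^{i\lambda z}:\lambda\in\R\}$ is weak-star dense in $H^\infty(\mathbb S)$ via Laplace transforms and rational approximation, and then shows (Corollary~\ref{Cor:complement}) that $\mathcal E_\infty(\Omega)$ is weak-star dense in $H^\infty(\Omega)$ \emph{if and only if} $\mathcal E_\infty(\mathbb S)$ is. This reduces everything to deciding when $H^\infty(\mathbb S)$ is weak-star dense in $H^\infty(\Omega)$, and that is exactly what Sarason's criterion (Proposition~\ref{Prop:Sarason}) is built for. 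Your Step~2 gestures toward Sarason but skips this reduction, and without it the statement ``the number of components is exactly what makes $e^{-z}$ a weak-star generator'' has no mechanism behind it.

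The real gap is the necessity direction for the component count. Your ``maximal ideal space is strictly smaller'' argument is not a proof; exponentials are entire, so they do not distinguish holes in any obvious spectral way, and it is not clear what replaces this intuition. The paper's argument is concrete and quite different: assuming $\C\setminus\overline\Omega$ has at least three components, one uses Lemma~\ref{Lem:meaning-connect-strip} to locate $y_0<y_1$ with $L_{y_0},L_{y_1}\subset\overline\Omega$, sets $Q=\{y_0<\Im w<y_1\}$ and $E=\Omega\cup Q$, and then shows via a crosscut lemma (Lemma~\ref{Lem:top-cara-Sarason}) that every $f\in H^\infty(E)$ satisfies $\sup_E|f|=\sup_\Omega|f|$. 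This contradicts Sarason's condition~($\star\star$), hence $H^\infty(\mathbb S)$ is not weak-star dense in $H^\infty(\Omega)$. Your proposal contains no analogue of this construction. (Your argument for $\mathrm{Int}(\overline\Omega)\neq\Omega$ via ``entire functions extend, hence so do weak-star limits'' is different from the paper's Corollary~\ref{Cor:no-int-diff} and is delicate: weak-star closures are taken over nets, not sequences, and you would need uniform bounds on a neighborhood of the missing point, which requires an extra step such as continuity of $|e^{\lambda z}|$ up to $\overline\Omega$ plus Montel.)
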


The result looks rather surprising and somewhat unexpected, because, on one hand, condition (2) says essentially that $\Omega$ is a Carath\'eodory domain in the strip, that is to say, the composition of $h$ with the conformal mapping of the strip onto $\D$ is a  weak-star sequential generator of $H^\infty$ in the sense of Sarason. However, there exist weak-star generators of $H^\infty$ such that their images are not Carath\'eodory domains. On the other hand, one might expect that discontinuities of $h$ represent a serious constraint to completeness, but, apart from those which are forbidden, $h$ can have ``serious'' discontinuities (like a Cantor set which is not a Cantor comb of discontinuities) and still enjoy completeness. Finally, the presence of boundary fixed points is very limited.

For  the case of parabolic semigroups of positive hyperbolic step, we have:

\begin{theorem}\label{Thm:main-para-pos}
Let $(\varphi_t)_{t\geq 0}$ be a parabolic semigroup of positive hyperbolic step  in $\D$ with Denjoy-Wolff point $\tau\in\partial\D$. Let $h$ be the Koenigs  function, $\Omega=h(\D)$ the Koenigs  domain  and let $\psi:I\to \R$ be the defining function of $\Omega$.   Then the following are equivalent:
\begin{enumerate}
\item $(\varphi_t)_{t\geq 0}$ has $\infty$-weak-star complete frequencies, that is, $\mathcal E_\infty(\Omega)$ is weak-star dense in $H^\infty(\Omega)$,
\item $\hbox{Int}(\overline{\Omega})=\Omega$ and $\C\setminus \overline{\Omega}$ is connected,
\item $\psi(y)=\tilde{\psi}(y)$ for every $y\in I$ and  $\liminf_{I\ni y\to y_0}\psi(y)>-\infty$ for every $y_0\in \overline{I}\setminus \overline{I_\infty}$, where $I_\infty$ is the (possibly empty) unbounded connected component of $\psi^{-1}(-\infty)$,
\item the semigroup $(\varphi_t)_{t\geq 0}$ satisfies  the following conditions:
\begin{itemize}
\item[(a)] if $A, B$ are non-exceptional maximal contact arcs for $(\varphi_t)_{t\geq 0}$ with final points $x_1(A)$ and $x_1(B)$ then   $x_1(A)\neq x_1(B)$,
\item[(b)] $\mathcal S(h)$ does not contain any Cantor comb,
\item[(c)] $(\varphi_t)_{t\geq 0}$ has no boundary fixed points other than $\tau$ and $\mathcal S(h)$ contains only bounded discontinuities except, at most,  one unbounded discontinuity $\xi\in\partial\D\setminus\{\tau\}$ which is the final point of a maximal contact arc starting at $\tau$.
\end{itemize}
\end{enumerate}
Moreover, if one---and hence any---of the above conditions (1)--(4) holds, then $(\varphi_t)_{t\geq 0}$ has $p$-complete frequencies, that is, $\mathcal E_p(\Omega)$ is dense in $H^p(\Omega)$ for all $p\in[1,\infty)$.
\end{theorem}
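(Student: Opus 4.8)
The plan is to prove the two theorems together, since they share the same structure, and to organize the argument around the chain of equivalences $(1)\Leftrightarrow(2)\Leftrightarrow(3)\Leftrightarrow(4)$ plus the final implication to $H^p$-completeness. I would first dispose of the purely geometric equivalence $(2)\Leftrightarrow(3)$: the condition $\mathrm{Int}(\overline\Omega)=\Omega$ translates, for a domain of the form $\{x+iy:x>\psi(y)\}$, exactly into the pointwise identity $\psi=\tilde\psi$ (no ``downward spikes'' that get filled in when taking closure, and no ``upward spikes'' that disconnect the complement locally), while the number of connected components of $\C\setminus\overline\Omega$ is controlled by the structure of the set where $\liminf\psi=-\infty$: an unbounded fiber over an interval $[a,b]$ produces one complementary component ``to the left'', and the unbounded component of the strip above/below produces another, so ``at most two components'' in the hyperbolic case and ``connected'' in the parabolic-positive-step case correspond precisely to the stated liminf conditions. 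This step is bookkeeping about plane topology once the correct normalization of $\psi$ is fixed, and I would lean on Definition~\ref{Def:psi-tilde} and the description of Koenigs domains recalled in the introduction.

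Next I would establish $(3)\Leftrightarrow(4)$, i.e. the dictionary between the geometry of $\psi$ and the dynamics of $(\varphi_t)$. By the results of \cite{BG} on maximal contact arcs, flat boundary pieces where $\psi=\tilde\psi$ is locally constant and finite correspond to non-exceptional maximal contact arcs, and two distinct such arcs with the same final point $x_1(A)=x_1(B)$ correspond exactly to a failure of $\psi=\tilde\psi$ or to an extra complementary component; this gives condition (a). The unbounded discontinuities $\mathcal S_u(h)$ and the fibers of $\psi^{-1}(-\infty)$ are matched via the prime-end/impression description from Subsection~\ref{Subsect:bdd-disc}, so the enumeration (c.1)--(c.6) in the hyperbolic case (resp. the single clause (c) in the parabolic case) is a complete list of the ways the liminf condition in (3) can hold: either $\psi$ stays bounded below near every boundary point of $I$ (few or no unbounded discontinuities, cases (c.1), (c.4), (c.6)), or there is one or two ``escape intervals'' to $-\infty$, each of which must be reachable along a maximal contact arc from the appropriate fixed point or from $\tau$ (cases (c.2), (c.3), (c.5)). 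Condition (b), ruling out Cantor combs, is precisely the statement that the exceptional fractal discontinuity sets of \cite{BG} do not occur; a Cantor comb would force $\psi\neq\tilde\psi$ on a nontrivial set or create infinitely many complementary components, contradicting (2)/(3). The constraint that extra boundary fixed points be ``boundary regular'' in case (c.3) comes from the fact that two unbounded discontinuities attached to the same non-regular fixed point would again violate $\mathrm{Int}(\overline\Omega)=\Omega$.

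The analytic heart is $(2)\Leftrightarrow(1)$ together with the final $H^p$ statement. For the implication $(2)\Rightarrow(1)$ and $(2)\Rightarrow(p\text{-completeness})$, I would transfer the problem to the strip: writing $\Omega$ inside a horizontal strip $S$ and letting $\Phi$ be a conformal map of $S$ onto $\D$, condition (2) says exactly that $h\circ\Phi^{-1}$ is a bounded univalent map whose image is a Carath\'eodory domain, hence a weak-star sequential generator of $H^\infty$ by Sarason's characterization \cite{Sa,Sa1}. One then needs to know that the polynomials in the variable $w=h\circ\Phi^{-1}$ — equivalently, polynomials in $e^{c z}$ for a suitable $c$ adapted to the strip width, which are exactly finite linear combinations of exponentials $e^{\lambda z}$ — lie in $\mathcal E_\infty(\Omega)$, i.e. that these exponentials are genuinely in $H^\infty(\Omega)$ (for the weak-star case) or in $H^p(\Omega)$ (invoking Betsakos's estimates \cite{Be} to see that a whole half-line of frequencies qualifies). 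Density of polynomials in $w$ then upgrades to density of $\mathcal E_p(\Omega)$. For the converse $(1)\Rightarrow(2)$, I would argue contrapositively: if $\mathrm{Int}(\overline\Omega)\neq\Omega$ there is a point of $\overline\Omega\setminus\Omega$ in the interior of $\overline\Omega$, at which every function in $H^\infty(\Omega)$ that extends continuously still makes sense, but one can build a bounded analytic functional (a suitable combination of point evaluations, or an integral against a measure supported on a slit) that annihilates every $e^{\lambda z}$ yet is nonzero on $H^\infty(\Omega)$ — the exponentials cannot separate the two ``sides'' of a slit, nor distinguish among $\geq 3$ complementary components, so completeness fails; the $H^p$ version uses the analogous duality with $H^q$. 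I expect this last direction — producing the obstructing functional with full generality and making the ``Carath\'eodory vs. general weak-star generator'' subtlety precise — to be the main obstacle, because it is exactly the place where the mere topology of $\overline\Omega$ has to be shown to be \emph{equivalent} to, not merely sufficient for, completeness, and where the special role of the exponential family (as opposed to arbitrary bounded functions) is used in an essential way.
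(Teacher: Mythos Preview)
Your overall architecture matches the paper's: $(2)\Leftrightarrow(3)$ via Theorem~\ref{Thm:Cantor-no-approx}, $(2)\Leftrightarrow(4)$ via the dictionary in Sections~\ref{Sec:bum}--\ref{Sec:char2}, and $(1)\Leftrightarrow(2)$ through Sarason's criterion for Carath\'eodory subdomains, with $H^p$-completeness following from Lemma~\ref{Lem:approx-weak-approx-p}. However, your argument for $(2)\Rightarrow(1)$ has a genuine gap.

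You claim that after conformally mapping the ambient domain $D$ (strip or half-plane) to $\D$ by $\Phi$, the polynomials in the disc variable pull back to ``polynomials in $e^{cz}$\dots which are exactly finite linear combinations of exponentials $e^{\lambda z}$''. This is false. For the half-plane the Cayley transform is a M\"obius map in $z$, so polynomials in $\Phi$ are rational functions in $z$ with a single pole, not exponentials at all. For the strip, $\Phi(z)=(e^{cz}-1)/(e^{cz}+1)$, so polynomials in $\Phi$ are \emph{rational} functions in $e^{cz}$; a function like $(e^{cz}+1)^{-1}$ is not a finite linear combination of exponentials, hence does not lie in $\mathcal E_\infty(\Omega)$. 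Sarason therefore gives you only that $H^\infty(D)$ is weak-star dense in $H^\infty(\Omega)$; you still need, as a separate nontrivial fact, that $\mathrm{span}\{e^{\lambda z}\}$ is already weak-star dense in $H^\infty(D)$. The paper supplies exactly this missing ingredient in Theorems~\ref{Thm:half-plane-group} and~\ref{Thm:hyperbolic-group}, via Laplace transforms of compactly supported measures approximated by discrete measures, followed by weak-star density of rational functions (Gamelin). Without an analogue of this step your $(2)\Rightarrow(1)$ does not go through.

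For $(1)\Rightarrow(2)$ your plan of ``building an obstructing functional'' is the right instinct but underspecified. The paper avoids constructing such a functional directly: it uses the equivalence in Proposition~\ref{Prop:Sarason} (Sarason's criterion in contrapositive form), then shows by a geometric crosscut argument (Lemma~\ref{Lem:top-cara-Sarason} and the proofs of Theorems~\ref{Thm:density-in-half-plane}, \ref{Thm:density-in-strip}) that if $\C\setminus\overline\Omega$ has too many components one can produce $E\supsetneq\Omega$ on which every $f\in H^\infty(E)$ already attains its sup on $\Omega$. A further subtlety you do not address is that $\Lambda_\infty(\Omega)$ may be strictly larger than $\Lambda_\infty(D)$; the paper handles this by passing to the convex hull $Q$ of all half-planes containing $\Omega$ (Corollary~\ref{Cor:complement}), which satisfies $\Lambda_\infty(Q)=\Lambda_\infty(\Omega)$ and is itself a strip or half-plane, reducing to the previous case.
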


Finally, for parabolic semigroups of zero hyperbolic step, the situation is simpler:

\begin{theorem}\label{Thm:main-para-zero}
Let $(\varphi_t)_{t\geq 0}$ be a parabolic semigroup of zero hyperbolic step  in $\D$ with Denjoy-Wolff point $\tau\in\partial\D$. Let $h$ be the Koenigs  function, $\Omega=h(\D)$ the Koenigs  domain  and let $\psi:I\to \R$ be the defining function of $\Omega$.  If $\Omega$ is not contained in any half-plane, then $\mathcal E_\infty(\Omega)=\C$ and in particular $\mathcal E_\infty(\Omega)$ is not weak-star dense in $H^\infty(\Omega)$.

If there exists a half-plane $H$ such that $\Omega\subset H$, then the following are equivalent:
\begin{enumerate}
\item $(\varphi_t)_{t\geq 0}$ has $\infty$-weak-star complete frequencies, that is, $\mathcal E_\infty(\Omega)$ is weak-star dense in $H^\infty(\Omega)$,
\item $\hbox{Int}(\overline{\Omega})=\Omega$,
\item $\psi(y)=\tilde{\psi}(y)$ for every $y\in I$,
\item the semigroup $(\varphi_t)_{t\geq 0}$ satisfies  the following conditions:
\begin{itemize}
\item[(a)] if $A, B$ are non-exceptional maximal contact arcs for $(\varphi_t)_{t\geq 0}$ with final points $x_1(A)$ and $x_1(B)$ then   $x_1(A)\neq x_1(B)$,
\item[(b)] $\mathcal S(h)$ does not contain any Cantor comb.
\end{itemize}
\end{enumerate}
Moreover, if one---and hence any---of the above conditions (1)--(4) holds, then $(\varphi_t)_{t\geq 0}$ has $p$-complete frequencies, that is, $\mathcal E_p(\Omega)$ is dense in $H^p(\Omega)$ for all $p\in[1,\infty)$.
\end{theorem}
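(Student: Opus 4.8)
The plan is to prove Theorem~\ref{Thm:main-para-zero} by reducing the parabolic zero-hyperbolic-step case to a cleaner version of the structure already developed for the hyperbolic and positive-step cases. First I would recall that for a parabolic semigroup of zero hyperbolic step the Koenigs domain $\Omega$ is contained in no horizontal strip (indeed, $\psi$ is defined on all of $\R$ and $\Omega$ ``spreads out'' at both ends), and the exponential $e^{\lambda z}\in H^p(\Omega)$ forces $\Re\lambda$ to have a definite sign only when $\Omega$ lies in a half-plane: if $\Omega$ is not contained in any half-plane, then $e^{\lambda z}$ is bounded on $\Omega$ only for $\lambda=0$, whence $\mathcal E_\infty(\Omega)=\C$ and completeness fails trivially. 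This disposes of the first assertion. So assume $\Omega\subset H$ for a half-plane $H$; after an affine change of coordinates (which does not affect any of the four conditions) we may take $H=\{\Re z>0\}$, so that $e^{\lambda z}\in H^\infty(\Omega)$ for all $\lambda$ with $\Re\lambda\le 0$, and more generally the admissible frequencies form a suitable right half-plane-type region studied by Betsakos.

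The core of the argument is the implication $(2)\Rightarrow(1)$, i.e.\ showing that $\mathrm{Int}(\overline\Omega)=\Omega$ is sufficient for weak-star density of $\mathcal E_\infty(\Omega)$ in $H^\infty(\Omega)$; the reverse implication $(1)\Rightarrow(2)$ and the equivalences $(2)\Leftrightarrow(3)\Leftrightarrow(4)$ I expect to be essentially formal given the machinery in the earlier sections. For $(2)\Leftrightarrow(3)$ one uses the description $\Omega=\{x+iy:x>\psi(y)\}$ together with Definition~\ref{Def:psi-tilde}: the condition $\mathrm{Int}(\overline\Omega)=\Omega$ translates pointwise into $\psi=\tilde\psi$ (the lower-semicontinuous regularization captures the interior of the closure from the left, and the upper-semicontinuous regularization captures the fact that $\psi$ already defines an open set from the right), and crucially, in the zero-step case there is no unbounded component $I_\infty$ and no ``two-line'' phenomenon at the Denjoy--Wolff point, which is why the extra clauses present in Theorems~\ref{Thm:main-hyp} and~\ref{Thm:main-para-pos} disappear here. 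For $(3)\Leftrightarrow(4)$ one invokes the dictionary, established in the earlier sections, between the analytic behaviour of $\psi$ at boundary points of $I$ and the dynamical/boundary data of $(\varphi_t)$: jumps of $\psi$ correspond to discontinuities in $\mathcal S(h)$, flat pieces at $+\infty$ (absent here by zero-step) would correspond to unbounded discontinuities, and maximal contact arcs with a common final point correspond precisely to a failure of $\psi=\tilde\psi$ of ``comb'' type; condition (4)(a) rules out the two-arc collision and (4)(b) rules out the Cantor comb, and together with the zero-step geometry these are exactly what (3) asks.

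For the decisive step $(2)\Rightarrow(1)$, I would argue that when $\mathrm{Int}(\overline\Omega)=\Omega$ and $\Omega$ sits in a half-plane, $\Omega$ is a Carath\'eodory domain (in the zero-step case $\C\setminus\overline\Omega$ is automatically connected, since $\Omega$ is the region above the graph of a function on all of $\R$), so by Sarason's theory \cite{Sa,Sa1} the conformal map $h\circ(\text{strip-to-}\D)$ — or rather the relevant map realizing $\Omega$ — is a weak-star sequential generator of $H^\infty$, hence polynomials in $z$ are weak-star dense in $H^\infty(\Omega)$. It then remains to approximate each monomial $z^n$ weak-star by elements of $\mathcal E_\infty(\Omega)$. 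Since $\Omega\subset\{\Re z>0\}$ one can write, for $s>0$, the Laplace-type identity $z^{-n}=\frac{1}{(n-1)!}\int_0^\infty t^{n-1}e^{-tz}\,dt$ and differentiate/combine to express polynomials as limits of finite linear combinations of exponentials $e^{-tz}$, $t\ge 0$, all of which lie in $H^\infty(\Omega)$; a Riemann-sum discretization of these integrals converges uniformly on bounded subsets of $\Omega$ and is uniformly bounded, which is exactly weak-star convergence in $H^\infty(\Omega)$. Thus $\overline{\mathcal E_\infty(\Omega)}^{w*}\supseteq\overline{\{\text{polynomials}\}}^{w*}=H^\infty(\Omega)$. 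The main obstacle is making this exponential-approximation-of-polynomials argument robust when $\Omega$ is unbounded and touches the boundary line $\{\Re z=0\}$: one must control the uniform boundedness of the Riemann sums up to that edge, which is where the hypothesis $\mathrm{Int}(\overline\Omega)=\Omega$ (ruling out inward slits that would otherwise obstruct the bounded approximation) and the half-plane containment are both genuinely used; I would handle it by a normal-families / Vitali argument on $\Omega$ together with the a~priori bound $\|e^{-tz}\|_{H^\infty(\Omega)}\le 1$.

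Finally, the last sentence — that (1)--(4) imply $p$-completeness for all $p\in[1,\infty)$ — follows from the same exponential-approximation scheme: the Riemann sums approximating polynomials now converge in $H^p(\Omega)$-norm (dominated convergence on bounded pieces plus a tail estimate using $\Omega\subset H$), and polynomials are dense in $H^p(\Omega)$ for a Carath\'eodory domain by the classical Farrell--Markushevich theorem; hence $\mathcal E_p(\Omega)$ is norm-dense in $H^p(\Omega)$.
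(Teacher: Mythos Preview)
Your overall architecture is right, and you correctly isolate the key simplification in the zero-hyperbolic-step case: since $\Omega$ cannot lie in any half-plane parallel to the real axis, any half-plane $H\supset\Omega$ must be non-horizontal, and then the starlike-at-infinity property forces $\C\setminus\overline\Omega$ to be connected automatically (every point of the complement can be translated leftward into $\C\setminus\overline H$). This is exactly why condition~(2) here is just $\mathrm{Int}(\overline\Omega)=\Omega$, without the extra connectedness clauses present in Theorems~\ref{Thm:main-hyp} and~\ref{Thm:main-para-pos}. The equivalences $(2)\Leftrightarrow(3)\Leftrightarrow(4)$ are indeed handled by the earlier machinery (Theorem~\ref{Thm:Cantor-no-approx}), as you say.

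However, your argument for $(2)\Rightarrow(1)$ has a genuine gap. You assert that ``polynomials in $z$ are weak-star dense in $H^\infty(\Omega)$'' via Sarason, but $\Omega$ is \emph{unbounded}, so no nonconstant polynomial belongs to $H^\infty(\Omega)$; the statement is vacuous. Sarason's theorem applies after conformal transfer to a domain $G\subset\D$, where polynomials \emph{are} bounded, and what it delivers back on $\Omega$ is that $H^\infty(H)$ (not $\C[z]$) is weak-star dense in $H^\infty(\Omega)$. Your own Laplace identity confirms the mismatch: it produces $z^{-n}$, not $z^n$, so you are in fact approximating rational functions with a pole outside $\overline\Omega$, not polynomials. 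The paper resolves this in two clean steps: first, Theorem~\ref{Thm:half-plane-group} shows that $\mathrm{span}\{e^{\lambda z}:\lambda\le 0\}$ is already weak-star dense in $H^\infty(\C_+)$, by using Laplace transforms to approximate the simple rationals $(\beta-iz)^{-1}$ (which \emph{are} in $H^\infty$) and then invoking weak-star density of such rationals; second, Theorem~\ref{Thm:density-in-half-plane} shows that under $\mathrm{Int}(\overline\Omega)=\Omega$ together with the automatic connectedness, $H^\infty(H)$ is weak-star dense in $H^\infty(\Omega)$, by mapping $H$ to $\D$ and applying Lemma~\ref{Lem:approx-Hardy}. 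Your Riemann-sum idea is essentially the content of the first step, but it must be aimed at the right target functions. A smaller point: your affine normalisation to $H=\{\Re z>0\}$ is legitimate for the density question by Lemma~\ref{Lem:under-affine}, but the rotated domain is no longer starlike at infinity with respect to real translation, so you cannot simultaneously appeal to the description $\Omega=\{x>\psi(y)\}$ after rotating; Remark~\ref{Rem:easy-contain-plane} gives the connectedness directly, without any normalisation.

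For the final $p$-completeness assertion, the paper bypasses Farrell--Markushevich entirely: Lemma~\ref{Lem:approx-weak-approx-p} shows by a soft duality/reflexivity argument that weak-star density of a linear set in $H^\infty$ already implies its norm density in every $H^p$, $1\le p<\infty$. This avoids revisiting the polynomial-approximation issue in the $H^p$ setting.
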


The proof of the previous theorems, which is based on the materials developed in the next sections, goes as follows. By a functional analysis argument, the $\infty$-weak-star completeness of frequencies implies that the same frequencies are also $p$-complete. Thus, we just consider $\infty$-frequencies. We show that for some basic domain like the half-plane and the strip, frequencies are $\infty$-weak-star complete. This is done using suitable Laplace transforms and approximation of rational functions. Then we show that a Koenigs domain $\Omega$ contained in a strip or in a half-plane has $\infty$-weak-star complete frequencies if and only if $H^\infty$ of the strip or the half-plane is weak-star dense in $H^\infty(\Omega)$. We study then such a condition and, using some results of Sarason and properties of starlike at infinity domains, we come up with the characterization (2) in the previous theorems. The rest of the arguments are then devoted to
 state the topological condition on $\Omega$ in terms of dynamical properties of the semigroup, and this brings to the study of discontinuities.

In the final Section~\ref{Sec:p-freq}, we turn our attention to $p$-completeness of frequencies. We do not know whether the  equivalent conditions (2)-(3)-(4) in the previous theorems are also necessary for $p$-completeness of frequencies. This seems to be an interesting problem, related to polynomial approximation in $H^p$ spaces of the so-called ``crescents''.

However, we can prove that certain conditions are necessary for $p$-completeness of frequencies (for instance, we show that (4).(a) is a necessary condition, and also the fact that $p$-frequencies are not contained in an interval  is a necessary condition).

We also provide some positive result for parabolic semigroups of zero hyperbolic step which are not contained in any half-plane (and for which then there are no $\infty$-frequencies, so that the $\infty$-weak-star completeness is not an issue). In such a case, we define {\sl logarithmic starlike at infinity domains}, which, roughly speaking, are logarithmic perturbations of a half-plane. If such a domain has  $p$-frequencies (see \eqref{Eq:full-freq}),
then these frequencies are $p$-complete (see Theorem~ \ref{thm-log-domains}). Using such logarithmic domains as a ``pivot'', we prove:

\begin{theorem}\label{Thm:main-density-in-log}
Let $(\varphi_t)_{t\geq 0}$ be a parabolic semigroup of zero hyperbolic step in $\D$ with Denjoy-Wolff point $\tau\in\partial\D$. Let $h$ be the Koenigs  function, $\Omega=h(\D)$ the Koenigs  domain  and let $\psi:I\to \R$ be the defining function of $\Omega$.  Suppose $\Omega\subseteq D$, where $D$ is the rotation of  logarithmic starlike at infinity domain with $p$-frequencies (that is, satisfying \eqref{Eq:full-freq}). Then conditions (2)-(3) and (4) of Theorem~\ref{Thm:main-para-pos} are equivalent and if one---and hence any---of them holds, then  $(\varphi_t)_{t\geq 0}$ has $p$-complete frequencies, that is, $\mathcal E_p(\Omega)$ is dense in $H^p(\Omega)$ for all
$p\in[1,\infty)$.
 \end{theorem}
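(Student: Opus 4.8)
The plan is to transfer the completeness statement from the ``pivot'' domain $D$ to the Koenigs domain $\Omega$, using the same abstract scheme that underlies Theorems~\ref{Thm:main-hyp}--\ref{Thm:main-para-zero} but with $D$ playing the role that the strip or the half-plane plays in the non-elliptic bounded-geometry cases. Concretely, the first step is to observe that by Theorem~\ref{thm-log-domains}, since $D$ is (a rotation of) a logarithmic starlike at infinity domain satisfying \eqref{Eq:full-freq}, the exponentials in $\mathcal E_p(D)$ are dense in $H^p(D)$ for every $p\in[1,\infty)$; moreover one checks that the same frequencies give weak-star density of $\mathcal E_\infty(D)$ in $H^\infty(D)$ (the Laplace-transform/rational-approximation argument sketched in the introduction for the half-plane adapts, because a logarithmic perturbation of a half-plane is still starlike at infinity and the relevant estimates on $e^{\lambda z}$ are controlled by \eqref{Eq:full-freq}). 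Thus $D$ itself has $\infty$-weak-star complete frequencies, and then the functional-analysis argument quoted in the introduction gives $p$-completeness for $D$ automatically.

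The second, and main, step is the reduction lemma: for a Koenigs domain $\Omega$ of a parabolic zero-hyperbolic-step semigroup with $\Omega\subseteq D$, one has that $(\varphi_t)_{t\geq 0}$ has $p$-complete frequencies if and only if $H^p(D)$ (restricted to $\Omega$) is dense in $H^p(\Omega)$, and likewise $\infty$-weak-star complete frequencies iff $H^\infty(D)|_\Omega$ is weak-star dense in $H^\infty(\Omega)$. The ``if'' direction is immediate since $\mathcal E_p(D)\subseteq \mathcal E_p(\Omega)$ (an exponential in $H^p(D)$ restricts to one in $H^p(\Omega)$ because $\Omega\subseteq D$) and $\mathcal E_p(D)$ is dense in $H^p(D)$, hence its closure in $H^p(\Omega)$ contains $H^p(D)|_\Omega$, which by hypothesis is dense in $H^p(\Omega)$. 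This is exactly the mechanism already used in the paper with the strip or half-plane as the ambient domain; here we just need $D$ to be a \emph{starlike at infinity} domain containing $\Omega$ for which completeness of exponentials is known, which is provided by Theorem~\ref{thm-log-domains}. Since $\Omega$ is the Koenigs domain of a parabolic zero-step semigroup it is itself starlike at infinity, so all the structural tools (the defining function $\psi$, its regularization $\tilde\psi$, starlikeness at infinity) apply uniformly to both $\Omega$ and $D$.

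The third step is to identify when $H^\infty(D)|_\Omega$ is weak-star dense in $H^\infty(\Omega)$, and to show this is governed by exactly the same Carath\'eodory-type / Sarason condition as before, namely $\operatorname{Int}(\overline{\Omega})=\Omega$ (condition (2) of Theorem~\ref{Thm:main-para-pos}; note there is no splitting of $\C\setminus\overline\Omega$ to worry about in the zero-step, contained-in-a-half-plane-like situation — the point of the parabolic-positive-step statement being invoked is the shape of $\Omega$ relative to the unbounded component $I_\infty$). Here I would invoke the results of Sarason on weak-star generators together with the properties of starlike at infinity domains already developed in the earlier sections: since $\Omega\subseteq D$ and $D$ is starlike at infinity with $H^\infty(D)$ weak-star dense in itself trivially, the relevant question becomes whether the conformal map realizing $\Omega$ inside $D$ (equivalently, inside the half-plane after the logarithmic change of variable) is a weak-star sequential generator, which Sarason's theory characterizes by the condition that $\Omega$ equals the interior of its closure together with the connectivity condition on the complement, and in the present configuration this collapses to precisely conditions (2)-(3)-(4) of Theorem~\ref{Thm:main-para-pos}. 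The equivalence (2)$\Leftrightarrow$(3)$\Leftrightarrow$(4) is then inherited verbatim from the proof of Theorem~\ref{Thm:main-para-pos}, because those equivalences are statements about the geometry of $\Omega$ and the dynamics of the semigroup alone and do not use the ambient strip — they only use that $\Omega$ is a starlike at infinity Koenigs domain of a parabolic zero-step semigroup.

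The hard part will be the third step: making precise that the change of variables turning the logarithmic starlike at infinity domain $D$ into (a domain comparable to) a half-plane behaves well enough that Sarason's weak-star generator criterion can be applied to $\Omega$ sitting inside $D$, and in particular that the logarithmic distortion does not create or destroy the relevant boundary structure (so that $\operatorname{Int}(\overline\Omega)=\Omega$ computed in $\C$ is the same condition as the corresponding one after the change of variables). One must also check the edge case where $\psi^{-1}(-\infty)$ has an unbounded component $I_\infty$ — that is where the statement genuinely uses the positive-step formulation rather than the zero-step one, since the zero-step semigroup's domain need not be a ``half-plane-like'' region but, being contained in $D$, inherits enough structure that the relevant comparison still goes through. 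Once this is in place, combining the reduction lemma with the known completeness for $D$ and the inherited geometric equivalences yields the theorem, and the final $p$-completeness assertion follows from the weak-star $\infty$-completeness by the same functional-analytic implication used throughout the paper.
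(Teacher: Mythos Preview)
Your proposal has a genuine gap that undermines the whole strategy. You assert that ``the same frequencies give weak-star density of $\mathcal E_\infty(D)$ in $H^\infty(D)$'' and then route the entire argument through $\infty$-weak-star completeness. But this is precisely what fails: the whole point of introducing logarithmic starlike at infinity domains is to handle Koenigs domains \emph{not} contained in any half-plane (see Proposition~\ref{example-log-complete}, which constructs such a $D$). By Proposition~\ref{Prop:easy}(4), for such $D$ one has $\Lambda_\infty(D)=\{0\}$, so $\mathcal E_\infty(D)=\C$ is certainly not weak-star dense in $H^\infty(D)$. The Laplace-transform/rational-approximation machinery of Section~\ref{Sec:approx-basic} does \emph{not} give weak-star density for these domains; Theorem~\ref{thm-log-domains} gives only $H^p$-density, $p<\infty$, and that is all one has to work with.

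The paper therefore proceeds differently and more directly. The key input is Theorem~\ref{Thm:density-in-log}: since $D$ is a Jordan domain in $\C_\infty$ (Lemma~\ref{lem-alpha}), a Riemann map $h:\D\to D$ extends to a homeomorphism of $\C_\infty$ via Jordan--Schoenflies, so the topological conditions $\hbox{Int}(\overline\Omega)=\Omega$ and $\C\setminus\overline\Omega$ connected transfer verbatim to the image $G=h^{-1}(\Omega)\subset\D$. Then Lemma~\ref{Lem:approx-Hardy} (polynomials dense in $H^p$ of a Carath\'eodory domain) gives $H^p(\D)$ dense in $H^p(G)$, hence $H^p(D)$ dense in $H^p(\Omega)$, and Lemma~\ref{Lem:dense-complete} combined with Theorem~\ref{thm-log-domains} finishes the $p$-completeness. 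No $H^\infty$ weak-star step is needed or possible. For the equivalence of (2)--(3)--(4), note also that these are \emph{not} inherited verbatim from Theorem~\ref{Thm:main-para-pos}: condition (2) there includes ``$\C\setminus\overline\Omega$ connected'', whose equivalence with the dynamical/defining-function conditions in Theorem~\ref{Thm:connect-half-plane} used $\Omega\subset\{\Im w>0\}$. The paper supplies a separate argument, Proposition~\ref{Prop:connect-log}, which handles the geometry when $\Omega$ sits inside a rotated logarithmic domain (splitting into the case where the rotation direction is not purely imaginary, which reduces to Remark~\ref{Rem:easy-contain-plane}, and the purely imaginary case, which reduces to the half-plane argument).
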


It is worth  mentioning that there is some relation between the subject of our paper and an extensive study of the possibility and the uniqueness of the representation of a function $f(z)$, holomorphic in a domain $\Om$, as a series $\sum c_k \exp(\lam_k z)$. See, for instance, the book by Leont'ev  \cite{Leont-book} or its account in \cite{Leont-1988} and Krasichkov-Ternovskii \cite{Kras-Tern-1972}. In the setting of these studies, the domain $\Om$ is convex and the sequence of exponents $\{\lam_k\}$ is fixed. Typically it is a zero sequence of an entire function with some known growth properties. Explicit formulas for coefficients $c_k$ are given. In most of these works, $\Om$ is assumed to be bounded, but in the book \cite{Leont-book}, the case when $\Om$ is unbounded and its boundary contains two half-lines and the case when $\Om=\C$  are also considered (in the setting of the locally uniform convergence). As it is proved in \cite[Sections 7 and 8]{Kras-Tern-1972},  the so-called spectral synthesis for invariant subspaces of the space of holomorphic functions in $\Om$ holds for all unbounded convex domains and does not hold in general for bounded convex domains. Relations with convolution equations for holomorphic functions and differential equations of infinite order can also be found in those works.

 The plan of the paper is the following. In Section~\ref{Sec:Preliminaries} we recall some definitions and prove some preliminary facts about semigroups, semigroups of composition operators and Hardy spaces. In Section~\ref{Sec:freq}, we define frequencies and prove some basic properties of them. In Section~\ref{Sec:approx-basic}, we prove the completeness results for  strips, half-planes and logarithmic starlike at infinity domains. In Section~\ref{Sec:density-weak}, we prove that $\infty$-weak-star completeness of frequencies is equivalent to the topological conditions on $\Omega$ studying the holomorphic hulls of starlike at infinity domains. In Section~\ref{Sec:char1}, we initiate our study of discontinuities of Koenigs functions, introducing the Cantor combs, and characterize  which Koenigs domains $\Omega$ have the property that $\hbox{Int}(\overline{\Omega})=\Omega$. In Section~\ref{Sec:bum}, we turn our attention to boundary fixed points and show that there are natural one-to-one correspondences among boundary regular and super-repulsive fixed points, unbounded discontinuities and certain subsets of the domain of definition of the defining function of the Koenigs' domain. These are the key results to prove the dynamical characterization of completeness of frequencies. In the next Section~\ref{Sec:char2} we  characterize those Koenigs  domains $\Omega$ such that $\C\setminus\overline{\Omega}$ has at most two connected components.  Section~\ref{Sec:proofs} contains the proofs of our main results. Finally, in Section~\ref{Sec:p-freq} we focus on $p$-frequencies.

\medskip

\centerline{\bf Acknowledgements}

\medskip

Part of this work has been done while the first named author was ``Profesor Distinguido'' at the research institute \emph{Instituto de Ciencias Matem\'aticas (ICMAT),
	 Spain}. He wants sincerely to thank ICMAT for the possibility of spending time there and work in a perfect atmosphere.
	 
	 The authors thank the referees for several helpful comments which improve the original manuscript.

\section{Preliminaries}\label{Sec:Preliminaries}

Let $\D:=\{z\in \C: |z|<1\}$ denote the unit disc of the complex plane $\C$. The Riemann sphere will be denoted by $\C_\infty$. A {\sl half-plane} is the open set $\{w\in\C: \Re \langle w, v\rangle<c\}$, where $v\in\C\setminus\{0\}$ and $c\in \R$. The right half-plane is $\C_+ :=\{z\in \C: \Re z>0\}$. A {\sl strip} is the open set $\{w\in\C: a<|\Im w|<b\}$ for some $a, b\in \R$, $a<b$.

If $D\subset \C$ is a domain (open and connected set), its Euclidean closure is denoted by $\overline{D}$ and its boundary by $\partial D$. Likewise, $\overline{D}^\infty$ and $\partial_\infty D$ denote, respectively, the  closure of $D$ and the boundary of $D$ in the Riemann sphere. For any set $D\subset \C_\infty$ we denote by $\hbox{Int}(D)$ its interior.

\subsection{Semigroups in the unit disc, Koenigs  functions and Koenigs  domains}
In this subsection we collect the relevant results regarding semigroups of holomorphic functions. For details we refer the reader to the monographs \cite{BCDbook, Sho}.

A continuous semigroup $(\varphi_t)_{t\geq 0}$ of holomorphic self-maps of $\D$, or, for short, a {\sl semigroup in $\D$}, is a continuous semigroup homomorphism from $[0,+\infty)$ endowed with the Euclidean topology to $\hbox{Hol}(\D,\D)$, the space of holomorphic self-maps of $\D$ endowed with the topology of uniform convergence on compacta. Hence, for every $t\geq 0$,
\begin{enumerate}
\item $\varphi_t:\D\to\D$ is holomorphic,
\item $\varphi_0={\sf id}_\D$,
\item $\varphi_t\circ \varphi_s=\varphi_{t+s}$ for all $s,t\geq 0$, and
\item $\varphi_t$ converges uniformly on compacta of $\D$ to $\varphi_s$ when $t\to s$, $s, t\geq 0$.
\end{enumerate}

A semigroup $(\varphi_t)_{t\geq 0}$ is {\sl elliptic} if there exists a point $z_0\in\D$ such that $\varphi_t(z_0)=z_0$ for some $t>0$. It is {\sl non-elliptic} otherwise.

If $(\varphi_t)_{t\geq 0}$ is non-elliptic, then there exists a unique point $\tau\in\partial\D$ such that $(\varphi_t)_{t\geq 0}$ converges uniformly on compacta to the constant map $z\mapsto\tau$ when $t\to +\infty$. Such a point is called the {\sl Denjoy-Wolff point} of $(\varphi_t)_{t\geq 0}$. Moreover, for all $t\geq 0$, $\varphi_t'(z)$ has non-tangential limit at $\tau$ given by $e^{t\alpha}$ for some $\alpha\leq 0$. If $\alpha<0$, the semigroup is called {\sl hyperbolic} while it is {\sl parabolic} if $\alpha=0$.

A parabolic semigroup $(\varphi_t)_{t\geq 0}$ is {\sl of positive hyperbolic step} if for some---and hence any---$z\in \D$ it holds
$$\lim_{t\to+\infty}k_\D(\varphi_{t}(z), \varphi_{t+1}(z))>0$$
(here $k_\D$ is the hyperbolic distance in $\D$), otherwise it is {\sl of zero parabolic step}.

For every semigroup $(\varphi_t)_{t\geq 0}$ there exists \emph{the infinitesimal generator of $(\varphi_t)_{t\geq 0}$}, namely, the unique holomorphic function $G:\D \to \C$ such that
$$
\frac{\partial \varphi_t(z)}{\partial t}=G(\varphi_t (z)), \qquad t\geq 0,\; z\in \D.
$$

We  recall the following theorem (see, {\sl e.g.}, \cite[Thm. 9.3.5]{BCDbook}):

\begin{theorem}\label{Thm:models}
Let $(\varphi_t)_{t\geq 0}$ be a non-elliptic semigroup in $\D$.  There exists $h:\D\to\C$ univalent such that
\begin{enumerate}
\item $h(\varphi_t(z))=h(z)+t$ for all $t\geq 0$,
\item $\bigcup_{t\geq 0} (h(\D)-t)=U$,
\end{enumerate}
where $U=\C$ or $U=\{z\in\C: \Im z>0\}$, or $U=\{z\in\C: \Im z<0\}$ or $U=\{z\in\C: a<\Im z<b\}$ for some $a<b$.
\end{theorem}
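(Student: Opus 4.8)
The plan is to construct the Koenigs function $h$ explicitly from the infinitesimal generator, read off the functional equation~(1) by differentiating along the flow, and then pin down $U$ by a soft argument about translation-invariant plane domains; the only genuinely delicate point is the univalence of $h$.

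First I would construct $h$. Let $G:\D\to\C$ be the infinitesimal generator of $(\varphi_t)_{t\ge0}$. Since the semigroup is non-elliptic, $G$ is zero-free in $\D$: a zero $z_0\in\D$ of $G$ would give $\varphi_t(z_0)\equiv z_0$, contradicting non-ellipticity. Hence $1/G$ is holomorphic on $\D$, and fixing a base point $z_0\in\D$ I set $h(z):=\int_{z_0}^{z}\frac{d\zeta}{G(\zeta)}$, which is well defined because $\D$ is simply connected. Then $h'=1/G$, so $h$ is locally biholomorphic and open; differentiating $t\mapsto h(\varphi_t(z))$ and using $\partial_t\varphi_t=G\circ\varphi_t$ gives $\frac{d}{dt}h(\varphi_t(z))=h'(\varphi_t(z))\,G(\varphi_t(z))=1$, and since $h(\varphi_0(z))=h(z)$ this yields $h(\varphi_t(z))=h(z)+t$, that is, (1). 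If $h_1,h_2$ both satisfy (1), then $h_1-h_2$ is holomorphic and constant along every (real-analytic) trajectory, hence constant on $\D$; so $h$ is unique up to an additive constant, which I will use below to normalize $U$.

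Next comes the univalence of $h$, which is the main obstacle. Local injectivity is free since $h'\neq0$. Moreover each $\varphi_t$ with $t>0$ is injective: if $\varphi_{t_0}(z_1)=\varphi_{t_0}(z_2)$, then $t\mapsto\varphi_t(z_1)$ and $t\mapsto\varphi_t(z_2)$ both solve the autonomous ODE $\dot\gamma=G(\gamma)$ inside $\D$ (where $G$ is locally Lipschitz) and agree at $t_0$, so by uniqueness of the initial value problem (integrated backward from $t_0$) they coincide on $[0,t_0]$, whence $z_1=z_2$. To upgrade local injectivity of $h$ to global injectivity I would invoke the classical linearization/model theorems (this is \cite[Thm.~9.3.5]{BCDbook}, building on work of Cowen, Pommerenke and Baker--Pommerenke): after conjugating so that the Denjoy--Wolff point is sent to $\infty$, one shows that $h$ is the locally uniform limit, as $t\to+\infty$, of the maps $\varphi_t$ renormalized by a suitable multiplier or translation chosen according to the dynamical type (hyperbolic, parabolic of positive step, parabolic of zero step); each renormalized map is univalent, the limit is non-constant because $h'\neq0$, so Hurwitz's theorem forces $h$ to be univalent, and a uniqueness argument identifies this limit with the $h$ built above. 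It is worth emphasizing that univalence really uses the dynamics: for a general plane domain $D$ satisfying $D+t\subseteq D$ there can exist locally biholomorphic intertwining maps onto $D$ that fail to be injective.

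Finally I would identify $U:=\bigcup_{t\ge0}(\Omega-t)$, where $\Omega:=h(\D)$. From $\varphi_t(\D)\subseteq\D$ it follows that $\Omega+t=h(\varphi_t(\D))\subseteq h(\D)=\Omega$ for every $t\ge0$; equivalently $\Omega-s\subseteq\Omega-t$ whenever $0\le s\le t$, so $\{\Omega-t\}_{t\ge0}$ is an increasing family of domains all containing $\Omega$, and $U$ is open and connected. A short computation gives $U+r=U$ for every $r\in\R$: for $r\ge0$ we have $U+r=\bigcup_{s\ge-r}(\Omega-s)$, and the terms with $s\in[-r,0)$ equal $\Omega+|s|\subseteq\Omega\subseteq U$, so $U+r\subseteq U$; the reverse inclusion $U\subseteq U+r$ is obvious, and applying this with $-r$ settles the case $r<0$. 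Thus $U$ is a non-empty, open, connected subset of $\C$ invariant under all real translations; writing $V:=\{\Im w:\ w\in U\}$, this invariance forces $U=\{x+iy:\ x\in\R,\ y\in V\}$, and $V$ is then a non-empty open interval because $U$ is open and connected. The four cases $V=\R$, $V=(a,+\infty)$, $V=(-\infty,b)$, $V=(a,b)$ give $U=\C$, a horizontal half-plane, or a horizontal strip; in the half-plane cases, subtracting a constant from $h$ (which preserves (1) and univalence) normalizes $U$ to $\{\Im z>0\}$ or $\{\Im z<0\}$. This establishes (2) and completes the proof.
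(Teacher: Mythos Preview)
The paper does not give its own proof of this theorem; it is stated as a recalled result with a citation to \cite[Thm.~9.3.5]{BCDbook}. So there is no in-paper argument to compare against, and your proposal is effectively a proof sketch of that cited result.

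Your construction of $h$ as a primitive of $1/G$, the derivation of the Abel equation~(1), the uniqueness up to an additive constant, and the identification of $U$ via real-translation invariance are all correct and cleanly argued. The argument that $U+r=U$ for every $r\in\R$ and that a connected open set with this property is $\{x+iy:y\in V\}$ for an open interval $V$ is exactly the right soft step.

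The one place to tighten is univalence. You correctly flag it as the crux, but then write ``this is \cite[Thm.~9.3.5]{BCDbook}'' --- which is precisely the statement you are proving, so that reference is circular as written. Your subsequent sketch (realize $h$ as a locally uniform limit of suitably renormalized iterates $\varphi_t$, each univalent, and apply Hurwitz to the nonconstant limit) is the right idea and is indeed how the classical proofs go, but if you want a self-contained argument you should spell out the renormalization in each dynamical type and check nondegeneracy of the limit rather than pointing back to the theorem itself. Also note that the ODE backward-uniqueness argument you give for injectivity of each $\varphi_t$ is already subsumed by the fact that nonidentity elements of a holomorphic semigroup in $\D$ are univalent; it is fine, but not where the difficulty lies.
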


Moreover, \cite[Prop. 9.3.10]{BCDbook}, if $g:\D\to \C$ is any univalent map such that $g(\D)=h(\D)$ and satisfies (1) and (2) then there exists $a\in\C$ such that $g(z)=h(z)+a$ (and, actually, in case $U\neq\C$, $a\in \R$).

The semigroup is  parabolic of zero hyperbolic step  if and only if $U=\C$, it is parabolic of positive hyperbolic step if and only if $U=\{z\in\C: \Im z>0\}$ or $U=\{z\in\C: \Im z<0\}$ and it is  hyperbolic if $U$ is a strip.

We call $h$ the {\sl Koenigs function} of $(\varphi_t)_{t\geq 0}$ and $\Omega:=h(\D)$ the {\sl Koenigs domain} of $(\varphi_t)_{t\geq 0}$.

By definition, the Koenigs  domain $\Omega$ is {\sl starlike at infinity}\footnote{In this paper when we say that a domain is starlike at infinity, we always mean with respect to the real translation $z\mapsto z+t$, $t\geq 0$.}
{\sl i.e.},
\[
\Omega+t\subseteq \Omega, \quad \hbox{ for all } t\geq 0.
\]
There is a simple way to define starlike at infinity  domains. Recall that, if $I\subseteq (-\infty,+\infty)$ is an open interval,  a function
$u:I \to [-\infty,+\infty)$ is {\sl upper semicontinuous} if for every $t_0\in I$
\[
\limsup_{t\to t_0} u(t)\leq u(t_0).
\]
Note that we allow the function $u$ to take value $-\infty$.

\begin{proposition}\label{Prop:definingfnc}
Let $I\subseteq (-\infty, +\infty)$ be an open interval and let $\psi: I\to [-\infty, +\infty)$ be an upper semicontinuous function. Let
\[
\Omega_\psi:=\{z \in \C: \Re z>\psi(\Im z), \Im z\in I\}.
\]
Then $\Omega_\psi$ is a domain starlike at infinity. Conversely, if $\Omega$ is a domain starlike at infinity, then there exist an open interval $I\subseteq (-\infty, +\infty)$ and an upper semicontinuous function $\psi: I\to [-\infty, +\infty)$ such that $\Omega=\Omega_\psi$. Moreover, $\Omega_\psi=\C$ if and only if $I=\R$ and $\psi\equiv -\infty$.
\end{proposition}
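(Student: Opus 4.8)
The plan is to establish the two directions separately. For the forward direction, suppose $\psi: I \to [-\infty,+\infty)$ is upper semicontinuous and set $\Omega_\psi = \{z \in \C : \Re z > \psi(\Im z),\ \Im z \in I\}$. First I would check that $\Omega_\psi$ is open: if $z_0 = x_0 + iy_0 \in \Omega_\psi$, then $x_0 > \psi(y_0)$, and since $y_0 \in I$ and $I$ is open, we may pick $\delta>0$ with $(y_0-\delta, y_0+\delta) \subset I$; by upper semicontinuity of $\psi$ at $y_0$, after shrinking $\delta$ we get $\psi(y) < x_0 - \eps$ for all $y$ with $|y-y_0|<\delta$, for some $\eps>0$, and then the box $\{|x - x_0| < \eps,\ |y-y_0|<\delta\}$ lies in $\Omega_\psi$. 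Next, connectedness: any two points of $\Omega_\psi$ can be joined by first moving horizontally to the right (which keeps us in $\Omega_\psi$ since $\psi(\Im z)$ does not change and $\Re z$ only increases) to a common large value of $\Re z$, and then — using that $I$ is an interval and, for $\Re z$ large enough, the whole horizontal segment at height in a compact subinterval of $I$ stays in $\Omega_\psi$ (again by upper semicontinuity, $\psi$ is bounded above on compact subintervals) — sliding the imaginary part. A clean way to phrase this: fix a basepoint and show every point connects to it. Finally, starlikeness at infinity is immediate: if $\Re z > \psi(\Im z)$ and $t \geq 0$ then $\Re(z+t) = \Re z + t > \psi(\Im z) = \psi(\Im(z+t))$, so $z + t \in \Omega_\psi$.

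For the converse, let $\Omega$ be a domain starlike at infinity. Define $I := \{ y \in \R : (x + iy) \in \Omega \text{ for some } x \in \R \}$, the projection of $\Omega$ onto the imaginary axis, and for $y \in I$ set $\psi(y) := \inf\{ x \in \R : x + iy \in \Omega\} \in [-\infty, +\infty)$. The projection $I$ is the continuous image of a connected set, hence an interval, and it is open because $\Omega$ is open (a small disc around a point of $\Omega$ projects onto an open interval). For each fixed $y \in I$, the horizontal slice $\Omega \cap (\R + iy)$ is open; I claim it equals the ray $(\psi(y), +\infty) + iy$. It is contained in that ray by definition of $\psi(y)$; conversely, if $x_0 + iy \in \Omega$ and $x > x_0$, then $x + iy = (x_0 + iy) + (x - x_0) \in \Omega$ by starlikeness at infinity. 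Hence $\Omega = \Omega_\psi$, provided we verify $\psi$ is upper semicontinuous. For this, fix $y_0 \in I$ and $x_0 > \psi(y_0)$; then $x_0 + iy_0 \in \Omega$, so a neighbourhood of $x_0 + iy_0$ lies in $\Omega$, which forces $\psi(y) < x_0$ for all $y$ near $y_0$; letting $x_0 \downarrow \psi(y_0)$ gives $\limsup_{y \to y_0}\psi(y) \leq \psi(y_0)$.

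For the last assertion: if $I = \R$ and $\psi \equiv -\infty$ then $\Omega_\psi = \C$ trivially. Conversely, if $\Omega_\psi = \C$, then every slice $(\psi(y),+\infty)+iy$ must equal the full line $\R + iy$, forcing $\psi(y) = -\infty$ for all $y \in I$, and the projection of $\C$ is all of $\R$, so $I = \R$.

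I do not expect any serious obstacle here; the only point requiring a little care is the connectedness of $\Omega_\psi$ in the forward direction, where one must use upper semicontinuity to guarantee that, at sufficiently large real part, long horizontal segments at nearby heights lie inside the domain so that one can maneuver in the imaginary direction. Everything else is a direct unwinding of definitions, with the equivalence $x_0 + iy \in \Omega,\ x > x_0 \implies x + iy \in \Omega$ being exactly the translation-by-positive-reals property.
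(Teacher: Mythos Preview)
Your proposal is correct and follows essentially the same approach as the paper: define $I$ as the projection of $\Omega$ to the imaginary axis, set $\psi(y)=\inf\{x:x+iy\in\Omega\}$, identify the horizontal slices as half-lines via starlikeness at infinity, and deduce upper semicontinuity from openness of $\Omega$. The paper is terser---it dismisses the forward direction in one line and phrases upper semicontinuity dually (the point $(\limsup_{y\to y_0}\psi(y),y_0)$ lies in $\overline{\C\setminus\Omega}$)---but your more explicit treatment of openness and connectedness, using that an upper semicontinuous function is bounded above on compact subintervals, fills in exactly what the paper leaves to the reader.
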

\begin{proof}
Since $\psi$ is upper semicontinuous, then $\Omega_\psi$ is a domain and clearly it is starlike at infinity.

Conversely, let $\Omega$ be a domain starlike at infinity.
Then the projection $I$ of $\Om$ onto the $y$-axis is an
open interval
(finite or infinite).
For $y\in I$, put
\[
\psi(y):=\inf \{x: x+iy\in \Omega\}\in [-\infty, +\infty).
\]
Since $\Omega$ is starlike at infinity, it follows easily that $\Omega=\Omega_\psi$. For any $y_0\in I$,
it is easy to see that the point $(\limsup_{y\to y_0}\psi(y), y_0)$
belongs to the closure of $\C\setminus \Om$, whenever
$$\limsup_{y\to y_0}\psi(y)\ne-\infty.$$
It follows that $\limsup_{y\to y_0}\psi(y)\le y_0,$
so that $\psi$ is upper semicontinuous.
\end{proof}

\begin{definition}
If $\Omega$ is the Koenigs  domain of a non-elliptic semigroup in $\D$, the upper semicontinuous function $\psi$ such that $\Omega=\Omega_\psi$ is called {\sl the defining function} of $\Omega$.
\end{definition}

\subsection{Hardy spaces and weak-star topology}

If $D\subsetneq \C$ is a simply connected domain,  for $p\in[1,\infty)$  we let
$H^p(D)$ denote the set of holomorphic functions $f$ in $D$ such that $|f|^p$ has a harmonic majorant in $D$. Given $z_0\in D$, we let $\|f\|_p:=\inf (u(z_0))^{1/p}$, where the infimum is taken among all harmonic majorants $u$ of $|f|^p$. This is indeed a norm that makes $H^p(D)$ into a Banach space.  If $g: D\to G$ is a biholomorphism, then the map $\Phi_g: H^p(D)\to H^p(G)$ defined as $f\mapsto f \circ g^{-1}$ induces a continuous isomorphism.  If $D=\D$, the Poisson representation formula implies that $H^p(\D)$ is the usual Hardy space $H^p$ of holomorphic functions with bounded $p$-mean. See, {\sl e.g.}, \cite{Du, Ru}, for details.

Also, we denote by $H^\infty(D)$  the set of all bounded holomorphic functions, which is a Banach space when endowed with the norm $\|f\|_\infty:=\sup\{|f(z)|, z\in D\}$.

Given $p\in [1,\infty]$, a classical result due to Fatou states that every function $f\in H^p(\D)$ has radial limit $f^\ast$ almost everywhere on $\partial\D$. The map $H^p(\D)\ni f\mapsto f^\ast\in L^p(\partial \D)$  induces an  isomorphism on a subspace  $H^p(\partial \D)$ of $L^p(\partial \D)$ (with respect to the normalized Lebesgue measure)---see, {\sl e.g.}, \cite[Chapter~7]{Du}.

For $f,g$ functions we let, whenever it makes sense,
\[
\langle f, g\rangle:=\frac{1}{2\pi}\int_0^{2\pi}f(e^{i\theta})g(e^{i\theta})d\theta.
\]
Then $\langle\cdot, \cdot\rangle: L^1(\partial\D)\times L^\infty(\partial\D)\to\C$ defines an isometrical isomorphism
\[
\iota_\infty: L^\infty(\partial\D)\to (L^1(\partial\D))^\ast,
\]
 the dual space of $L^1(\partial\D)$.
Thus, one can endow $H^\infty(\D)$ with the topology induced by  the weak-star topology of $(L^1(\partial\D))^\ast$ on $\iota_\infty(H^\infty(\partial\D))$. Such topology is the {\sl weak-star topology} on $H^\infty(\D)$.

For $p\in(1,+\infty)$, let $q\in(1,+\infty)$ be such that $\frac{1}{p}+\frac{1}{q}=1$. The dual pairing
$$\langle \cdot, \cdot \rangle: L^p(\partial\D)\times L^q(\partial \D)\to \C$$
defines an isometrical isomorphism $\iota_p: L^p(\partial\D)\to (L^q(\partial\D))^\ast$. Hence, we can define the weak-star topology of $H^p(\D)$ as the one induced on $\iota_p(H^p(\partial\D))$ by the weak-star topology of $(L^q(\partial\D))^\ast$. The dual pairing $\langle \cdot, \cdot\rangle$ allows one also to define an isomorphism (not isometrical) between $H^p(\partial\D)$ and $(H^q(\partial\D))^\ast$. In particular,  the space $H^p(\D)$ is reflexive, and the weak-star topology in $H^p(\D)$ coincides with the weak topology in $H^p(\D)$.

Using a Riemann map, one can then define the weak-star topology on $H^\infty(D)$ and $H^p(D)$, $p>1$, for every simply connected domain  $D\subsetneq \C$.

It is well-known (see \cite{Sa}) that  a sequence $\{f_n\}$ converges to $f$ in the weak-star topology on 
$H^\infty(D)$ if and only if it converges pointwise  in $D$ to $f$ and the norms of $f_n$ in $H^\infty(D)$
are uniformly bounded.

For the sake of completeness (and partly  because of the lack of direct references), we prove some facts about Hardy spaces that we would need later.

\begin{lemma}\label{Lem:approx-weak-approx-p}
Let $D\subset \C$ be a simply connected domain and let $\mathcal G\subset H^\infty(D)$ be a linear set. If $\mathcal G$ is weak-star dense in $H^\infty(D)$ then $\mathcal G$ is dense in $H^p(D)$ for all $p\in[1,\infty)$.
\end{lemma}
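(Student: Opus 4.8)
The plan is to reduce the statement to the disc case via the Riemann map and then to exploit the duality between Hardy spaces. Fix $p\in[1,\infty)$ and let $q\in(1,\infty]$ be the conjugate exponent, with the convention $q=\infty$ when $p=1$. By transporting everything through a Riemann map $g:D\to\D$ (which induces isomorphisms $\Phi_g$ on all $H^r$-spaces and, being bounded with bounded inverse on the boundary, carries the weak-star topology of $H^\infty(D)$ to that of $H^\infty(\D)$), it suffices to prove the statement for $D=\D$: if $\mathcal G\subset H^\infty(\D)$ is a linear subspace that is weak-star dense in $H^\infty(\D)$, then $\mathcal G$ is norm-dense in $H^p(\D)$.

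To prove density of a linear subspace in a Banach space it is enough, by the Hahn–Banach theorem, to show that the only bounded linear functional on $H^p(\D)$ annihilating $\mathcal G$ is the zero functional. Now identify the dual of $H^p(\D)$: for $p>1$ one has $(H^p(\partial\D))^\ast\cong H^q(\partial\D)$ (or rather the quotient $L^q/\overline{H^q_0}$, which is what matters) via the pairing $\langle\cdot,\cdot\rangle$ introduced in the excerpt, and for $p=1$ every bounded functional on $H^1(\partial\D)$ is still represented by some $k\in L^\infty(\partial\D)$ through $f\mapsto\langle f,k\rangle$. So let $k$ be such a representing function with $\langle f,k\rangle=0$ for all $f\in\mathcal G$; I must show $\langle f,k\rangle=0$ for all $f\in H^p(\D)$, equivalently (by the $F.$ and $M.$ Riesz-type duality description) that $k$ lies in the annihilator of $H^p$, i.e. the functional it induces on $H^p$ vanishes.

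The bridge from weak-star density in $H^\infty$ to this conclusion is the following observation: since $k\in L^\infty(\partial\D)\subset L^1(\partial\D)$, the map $f\mapsto\langle f,k\rangle$ is, in particular, weak-star continuous on $H^\infty(\D)$ (this is exactly the pairing $L^1\times L^\infty$ defining the weak-star topology). It vanishes on $\mathcal G$, hence by weak-star density it vanishes on all of $H^\infty(\D)$. Now I invoke the fact that $H^\infty(\D)$ is norm-dense in $H^p(\D)$ for every $p\in[1,\infty)$ — a classical fact, e.g. polynomials are already dense — together with the continuity of $f\mapsto\langle f,k\rangle$ on $H^p(\D)$ (it is bounded because $k\in L^\infty\subset L^q$ for every $q$). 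Therefore the functional vanishes on a dense subset of $H^p(\D)$ and hence on all of $H^p(\D)$. By Hahn–Banach, $\mathcal G$ is dense in $H^p(\D)$, and transporting back via $\Phi_{g^{-1}}$ finishes the proof.

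The only genuine subtlety, and the step I would be most careful about, is the identification of $(H^p(\partial\D))^\ast$ for the endpoint $p=1$: one needs that a bounded linear functional on $H^1$ is given by integration against an $L^\infty$ function (this uses Hahn–Banach extension to $L^1$ and $(L^1)^\ast=L^\infty$), which is precisely what makes the weak-star continuity argument go through uniformly in $p\in[1,\infty)$. For $p>1$ one can alternatively bypass functionals entirely: $H^p(\D)$ is reflexive, weak and weak-star topologies coincide there, weak-star convergence in $H^\infty$ implies weak convergence in $H^p$ (bounded sequences converging pointwise), so the weak closure of $\mathcal G$ in $H^p$ contains the weak-star closure of $\mathcal G\cap\{\|\cdot\|_\infty\le n\}$ and hence, after a normalization/density argument, all of $H^p$; then the norm closure of the convex set $\mathcal G$ equals its weak closure. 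I would present the Hahn–Banach version as the main line since it handles all $p\in[1,\infty)$ at once.
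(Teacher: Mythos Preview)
Your argument is correct and takes a different route from the paper's. The paper proceeds topologically: for $1<p<\infty$ it observes that the inclusion $H^\infty(\D)\hookrightarrow H^p(\D)$ is weak-star to weak-star continuous (because $(L^1)^\ast\hookrightarrow (L^q)^\ast$ is), so weak-star density of $\mathcal G$ in $H^\infty$ passes to weak-star density in $H^p$; reflexivity then identifies weak-star with weak, and Mazur's theorem upgrades weak density of the convex set $\mathcal G$ to norm density. The endpoint $p=1$ is handled separately via the dense continuous embedding $H^p\hookrightarrow H^1$ for some $p>1$. Your Hahn--Banach/duality argument is more economical: it treats all $p\in[1,\infty)$ uniformly, avoids Mazur's theorem, and isolates cleanly the single point that matters, namely that any representing function $k$ for a functional on $H^p(\partial\D)$ lies in $L^1(\partial\D)$ and therefore defines a weak-star continuous functional on $H^\infty$. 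One small slip to fix in your write-up: for $p>1$ the representing element $k$ lives in $L^q$, not in $L^\infty$; but since $\partial\D$ has finite measure you still have $k\in L^q\subset L^1$, which is all the argument needs.
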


\begin{proof}
Using a Riemann map from $\D$ onto $D$, we can assume $D=\D$ and $\mathcal G\subset H^\infty(\D)$ weak-star dense in $H^\infty(\D)$, and prove that $\mathcal G$ is dense in $H^p(\D)$ for $p\in[1,\infty)$.

Fix $p$, $1<p<\infty$, and let $q$ be such that $\frac{1}{p}+\frac{1}{q}=1$.
Since  the inclusion $(L^1(\partial \D))^\ast\subset (L^q(\partial \D))^\ast$ is weak-star continuous, it follows that the inclusion $H^\infty(\D)\subset H^p(\D)$  is  weak-star continuous as well.

Since polynomials are dense in $H^p(\D)$, it follows that  $H^\infty(\D)$ is dense in $H^p(\D)$ (with respect to the strong topology in $H^p(\D)$), thus, it is dense also in the weak topology of $H^p(\D)$. But $H^p(\D)$ is reflexive, hence $H^\infty(\D)$ is dense in $H^p(\D)$ in the weak-star topology of $H^p(\D)$.

Since $\mathcal G$ is dense in $H^\infty(\D)$ in the weak-star topology, it follows from the previous observations that $\mathcal G$ is  dense in $H^p(\D)$ in the weak-star topology of $H^p(\D)$, thus, $\mathcal G$ is  dense in $H^p(\D)$ in the weak topology of $H^p(\D)$. By Mazur's Theorem (see, {\sl e.g.}, \cite[Corollary 3, Chapter 2]{Diestel}), $\mathcal G$ is dense in $H^p(\D)$ (with respect to the strong topology in $H^p(\D)$).

Therefore, we proved that $\mathcal G$ is dense in $H^p(\D)$ for every $p>1$.

Since $H^p(\D)$ embeds densely and continuously in $H^1(\D)$, it follows immediately that $\mathcal G$ is also dense in $H^1(\D)$.
\end{proof}

\begin{lemma}\label{Lem:approx-Hardy}
Let $G\subset\D$ be a simply connected domain such that $\hbox{Int}(\overline{G})=G$ and $\C\setminus\overline{G}$ is connected. Then $H^\infty(\D)$ is weak-star dense in $H^\infty(G)$ and $H^p(\D)$ is  dense in $H^p(G)$ for all $p\in[1,\infty)$.
\end{lemma}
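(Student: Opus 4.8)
The plan is to reduce the statement to a theorem of Sarason on weak-star generators of $H^\infty$, via the characterization of Carathéodory domains. Recall that a simply connected domain $G\subsetneq\C$ is called a \emph{Carathéodory domain} if $\mathrm{Int}(\overline{G})=G$ and $\C_\infty\setminus\overline{G}$ is connected; under our hypotheses $G\subset\D$ is bounded, so $\C_\infty\setminus\overline{G}$ is connected precisely when $\C\setminus\overline{G}$ is connected, and thus $G$ is a Carathéodory domain. First I would recall (or cite from Sarason \cite{Sa, Sa1}) that if $g$ is a Riemann map from $\D$ onto a bounded Carathéodory domain $G$, then $g$ is a weak-star sequential generator of $H^\infty(\D)$, i.e. the polynomials in $g$ are weak-star dense in $H^\infty(\D)$. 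Transporting along the Riemann map $g$, this says exactly that the polynomials are weak-star dense in $H^\infty(G)$; since polynomials lie in $H^\infty(\D)$ (as $G\subset\D$ forces every polynomial to be bounded on $G$, indeed on all of $\D$), a fortiori $H^\infty(\D)$ is weak-star dense in $H^\infty(G)$.

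For the actual argument I would proceed concretely using the description of weak-star convergence in $H^\infty$ recalled just above in the excerpt: a bounded sequence $\{f_n\}\subset H^\infty(G)$ converging pointwise on $G$ converges weak-star. So, given $f\in H^\infty(G)$, it suffices to produce a sequence of polynomials (or of functions in $H^\infty(\D)$), uniformly bounded in sup-norm on $G$, converging to $f$ pointwise on $G$. The classical construction for Carathéodory domains is to take an exhausting sequence of Jordan domains $G_1\Subset G_2\Subset\cdots$ with $\overline{G_n}\subset G$, $\bigcup_n G_n=G$, each $\C\setminus\overline{G_n}$ connected; restrict $f$ to a neighborhood of $\overline{G_n}$, where it is holomorphic and bounded by $\|f\|_{H^\infty(G)}$ on $\overline{G_n}$ by the maximum principle, and apply Runge's theorem to approximate $f$ uniformly on $\overline{G_n}$ by a polynomial $p_n$ with $\|p_n-f\|_{\overline{G_n}}<1/n$. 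One must be slightly careful that $\|p_n\|_{\overline{G}}$ stays bounded: this is where $\mathrm{Int}(\overline G)=G$ enters, together with the Carathéodory-domain structure, ensuring $\overline{G_n}\to\overline{G}$ in the appropriate sense so that $\sup_{\overline G}|p_n|$ does not blow up beyond (a constant times) $\|f\|_{H^\infty(G)}$. Then $p_n\to f$ locally uniformly on $G$, hence pointwise, with uniformly bounded norms, giving weak-star density of polynomials, hence of $H^\infty(\D)$, in $H^\infty(G)$.

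Once weak-star density of $H^\infty(\D)$ in $H^\infty(G)$ is established, the $H^p$ statement follows immediately from Lemma~\ref{Lem:approx-weak-approx-p} applied with $D=G$ and $\mathcal G=H^\infty(\D)\cap H^\infty(G)=H^\infty(\D)|_G$ (viewed as a linear subspace of $H^\infty(G)$): that lemma gives that $\mathcal G$ is dense in $H^p(G)$ for all $p\in[1,\infty)$, which is the assertion. (To be fully rigorous one should note that restriction $H^\infty(\D)\to H^\infty(G)$ followed by inclusion into $H^p(G)$ lands inside the closure of $H^p(\D)$ in $H^p(G)$, which is clear since bounded holomorphic functions on $\D$ restrict to elements of $H^p(\D)$ and hence of $H^p(G)$.)

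The main obstacle I anticipate is the uniform boundedness of the approximating polynomials on $\overline G$, i.e. controlling $\|p_n\|_{\infty,G}$ by a constant depending only on $\|f\|_{H^\infty(G)}$ rather than on $n$. This is precisely the point where the two hypotheses $\mathrm{Int}(\overline G)=G$ and connectivity of $\C\setminus\overline G$ are both needed and is the technical heart of Sarason's theorem on Carathéodory domains; everything else (Runge approximation, the pointwise/bounded criterion for weak-star convergence, and the transfer to $H^p$) is routine. If one prefers, this obstacle can be bypassed entirely by citing Sarason's result directly, in which case the lemma is essentially immediate.
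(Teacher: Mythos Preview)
Your proposal is correct and follows essentially the same route as the paper: observe that the hypotheses make $G$ a Carath\'eodory domain, invoke Sarason's result that polynomials are weak-star dense in $H^\infty(G)$ (hence so is $H^\infty(\D)$, since $\mathcal P\subset H^\infty(\D)$), and then apply Lemma~\ref{Lem:approx-weak-approx-p} for the $H^p$ case. The paper simply cites \cite[Prop.~1]{Do} and \cite[Proposition~2]{Sa} rather than sketching the Runge/exhaustion argument, which---as you yourself note---is the cleanest way to handle the uniform-boundedness obstacle.
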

\begin{proof} Let $\mathcal P$ be the set of all holomorphic polynomials. The conditions that $\hbox{Int}(\overline{G})=G$ and $\C\setminus\overline{G}$ is connected imply (see, {\sl e.g.}, \cite[Prop. 1]{Do}) that $G$ is a Carath\'eodory domain. Hence, \cite[Proposition~2]{Sa} implies that holomorphic polynomials are weak-star dense in $H^\infty(G)$. Since $\mathcal P\subset H^\infty(\D)$, we have that $H^\infty(\D)$ is weak-star dense in $H^\infty(G)$.

The last statement  follows from Lemma~\ref{Lem:approx-weak-approx-p}.
\end{proof}

\subsection{Infinitesimal generators of composition operators semigroups}

Associated to any semigroup  $(\varphi_t)_{t\geq 0}$ in $\D$, there is a  family of composition operators $\{C_{\varphi_t} \}_{t\geq 0}$, defined
on the space of analytic functions on $\mathbb{D}$ by
$$
C_{\varphi_t} f= f\circ \varphi_t.
$$
Obviously, $\{C_{\varphi_t} \}_{t\geq 0}$ has the semigroup property:
\begin{enumerate}
\item[(1)] $C_{\varphi_0}=I$ (the identity operator);
\item[(2)]$C_{\varphi_t} C_{\varphi_s}=C_{\varphi_{t+s}}$ for all $t,s\ge 0$.
\end{enumerate}

Recall that an operator  semigroup $\{T_t\}_{t\geq 0}$ acting on a Banach space $X$ is called \emph{strongly continuous} or \emph{$C_0$-semigroup} if it satisfies
 $$
\lim_{t\to 0^+} T_t f=f
$$
for $f\in X $. A remarkable result of  Berkson and Porta \cite{Ber-Por} states that any semigroup in $\D$ always induces a strongly continuous semigroup on the classical Hardy spaces $H^p(\D)$, $1\le p<\infty$.

Given a $C_0$-semigroup $\{T_t\}_{t\geq0}$ on a Banach space $X $, recall that its generator is the  closed and
densely defined linear operator $\Gamma$ given by
\[
\Gamma f=\lim_{t\to 0^+} \frac {T_t f-f} t
\]
with domain $\cD \Gamma=\{f\in X : \lim_{t\to 0^+} \frac {T_t f-f} t \enspace \text{exists}\}$. It is well-known that the semigroup is
determined uniquely by its generator (see, {\sl e.g.}, \cite{Co-Ma, R-S}).

In the particular instance of $\{C_{\varphi_t} \}_{t\geq 0}$, if $G$ is the infinitesimal generator of $(\varphi_t)_{t\geq 0}$,  the generator $\Gamma$ of the operator semigroup $\{C_{\varphi_t} \}_{t\geq 0}$ in $H^p(\D)$, $p\in[1,\infty)$ is given by
$$
\Gamma f= G' f
$$
for any $f\in \cD \Gamma=\{f\in H^p(\D):\, G' f\in H^p (\D)\}$.

In \cite{Siskakis 85} Siskakis showed that a complex number $\lambda$ is an eigenvalue of $\Gamma$ if and only if $e^{\lambda h} \in H^p(\D)$, where $h$ is the Koenigs function of the semigroup. At this regard, observe that if $(\varphi_ t)_{t\geq 0}$ is a non-elliptic semigroup in $\D$, $h$ its Koenigs  function  and $\Omega:=h(\D)$  the Koenigs domain, for every $\lambda \in \C$,
\[
e^{\lambda h(\varphi_t(z))}=e^{\lambda t} e^{\lambda h(z)}, \qquad (z\in \D),
\]
that is, $e^{\lambda h}$, is a \emph{semi-model} of $(\varphi_t)_{t\geq 0}$.
	By conformal invariance of $H^p$ spaces, we get that
	$\lambda$ is an eigenvalue of $\Gamma$ if and only if $e^{\lambda z} \in H^p(\Omega)$; this is the criterion which we will mostly use.

Recently, Betsakos \cite{Be} has studied the point spectrum of $\Gamma$ in terms of the geometry of the Koenigs domain of the semigroup. As we mentioned in the introduction, our aim is to study the completeness of eigenfunctions of $\Gamma$.

\section{Frequencies of semigroups in  Hardy spaces}\label{Sec:freq}

\begin{definition}
Let $\Omega$ be the Koenigs domain of a semigroup $(\varphi_t)_{t\geq 0}$. For $p\in [1,+\infty]$, we define the set of {\sl $p$-frequencies} of $(\varphi_t)_{t\geq 0}$ to be
\[
\Lambda_p(\Omega):=\{\lambda\in \C: e^{\lambda z}\in H^p(\Omega)\}.
\]
To each $\Lambda_p(\Omega)$ we associate the {\sl linear space of $p$-frequencies} of $(\varphi_t)_{t\geq 0}$ defined as
\[
\mathcal E_p(\Omega):=\hbox{span}\{e^{\lambda z}: \lambda\in \Lambda_p(\Omega)\}.
\]
If $\mathcal E_p(\Omega)$ is dense in $H^p(\Omega)$ for some $p\in[1,\infty)$, we say that $(\varphi_t)_{t\geq 0}$ has {\sl $p$-complete frequencies}.

If $\mathcal E_\infty(\Omega)$ is weak-star dense in $H^\infty(\Omega)$ we say that $(\varphi_t)_{t\geq 0}$ has {\sl $\infty$-weak-star complete frequencies}.
\end{definition}

Frequencies of semigroups have been studied by D. Betsakos~\cite{Be}. We state here his results in this form:

\begin{theorem}[Betsakos]\label{Betsakos}
Let $(\varphi_t)_{t\geq 0}$ be a non-elliptic semigroup of holomorphic self-maps of $\D$, with associated Koenigs domain $\Omega$.
\begin{enumerate}
\item If $(\varphi_t)_{t\geq 0}$ is hyperbolic and $p\in[1,\infty)$ then there exist $c_1\in (0,+\infty]$, $c_2\in (0,+\infty)$  such that
\[
\left\{\lambda\in\C :-\frac{c_1}{p}<\Re\lambda<\frac{c_2}{p}\right\}\subseteq \Lambda_p(\Omega)\subseteq \left\{\lambda\in\C :-\frac{c_1}{p}<\Re\lambda\leq\frac{c_2}{p}\right\}.
\]
In particular, $\{\lam\in \C: \Re \lam=0\}\subseteq \Lambda_p(\Omega)$.
\item If $(\varphi_t)_{t\geq 0}$ is parabolic and $p\in[1,\infty)$ then
\[
\Lambda_p(\Omega)\subseteq \left\{\lambda\in\C :\Re\lambda\leq 0\right\}.
\]
\end{enumerate}
\end{theorem}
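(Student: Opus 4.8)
The plan is to reduce everything to the exponent $p=1$ and then determine the (convex) set $\Lambda_1(\Omega)$ by combining two soft stability properties with one harmonic-measure estimate on half-strips.

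\emph{Reduction and stability.} Since $|e^{p\lambda z}|=|e^{\lambda z}|^{p}$, the function $|e^{\lambda z}|^{p}$ has a harmonic majorant on $\Omega$ precisely when $|e^{p\lambda z}|$ does, so $\lambda\in\Lambda_p(\Omega)\iff p\lambda\in\Lambda_1(\Omega)$, i.e. $\Lambda_p(\Omega)=p^{-1}\Lambda_1(\Omega)$; the factors $1/p$ in the statement then appear automatically. The set $\Lambda_1(\Omega)$ is \emph{convex}: if $u_j\ge 0$ is a harmonic majorant of $|e^{\lambda_j z}|$, $j=0,1$, then for $t\in[0,1]$ the harmonic function $(1-t)u_0+tu_1$ dominates $u_0^{1-t}u_1^{t}\ge|e^{((1-t)\lambda_0+t\lambda_1)z}|$ by the weighted arithmetic--geometric inequality; also $0\in\Lambda_1(\Omega)$. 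Moreover, by Theorem~\ref{Thm:models} the domain $\Omega$ is contained in a horizontal strip $S=\{a<\Im z<b\}$ when $(\varphi_t)$ is hyperbolic and in $\{\Im z>0\}$ or $\{\Im z<0\}$ when it is parabolic; since $|e^{isz}|=e^{-s\Im z}$, the function $e^{isz}$ is bounded on $\Omega$ for every $s\in\R$ in the hyperbolic case. Multiplying a harmonic majorant by $\|e^{isz}\|_\infty$ then shows that in the hyperbolic case $\Lambda_1(\Omega)$ is invariant under $\lambda\mapsto\lambda+i\R$; combined with convexity this gives $i\R\subseteq\Lambda_1(\Omega)$ and $\Lambda_1(\Omega)=\{\lambda:\Re\lambda\in J\}$ for an interval $J\subseteq\R$.

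\emph{Upper bound via half-strips.} Write $\Omega=\Omega_\psi$ with $\psi\colon I\to[-\infty,+\infty)$ upper semicontinuous and $I$ the projection of $\Omega$ onto the imaginary axis. Since $\psi$ is locally bounded above, for every compact interval $[c,d]\subset I$ and $M$ large the half-strip $H=\{\Re z>M,\ c<\Im z<d\}$ is contained in $\Omega$. I claim that if such an $H$ has width $w$ and $e^{\lambda z}\in H^1(H)$, then $\Re\lambda\le\pi/w$. On $H$ the modulus $|e^{\lambda z}|$ is bounded above and below by positive constants times $e^{(\Re\lambda)\Re z}$, so it suffices to show $e^{\alpha z}\notin H^1(H)$ for real $\alpha>\pi/w$; but the least harmonic majorant of the subharmonic function $e^{\alpha\Re z}$ at a fixed interior point $z_0$ of $H$ is the increasing limit of its integrals against harmonic measure on the exhausting rectangles $R_n$, the harmonic measure of the far edge $\{\Re z=n\}\cap\partial R_n$ from $z_0$ is bounded below by $c(z_0)e^{-\pi n/w}$ (a standard separation-of-variables estimate), and $e^{\alpha\Re z}=e^{\alpha n}$ there, so these integrals diverge. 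Because $H^1$-membership passes to subdomains, $e^{\lambda z}\in H^1(\Omega)$ forces $\Re\lambda\le\pi/\mathrm{length}(I')$ for every compact $I'\subset I$, hence $\Re\lambda\le\pi/\mathrm{length}(I)$. For a parabolic semigroup $I$ is a half-line or $\R$, so $\mathrm{length}(I)=+\infty$ and $\Re\lambda\le 0$: this is assertion (2). For a hyperbolic semigroup $\mathrm{length}(I)=b-a<\infty$, so $\Lambda_1(\Omega)\subseteq\{\Re\lambda\le\pi/(b-a)\}$.

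\emph{Lower bound (hyperbolic) and the remaining point.} On $S=\{a<\Im z<b\}$ and for $0<s<\pi/(b-a)$, pick $C\in\C$ with $\Re\bigl(Ce^{\pm is\,\Im z}\bigr)\ge1$ for every $\Im z\in(a,b)$, which is possible because $s(b-a)<\pi$; then $\Re(Ce^{\pm sz})$ is a positive harmonic majorant of $|e^{\pm sz}|$, so $e^{\pm sz}\in H^1(S)$ and, restricting, $\{\lambda:|\Re\lambda|<\pi/(b-a)\}\subseteq\Lambda_1(\Omega)$. Together with convexity, the vertical-translation invariance, and the upper bound, this shows $\Lambda_1(\Omega)=\{\lambda:\Re\lambda\in J\}$ where $J$ is an interval with $\sup J=\pi/(b-a)=:c_2\in(0,\infty)$ and $\inf J=:-c_1\in[-\infty,0)$; in particular $\{\Re\lambda=0\}\subseteq\Lambda_p(\Omega)$. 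The one remaining point is that $-c_1$ is not attained when $c_1<\infty$, i.e. the left inequality is strict. If $\inf_\Omega\Re z>-\infty$ then $\Omega$ lies in a right-opening half-strip and $e^{\lambda z}$ is bounded on $\Omega$ for every $\Re\lambda\le0$, so $c_1=+\infty$ and there is nothing to prove. Otherwise one has to show that $e^{\lambda_0 z}\in H^1(\Omega)$ with $\Re\lambda_0<0$ implies $e^{(\lambda_0-t)z}\in H^1(\Omega)$ for all small $t>0$; this is the genuinely delicate part, equivalent to the fact that $H^1$-membership of $e^{\lambda z}$ near the end $\Re z=-\infty$ of a starlike-at-infinity domain is governed by a \emph{strict} inequality, which one reads off from a sharp two-sided estimate of the harmonic measure of $\Omega$ near that end. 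For this estimate --- equivalently, for the exact identification of $c_1$ and $c_2$ in terms of the geometry of $\Omega$ --- I would appeal to Betsakos's analysis in \cite{Be}. This harmonic-measure estimate is the only non-elementary ingredient and the main obstacle; the rest is a combination of the arithmetic--geometric inequality, explicit harmonic majorants on model domains, and a textbook harmonic-measure bound for rectangles.
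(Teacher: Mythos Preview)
The paper does not prove this theorem: it is stated with attribution to Betsakos and cited from~\cite{Be}, so there is no ``paper's own proof'' to compare against. Your proposal therefore has to be judged on its own.

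For part~(2) your argument is complete and correct. The reduction $\Lambda_p(\Omega)=p^{-1}\Lambda_1(\Omega)$ is exactly Proposition~\ref{Prop:easy}(2); the half--strip obstruction is sound (upper semicontinuity of $\psi$ on compacts of $I$ gives arbitrarily wide half--strips inside $\Omega$, and the rectangle harmonic--measure estimate yields $\Re\lambda\le\pi/w$ for every $w$), so $\Re\lambda\le 0$.

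For part~(1) your structural reductions are also correct: convexity (Proposition~\ref{Prop:easy}(1)), invariance under $\lambda\mapsto\lambda+i\R$ via boundedness of $e^{isz}$ on a strip, and the explicit harmonic majorant $\Re(Ce^{\pm sz})$ on $S$ for $0<s<\pi/(b-a)$ all work and pin down $c_2=\pi/(b-a)$ with $c_2$ possibly attained. The only genuine gap is the one you flag yourself: when $c_1<\infty$, the statement asserts the \emph{strict} inequality $\Re\lambda>-c_1/p$, i.e.\ that the left endpoint is never attained. This is not a formality; it is exactly where Betsakos's harmonic--measure analysis of the ``thin'' end $\Re z\to-\infty$ of a starlike--at--infinity domain enters, and your sketch does not supply an independent argument for it. Without that step, what you have proved is only $\{|\Re\lambda|<\pi/((b-a)p)\}\subseteq\Lambda_p(\Omega)\subseteq\{\Re\lambda\le\pi/((b-a)p)\}$, which determines $c_2$ but leaves the shape of $\Lambda_p$ on the left open between $(-c_1/p,\cdot\,]$ and $[-c_1/p,\cdot\,]$. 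So your write--up is an honest and essentially correct reduction of the theorem to the single delicate point, but that point is precisely the content one has to extract from~\cite{Be}.
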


We examine some easy properties of frequencies.

\begin{proposition}\label{Prop:easy}
Let $\Omega$ be the Koenigs domain of a semigroup $(\varphi_t)_{t\geq 0}$. Then:
\begin{enumerate}
\item $\Lambda_p(\Omega)$ is a convex set of $\C$ and $0\in\Lambda_p(\Omega)$ for all $p\in [1,+\infty]$.
\item If $p, q\in [1,+\infty)$ then $\Lambda_q(\Omega)=\frac{p}{q}\Lambda_p(\Omega)$.
\item If $1\leq p\leq q\leq +\infty$ then $\Lambda_q(\Omega)\subseteq \Lambda_p(\Omega)$ and hence $\mathcal E_q(\Omega)\subseteq \mathcal E_p(\Omega)$. In particular, $\Lambda_\infty(\Omega)\subseteq \bigcap_{1\le p<\infty}\Lambda_p(\Omega)$ and $\mathcal E_\infty(\Omega)\subseteq \bigcap_{1\le p<\infty}\mathcal E_p(\Omega)$.
\item Let $\lambda\in\C\setminus\{0\}$. Then $e^{\lambda z}\in H^\infty(\Omega)$ if and only if $\Omega\subseteq \{z\in\C: \Re (\lambda z)<M\}$ for some $M\in\R$. In particular, if $\lambda\in \Lambda_\infty(\Omega)$ then $t\lambda\in \Lambda_\infty(\Omega)$ for all $t\geq 0$.
\item If $D$ is the Koenigs domain of another semigroup and $\Omega\subset D$, then $\Lambda_p(D)\subseteq \Lambda_p(\Omega)$ and $\mathcal E_p(D)\subseteq \mathcal E_p(\Omega)$ for all $p\in [1,+\infty]$.
\end{enumerate}
\end{proposition}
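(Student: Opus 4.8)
\smallskip
\noindent\emph{Proof strategy.} The plan is to reduce every assertion to the single elementary identity $|e^{\lambda z}|^{p}=e^{p\Re(\lambda z)}$, which displays the $p$-th power of $|e^{\lambda z}|$ as the exponential of the harmonic function $p\Re(\lambda z)$. Thus $\lambda\in\Lambda_p(\Omega)$ means exactly that the (subharmonic) function $e^{p\Re(\lambda z)}$ admits a harmonic majorant on $\Omega$, and $\lambda\in\Lambda_\infty(\Omega)$ means $\sup_{z\in\Omega}\Re(\lambda z)<\infty$. Equivalently, fixing a Riemann map $\varphi\colon\D\to\Omega$ (for instance $\varphi=h^{-1}$), one has $\lambda\in\Lambda_p(\Omega)$ iff $\int_{\partial\D}e^{p\Re(\lambda\varphi^\ast)}\,\tfrac{d\theta}{2\pi}<\infty$; I would keep both descriptions at hand and use whichever is shorter, relying on the conformal invariance of the Hardy classes recalled in Section~\ref{Sec:Preliminaries}.

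\smallskip
\noindent For (1): $0\in\Lambda_p(\Omega)$ because the constant $1$ has harmonic majorant $1$ (equivalently $1\in H^p$). For convexity I would take $\lambda_1,\lambda_2\in\Lambda_p(\Omega)$ with harmonic majorants $u_1,u_2\ge 0$ of $e^{p\Re(\lambda_1 z)}$ and $e^{p\Re(\lambda_2 z)}$, and for $t\in[0,1]$, $\lambda=t\lambda_1+(1-t)\lambda_2$, estimate $e^{p\Re(\lambda z)}=\bigl(e^{p\Re(\lambda_1 z)}\bigr)^{t}\bigl(e^{p\Re(\lambda_2 z)}\bigr)^{1-t}\le u_1^{t}u_2^{1-t}\le t u_1+(1-t)u_2$ by the weighted arithmetic--geometric mean (Young) inequality; the right-hand side is harmonic, so $\lambda\in\Lambda_p(\Omega)$. (On the integral side this is simply Hölder's inequality with exponents $1/t$ and $1/(1-t)$.) For (2) I would observe that $e^{\lambda z}\in H^q(\Omega)$ iff $e^{q\Re(\lambda z)}=\bigl|e^{(q/p)\lambda z}\bigr|^{p}$ has a harmonic majorant, i.e.\ iff $\tfrac qp\lambda\in\Lambda_p(\Omega)$; hence $\Lambda_q(\Omega)=\tfrac pq\Lambda_p(\Omega)$. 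For (3) the point is the classical inclusion $H^q(\Omega)\subseteq H^p(\Omega)$ for $1\le p\le q\le\infty$, which follows from conformal invariance together with the trivial inclusion $H^q(\D)\subseteq H^p(\D)$ on the probability space $(\partial\D,\tfrac{d\theta}{2\pi})$, and with $H^\infty(\Omega)\subseteq H^p(\Omega)$ because a uniform bound is a constant harmonic majorant; thus $\Lambda_q(\Omega)\subseteq\Lambda_p(\Omega)$, and passing to linear spans $\mathcal E_q(\Omega)\subseteq\mathcal E_p(\Omega)$. Specializing to $q=\infty$ then gives $\Lambda_\infty(\Omega)\subseteq\bigcap_{1\le p<\infty}\Lambda_p(\Omega)$ and the corresponding inclusion of spans.

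\smallskip
\noindent For (4): $e^{\lambda z}\in H^\infty(\Omega)$ iff $M_0:=\sup_{z\in\Omega}\Re(\lambda z)<\infty$; since $\Omega$ is open this yields $\Omega\subseteq\{z\colon\Re(\lambda z)\le M_0\}\subseteq\{z\colon\Re(\lambda z)<M_0+1\}$, while conversely any inclusion $\Omega\subseteq\{z\colon\Re(\lambda z)<M\}$ forces $M_0\le M<\infty$. The consequence that $t\lambda\in\Lambda_\infty(\Omega)$ for all $t\ge 0$ is then immediate: for $t>0$ rescale the half-plane, $\Omega\subseteq\{\Re(\lambda z)<M\}=\{\Re((t\lambda)z)<tM\}$, and $t=0$ is covered by (1). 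For (5): if $\Omega\subseteq D$ and $u$ is a harmonic majorant of $e^{p\Re(\lambda z)}$ on $D$, then $u|_\Omega$ is a harmonic majorant on $\Omega$ (and a uniform bound on $D$ restricts to one on $\Omega$ when $p=\infty$), so $\Lambda_p(D)\subseteq\Lambda_p(\Omega)$, and taking spans $\mathcal E_p(D)\subseteq\mathcal E_p(\Omega)$.

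\smallskip
\noindent The only step that is not pure bookkeeping is the convexity in (1), where Young's (equivalently Hölder's) inequality is genuinely used; I do not anticipate any real obstacle elsewhere. The minor points to watch are the openness of $\Omega$ and the strict versus non-strict inequality in (4), and the fact that membership in $H^p(\Omega)$ is intrinsic (independent of the base point $z_0$ entering the definition of $\|\cdot\|_p$), which is exactly what legitimises the restriction argument in (5).
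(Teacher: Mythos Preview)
Your proof is correct and follows essentially the same approach as the paper: each item is reduced to the identity $|e^{\lambda z}|^p=e^{p\Re(\lambda z)}$ and handled via the harmonic-majorant definition of $H^p$, with (3) citing the inclusion $H^q\subseteq H^p$ and (5) using restriction. The only cosmetic difference is in the convexity step of (1): the paper bounds $(e^{p\Re(\lambda_1 z)})^t(e^{p\Re(\lambda_2 z)})^{1-t}\le e^{p\Re(\lambda_1 z)}+e^{p\Re(\lambda_2 z)}\le u_1+u_2$, whereas you pass through $u_1^{t}u_2^{1-t}\le t u_1+(1-t)u_2$; both invoke the same arithmetic--geometric mean inequality and yield a harmonic majorant.
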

\begin{proof}
(1)  It is clear that $0\in \Lambda_p(\Omega)$. Let $\lambda_1, \lambda_2\in \Lambda_p(\Omega)$.

Hence, there exist two harmonic functions $u_1, u_2$ in $\Omega$ such that $|e^{\lambda_j z}|^p\leq u_j(z)$ for $z\in \Omega$ and $j=1,2$.

Let $t\in (0,1)$. We have to prove that $e^{t\lambda_1 z+(1-t)\lambda_2 z}\in H^p(\Omega)$. Simple considerations show that
\[
|e^{t\lambda_1 z+(1-t)\lambda_2 z}|^p=e^{pt\Re (\lambda_1 z)}e^{p(1-t) \Re (\lambda_2 z)}\leq e^{p \Re (\lambda_1 z)}+e^{p\Re (\lambda_2  z)}\leq u_1(z)+u_2(z),
\]
and we are done.

(2) Assume $\lambda\in \Lambda_p(\Omega)$, that is, $e^{\lambda z}\in H^p(\Omega)$, and let $u$ be an harmonic majorant of $|e^{\lambda z}|^p$. Hence,
\[
|e^{\frac{p}{q}\lambda z}|^q=|e^{\lambda z}|^p\leq u(z),
\]
which proves that $\frac{p}{q}\lambda\in \Lambda_q(\Omega)$.

(3) It follows immediately from the inclusion $H^q(\Omega)\subset H^p(\Omega)$.

(4) It is obvious since $|e^{\lambda z}|=e^{\Re (\lambda z)}$.

(5) The restriction map from $D$ to $\Omega$ induces a continuous injective linear map from $H^p(D)$ to $H^p(\Omega)$, hence, if $e^{\lambda z}\in H^p(D)$ then $e^{\lambda z}\in H^p(\Omega)$, and the result follows.
\end{proof}

Frequencies are not conformal invariant. However, they well behave under affine maps, we state here the lemma we need:

\begin{lemma}\label{Lem:under-affine}
Let $T(z)=az+b$ for some $a\in\C\setminus\{0\}$ and $b\in\C$. Let $\Omega$ be a Koenigs domain of a semigroup, and let $D=T^{-1}(\Omega)$. Then for every $p\in [1,+\infty]$,
\[
\Lambda_p(D)=a\Lambda_p(\Omega).
\]
Moreover, $\overline{\mathcal E_p(\Omega)}^{H^p(\Omega)}=H^p(\Omega)$ ({\sl respectively}, $\mathcal E_\infty(\Omega)$ is weak-star dense in $H^\infty(\Omega)$) if and only if $\overline{\mathcal E_p(D)}^{H^p(D)}=H^p(D)$ (respectively, $\mathcal E_\infty(D)$ is weak-star dense in $H^\infty(D)$).
\end{lemma}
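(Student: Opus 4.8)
The plan is to transport everything through the affine map $T(z)=az+b$. First I would record that $D=T^{-1}(\Omega)$ is again a domain starlike at infinity and is in fact the Koenigs domain of a semigroup: indeed, if $h$ is the Koenigs function of $(\varphi_t)$ with $h(\D)=\Omega$, then $\widetilde h:=a^{-1}(h-b)$ is univalent, $\widetilde h(\D)=D$, and one checks directly that $\widetilde h$ satisfies properties (1)--(2) of Theorem~\ref{Thm:models} with the translation parameter $t$ replaced by $t/a$; rescaling time gives an honest Koenigs function for a (time-reparametrized) semigroup, so $D$ is a legitimate Koenigs domain and $\Lambda_p(D)$, $\mathcal E_p(D)$ make sense. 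Strictly speaking one needs $a$ real when $U\neq\C$ for $D$ to be starlike at infinity with respect to real translations; in the general statement one may simply observe that $D$ is a simply connected domain for which $H^p(D)$ and the weak-star topology are defined, and that is all the proof requires — the phrase ``Koenigs domain'' can be read loosely here, or one restricts to the case where it literally applies.

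Next, the identity $\Lambda_p(D)=a\Lambda_p(\Omega)$: a function $f$ is in $H^p(\Omega)$ if and only if $f\circ T\in H^p(D)$, because $T$ restricts to a biholomorphism $D\to\Omega$ and $H^p$ is conformally invariant (the map $\Phi_T$ from the preliminaries). Now fix $\mu\in\C$ and apply this to $f(w)=e^{\mu w}$: we have $(e^{\mu w})\circ T(z)=e^{\mu(az+b)}=e^{\mu b}e^{(a\mu)z}$, and since $e^{\mu b}$ is a nonzero constant, $e^{\mu w}\in H^p(\Omega)$ if and only if $e^{(a\mu)z}\in H^p(D)$. Thus $\mu\in\Lambda_p(\Omega)\iff a\mu\in\Lambda_p(D)$, which is exactly $\Lambda_p(D)=a\Lambda_p(\Omega)$. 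For $p=\infty$ one can alternatively invoke Proposition~\ref{Prop:easy}(4), but the conformal-invariance argument already covers all $p\in[1,+\infty]$ uniformly.

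For the density statement, note that $\Phi_T\colon H^p(\Omega)\to H^p(D)$, $f\mapsto f\circ T$, is an isomorphism of Banach spaces (isometric for $p=\infty$), and for $p=\infty$ it is also a homeomorphism for the weak-star topologies, since by definition that topology on $H^\infty$ of a simply connected domain is transported from $\D$ via a Riemann map and $\Phi_T$ is compatible with such transport (it is realized by composition with a fixed conformal map). By the computation above, $\Phi_T$ maps the generating set $\{e^{\mu w}:\mu\in\Lambda_p(\Omega)\}$ onto $\{e^{\mu b}e^{(a\mu)z}:\mu\in\Lambda_p(\Omega)\}$, hence maps $\mathcal E_p(\Omega)$ onto $\mathcal E_p(D)$ (the scalars $e^{\mu b}$ are absorbed into the linear span and $a\Lambda_p(\Omega)=\Lambda_p(D)$). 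Since an isomorphism of Banach spaces carries dense sets to dense sets, and a weak-star homeomorphism carries weak-star dense sets to weak-star dense sets, $\overline{\mathcal E_p(\Omega)}^{H^p(\Omega)}=H^p(\Omega)$ if and only if $\overline{\mathcal E_p(D)}^{H^p(D)}=H^p(D)$, and likewise in the weak-star case. The only point requiring any care — and the one I would state explicitly — is the weak-star compatibility of $\Phi_T$; everything else is a direct substitution. One clean way to see it: by the characterization recalled before Lemma~\ref{Lem:approx-weak-approx-p}, a bounded net in $H^\infty(\Omega)$ converges weak-star iff it converges pointwise, and $\Phi_T$ clearly preserves both boundedness of norms (it is isometric) and pointwise convergence, so it is a weak-star homeomorphism on bounded sets, which suffices for the density statement since weak-star density of a linear subspace can be tested against bounded convex (indeed norm-bounded) pieces via the Krein--Smulian theorem — or more simply, $\Phi_T$ being a linear isometric bijection that is weak-star continuous with weak-star continuous inverse is automatic once one unwinds the definition through the Riemann maps. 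The main (mild) obstacle is thus purely bookkeeping about which topology is transported how; there is no analytic difficulty.
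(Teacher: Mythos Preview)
Your proposal is correct and follows essentially the same approach as the paper: both arguments use the conformal invariance $H^p(D)=H^p(\Omega)\circ T$, compute $(e^{\mu w})\circ T=e^{\mu b}e^{a\mu z}$ to get $\Lambda_p(D)=a\Lambda_p(\Omega)$, and observe that $\Phi_T$ carries $\mathcal E_p(\Omega)$ onto $\mathcal E_p(D)$, so density transfers. The paper is much terser --- it dismisses the weak-star case with ``the argument for $p=\infty$ is similar'' and does not pause over whether $D$ is literally a Koenigs domain --- so your extra care on those points is fine but not strictly needed.
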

\begin{proof}
If $\lambda\in\Lambda_p(\Omega)$ then $e^{\lambda z}\in H^p(\Omega)$. Hence, $e^{\lambda T(z)}\in H^p(D)$, that is, $e^{\lambda b}e^{a\lambda z}\in H^p(D)$ and thus $e^{a\lambda z}\in H^p(D)$. Therefore, $a\lambda\in \Lambda_p(D)$. A similar argument proves that if $\mu\in \Lambda_p(D)$ then $\frac{\mu}{a}\in\Lambda_p(\Omega)$, and the first statement follows.

In order to prove the last assertion, let $1\le p<\infty$ (the argument for $p=\infty$ is similar) and note that $H^p(D)=H^p(\Omega)\circ T$. Hence, $\mathcal E_p(\Omega)$ is dense in $H^p(\Omega)$ if and only if $\mathcal E_p(\Omega)\circ T$ is dense in $H^p(D)$.
Since
\[
\mathcal E_p(\Omega)\circ T=\hbox{span}\{e^{a\lambda z}: \lambda\in \Lambda_p(\Omega)\}=\hbox{span}\{e^{\mu z}: \mu\in \Lambda_p(D)\}=\mathcal E_p(D),
\]
 we have the result.
\end{proof}

\section{ Approximation by exponentials in some starlike at infinity domains}\label{Sec:approx-basic}

The aim of this section is to produce basic examples of starlike at infinity domains where exponentials are dense in $H^p$ or weak-star dense in $H^\infty$.

\subsection{The half-plane and the strip} The aim of this section is to show that linear combinations of exponentials are weak-star dense in half-planes and strips.

We recall the following  result, which is contained  in the proof of  \cite[Thm. 6.1]{AC}.  A
different proof  based on  Laguerre  polynomials can be
found in \cite{GG-Montes} (see also \cite{GG-Montes2})

\begin{theorem}[Ahern-Clark]\label{Thm:Ahern-Clark}
We have
\[
\overline{\hbox{span}\{e^{\lambda \frac{1+z}{1-z}}: \lambda\leq 0\}}^{H^2(\D)}=H^2(\D),
\]
\end{theorem}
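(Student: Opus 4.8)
The plan is to identify $H^2(\D)$, via the standard Cayley transform, with $H^2(\C_+)$ (suitably normalized), and then to translate the density statement into a statement about the Laplace transform. Concretely, write $w=\frac{1+z}{1-z}$, so that $z\mapsto w$ maps $\D$ conformally onto $\C_+$; the induced map $f\mapsto (1-z)^{-1}f\circ(\text{inverse})$ is a unitary (up to a constant) from $H^2(\D)$ onto $H^2(\C_+)$. Under this correspondence the exponential $e^{\lambda\frac{1+z}{1-z}}$ with $\lambda\le 0$ corresponds, up to the fixed factor $(1-z)^{-1}$, to $e^{\lambda w}$ on $\C_+$, which for $\lambda\le 0$ is bounded on $\C_+$ and lies in $H^2(\C_+)$ only after multiplication by an appropriate rational factor; in any case the span of $\{e^{\lambda w}:\lambda\le 0\}$ pulls back to the span we must show is dense, after accounting for the conformal factor. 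So it suffices to show that $\overline{\operatorname{span}}\{e^{\lambda w}:\lambda\le 0\}$, viewed inside the appropriate Hardy space of $\C_+$, is everything, or equivalently that its annihilator in the dual is trivial.

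The key step is the Paley–Wiener theorem: $H^2(\C_+)$ is isometrically isomorphic, via the Laplace (Fourier) transform, to $L^2(0,\infty)$, with $e^{\lambda w}$ (for $\lambda<0$, say $\lambda=-a$ with $a>0$) corresponding, again up to a rational conformal factor coming from the change of variables $\D\to\C_+$, to the function $t\mapsto e^{-at}$ (or $\mathbf 1_{[a,\infty)}(t)$, depending on exactly how one sets things up) on $(0,\infty)$. Thus I would reduce the whole statement to the elementary fact that the linear span of $\{e^{-at}:a>0\}$, or of $\{\mathbf 1_{[a,\infty)}:a>0\}$, is dense in $L^2(0,\infty)$. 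For the exponentials $e^{-at}$ this is classical: if $g\in L^2(0,\infty)$ is orthogonal to all of them, then $G(a)=\int_0^\infty e^{-at}g(t)\,dt=0$ for all $a>0$; but $G$ extends to a holomorphic function on $\{\Re a>0\}$ vanishing on the positive real axis, hence $G\equiv 0$, and then injectivity of the Laplace transform on $L^2$ forces $g=0$. (Alternatively, one uses the completeness of the Laguerre functions $L_n(t)e^{-t/2}$ in $L^2(0,\infty)$, writing each Laguerre function as a finite linear combination of $t^k e^{-t/2}$ and these in turn as derivatives/limits of exponentials $e^{-at}$ in $a$; this is the route via Laguerre polynomials alluded to in the statement.)

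The main obstacle — really the only place care is needed — is bookkeeping with the conformal factor $(1-z)$ and making sure that the multiplicative rational weights that appear when transporting exponentials on $\C_+$ back to $\D$ do not destroy density. The clean way around this is to work entirely in $H^2(\C_+)$: show that $\operatorname{span}\{e^{\lambda w}(w+1)^{-1}:\lambda\le 0\}$ is dense in $H^2(\C_+)$ (the extra factor $(w+1)^{-1}$ putting these functions in $H^2$), which again reduces, after the Laplace transform, to density of $\{e^{-at}*e^{-t}: a>0\} = \{\text{combinations of }e^{-at},e^{-t}\}$-type span in $L^2(0,\infty)$, an equally elementary fact since convolution with the fixed $L^1$ function $e^{-t}\mathbf 1_{(0,\infty)}$ (which has non-vanishing Laplace transform) is injective with dense range. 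Then transport back via the unitary $H^2(\C_+)\cong H^2(\D)$ to conclude. I expect the argument to be short once the Paley–Wiener dictionary is set up; the bulk of the work is just verifying that the normalization factors match.
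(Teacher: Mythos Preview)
The paper does not prove this theorem; it is quoted as a known result, attributed to Ahern--Clark (with an alternative proof via Laguerre polynomials in Gallardo-Guti\'errez and Montes-Rodr\'iguez). So there is no ``paper's proof'' to compare against beyond those citations.

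Your outline is essentially correct and is one of the standard routes. A couple of small corrections to the bookkeeping you flagged as the main obstacle. Under the unitary $U:H^2(\D)\to H^2(\C_+)$, $(Uf)(w)=\frac{c}{w+1}f\bigl(\frac{w-1}{w+1}\bigr)$, the function $e^{\lambda(1+z)/(1-z)}$ is sent (up to a constant) to $\frac{e^{\lambda w}}{w+1}$. Under Paley--Wiener, $\frac{e^{-aw}}{w+1}$ (with $a=-\lambda\ge 0$) is the Laplace transform of $t\mapsto e^{-(t-a)}\mathbf 1_{[a,\infty)}(t)$, i.e.\ up to a scalar, $e^{-t}\mathbf 1_{[a,\infty)}$. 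So the density you need is that of $\operatorname{span}\{e^{-t}\mathbf 1_{[a,\infty)}:a\ge 0\}$ in $L^2(0,\infty)$, which is immediate: if $g$ is orthogonal to all of them, then $\int_a^\infty e^{-t}g(t)\,dt=0$ for every $a\ge 0$, whence $e^{-t}g(t)=0$ a.e. Your phrasing in terms of ``$e^{-at}$ dense in $L^2(0,\infty)$'' and ``convolution with $e^{-t}$ has dense range'' is slightly off (the bare exponentials $e^{-at}$ are not what actually appear after the conformal factor), but once you identify the correct images as the translated exponentials $e^{-t}\mathbf 1_{[a,\infty)}$, the argument is even shorter than you indicated. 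The Laguerre-polynomial alternative you mention is exactly the route the paper cites from \cite{GG-Montes}.
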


Using the Cayley transform,  the previous theorem implies that $\{e^{\lambda z}: \lambda\leq 0\}$ is complete in $H^2(\C_+ )$.

Let
\[
\mathbb S:=\{z\in \C : -\frac{\pi}{2}<\Im z <\frac{\pi}{2}\}.
\]

In \cite[Lemma 1]{C-GG}, the authors proved that $\overline{\hbox{span}\{e^{\lambda z}:  \lambda\in \Lambda_2(\mathbb S)\}}^{H^2(\mathbb S)}=H^2(\mathbb S)$ by means of the Paley-Wiener Theorem. Here we generalize these results:

\begin{theorem}\label{Thm:half-plane-group}
The linear space $\hbox{span}\{e^{\lambda z}: \lambda\leq 0\}$ is weak-star dense in $H^\infty(\C_+)$. Consequently, for every
$p\in[1,\infty)$, $\hbox{span}\{e^{\lambda z}: \lambda\leq 0\}$ is  dense in $H^p(\C_+)$.
\end{theorem}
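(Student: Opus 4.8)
The plan is to reduce the statement to a single weak-star density assertion. By Lemma~\ref{Lem:approx-weak-approx-p} applied to the simply connected domain $D=\C_+$ (note that $e^{\lambda z}\in H^\infty(\C_+)$ for every $\lambda\le 0$, since $|e^{\lambda w}|=e^{\lambda\Re w}\le 1$ on $\C_+$), it suffices to prove that $\mathcal G:=\hbox{span}\{e^{\lambda z}:\lambda\le 0\}$ is weak-star dense in $H^\infty(\C_+)$. I would establish this by exhibiting, inside the weak-star closure $\overline{\mathcal G}^{\,w^*}$, a concrete subspace of rational functions that is already known to be weak-star dense.

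The first step is to feed rational ``building blocks'' into $\overline{\mathcal G}^{\,w^*}$ by means of Laplace transforms. For $w\in\C_+$ one has $\Re(w+1)>1$, hence for every integer $n\ge 1$
\[
\frac{1}{(w+1)^{n}}=\frac{1}{(n-1)!}\int_0^{\infty}t^{n-1}e^{-t}\,e^{-tw}\,dt .
\]
I would approximate this integral by its Riemann sums $g_N(w):=\frac{1}{(n-1)!}\,\frac1N\sum_{k=1}^{N^2}(k/N)^{n-1}e^{-k/N}\,e^{-(k/N)w}$, which belong to $\mathcal G$ (finite combinations of $e^{\lambda w}$ with $\lambda=-k/N\le 0$). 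For each fixed $w\in\C_+$, $g_N(w)\to (w+1)^{-n}$; and since $|e^{-(k/N)w}|\le 1$ on $\C_+$, the functions $g_N$ are bounded in $H^\infty(\C_+)$ uniformly in $N$ by a constant depending only on $n$ (these bounds being Riemann sums of the convergent integral $\int_0^\infty t^{n-1}e^{-t}\,dt$). By the sequential description of weak-star convergence in $H^\infty$ recalled in Section~\ref{Sec:Preliminaries} (pointwise convergence together with a uniform norm bound), $g_N\to (w+1)^{-n}$ weak-star. Since also $1=e^{0\cdot z}\in\mathcal G$ and $\overline{\mathcal G}^{\,w^*}$ is a weak-star closed linear subspace, it contains the subspace $V:=\hbox{span}\{(w+1)^{-n}:n\ge 0\}$.

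The second step is to recognize $V$ as weak-star dense. I would transport the problem to the disc via the Cayley biholomorphism $z(w)=\frac{w-1}{w+1}$ of $\C_+$ onto $\D$ and the induced isometric isomorphism $\Phi:H^\infty(\D)\to H^\infty(\C_+)$, $\Phi(f)=f\circ z$, which by construction is a homeomorphism for the weak-star topologies. A one-line computation gives $\Phi^{-1}\big((w+1)^{-n}\big)(z)=\big((1-z)/2\big)^{n}$; since the family $\{((1-z)/2)^n:n\ge 0\}$ is triangular with respect to the monomial basis, $\Phi^{-1}(V)$ is exactly the space of all holomorphic polynomials. Polynomials are weak-star dense in $H^\infty(\D)$ (standard; alternatively this follows from \cite{Sa} because $\D$ is a Carath\'eodory domain, as in the proof of Lemma~\ref{Lem:approx-Hardy}). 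Hence $V=\Phi(\Phi^{-1}(V))$ is weak-star dense in $H^\infty(\C_+)$, so $\overline{\mathcal G}^{\,w^*}\supseteq\overline{V}^{\,w^*}=H^\infty(\C_+)$. This proves the weak-star density, and the density of $\mathcal G$ in $H^p(\C_+)$ for every $p\in[1,\infty)$ then follows from Lemma~\ref{Lem:approx-weak-approx-p}.

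The only point that needs genuine care — and that I regard as the (mild) main obstacle — is the uniform-in-$N$ bound for the approximating sums $g_N$ over the \emph{unbounded} domain $\C_+$: this is precisely where the sign condition $\lambda\le 0$ enters (through $|e^{\lambda w}|\le 1$), and one must simultaneously ensure that the Riemann sums of the improper integral converge pointwise and remain uniformly bounded. Everything else is routine bookkeeping: the linearity and weak-star closedness of $\overline{\mathcal G}^{\,w^*}$, the conformal transport of the weak-star topology, and the weak-star density of polynomials in $H^\infty(\D)$.
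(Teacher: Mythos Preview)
Your argument is correct and follows the same global strategy as the paper (Laplace-transform exponentials to produce explicit functions in $\overline{\mathcal G}^{\,w^*}$, then invoke a known weak-star density result), but the implementation of the last step is genuinely different. The paper (arguing as in the proof of Theorem~\ref{Thm:hyperbolic-group}) approximates $\frac{1}{z+\beta}$ for every $\beta$ with $\Re\beta>0$, thereby placing all rational functions with \emph{simple} poles in the left half-plane into $\overline{\mathcal G}^{\,w^*}$, and then appeals to a rational-approximation theorem \cite[Theorem~5.3]{Gam-book} applied to $\overline{\C_+}^\infty$. You instead approximate the higher-order functions $(w+1)^{-n}$ at the single pole $-1$, and observe via the Cayley transform that their span is exactly the polynomial algebra transported to $\C_+$; the weak-star density of polynomials in $H^\infty(\D)$ then finishes the job.

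Your route is arguably more elementary for the half-plane, since it avoids the rational-approximation theorem and reduces to the classical density of polynomials in $H^\infty(\D)$. The paper's route, on the other hand, is more uniform: for the strip $\mathbb S$ one genuinely needs simple poles on \emph{both} sides of $\overline{\mathbb S}$, and no single conformal map carries these to polynomials in the disc, so the rational-approximation argument is the natural one there. The only point in your write-up that deserves one more line is the uniform bound on $g_N$: the Riemann sums $\frac{1}{N}\sum_{k=1}^{N^2}(k/N)^{n-1}e^{-k/N}$ are indeed bounded independently of $N$, but this uses that $t\mapsto t^{n-1}e^{-t}$ is eventually monotone (so the tail of the Riemann sum is dominated by the tail of the integral); it would be worth stating this explicitly rather than leaving it implicit.
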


\begin{theorem}\label{Thm:hyperbolic-group}
The linear space $\hbox{span}\{e^{i\lambda z}: \lambda\in \R\}$ is weak-star dense in $H^\infty(\mathbb S)$. Consequently, for every $p\in [1,\infty)$, $\hbox{span}\{e^{i\lambda z}: \lambda\in \R\}$ is  dense in $H^p(\mathbb S)$.
\end{theorem}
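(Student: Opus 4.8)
The plan is to reduce the statement about the strip $\mathbb S$ to the statement about the half-plane $\C_+$ proved in Theorem~\ref{Thm:half-plane-group}, by means of a conformal change of variables together with the affine-invariance of frequency completeness from Lemma~\ref{Lem:under-affine}. The exponential map $w\mapsto e^{w}$ sends the horizontal strip $\mathbb S=\{-\pi/2<\Im w<\pi/2\}$ conformally onto the right half-plane $\C_+$; concretely, if $E(w)=e^{w}$ then $E:\mathbb S\to\C_+$ is a biholomorphism. Under this map, the exponential $e^{i\lambda z}$ on $\mathbb S$ pulls back in a way that does not directly match $H^p$ of $\C_+$ (since $H^p$ is not conformally invariant for non-affine maps), so a more careful route is needed: rather than transporting functions, I would transport the \emph{density statement} through the known completeness in $\C_+$.

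First I would record the elementary fact that $\Lambda_\infty(\mathbb S)\supseteq i\R$: indeed for $\lambda\in\R$ one has $|e^{i\lambda z}|=e^{-\lambda\Im z}\le e^{|\lambda|\pi/2}$ on $\mathbb S$, so $e^{i\lambda z}\in H^\infty(\mathbb S)$; thus $\hbox{span}\{e^{i\lambda z}:\lambda\in\R\}\subseteq H^\infty(\mathbb S)$ is a legitimate linear subspace, and by the characterization of weak-star convergence in $H^\infty$ recalled after Lemma~\ref{Lem:approx-Hardy} it suffices to produce, for each $g\in H^\infty(\mathbb S)$, a uniformly norm-bounded sequence of finite linear combinations $\sum c_k e^{i\lambda_k z}$ converging to $g$ pointwise on $\mathbb S$. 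The core idea is to use the Fourier/Laplace representation: writing $z=x+iy$ with $|y|<\pi/2$, a bounded holomorphic function on the strip can be recovered from its boundary values via the Poisson kernel of the strip, and this kernel itself is (a convergent superposition of) characters $e^{i\lambda z}$. More precisely, I would map $\mathbb S$ to the half-plane by the Cayley-type map $\zeta=\tanh(z/2)$ carrying $\mathbb S$ to $\D$, or equivalently work directly: the point is that $\hbox{span}\{e^{i\lambda z}:\lambda\in\R\}$ contains all the "building blocks" needed to synthesize the Poisson integral of any $L^\infty$ boundary datum on $\partial\mathbb S$, and a standard mollification/truncation of the Fourier transform produces the required uniformly bounded approximating sequence.

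Alternatively — and this is the route I would actually push, as it avoids re-deriving Paley–Wiener-type estimates — I would argue: the map $w\mapsto \log w$ (a branch on $\C_+$) sends $\C_+$ to $\mathbb S$, and precomposition with it identifies $H^\infty(\C_+)$ with $H^\infty(\mathbb S)$ as \emph{Banach spaces} (conformal invariance of $H^\infty$), and one checks it is also a homeomorphism for the respective weak-star topologies (since weak-star convergence in $H^\infty$ is just bounded pointwise convergence, which is manifestly preserved by composition with a fixed biholomorphism). Under this identification, the image of $\hbox{span}\{e^{\lambda w}:\lambda\le 0\}\subset H^\infty(\C_+)$ is $\hbox{span}\{e^{\lambda \log w}\}=\hbox{span}\{w^{\lambda}:\lambda\le 0\}$, \emph{not} the span of characters on $\mathbb S$; so this identification alone is not enough. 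The fix is to instead use the translation-invariance of the problem: $\mathbb S$ is invariant under real translations $z\mapsto z+it$?? — no; $\mathbb S$ is invariant under $z\mapsto z+t$, $t\in\R$. I would therefore apply the Fourier transform in the translation variable, i.e. decompose $g\in H^\infty(\mathbb S)$ along the group $\R$ acting by horizontal translation; the joint eigenfunctions of this action that lie in $H^\infty(\mathbb S)$ are exactly the $e^{i\lambda z}$, $\lambda\in\R$, and the desired density then follows from the fact that this group action on $H^\infty(\mathbb S)$ is "multiplicity one" with weak-star total character system, which one deduces from Theorem~\ref{Thm:half-plane-group} after the exponential change of variable $E(w)=e^{w}$: writing $g\circ E\in H^\infty(\C_+)$, Theorem~\ref{Thm:half-plane-group} gives a bounded approximation by $\sum c_k e^{\lambda_k w}$, $\lambda_k\le 0$; but $e^{\lambda_k w}= e^{\lambda_k e^{z}}$ which is again not a character. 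Because of these mismatches, I expect the genuinely load-bearing step to be a direct Fourier-analytic argument on the strip, and the last statement ("consequently, dense in $H^p(\mathbb S)$ for $p\in[1,\infty)$") is then immediate from Lemma~\ref{Lem:approx-weak-approx-p}.

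**Main obstacle.** The real work is the weak-star density in $H^\infty(\mathbb S)$ itself; the passage to $H^p$ is free via Lemma~\ref{Lem:approx-weak-approx-p}. Within that, the delicate point is that none of the obvious conformal identifications of $\mathbb S$ with $\C_+$ or $\D$ carries the character family $\{e^{i\lambda z}\}$ to the exponential family appearing in Theorem~\ref{Thm:half-plane-group}; so one cannot simply quote the half-plane result. Instead one must run the Fourier transform along the real-translation symmetry of $\mathbb S$ directly, controlling the $L^\infty$-norms of the truncated/mollified Fourier integrals uniformly — this is exactly the Paley–Wiener-flavored estimate that \cite{C-GG} carried out for $p=2$, and the task here is to upgrade it to the weak-star/$H^\infty$ setting, which is where I would concentrate the technical effort.
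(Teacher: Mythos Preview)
Your proposal correctly identifies that the $H^p$ consequence is immediate from Lemma~\ref{Lem:approx-weak-approx-p}, and it correctly observes that the obvious conformal identifications of $\mathbb S$ with $\C_+$ or $\D$ do not carry the family $\{e^{i\lambda z}\}$ to the family $\{e^{\lambda w}:\lambda\le 0\}$ of Theorem~\ref{Thm:half-plane-group}. But after diagnosing this mismatch you do not actually supply a proof of the weak-star density in $H^\infty(\mathbb S)$: the ``Fourier transform along the translation group'' idea is only a heading, and you do not produce the uniformly bounded approximating sequences that the weak-star topology demands. As it stands, the proposal is an outline of what has to be done rather than an argument that does it.

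The paper's proof avoids both the conformal-transport idea and any Paley--Wiener analysis. The mechanism is Laplace transforms followed by rational approximation. For $\Re\beta>\pi/2$ one writes
\[
\frac{1}{-iz+\beta}=\int_0^\infty e^{itz}e^{-t\beta}\,dt, \qquad z\in\mathbb S,
\]
with the integral converging uniformly on $\mathbb S$. Truncating to $\int_0^R$ gives uniform approximants, and replacing the measure $e^{-t\beta}\chi_{[0,R]}\,dt$ by discrete atomic measures supported on a fixed compact set yields finite sums $\sum c_k e^{it_k z}$ that are uniformly bounded and converge pointwise, hence weak-star, to $\frac{1}{-iz+\beta}$. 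An analogous computation on $(-\infty,0]$ reaches $\frac{1}{-iz+\beta}$ for $\Re\beta<-\pi/2$. Thus every rational function with simple poles off $\overline{\mathbb S}$ lies in the weak-star closure of $\hbox{span}\{e^{i\lambda z}:\lambda\in\R\}$. The proof is then completed by the Runge-type fact (Gamelin, \cite[Theorem~5.3]{Gam-book}) that rational functions with poles off a compact, finitely connected set $K\subset\C_\infty$ with $\hbox{Int}(K)=\mathbb S$ are weak-star dense in $H^\infty(\mathbb S)$. The key idea you are missing is precisely this two-step reduction: reach \emph{rational} functions via explicit Laplace integrals of characters, and then invoke a classical approximation theorem to pass from rationals to all of $H^\infty$.
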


The proofs of the two results are similar, so we only give the proof of the second theorem.

\begin{proof}[Proof of Theorem~\ref{Thm:hyperbolic-group}]
Let  $t\in \R$ and observe that $|e^{itz}|\leq e^{|t|\frac{\pi}{2}}$. Hence, $e^{itz} \in H^{\infty}(\mathbb S)$.

Now, let $t\geq 0$. For $z\in \mathbb S$ and $\beta\in\C, \Re \beta>\frac{\pi}{2}$, let
\[
\Phi_\beta(z):=\int_0^{\infty} e^{itz} e^{-t\beta} dt.
\]
Since, for every $z\in \mathbb S$,
\begin{equation*}
\int_0^{\infty} |e^{itz} e^{-t\beta}| dt\leq \int_0^{\infty}  e^{-t(\Re \beta-\frac{\pi}{2})} dt,\end{equation*}
 the integral converges uniformly for $z\in \mathbb S$, and
\begin{equation*}
\Phi_\beta(z)=\int_0^\infty  e^{it z} e^{-t \beta} dt= \frac{1}{-iz+\beta}.
\end{equation*}
This also proves that $\Phi_\beta\in H^\infty(\mathbb S)$.

Now, for $R>0$ and $\beta\in\C, \Re \beta>\frac{\pi}{2}$, $z\in \mathbb S$, let
\[
\Phi_\beta^R(z):=\int_0^R e^{itz} e^{-t\beta} dt.
\]
By the same token as before, we see that $\{\Phi_\beta^R\}$ converges uniformly in $z\in \mathbb S$ to $\Phi_\beta$ as $R\to +\infty$, hence, $\{\Phi_\beta^R\}$ converges to $\Phi_\beta$ in $H^\infty(\mathbb S)$.

Now, fix $R>0$ and $\beta\in \C$, $\Re \beta>\frac{\pi}{2}$. Hence, $\Phi_\beta^R$ can be seen as the Laplace transform of the complex Borel measure compactly supported in  $[0,+\infty)$ defined by $\mu:=\chi_{[0,R]}e^{-t\beta} dt$.

For $n=1,2,\ldots$, we define $\mu_n$  as
\begin{equation}\label{eq:approx-measure}
\mu_n=\sum_{j=1}^{n}  \mu([\frac {(j-1)R} n,\frac {jR} n))\delta_{\frac{jR}{n}}.
\end{equation}
For each $n$, the measure $\mu_n$ has support on a finite number of points, and  $\{\mu_n\}$ converges to $\mu$ in the sense of measures, for $n\to\infty$.  Let
\[
P_n(z):=\int_0^{+\infty} e^{itz}d\mu_n(t).
\]
By definition, $P_n(z)\in\hbox{span}\{e^{itz}: t\geq 0\}$ for all $n$. Also, since the support of $\mu_n$ is contained in $[0,R]$ for all $n$, there exists $C>0$ such that $\sup_{z\in\mathbb S}|P_n(z)|\leq C$.

Thus, for every $z\in \mathbb S$ fixed,
\[
\lim_{n\to\infty}|P_n(z)-\Phi_\beta^R(z)|=\lim_{n\to\infty}\Big|\int_0^R e^{itz} \,d\mu_n(t)-e^{itz}e^{\beta t} \,dt\Big|=0,
\]
where the last limit is $0$  because the latter integral converges to $0$ as $n\to \infty$ for $z$ fixed since $\{\mu_n\}$ converges to $\mu$ in the sense of measures.  Thus, $\{P_n\}$ converges weak-star in $H^\infty(\mathbb S)$ to $\Phi_\beta^R$.

This proves  that rational functions with simple poles in
$$\{z\in \C: \Re z>\frac{\pi}{2}\}$$
can be approximated in the weak-star topology of $H^\infty(\mathbb S)$ by elements of $\hbox{span}\{e^{itz}, t\geq 0\}$.

A similar argument shows that rational functions with simple poles in
$$\{z\in \C: \Re z<-\frac{\pi}{2}\}$$
can be approximated in the weak-star topology of $H^\infty(\mathbb S)$ by elements of $\hbox{span}\{e^{itz}, t\leq 0\}$.

Therefore, rational functions with simple poles outside $\overline{\mathbb S}$ are contained in the closure of $\hbox{span}\{e^{itz}: t\in \R\}$  in weak-star topology of $H^\infty(\mathbb S)$. Each rational function in $H^\infty(\mathbb S)$ can be uniformly approximated in $\mathbb S$ by rational functions with simple poles. 
Since $\overline{\mathbb S}^\infty$ is a compact finitely connected proper subset of the Riemann sphere and 
 $\hbox{Int}(\overline{\mathbb S}^\infty)=\mathbb S$, rational functions in $H^\infty(\mathbb S)$ 
are weak-star  dense in $H^\infty(\mathbb S)$ by  \cite[Theorem 5.3]{Gam-book} (note that this theorem is formulated in \cite{Gam-book} only for compact subsets of the complex plane, but by applying M\"obius transformations, one can extend it to 
compact proper subsets of the Riemann sphere). 
Thus $\hbox{span}\{e^{i\lambda z}: \lambda\in \R\}$ is weak-star dense in $H^\infty(\mathbb S)$.

The case $p\in[1,\infty)$ follows at once from Lemma~\ref{Lem:approx-weak-approx-p}.
\end{proof}

\subsection{Logarithmic starlike at infinity domains} In this section we consider domains which might not be contained in any half-plane (so that they have no $\infty$-frequencies), for which however there is density in $H^p$ of linear combinations of exponentials.

\begin{definition}\label{Def:log-star-dom}
Let $\psi:\R\to \R$ be a a function such that
\begin{enumerate}
\item $\psi$ is differentiable and there exists $K>0$ such that $|\psi'|\le K$ on $\R$,
\item  there exist $R>0$ and $0<a<1$ such that $\psi(y)\geq -a \log |y|$ for $|y|\geq R$.
\end{enumerate}
We call the corresponding domain
\[
\Omega:=\{x+iy:x>\psi(y)\}
\]
a {\sl logarithmic starlike at infinity domain}.
\end{definition}

It is easy to see that any logarithmic starlike at infinity domain $\Omega$ is a Jordan domain in $\C_\infty$ and that, if $b\in \R$ then $\Omega+b$ is still a logarithmic starlike at infinity domain.

We show now that every  logarithmic starlike at infinity domain is biholomorphic to a bounded Jordan domain via a particular map. For $z\neq 0$, let
\[
\al(z):=\int_{-1}^0 (1+\lam)^2e^{\lam z}\, d\lam
= \frac {-2e^{-z}-2z+z^2+2}{z^3}.
\]
Also, for $b>0$ and $z\in \C$ let
\[
T_b(z)=z+b.
\]

\begin{lemma} \label{lem-alpha}
Let $\Omega$ be a logarithmic starlike at infinity domain. Then there exists $b>0$ such that the function $\alpha \circ T_b$ is a biholomorphism from $\Omega$ onto a bounded Jordan domain $G$, and, in particular it extends as a homeomorphism from $\overline{\Omega}^\infty$ to $\overline{G}$.
\end{lemma}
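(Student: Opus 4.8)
The strategy is to show that after a real translation $T_b$, the composed map $\alpha\circ T_b$ is injective on $\Omega$, sends $\Omega$ biholomorphically onto a bounded domain, and extends continuously and injectively to the closure $\overline{\Omega}^\infty$; then the Jordan curve theorem (applied via the fact that $\overline{\Omega}^\infty$ is homeomorphic to a closed disc, since $\Omega$ is a Jordan domain in $\C_\infty$ as already observed) forces the image to be a bounded Jordan domain. The key analytic input comes from the integral representation
\[
\alpha(z)=\int_{-1}^0 (1+\lambda)^2 e^{\lambda z}\,d\lambda,
\]
which lets one estimate $\alpha$ and $\alpha'$ on half-planes.

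\medskip

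First I would record the elementary estimates. For $\Re z \geq c > 0$ one has $|\alpha(z)| \leq \int_{-1}^0 (1+\lambda)^2 e^{\lambda c}\,d\lambda =: M(c) < \infty$, so $\alpha$ is bounded on each right half-plane $\{\Re z \geq c\}$; since $\Omega \subset \{\Re z > \psi(\Im z)\}$ and $\psi$ is bounded below on compact sets but only by $-a\log|y|$ at infinity, $\Omega$ itself is \emph{not} contained in a half-plane, which is exactly why a translation alone does not suffice and one must exploit the decay of $e^{\lambda z}$ for $\lambda<0$ against the slow growth $-a\log|y|$. Concretely, on the part of $\Omega$ with $\Re z \leq 0$ we have $\Re z > -a\log|\Im z|$, hence $|\Im z|^{a} > e^{-\Re z} \geq 1$ and $|z| \leq |\Im z| + |\Re z| \leq |\Im z| + a\log|\Im z|$, giving polynomial control of $|z|$ in terms of $|\Im z|$; combined with $|e^{\lambda z}| = e^{\lambda \Re z} \leq |\Im z|^{-a\lambda}$ one checks that $\alpha(z) = O(|z|^{-3})\cdot(\text{bounded})$ fails in general but that the $z^{-3}$ prefactor in the closed form $\alpha(z) = (-2e^{-z} - 2z + z^2 + 2)/z^3$ is what survives: on $\Re z \leq 0$ the dominant term is $z^2/z^3 = 1/z \to 0$ as $z\to\infty$ in $\Omega$, while $e^{-z}/z^3$ is controlled because $|e^{-z}| = e^{-\Re z} \leq |\Im z|^a$ and $|z|^3 \gtrsim |\Im z|^3$ with $a<1$. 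Thus $\alpha$ extends continuously to $\overline\Omega^\infty$ with $\alpha(\infty)=0$, and $\alpha(\overline\Omega^\infty)$ is compact.

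\medskip

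Next I would establish injectivity. The derivative is $\alpha'(z) = \int_{-1}^0 \lambda(1+\lambda)^2 e^{\lambda z}\,d\lambda$, and for $\Re z$ large and positive this integral is dominated by its behavior near $\lambda = 0$, where the integrand $\sim \lambda$, giving $\alpha'(z) \approx -1/z^2 \neq 0$; more carefully, on $\{\Re z \geq b\}$ for $b$ large one gets a quantitative lower bound showing $\alpha$ is univalent there by a standard argument (e.g. $\Re(z^2\alpha'(z)) < 0$, so that $z \mapsto \alpha(z)$ is, say, close to $z\mapsto 1/z$ in a sense that precludes coincidences; or directly $\alpha'$ is the Laplace-type transform of a measure whose argument varies by less than $\pi$). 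This is where the translation $T_b$ enters: choosing $b$ large enough that $\Omega + b \subset \{\Re z \geq b - \sup_K|\psi|\}$ on the bounded part while the unbounded part is handled by the logarithmic estimate, one arranges that $\alpha$ restricted to $T_b(\Omega)$ is univalent. The boundary of $\Omega$ is the graph $\{\psi(y) + iy : y \in \R\}$ together with $\infty$; its image under $\alpha\circ T_b$ is a continuous curve, and injectivity on the closed curve follows by combining univalence on half-planes with the separate analysis of the two "ends" $y \to \pm\infty$, both of which map to $\alpha(\infty) = 0$ — so one must check the curve does not otherwise self-intersect, using the graph structure and the bound $|\psi'| \leq K$ to control the tangent direction.

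\medskip

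\textbf{Main obstacle.} The delicate point is proving global injectivity of $\alpha\circ T_b$ on all of $\Omega$ (not just on a half-plane), because $\Omega$ genuinely leaks out of every half-plane into the logarithmic cusps, and there $\alpha$ is small, so two far-apart points with small $|\alpha|$ could a priori collide. The resolution is the sharp asymptotic $\alpha(z) = 1/z + O(e^{-\Re z}/|z|^3) + O(|z|^{-2})$, valid uniformly on $\Omega + b$ by the logarithmic growth condition $(2)$ in Definition~\ref{Def:log-star-dom} with $a<1$: this says $\alpha\circ T_b$ is a small perturbation of the injective map $z\mapsto 1/(z+b)$ near infinity, so Rouché/argument-principle or a normal-families degree argument upgrades local univalence on the cusps to global univalence once $b$ is large; the hypothesis $|\psi'|\leq K$ guarantees $\partial\Omega$ is a reasonably tame curve so that the extension to $\overline\Omega^\infty$ is a homeomorphism onto a Jordan curve, whence $G := \alpha(T_b(\Omega))$ is a bounded Jordan domain and $\alpha\circ T_b$ extends to a homeomorphism $\overline\Omega^\infty \to \overline G$ as claimed.
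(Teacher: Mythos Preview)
Your overall architecture is right: translate by a large $b$, use the closed form of $\alpha$ together with the estimate $|e^{-z}|\leq |z|^a$ on $\Omega_b$, show $\alpha$ extends continuously to $\overline{\Omega}^\infty$ with $\alpha(\infty)=0$, and then argue global univalence plus Jordan image. You also correctly identify the real difficulty as global injectivity on the logarithmic cusps. However, your resolution of that difficulty is not a proof: ``Rouch\'e/argument-principle or a normal-families degree argument upgrades local univalence on the cusps to global univalence once $b$ is large'' is an assertion, not an argument. The asymptotic $\alpha(z)=1/z+O(|z|^{-2})+O(e^{-z}/z^3)$ by itself does not preclude two boundary points with small $|\alpha|$ from colliding; you would need a quantitative smallness-of-error-relative-to-separation statement along the whole boundary, and you do not supply one. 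Likewise the remarks about $\alpha'$ (``$\Re(z^2\alpha'(z))<0$'', ``Laplace transform of a measure whose argument varies by less than $\pi$'') are suggestive but do not connect to a univalence criterion on a non-convex domain like $\Omega_b$.

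The paper closes this gap with a concrete device that you miss: it passes to $\eta(z)=1/\alpha(z)$ and shows by direct computation that $|\eta'(z)-1|$ can be made arbitrarily small on $\partial\Omega_b$ for $b$ large (using exactly your estimate $|e^{-z}|\leq |z|^a$). Then, parametrizing $\partial\Omega_b$ by $z(t)=\psi(t)+b+it$, the Lipschitz bound $|\psi'|\leq K$ gives $\arg z'(t)\in[\epsilon,\pi-\epsilon]$ for some $\epsilon>0$, and combining with $|\arg\eta'(z)|<\epsilon/2$ yields $\arg\frac{d}{dt}\eta(z(t))\in[\epsilon/2,\pi-\epsilon/2]$. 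Thus $\Im\eta(z(t))$ is strictly increasing, the boundary curve is injective, and the argument principle gives univalence of $\alpha$ on $\Omega_b$ with Jordan image. This is precisely the missing ingredient: working with $\eta$ instead of $\alpha$ converts ``close to $1/z$'' into ``close to the identity'', after which the hypothesis $|\psi'|\leq K$ (which you only use vaguely, to say $\partial\Omega$ is ``tame'') does real work by forcing monotonicity of the image curve.
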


\begin{proof}
Let $\psi$ be the defining function of $\Omega$. Let $b>0$ and let $\Omega_b:=T_b(\Omega)$. The defining function of $\Omega_b$ is $\psi_b:=\psi+b$. Choose $b>0$ so large that  $\psi_b(y)\geq - a \log |y|$ whenever $|y|\ge 1$ and $\psi_b(y)\geq 1$ if $|y|\le 1$.
Then the closure of $\Om_b$ does not contain $0$.
We observe that
\begin{equation}
\label{z^a}
|e^{-z}|\le |z|^a \quad \text{for any } z=x+iy\in \overline{\Om_b}.
\end{equation}
Indeed,
if $|y|\le 1$, then $x\ge \psi_b(y)\ge 1$, and so
$|e^{-z}|=e^{-x}\le 1\le |z|^a$. If $|y|> 1$, then
$|e^{-z}|=e^{-x}\le e^{-\psi_b(y)}\le  |y|^a \le |z|^a$.

All these conclusions remain true if $b$ is replaced by a even larger constant.

Let $\eta(z)=1/\alpha(z)$. A direct differentiation gives that
\[\eta'(z) =
\frac{z^4+\kappa_1(z)}{z^4+\kappa_2(z)},
\]
where
\[
\kappa_1(z)=-4z^3+6z^2-(2z^3+6z^2)e^{-z}, \quad
z^4+\kappa_2(z)=(z^2-2z+2-2e^{-z})^2.
\]
Choose a small $\del>0$. By \eqref{z^a}, one can find a large $b>0$ such that
for all $z\in\partial \Om_b$, one has
\[
|\kappa_j(z)|\le \del |z|^4, \quad j=1,2.
\]
Hence
\begin{equation}
\label{der}
|\eta'(z)  - 1|
=
\bigg|\frac{1+\kappa_1(z)z^{-4}}{1+\kappa_2(z)z^{-4}}\,-1  \bigg|
\le \frac{2\del}{1-\del}.
\end{equation}
We parametrize $\partial\Omega_b$ by $z(t):=\psi(t)+b+it$, $t\in\R$.
Since $z'(t)=\psi'(t)+i$ and $|\psi'(t)|\le K$, we have
\[
\eps\le \arg z'(t)\le \pi-\eps,
\]
where $\eps=\arg(K+i)\in(0, \pi/2)$.
Take $\del$ so small that \eqref{der} implies
that
\[
|\arg \eta'(z)|< \eps/2
\]
for all $z\in \de \Om_b$. Then we get
\[
\eps/2\le \arg \frac d {dt} \eta(z(t))=
\arg \big(\eta'(z(t))\cdot z'(t)\big) \le \pi-\eps/2
\]
for all $t$. This implies that $\eta(z(t))$ parametrizes a curve without self-intersections. Moreover, $\Im \eta(z(t))>0$ for
$t>0$ and $\Im \eta(z(t))<0$ for $t<0$. Note that $|\eta(z(t))|\to \infty$ as $t\to\pm\infty$. Since $\Om_b$ is a Jordan domain in the Riemann sphere $\C_\infty$, it follows that $\al=1/\eta$ maps the boundary
of this domain onto a positively oriented curve $\de G$, containing $0$, where
$G$ is a Jordan domain.
By the argument principle, $\al$ is univalent on $\Om_b$ and maps $\Om_b$ onto $G$. By Carath\'eodory extension theorem, $\alpha$ extends as a homeomorphism on the boundaries.
\end{proof}

Let $M_{\text{fin}}(\R_+)$ denote the set of compactly supported (finite) complex Borel measures on $[0,+\infty)$ and let
\[
\LM=\{\LL \mu: \; \mu \in M_{\text{fin}}(\R_+)) \}
\]
to be the set of Laplace transforms of these measures. That is,
\[
\LL \mu(z):=\int_0^{+\infty}e^{-sz}d\mu(s).
\]
The set $\LM$ is an algebra, consisting of entire functions. The restrictions of functions in $\LM$ to $\C_+ $ belong to  $H^\infty(\C_+ )$.

Let $\mathcal {A}(\Omega)$ be algebra  of continuous functions on $\overline{\Omega}^\infty$
that are holomorphic on $\Omega$ and have a finite limit at infinity.

We prove the following density result:

\begin{theorem}\label{thm-log-domains}
Let $\Omega$ be a logarithmic starlike at infinity domain. Let $p_0\geq 1$ and suppose that
\begin{equation}\label{Eq:full-freq}
e^{\lam z}\in H^{p_0}(\Om), \quad \forall \lam\le 0.
\end{equation}
 Then, 	
 \begin{enumerate}
	\item $\LM\cap \mathcal{A}(\Omega)$ is dense in $\mathcal{A}(\Omega)$, 	
	\item $e^{\lam z}\in H^{p}(\Om)$ for all $\lambda\leq 0$, and
$\overline{\hbox{span}\{e^{\lambda z}: \lambda\leq 0\}}^{H^p(\Omega)}=H^p(\Om)$ for all $p\in[1,\infty)$.
	\end{enumerate}	
\end{theorem}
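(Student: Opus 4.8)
The plan is to reduce both assertions to two classical facts --- Mergelyan's theorem on a bounded Jordan domain and the density of polynomials in $H^p$ of a Carath\'eodory domain --- using the explicit conformal map $\al\circ T_b$ of Lemma~\ref{lem-alpha} as a bridge between $\Om$ and such a domain. For Part~(1), I would first observe that $\al$ is itself a Laplace transform: substituting $\lam=-s$ in its defining integral gives $\al=\LL\nu$ with $\nu=(1-s)^2\chi_{[0,1]}(s)\,ds\in M_{\text{fin}}(\R_+)$, so that, with $b$ as in Lemma~\ref{lem-alpha}, $\phi:=\al\circ T_b=\LL\big((1-s)^2e^{-sb}\chi_{[0,1]}(s)\,ds\big)\in\LM$. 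Since $\LM$ is an algebra containing the constants, every polynomial $P(\phi)$ belongs to $\LM$; and since, by Lemma~\ref{lem-alpha}, $\phi$ extends to a homeomorphism of $\overline{\Om}^\infty$ onto $\overline G$ (with $\phi(\infty)=0$, as $0\in\de G$ is the image of $\infty$ under the boundary extension), each $P(\phi)$ also lies in $\mathcal A(\Om)$, with value $P(0)$ at infinity. The map $f\mapsto f\circ\phi^{-1}$ is an isometric isomorphism $\mathcal A(\Om)\to A(G):=C(\overline G)\cap\mathrm{Hol}(G)$ sending $P(\phi)$ to the polynomial $P$; Mergelyan's theorem (applicable since $\overline G$ is compact with connected complement) gives that polynomials are dense in $A(G)$, hence $\{P(\phi):P\text{ a polynomial}\}$ --- a fortiori $\LM\cap\mathcal A(\Om)$ --- is dense in $\mathcal A(\Om)$. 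That proves~(1).

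For Part~(2), membership is immediate from Proposition~\ref{Prop:easy}(2): since $\Lambda_p(\Om)=\tfrac{p_0}{p}\Lambda_{p_0}(\Om)$, the hypothesis \eqref{Eq:full-freq} (i.e.\ $(-\infty,0]\subseteq\Lambda_{p_0}(\Om)$) upgrades to $(-\infty,0]\subseteq\Lambda_p(\Om)$ for every $p\in[1,\infty)$; write $\mathcal V:=\hbox{span}\{e^{\lam z}:\lam\le0\}\subseteq H^p(\Om)$. For density I would argue, for fixed $p\in(1,\infty)$, in three steps. First, every $g=\LL\mu\in\LM$ (say $\mathrm{supp}\,\mu\subseteq[0,N]$) lies in $\overline{\mathcal V}^{H^p(\Om)}$: the discrete approximants $\mu_m$ of \eqref{eq:approx-measure} satisfy $g_m:=\LL\mu_m\in\mathcal V$, $\|\mu_m\|\le\|\mu\|$, $g_m\to g$ pointwise on $\Om$, and $|g_m(z)|^p\le 2^{p-1}\|\mu\|^p\big(1+|e^{-Nz}|^p\big)$ uniformly in $m$; since $e^{-Nz}\in H^p(\Om)$, the right-hand side has a harmonic majorant, so $\{g_m\}$ is bounded in $H^p(\Om)$ (and $g\in H^p(\Om)$ by the same estimate). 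As $H^p(\Om)$ is reflexive and point evaluations are bounded functionals, a norm-bounded pointwise-convergent sequence converges weakly, so $g_m\rightharpoonup g$; $\mathcal V$ being convex, Mazur's theorem gives $g\in\overline{\mathcal V}^{H^p(\Om)}$. Second, using Part~(1) together with $\|f\|_{H^p(\Om)}\le\|f\|_{\overline\Om^\infty}$ for bounded $f$, I obtain $\mathcal A(\Om)\subseteq\overline{\LM\cap\mathcal A(\Om)}^{H^p(\Om)}\subseteq\overline{\mathcal V}^{H^p(\Om)}$. Third, $\mathcal A(\Om)$ is dense in $H^p(\Om)$: transported by $f\mapsto f\circ\phi^{-1}$ this is the density of $A(G)$ in $H^p(G)$, which holds because $G$, being a Jordan domain, satisfies (after an affine normalization placing it inside $\D$) the hypotheses of Lemma~\ref{Lem:approx-Hardy}, so polynomials are dense in $H^p(G)$ and polynomials lie in $A(G)$. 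Combining the three steps gives $\overline{\mathcal V}^{H^p(\Om)}=H^p(\Om)$ for $p\in(1,\infty)$; the case $p=1$ follows since $\mathcal V\subseteq H^2(\Om)$, which embeds densely and continuously into $H^1(\Om)$, while $\mathcal V$ is already dense in $H^2(\Om)$.

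I expect the main obstacle to be the first step above: passing from the pointwise convergence $g_m\to g$ of the Riemann-sum approximants to membership in the $H^p$-norm closure of $\mathcal V$. Norm convergence $g_m\to g$ need not hold --- as the sequence $z^m$ in $H^p(\D)$ shows, a norm-bounded pointwise-convergent sequence may fail to converge in norm --- and a direct boundary uniform-integrability estimate looks unpleasant on this unbounded domain. The way around it is to aim only for \emph{weak} convergence, which, by reflexivity of $H^p(\Om)$ for $p>1$, is automatic from boundedness plus pointwise convergence, and then to invoke Mazur's theorem, the weak and norm closures of the convex set $\mathcal V$ coinciding. It is worth stressing that the uniform bound $|g_m|^p\le 2^{p-1}\|\mu\|^p(1+|e^{-Nz}|^p)$ is available for \emph{every} $p<\infty$ precisely because \eqref{Eq:full-freq} is assumed for all $\lam\le0$, not merely for one exponent --- this is exactly what makes the scaling identity $\Lambda_p(\Om)=\tfrac{p_0}{p}\Lambda_{p_0}(\Om)$ do its work.
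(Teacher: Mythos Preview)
Your argument is correct and follows the same outline as the paper's proof: Part~(1) via Mergelyan applied to $G=\alpha\circ T_b(\Omega)$ together with the observation that $\alpha\in\LM$ and $\LM$ is an algebra; Part~(2) via $\LM\subset\overline{\mathcal V}^{H^p}$ followed by $\mathcal A(\Omega)\subset\overline{\LM\cap\mathcal A(\Omega)}^{H^p}$ and the density of $\mathcal A(\Omega)$ in $H^p(\Omega)$.

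The one genuine difference is in how you pass from $g_m=\LL\mu_m$ to $g=\LL\mu$ inside $\overline{\mathcal V}^{H^p}$. The paper proves \emph{norm} convergence $g_m\to g$ directly: the map $[0,R]\ni t\mapsto e^{-tz}\in H^p(\Omega)$ is continuous (by dominated convergence on the boundary, using the uniform majorant $1+|e^{-Rz}|^p$), hence uniformly continuous on $[0,R]$, and then the Riemann-sum estimate
\[
\|\LL\mu-\LL\mu_n\|_{H^p(\Omega)}\le\sum_{j}\int_{I_j}\|e^{-\lambda z}-e^{-\frac{jR}{n}z}\|_{H^p(\Omega)}\,d|\mu|(\lambda)\le\epsilon\,|\mu|([0,R])
\]
gives $g_m\to g$ in norm. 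Your route---uniform harmonic majorant $\Rightarrow$ norm-boundedness $\Rightarrow$ weak convergence by reflexivity and separating point evaluations $\Rightarrow$ norm-closure membership by Mazur---is valid but more circuitous, and your aside that ``norm convergence $g_m\to g$ need not hold'' is actually mistaken in this instance: it does hold, by the uniform-continuity argument just sketched. The paper's approach is slightly cleaner and also covers $p=1$ uniformly, without the detour through $H^2$.
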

\begin{proof}
(1) First observe that it is enough to prove the result for $T_b(\Omega)$ for some $b>0$. Hence, by Lemma~\ref{lem-alpha}, we can assume that $\alpha:\Omega\to G$ is a biholomorphism.  Note that, by definition, $\alpha\in \LM\cap \mathcal{A}(\Omega)$.

Let $f\in \mathcal{A}(\Om)$ and let  $\tilde f= f\circ \al^{-1}$.
Then $\tilde f$ is continuous on $\overline{G}$ and hence, by Mergelyan's approximation theorem there exists a sequence of holomorphic polynomials $\{p_n\}$ such that
$\|\tilde f-p_n\|_{\infty, \overline{G}}\to 0$ as $n\to \infty$. Therefore, $\{p_n\circ \al\}$ approximates $f$ uniformly on $\overline{\Omega}^\infty$.

Since the Laplace transform of the convolution of two measures is the product of the Laplace transforms of these measures, it follows that   $p_n\circ \al\in \LM\cap \mathcal{A}(\Om)$ for all $n$,  and (1) follows.

 (2) By Proposition~\ref{Prop:easy} (2),  $e^{\lam z}\in H^{p}(\Om)$ for all $\lambda\leq 0$ and $p\in[1,\infty)$.

Fix $p\in[1,\infty)$ and let $\mu\in M_{\text{fin}}(\R_+)$, whose support is contained in
an interval $[0,R]$. For every $n=1, 2, \ldots$ we define $\mu_n$ as in \eqref{eq:approx-measure}.

Each $\mu_n$ is a measure  supported on a finite set. Moreover, by definition, $\LL \mu_n$ is a finite combination of exponentials of type $e^{tz}$, $t\leq 0$, hence, by \eqref{Eq:full-freq}, $\LL \mu_n\in H^p(\Omega)$.

Now, we claim that $\{\LL \mu_n\}$ converges to $\LL \mu$ in $H^p(\Omega)$.

To this aim, note that the map $[0,+\infty)\ni t\mapsto e^{tz}\in H^p(\Omega)$ is continuous, thus uniformly continuous on $[0,R]$. Hence, given $\epsilon>0$ there exists $\delta>0$ such that $\|e^{-\lambda z}-e^{-\tilde{\lambda}z}\|_{H^p(\Omega)}\leq \epsilon$ for every $\lambda,\tilde{\lambda}\in [0,R]$ such that $|\lambda-\tilde{\lambda}|<\delta$. Let  $I_j:=[\frac {(j-1)R} n,\frac {jR} n)$, with $n$ sufficiently large so that $\frac{1}{n}<\delta$. Hence,
\begin{equation*}
\begin{split}
\| \LL\mu-\LL \mu_n\|_{H^p(\Omega)}&\leq \sum_{j=1}^{n}\|\int_{I_j} e^{-\lambda z}d\mu(\lambda)-\mu(I_j)e^{-\frac{jR}{n}z}\|_{H^p(\Omega)}\\&=\sum_{j=1}^{n}\|\int_{I_j} (e^{-\lambda z}-e^{-\frac{jR}{n}z})d\mu(\lambda)\|_{H^p(\Omega)}\\&\leq \sum_{j=1}^{n}\int_{I_j} \|e^{-\lambda z}-e^{-\frac{jR}{n}z}\|_{H^p(\Omega)}|d\mu(\lambda)|\leq \epsilon |\mu([0,R])|.
 \end{split}
\end{equation*}

Therefore, each function in $\LM$ belongs to the closure of linear combination of exponentials
in $H^p(\Om)$.

Now, $\mathcal{A}(\Om)$ is dense in
$H^p(\Om)$ and by (1), each function in $\mathcal{A}(\Om)$ can be approximated uniformly on $\overline{\Omega}^\infty$ by functions in  $\LM\cap \mathcal{A}(\Om)$, which belong to the closure of the span of exponentials in
$H^p(\Om)$. Since the topology of $H^p(\Om)$ is weaker than the topology of ${\mathcal A}(\Om)$, assertion (2) follows.
\end{proof}

\begin{definition}
A logarithmic starlike at infinity domain $\Omega$ which satisfies \eqref{Eq:full-freq} is a logarithmic starlike at infinity domain {\sl with $p$-frequencies}.
\end{definition}

Further positive and negative results on the $p$-completeness of frequencies in logarithmic starlike domains at infinity
are given in Theorem~\ref{Thm:density-in-log} and
in Section~\ref{Sec:p-freq} (where, in particular,
Theorem~\ref{Thm:main-density-in-log} is proved).

\section{Weak-star density for Koenigs domains}\label{Sec:density-weak}

The aim of this section is to extend the results of the previous section to more general Koenigs domains.

To this aim, assume that $\Omega, D$ are Koenigs domains of two non-elliptic semigroups. Suppose $\Omega\subseteq D$. Then the inclusion $\iota:D\to\Omega$ induces the restriction  map
$$H^p(D)\to H^p(\Omega)$$ which is a continuous injective, for every $p\in [1,+\infty]$.

The following statement follows at once from the conformal invariance of Hardy space and recalling that $\mathcal E_p(D)\subseteq \mathcal E_p(\Omega)$ for $p\in[1,+\infty]$:

\begin{lemma}\label{Lem:dense-complete}
Let $p\in[1,\infty)$. Let $\Omega, D$ be Koenigs domains of two non-elliptic semigroups. Suppose $\Omega\subseteq D$.
\begin{itemize}
\item Assume $\mathcal E_\infty(D)$ is weak-star dense in $H^\infty(D)$. Then $H^\infty(D)$ is weak-star dense in $H^\infty(\Omega)$ if and only if $\mathcal E_\infty(D)$ is weak-star dense in $H^\infty(\Omega)$. If this is the case, $\mathcal E_\infty(\Omega)$ is also weak-star dense in $H^\infty(\Omega)$.
\item Assume $\mathcal E_p(D)$ is  dense in $H^p(D)$ for some $p\in[1,\infty)$. Then $H^p(D)$ is  dense in $H^p(\Omega)$ if and only if $\mathcal E_p(D)$ is dense in $H^p(\Omega)$. If this is the case, $\mathcal E_p(\Omega)$ is  also dense in $H^p(\Omega)$.
\end{itemize}
\end{lemma}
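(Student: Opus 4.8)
The plan is to reduce everything to a single elementary observation about closures of nested linear subspaces, once the inclusion-induced restriction map has been understood. First I would fix $p\in[1,\infty)$ (the case $p=\infty$ with the weak-star topology being formally identical, replacing "closure" by "weak-star closure" throughout), and record the two facts already available: the restriction map $\rho\colon H^p(D)\to H^p(\Omega)$ induced by $\iota\colon\Omega\hookrightarrow D$ is continuous and injective (continuity being conformal invariance together with the norm estimate, as noted just before the statement), and $\mathcal E_p(D)\subseteq\mathcal E_p(\Omega)$ by Proposition~\ref{Prop:easy}(5). The key point is then purely formal: if $X\subseteq Y$ are subsets of a topological space $Z$ with $\overline{Y}=Z$, then $\overline{X}=Z$ if and only if $\overline{X}\supseteq Y$ — indeed one direction is trivial, and conversely if $\overline{X}\supseteq Y$ then $\overline{X}\supseteq\overline{Y}=Z$. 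I will apply this with $Z=H^p(\Omega)$.

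For the first bullet, assume $\mathcal E_\infty(D)$ is weak-star dense in $H^\infty(D)$. Since $\rho\colon H^\infty(D)\to H^\infty(\Omega)$ is weak-star continuous (this is the same continuity statement; one should note that the restriction map is weak-star--weak-star continuous, which follows because it is the adjoint-of-a-bounded-map picture through Riemann maps, or simply because weak-star convergence in $H^\infty$ is pointwise-plus-bounded-norms by Sarason's criterion recalled in the preliminaries, and restriction clearly preserves both), the weak-star closure of $\mathcal E_\infty(D)$ inside $H^\infty(\Omega)$ contains the weak-star closure of $\rho\bigl(H^\infty(D)\bigr)$; more precisely, $\overline{\mathcal E_\infty(D)}^{w*}\supseteq\overline{H^\infty(D)}^{w*}$ taken inside $H^\infty(\Omega)$. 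Hence, writing $X=\mathcal E_\infty(D)$, $Y=H^\infty(D)$ (as subsets of $H^\infty(\Omega)$ via $\rho$), the formal observation gives: $Y$ is weak-star dense in $H^\infty(\Omega)$ $\iff$ $X$ is weak-star dense in $H^\infty(\Omega)$. Finally, if this holds, then since $\mathcal E_\infty(D)\subseteq\mathcal E_\infty(\Omega)\subseteq H^\infty(\Omega)$, a fortiori $\mathcal E_\infty(\Omega)$ is weak-star dense in $H^\infty(\Omega)$. The second bullet is the verbatim translation with the norm topology: $\rho\colon H^p(D)\to H^p(\Omega)$ continuous, $\overline{\mathcal E_p(D)}^{H^p(\Omega)}\supseteq\overline{\rho(H^p(D))}^{H^p(\Omega)}$ because $\mathcal E_p(D)$ is norm-dense in $H^p(D)$ and $\rho$ is norm-continuous, then the same formal observation and the final a fortiori step using $\mathcal E_p(D)\subseteq\mathcal E_p(\Omega)$.

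I do not expect a genuine obstacle here; the only point requiring a little care is the weak-star continuity of the restriction map $H^\infty(D)\to H^\infty(\Omega)$, which is what makes the $p=\infty$ case work and which is not literally the same as the boundedness statement. I would handle it by invoking the description of weak-star convergence in $H^\infty$ of a simply connected domain as pointwise convergence on the domain together with uniform boundedness of norms (stated in the preliminaries, attributed to Sarason): restriction obviously preserves pointwise convergence on $\Omega\subseteq D$ and does not increase sup-norms, so it is sequentially weak-star continuous, and since we only ever use sequences (or, if one prefers, bounded nets, where the same argument applies on the closed ball, which is weak-star metrizable), this suffices for the density transfer. With this in hand the proof is three lines in each bullet, as indicated.
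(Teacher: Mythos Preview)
Your proposal is correct and follows essentially the same approach as the paper, which dispatches the lemma in a single line (``follows at once from the conformal invariance of Hardy space and recalling that $\mathcal E_p(D)\subseteq \mathcal E_p(\Omega)$''). You have simply spelled out the details the paper leaves implicit---in particular the weak-star--weak-star continuity of the restriction map, which you correctly flag as the only point needing care; your handling via Sarason's pointwise-plus-bounded criterion together with metrizability of the ball (equivalently, Krein--Smulian for the convex set $\mathcal E_\infty(D)$) is a legitimate way to close that gap.
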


In our context, the domain $D$ of the previous lemma will be either a strip, or a half-plane or a  logarithmic starlike at infinity domain (for the $p\in[1,\infty)$ case). Thus, we are aimed to understand when $H^\infty(D)$ is weak-star dense in $H^\infty(\Omega)$ in case $D$ is either a strip or a half-plane.

In order to see this, we need the following proposition, which is a simple consequence of the work of Sarason~\cite{Sa, Sa1} on weak-star generators of $H^\infty(\D)$:

\begin{proposition}\label{Prop:Sarason}
Let $\Omega, D$ be simply connected domains in $\C$ and assume $\Omega\subset D$. The following are equivalent:
\begin{enumerate}
\item $H^\infty(D)$ is weak-star dense in $H^\infty(\Omega)$.
\item For every domain $E\subseteq D$ such that $\Omega\subsetneq E$ there exist $f\in H^\infty(E)$ and $z_0\in E\setminus\Om$ such that $|f(z_0)|>\sup_{z\in \Om}|f(z)|$.
\end{enumerate}
\end{proposition}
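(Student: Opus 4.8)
Proposition~\ref{Prop:Sarason} is essentially a restatement of Sarason's characterization of weak-star generators, phrased for an inclusion of domains rather than for a single function. The natural route is to reduce to Sarason's theorem by means of a Riemann map. So first I would fix a Riemann map $\sigma:\D\to D$ and set $G:=\sigma^{-1}(\Omega)\subset\D$; by conformal invariance of the weak-star topology on $H^\infty$, statement (1) is equivalent to saying that $H^\infty(\D)$ (equivalently, the constants together with the coordinate function $z$, or any single bounded univalent function generating $H^\infty(\D)$) is weak-star dense in $H^\infty(G)$, i.e.\ that $G$ is such that the restriction of $H^\infty(\D)$ is weak-star dense. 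The cleanest formulation: the inclusion $H^\infty(\D)\hookrightarrow H^\infty(G)$ has weak-star dense range if and only if the single function $\sigma^{-1}$ — no, better, one should use the fact from Sarason~\cite{Sa,Sa1} that $g\in H^\infty(\D)$ is a weak-star generator of $H^\infty(\D)$ iff for no domain $E$ with $\Omega':=g(\D)\subsetneq E$ can one extend all of $H^\infty(\Omega')$... Actually the intended reduction is: apply Sarason's criterion directly to the domain $G$ and the statement ``$H^\infty(\D)|_G$ is weak-star dense in $H^\infty(G)$''.

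Concretely, I would invoke the following from Sarason's work: for a domain $G\subseteq\D$, $H^\infty(\D)$ is weak-star dense in $H^\infty(G)$ if and only if $G$ has no proper ``$H^\infty$-hull'' inside $\D$, which is exactly the maximum-principle-type condition: for every domain $E$ with $G\subsetneq E\subseteq\D$ there is $f\in H^\infty(E)$ and a point $z_0\in E\setminus G$ with $|f(z_0)|>\sup_{z\in G}|f(z)|$. Pulling this back under $\sigma$ (note $\sigma$ gives a bijection between domains $E$ with $\Omega\subsetneq E\subseteq D$ and domains $\sigma^{-1}(E)$ with $G\subsetneq\sigma^{-1}(E)\subseteq\D$, and it is an isometry of the relevant $H^\infty$ sup-norms since $\|f\|_{H^\infty(E)}=\|f\circ\sigma\|_{H^\infty(\sigma^{-1}(E))}$), condition (2) for the pair $(\Omega,D)$ translates precisely into Sarason's condition for the pair $(G,\D)$. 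Thus (1)$\Leftrightarrow$(2).

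The one subtle point — and the only real obstacle — is locating in \cite{Sa,Sa1} (or deducing from the results there) the exact ``relative'' statement in the form I want, namely the equivalence ``$H^\infty(\D)$ weak-star dense in $H^\infty(G)$ $\iff$ no enlargement $E$ admits a function with a strictly larger value off $G$''. Sarason's papers phrase things in terms of a single generating function $g$ and the domain $g(\D)$; one must observe that $H^\infty(\D)$ is generated (weak-star) by the identity function $z\mapsto z$, or rather that $H^\infty(\D)|_G$ is weak-star dense in $H^\infty(G)$ iff the image of $\D$ under a conformal map realizing $G\subset\D$ is a weak-star generator domain, and then transcribe Sarason's topological characterization (Theorem in \cite{Sa1}) of such domains. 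I would cite the relevant theorem of Sarason for the ``hull'' characterization and carry out the conformal transport carefully; the forward and backward implications are then both immediate, with (2)$\Rightarrow$(1) using that failure of weak-star density produces, via Hahn--Banach and the structure of annihilating measures, exactly such an enlargement $E$, and (1)$\Rightarrow$(2) being the contrapositive: if some $E\supsetneq\Omega$ admits no separating $f$, then functions holomorphic on $E$ cannot be approximated, obstructing density.

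I expect essentially no computation; the work is conceptual bookkeeping of the correspondence between enlargements of $\Omega$ in $D$ and enlargements of $G$ in $\D$, together with pinning down the precise citation to Sarason. If a clean citable ``relative'' statement is not available verbatim, the fallback is to reprove the equivalence from scratch using Hahn--Banach: $H^\infty(D)$ fails to be weak-star dense in $H^\infty(G)$ iff there is a nonzero measure $\nu$ on $\partial G$ (after the Riemann map, on $\partial\D$) annihilating $H^\infty(D)$ but not all of $H^\infty(G)$; the support and the rational-approximation/F.~and M.~Riesz structure of such $\nu$ then yield a domain $E\supsetneq\Omega$ over which the Cauchy transform of $\nu$ is holomorphic and which provides the separating function — this is exactly Sarason's argument and is the content I would spell out if needed.
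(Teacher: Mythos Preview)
Your proposal is correct and takes essentially the same approach as the paper: reduce via a Riemann map to $D=\D$, then invoke Sarason's characterization. The paper resolves the citation issue you flag by introducing a second Riemann map $\varphi:\D\to\Omega\subset\D$ and citing \cite[Corollary~2]{Sa} directly, which characterizes when $\mathcal{P}\circ\varphi$ is weak-star dense in $H^\infty(\D)$ precisely by condition~(2); the link to~(1) is then immediate since polynomials are already weak-star dense in $H^\infty(\D)$ and restriction is weak-star continuous.
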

\begin{proof}
Using a Riemann map of $D$, we can assume that $D=\D$ and $\Omega\subset\D$.
Let $\mathcal P$ denote the set of all holomorphic polynomials. Let $\varphi:\D\to \Omega$ be a Riemann map. Then by \cite[Corollary~2]{Sa}, $\mathcal P\circ \varphi$ is weak-star dense in $H^{\infty}(\D)$ if and only if (2) holds. By the conformal invariance of Hardy spaces, it follows that (2) is equivalent to $\mathcal P$ being weak-star dense in $H^\infty(\Omega)$. Since $\mathcal P\subset H^\infty(\D)$, it follows that (1) is equivalent to (2).
\end{proof}

As a first corollary we have
\begin{corollary}\label{Cor:no-int-diff}
Let $\Omega, D$ be simply connected domains in $\C$ and assume $\Omega\subset D$. Assume that $\hbox{Int}(\overline{D})=D$ and that  $H^\infty(D)$ is weak-star dense in $H^\infty(\Omega)$. Then $\hbox{Int}(\overline{\Omega})=\Omega$.
\end{corollary}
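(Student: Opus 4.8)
The plan is to prove the statement by contradiction, with Proposition~\ref{Prop:Sarason} as the only substantive input. Since $\Omega$ is open we always have $\Omega\subseteq\hbox{Int}(\overline{\Omega})$, so it suffices to rule out the existence of a point $z_0\in\hbox{Int}(\overline{\Omega})\setminus\Omega$. The first step would be to observe that any such $z_0$ necessarily lies in $D$: from $\Omega\subseteq D$ we get $\overline{\Omega}\subseteq\overline{D}$, hence $\hbox{Int}(\overline{\Omega})\subseteq\hbox{Int}(\overline{D})=D$, and this is precisely where the hypothesis $\hbox{Int}(\overline{D})=D$ enters. Next, pick $r>0$ with $\{z\in\C:|z-z_0|<r\}\subseteq\overline{\Omega}$, and then choose $0<\rho\le r$ small enough that the disc $B:=\{z\in\C:|z-z_0|<\rho\}$ is contained in $D$ (possible since $D$ is open and $z_0\in D$).

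The second step is to build the competitor domain for condition~(2) of Proposition~\ref{Prop:Sarason}. Set $E:=\Omega\cup B$. Then $E$ is open and $E\subseteq D$, and $E$ is connected because $\Omega$ and $B$ are each connected and $B\cap\Omega\neq\emptyset$ (every disc centered at $z_0\in\overline{\Omega}$ meets $\Omega$, and at a point $\ne z_0$). Moreover $z_0\in E\setminus\Omega$, so $\Omega\subsetneq E$; thus $E$ is an admissible domain in condition~(2).

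The crux is then to show that for every $f\in H^\infty(E)$ one has $\sup_{z\in E}|f(z)|=\sup_{w\in\Omega}|f(w)|$, which in particular forbids any $f\in H^\infty(E)$ and any $z\in E\setminus\Omega$ with $|f(z)|>\sup_{w\in\Omega}|f(w)|$. Indeed, any $z\in B$ lies in $\{w:|w-z_0|<r\}\subseteq\overline{\Omega}$, so there is a sequence $(w_n)\subset\Omega$ with $w_n\to z$; since $w_n\in\Omega\subseteq E$ and $f$ is continuous on $E$, we get $|f(z)|=\lim_n|f(w_n)|\le\sup_{w\in\Omega}|f(w)|$, and for $z\in\Omega$ the inequality is trivial. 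This contradicts condition~(2) of Proposition~\ref{Prop:Sarason}, which holds because $H^\infty(D)$ is weak-star dense in $H^\infty(\Omega)$. Hence $\hbox{Int}(\overline{\Omega})\setminus\Omega=\emptyset$, i.e.\ $\hbox{Int}(\overline{\Omega})=\Omega$.

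I do not expect a genuine obstacle here: the argument is essentially a maximum-modulus observation coupled with the right choice of $E$. The two points that require a little care are (i) verifying $z_0\in D$, so that $E\subseteq D$ and $E$ is a legitimate competitor in Proposition~\ref{Prop:Sarason} — this is exactly the role played by the hypothesis $\hbox{Int}(\overline{D})=D$ — and (ii) justifying the limiting inequality $|f(z)|\le\sup_{w\in\Omega}|f(w)|$, which is valid because the approximating points $w_n$ stay inside $E$, where $f$ is continuous and bounded.
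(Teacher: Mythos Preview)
Your proof is correct and follows essentially the same approach as the paper: both use Proposition~\ref{Prop:Sarason} and the observation that any $f\in H^\infty(E)$ satisfies $|f(z)|\le\sup_{\Omega}|f|$ at points $z\in E\cap\overline{\Omega}$ by continuity. The only cosmetic difference is your choice of competitor domain: the paper takes $E=\hbox{Int}(\overline{\Omega})$ directly (which is connected, though the paper does not pause to verify this), whereas you build the smaller $E=\Omega\cup B$ and check connectedness by hand.
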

\begin{proof}
Recall that $\hbox{Int}(\overline{\Omega})$ is the union of all  open sets $A\subset \overline{\Omega}$. Since $\overline{\Omega}\subseteq \overline{D}$, it follows that  $\hbox{Int}(\overline{\Omega})\subseteq \hbox{Int}(\overline{D})=D$.

Let $E:=\hbox{Int}(\overline{\Omega})\subseteq D$. Let $f\in H^\infty(E)$ and $z_0\in E$. Then there exists a sequence $\{z_n\}\subset \Omega$ converging to $z_0$. Thus,
\[
|f(z_0)|=\lim_{n\to\infty}|f(z_n)|\leq \sup_{z\in\Omega}|f(z)|.
\]
Since $H^\infty(D)$ is weak-star dense in $H^\infty(\Omega)$, we get  $\hbox{Int}(\overline{\Omega})=\Omega$ by Proposition~\ref{Prop:Sarason}.
\end{proof}

As a matter of notation, we say that a Jordan arc $\Gamma$ with end points $a,b$, $a\neq b$ is a {\sl crosscut} in $\D$ if $\Gamma\setminus\{a,b\}\subset \D$ and $a,b\in\partial\D$. We stress that, by definition, a crosscut has {\sl different} end points.
In what follows, for $y\in\R$, we set
\[
L_y:=\{z\in \C: \Im z=y\}.
\]

We need the following lemma which is a simple consequence of \cite[Proposition~4]{Sa}.

\begin{lemma}\label{Lem:top-cara-Sarason}
Let $G\subset \D$ be a simply connected domain. Suppose that every crosscut of $\D$ intersects $G$. Then for every $f\in H^\infty(\D)$ and every $z\in \D$
\[
|f(z)|\leq \sup_{\zeta\in G}|f(\zeta)|.
\]
\end{lemma}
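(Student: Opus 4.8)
The plan is to deduce Lemma~\ref{Lem:top-cara-Sarason} from Sarason's criterion for weak-star generators of $H^\infty(\D)$ (\cite[Proposition~4]{Sa}), which characterizes weak-star density of the polynomials $\mathcal P\circ\varphi$ in $H^\infty(\D)$ (equivalently, of $\mathcal P$ in $H^\infty(G)$ after a Riemann map $\varphi:\D\to G$) in terms of a topological condition on $G\subset\D$. First I would recall the exact form of that topological condition: roughly, that $G$ is not separated from $\partial\D$ by any ``barrier'', i.e. that for every crosscut $\Gamma$ of $\D$, the component of $\D\setminus\Gamma$ whose boundary contains the larger part of $\partial\D$ (or, in the relevant formulation, every component of $\D\setminus\Gamma$) meets $G$. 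The hypothesis ``every crosscut of $\D$ intersects $G$'' is designed to be exactly (or to trivially imply) this condition, so Sarason's proposition applies and yields that $\mathcal P$ is weak-star dense in $H^\infty(G)$, equivalently $H^\infty(\D)$ is weak-star dense in $H^\infty(G)$.

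Next I would feed this conclusion into Proposition~\ref{Prop:Sarason} (or directly into the definition of weak-star density): since $\mathcal P\subset H^\infty(\D)$ and $\mathcal P$ is weak-star dense in $H^\infty(G)$, given $f\in H^\infty(\D)$ there is a net (or, using the sequential description of weak-star convergence recalled after Lemma~\ref{Lem:approx-weak-approx-p}, a bounded sequence) $\{p_n\}\subset\mathcal P$ with $p_n\to f|_G$ weak-star in $H^\infty(G)$, hence $p_n\to f$ pointwise on $G$ with $\sup_n\|p_n\|_{H^\infty(G)}<\infty$. Actually, a cleaner route: apply Proposition~\ref{Prop:Sarason} with $D=\D$ and $\Omega=G$ — the crosscut hypothesis on $G$ rules out the existence of a domain $E$ with $G\subsetneq E\subseteq\D$ and a peak-type function $f\in H^\infty(E)$ as in condition (2), because such an $E$ together with $f$ would produce a crosscut of $\D$ (a level curve $\{|f|=c\}$ near $z_0$, suitably completed) missing $G$; so condition (2) holds and $H^\infty(\D)$ is weak-star dense in $H^\infty(G)$.

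Now fix $z\in\D$ and $f\in H^\infty(\D)$. Choose a bounded sequence $\{p_n\}\subset\mathcal P$ converging to $f$ weak-star in $H^\infty(G)$, hence pointwise on $G$ and locally uniformly on $G$ by normal families; in particular $\sup_n\sup_{\zeta\in G}|p_n(\zeta)|<\infty$. The subtlety is that a priori the uniform bound one gets is the $H^\infty(G)$-norm bound $\sup_n\|p_n\|_{H^\infty(G)}$, which for a simply connected $G$ equals $\sup_n\sup_{\zeta\in G}|p_n(\zeta)|$; call it $M$. Since each $p_n$ is a polynomial, hence entire, and since weak-star density in $H^\infty(G)$ forces $p_n\to f$ pointwise on $G$, the maximum principle on $\D$ does not directly bound $p_n$ on all of $\D$ by its sup on $G$ — that is precisely the point of the lemma. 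Instead I would argue: $\{p_n\}$ is a sequence of polynomials with $\|p_n\|_{H^\infty(G)}\le M$ for all $n$; because $\mathcal P$ is weak-star dense in $H^\infty(G)$ one can arrange (this is part of the strength of Sarason's theorem — the approximating polynomials can be taken with norm controlled by $\|f\|_{H^\infty(G)}$) that $M\le \|f\|_{H^\infty(G)}=\sup_{\zeta\in G}|f(\zeta)|$. Then $|p_n(z)|\le\|p_n\|_{H^\infty(\D)}$ is not yet what we want; rather, we use that $p_n\to f$ pointwise on $\D$? No — pointwise convergence is only known on $G$.

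I will therefore organize the final step as follows instead. Let $z\in\D$ and let $\delta_z$ denote evaluation at $z$, a weak-star continuous functional on $H^\infty(\D)$ (it is represented by the Poisson kernel, which lies in $L^1(\partial\D)$). The weak-star closure of $\mathcal P$ in $H^\infty(\D)$ is all of $H^\infty(\D)$ in particular $f$ lies in the weak-star closure of the convex set $\{p\in\mathcal P:\|p\|_{H^\infty(G)}\le\sup_{\zeta\in G}|f(\zeta)|\}$; but this set has weak-star closure (in $H^\infty(\D)$) contained in $\{g\in H^\infty(\D):\sup_{\zeta\in G}|g(\zeta)|\le\sup_{\zeta\in G}|f(\zeta)|\}$ since each $\zeta\in G\subset\D$ gives a weak-star continuous evaluation. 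Hence for any such $g$, and in particular tracing back through the norm control, we would get the bound only on $G$, which is vacuous. The genuinely correct mechanism: $\mathcal P$ weak-star dense in $H^\infty(G)$ means the restriction map $H^\infty(\D)\to H^\infty(G)$ has weak-star dense range, equivalently its adjoint is injective; combined with $z\in\D$ this gives that $\delta_z|_{H^\infty(\D)}$ factors through $H^\infty(G)$, i.e. there is a functional on $H^\infty(G)$ agreeing with $\delta_z$ on (the image of) $H^\infty(\D)$, and such a functional has norm at most $1$ since $\delta_z$ does — therefore $|f(z)|=|\delta_z(f)|\le\|f|_G\|_{H^\infty(G)}=\sup_{\zeta\in G}|f(\zeta)|$. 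The main obstacle is precisely making this factorization-of-the-point-evaluation argument rigorous: one must check that weak-star density of the restriction range really does let one extend $\delta_z$ to $H^\infty(G)$ with the same norm, which is where the simple connectivity of $G$ (so that $\|\cdot\|_{H^\infty(G)}=\sup_G|\cdot|$) and Sarason's topological criterion (to know the density holds at all) both enter; everything else is soft functional analysis. So concretely the proof order is: (i) translate the crosscut hypothesis into Sarason's topological condition and invoke \cite[Proposition~4]{Sa} to get $\mathcal P$ weak-star dense in $H^\infty(G)$; (ii) rephrase as weak-star dense range of the restriction $H^\infty(\D)\to H^\infty(G)$; (iii) for fixed $z\in\D$, observe $\delta_z$ on $H^\infty(\D)$ is weak-star continuous of norm $\le 1$ and, by density, extends to a norm-$\le 1$ functional on $H^\infty(G)$; (iv) conclude $|f(z)|\le\|f\|_{H^\infty(G)}=\sup_{\zeta\in G}|f(\zeta)|$.
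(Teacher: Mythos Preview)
Your proposal has a genuine gap, and in fact the direction of implication in your step (i) is backwards. You claim that the crosscut hypothesis, via Sarason's Proposition~4, yields that $\mathcal P$ (equivalently $H^\infty(\D)$) is weak-star dense in $H^\infty(G)$. But this is false whenever $G\subsetneq\D$: the conclusion of the lemma itself, read through Proposition~\ref{Prop:Sarason} with $D=\D$ and $\Omega=G$, shows that condition~(2) there \emph{fails} for the particular choice $E=\D$ (no $f\in H^\infty(\D)$ and $z_0\in\D\setminus G$ satisfy $|f(z_0)|>\sup_G|f|$), hence $H^\infty(\D)$ is \emph{not} weak-star dense in $H^\infty(G)$. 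A concrete example is the slit disc $G=\D\setminus[0,1)$: every crosscut of $\D$ meets $G$, the lemma's inequality holds trivially since $\overline G=\overline\D$, yet $\sqrt z\in H^\infty(G)$ cannot be weak-star approximated by $H^\infty(\D)$-functions. So step~(i) asserts exactly the opposite of what is true, and the rest of your argument is built on it.

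Even if one granted step~(i), step~(iii) is circular. To ``extend $\delta_z$ to a norm-$\le 1$ functional on $H^\infty(G)$'' you must first know that the induced functional $f|_G\mapsto f(z)$ on the range of the restriction map has norm $\le 1$ in the $H^\infty(G)$-norm, i.e.\ that $|f(z)|\le\sup_G|f|$ for all $f\in H^\infty(\D)$---which is precisely the statement to be proved. Weak-star density of the range gives injectivity of the pre-adjoint, not surjectivity, so it does not hand you $\delta_z$ as the image of a predual element of $H^\infty(G)$.

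The paper's own argument is simply to invoke \cite[Proposition~4]{Sa} directly: that proposition \emph{is} (essentially) the statement that the crosscut condition on $G$ forces $\sup_\D|f|=\sup_G|f|$ for every $f\in H^\infty(\D)$. There is no detour through weak-star density; indeed, in the paper the lemma is used precisely to exhibit a domain $E$ for which condition~(2) of Proposition~\ref{Prop:Sarason} fails, thereby \emph{disproving} weak-star density in the situations where it is applied.
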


Now we are in good shape to characterize Koenigs domains contained either in a half-plane or a strip for which linear combinations of exponentials are weak-star dense. We start with domains contained in a half-plane.

\begin{theorem}\label{Thm:density-in-half-plane}
Let $\Omega$ be the Koenigs domain of a non-elliptic semigroup in $\D$. Suppose $\Omega\subseteq H$, where $H$ is a half-plane. The following are equivalent:
\begin{enumerate}
\item $\mathcal E_\infty(H)$ is weak-star dense in $H^\infty(\Omega)$.
\item $\hbox{Int}(\overline{\Omega})=\Omega$ and $\C\setminus\overline\Omega$ is connected.
\end{enumerate}
\end{theorem}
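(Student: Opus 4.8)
The plan is to peel off the exponentials first, reduce to a statement about $H^\infty(H)$, and then transport everything to the unit disc, where Lemma~\ref{Lem:approx-Hardy} and the Sarason-type results (Proposition~\ref{Prop:Sarason}, Lemma~\ref{Lem:top-cara-Sarason}) apply.

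\emph{Step 1: reduction to $H^\infty(H)$.} I would first note that $\mathcal E_\infty(H)$ is weak-star dense in $H^\infty(H)$. Since $\Omega$ is starlike at infinity and $\Omega\subseteq H$, following a forward orbit $\{w_0+t:t\ge 0\}\subseteq\Omega\subseteq H$ forces $H$ to be itself starlike at infinity, hence a Koenigs domain; moreover there is an affine map $z\mapsto az+b$ carrying $H$ onto $\C_+$ (through an upper/lower half-plane if $H$ is horizontal). Under such a map $\mathcal E_\infty(H)$ corresponds exactly to $\operatorname{span}\{e^{\lambda z}:\lambda\le 0\}$, which is weak-star dense in $H^\infty(\C_+)$ by Theorem~\ref{Thm:half-plane-group}; conformal invariance of the weak-star topology (cf.\ also Lemma~\ref{Lem:under-affine}) then gives the claim. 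Applying Lemma~\ref{Lem:dense-complete} with $D=H$, condition (1) becomes equivalent to: $H^\infty(H)$ is weak-star dense in $H^\infty(\Omega)$. So it suffices to show this property is equivalent to (2).

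\emph{Step 2: $(2)\Rightarrow(1)$.} Pick a M\"obius transformation $\rho$ taking $H$ onto $\D$; it extends to a homeomorphism $\widehat\rho$ of $\C_\infty$. Set $G:=\rho(\Omega)\subseteq\D$, a simply connected domain. Using that $\rho|_H\colon H\to\D$ is a homeomorphism together with $\hbox{Int}(\overline\Omega)=\Omega$, one checks $\hbox{Int}(\overline G)=G$ (interiors and closures may be taken relative to $\D$, resp.\ $H$, since $\overline G\subseteq\overline{\D}$). Since $\Omega$ is unbounded, $\C\setminus\overline\Omega=\C_\infty\setminus\overline{\Omega}^\infty$, and $\widehat\rho$ maps this onto $\C_\infty\setminus\overline{G}^\infty$; as $\overline G$ is compact this set is connected iff $\C\setminus\overline G$ is. Hence $G$ meets the hypotheses of Lemma~\ref{Lem:approx-Hardy}, so $H^\infty(\D)$ is weak-star dense in $H^\infty(G)$, and pulling back by $\rho$ yields that $H^\infty(H)$ is weak-star dense in $H^\infty(\Omega)$.

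\emph{Step 3: $(1)\Rightarrow(2)$.} Since $\hbox{Int}(\overline H)=H$ and $H^\infty(H)\supseteq\mathcal E_\infty(H)$ is a fortiori weak-star dense in $H^\infty(\Omega)$, Corollary~\ref{Cor:no-int-diff} gives $\hbox{Int}(\overline\Omega)=\Omega$. Suppose toward a contradiction that $\C\setminus\overline\Omega$ is disconnected. Transporting to the disc as in Step~2, we get a simply connected $G\subseteq\D$ with $\hbox{Int}(\overline G)=G$, $H^\infty(\D)$ weak-star dense in $H^\infty(G)$, and $\C\setminus\overline G$ disconnected, hence with a bounded component $\mathcal O$ (and $\mathcal O\subseteq\D$, since $\C\setminus\overline{\D}$ lies in the unbounded one). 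The aim is to exhibit a domain $E$ with $G\subsetneq E\subseteq\D$ that fills $\mathcal O$ along $G$ and for which every $f\in H^\infty(E)$ satisfies $\sup_E|f|=\sup_G|f|$ — e.g.\ by arranging that every crosscut of $E$ meets $G$ and then invoking the analogue of Lemma~\ref{Lem:top-cara-Sarason} for $E$. This contradicts Proposition~\ref{Prop:Sarason} and completes the proof.

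\emph{Main obstacle.} The hard part is the construction of $E$ in Step~3. Because $\Omega$ (and hence, after transport, all but one component of $\C\setminus\overline G$) is starlike at infinity, every complementary component is unbounded, so ``filling the hole'' naively yields a domain that spreads out near $\partial\D$ and leaves room to peek; one must fill $\mathcal O$ only in a thin neighbourhood of $G$, exploiting $\hbox{Int}(\overline G)=G$ (so $\partial G$ is nowhere dense) to keep $E$ open, connected and tightly wrapped around $G$. Steps~1 and~2 are routine once the affine normalization of $H$ and the behaviour of the M\"obius extension $\widehat\rho$ at $\infty$ are taken care of.
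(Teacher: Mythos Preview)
Your Steps~1 and~2 are correct and essentially coincide with the paper's argument. The genuine gap is in Step~3: you correctly identify that one must produce a domain $E$ with $G\subsetneq E\subseteq\D$ on which every bounded holomorphic function attains its sup already on $G$, but you do not construct $E$, and your sketched plan (transport to the disc, then patch $\mathcal O$ onto $G$ ``in a thin neighbourhood'') has no clear execution. Once you pass through the M\"obius map $\rho$, the starlike-at-infinity structure of $\Omega$ is destroyed, and that structure is exactly what makes the construction feasible.

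The paper does \emph{not} transport to the disc for $(1)\Rightarrow(2)$; it works directly in $H$. Let $U_0$ be the component of $\C\setminus\overline\Omega$ containing $\C\setminus\overline H$, and set $E:=H\setminus\overline{U_0}$. Pick $z_0\in E\setminus\overline\Omega$, lying in some component $U_1\neq U_0$. Since $\Omega$ is starlike at infinity, $z_0-t\in U_1$ for all $t\ge 0$; because $\C\setminus\overline H\subset U_0$, this forces $\partial H$ to be horizontal, say $H=\{\Im w>0\}$. A monotonicity argument then produces $y_0\in[0,\Im z_0)$ with $L_{y_0}\subset\overline\Omega$, and one sets $Q:=\{\Im w>y_0\}\subset E$. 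Now apply Lemma~\ref{Lem:top-cara-Sarason} to $\Omega\cap Q\subset Q$ (via a Cayley transform $Q\to\D$): any crosscut of $Q$ avoiding $\Omega\cap Q$ would have an endpoint on $L_{y_0}$, and translating a sub-arc to the left (again starlike at infinity) contradicts $L_{y_0}\subset\overline\Omega$. This yields $|f(z_0)|\le\sup_{\Omega\cap Q}|f|\le\sup_\Omega|f|$ for every $f\in H^\infty(E)$, contradicting Proposition~\ref{Prop:Sarason}. The moral: keep the argument in the half-plane, where the translation-invariance of $\Omega$ does all the work both in building $E$ and in verifying the crosscut condition.
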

\begin{proof}
We first show that (1) implies (2). By Lemma~\ref{Lem:dense-complete} and  Corollary~\ref{Cor:no-int-diff}, $\hbox{Int}(\overline{\Omega})=\Omega$.

In order to show that $\C\setminus\overline\Omega$ is connected, we first note that by
Lemma~\ref{Lem:dense-complete}  and Proposition~\ref{Prop:Sarason}, (1) is equivalent to

\smallskip
($\star$) For every domain $E\subseteq H$ such that $\Omega\subsetneq E$ there exist $f\in H^\infty(E)$ and $z_0\in E$ such that $|f(z_0)|>\sup_{z\in \Omega}|f(z)|$.
\smallskip

Suppose by contradiction that $\C\setminus\overline{\Omega}$ is not connected.

Note that $\C\setminus\overline{\Omega}$ has a connected component, $U_0$ such that $\C\setminus\overline{H}\subset U_0$.   Let $E:=H\setminus \overline{U_0}$. Since we are assuming that $\C\setminus\overline{\Omega}$ is not connected, it follows that $\Omega\subsetneq E\subset H$.

Now, let $f\in H^\infty(E)$. Let $z_0\in E$. If $z_0\in\overline{\Omega}$ then $|f(z_0)|\leq \sup_{z\in \Om} |f(z)|$.

If $z_0\in E\setminus\overline{\Omega}$, then  $z_0$ belongs to a connected component $U_1$ of $\C\setminus \overline{\Omega}$ different from $U_0$. Since $\Omega$ is starlike at infinity, $z_0-t\not\in \overline{\Omega}$ for all $t\geq 0$. It follows that $z_0-t\in U_1$ for all $t\geq 0$. Hence
$z_0-t\notin U_0$ for all $t\geq 0$.

Since $\C\setminus \overline{H} \subset U_0$, this implies that the boundary of $H$ has to be  parallel to the real axis.
Up to a translation and reflection we can assume that $H=\{w\in\C:\Im w>0\}$.

Thus, $z_0:=x+iy$ for some $y>0$. We claim that there exists $y_0\in [0,y)$ such that, setting $Q:= \{w\in \C: \Im w>y_0\}$, we have
\begin{equation}\label{Eq:L-Q-Omega}
L_{y_0}\subset \overline{\Omega}\cap \overline{Q}.
\end{equation}
Indeed, let $t\geq 0$. Note that if $x-t+ir\not\in\overline{\Omega}$ for all $r\in [0, y]$, then $[-1,y]\ni r\mapsto x-t+ir$ would connect $U_1$ with $U_0$ in $\C\setminus\overline \Omega$, a contradiction. Thus, for every $t\geq 0$ there exists $r(t)\in [0,y]$ such that $x-t+ir\not\in\overline{\Omega}$ for all $r\in (r(t), y]$ and $x-t+ir(t)\in\overline{\Omega}$. Since $\Omega$ is starlike at infinity, then $a-t+ir(t)\in\overline{\Omega}$ for all $a\geq x$. Hence, $[0,+\infty)\ni t\mapsto r(t)$ is non-increasing. Let $y_0:=\lim_{t\to+\infty}r(t)$.  Since $a-t+ir(t)\in\overline{\Omega}$ for all $a\geq x$ and $t\geq 0$, it follows that  $L_{y_0}\subset\overline{\Omega}\cap \overline{Q}$, and ~\eqref{Eq:L-Q-Omega} holds.

It follows from ~\eqref{Eq:L-Q-Omega} that $L_{y_0}\cap U_0=\emptyset$.
Since $\C\setminus \overline{H}\subset U_0$ and $U_0$ is connected,
it follows that $\overline{U_0}\subset \C\setminus Q$. Hence $Q\subset E$.
Note also that $Q\cap \Omega$ is a  domain starlike at infinity, in particular, it is simply connected.

Recall that $f\in H^\infty(E)$, thus $f|_Q\in H^\infty(Q)$. We claim
\begin{equation}\label{Eq:first-sup-iQ}
 \sup_{ \zeta\in Q}|f(\zeta)|= \sup_{\zeta\in\Omega\cap Q}|f(\zeta)|.
\end{equation}
In order to prove \eqref{Eq:first-sup-iQ}, we denote by $T$ the Cayley transform which maps $Q$ onto $\D$ and let $G:=T(Q\cap\Omega)$. We show that every crosscut of $\D$ intersects $G$. By Lemma~\ref{Lem:top-cara-Sarason}, this implies that  $|f(T^{-1}(\zeta))|\leq \sup_{\xi\in G}|f(T^{-1}(\xi))|$ for all $\zeta\in \D$ and thus \eqref{Eq:first-sup-iQ} follows.

Assume by contradiction that there is a crosscut $\tilde{\Gamma}$ in $\D$ which does not intersect $G$. Let $\tilde{a},\tilde{b}\in\partial\D$, $\tilde a\neq \tilde b$ be the end points of $\tilde{\Gamma}$. Then $\tilde{\Gamma}\setminus\{\tilde{a},\tilde{b}\}\subset \D\setminus G$. Hence, $\Gamma:=T^{-1}(\tilde{\Gamma})$ is a Jordan arc in $\C_\infty$ with end points $a:=T^{-1}(\tilde{a})$ and $b:=T^{-1}(\tilde{b})$ such that $a, b\in \partial_\infty Q$, $a\neq b$ and $\Gamma\setminus\{a,b\}\subset Q\setminus\Omega$. In particular, either $a$ or $b$ is different from $\infty$. We assume $a\in L_{y_0}$. Let $u\in \Gamma\setminus\{a,b\}$ and let $\Gamma'$ be the Jordan arc given by the part of $\Gamma$ connecting $a$ and $u$. Since $\Omega$ is starlike at infinity, $\cup_{t\geq 0}(\Gamma'-t)\cap\Omega=\emptyset$. This implies easily that $L_{y_0}\not\subset \overline{\Omega}\cap \overline{Q}$, contradicting \eqref{Eq:L-Q-Omega}. Thus, \eqref{Eq:first-sup-iQ} holds.

Since $z_0\in Q$ by construction,  \eqref{Eq:first-sup-iQ} implies
\[
|f(z_0)|\leq  \sup_{ \zeta\in Q}|f(\zeta)|= \sup_{\zeta\in\Omega\cap Q}|f(\zeta)|\leq
\sup_{\zeta\in\Omega}|f(\zeta)|.
\]
Summing up, we proved that for every $z_0\in E$ and $f\in H^\infty(E)$ we have $|f(z_0)|\leq \sup_{\zeta\in\Omega}|f(\zeta)|$, contradicting ($\star$). Hence $\C\setminus\overline{\Omega}$ is connected. Thus (2) holds.

\medskip

Now, we show that (2) implies (1). By Lemma~\ref{Lem:dense-complete}
and Theorem~\ref{Thm:half-plane-group}, it is enough to show that $H^\infty(H)$ is weak-star dense in $H^\infty(\Omega)$. To this aim, let $T:H\to \D$ be a Riemann map and let $G:=T(\Omega)$. Hence, it is enough to prove that $H^\infty(\D)$ is weak-star dense in $H^\infty(G)$. Since $T$ is a Moebius transformation, hence a homeomorphism of $\C_\infty$, and by hypothesis (2), it follows easily that $\hbox{Int}(\overline{G})=G$ and $\C\setminus \overline{G}$ is connected. Thus the result follows from Lemma~\ref{Lem:approx-Hardy}.
\end{proof}

\begin{remark}
The previous theorem shows that Theorem~\ref{Thm:hyperbolic-group} cannot be improved, in the sense that $\hbox{span}\{e^{it}: t\leq 0\}$ (or $\hbox{span}\{e^{it}: t\geq 0\}$) is not weak-star dense in $H^\infty(\mathbb S)$.
\end{remark}

Now we consider the case when $\Omega$ is contained in a strip.
Notice that the set $\Re\Omega=\{\Re z: z\in \Omega\}$ is not bounded from below if and only if $\overline{\Om}$ contains some horizontal line $L_y$. Indeed,
if $\Re\Omega$ is not bounded from below, then one can find a sequence of points $w_n\in \Om$
such that $\Re w_n\to -\infty$ and $\Im w_n$ tend to a real number $y$. Since $\Om$ is starlike at infinity, $w_n+\R_+\subseteq \Om$ for all $n$, and it follows easily that $L_y$ is contained in $\overline{\Om}$.

We need a preliminary lemma:

\begin{lemma}\label{Lem:meaning-connect-strip}
Let $\Omega$ be the Koenigs domain of a non-elliptic semigroup in $\D$. Suppose $\Omega\subseteq \mathbb S$, where $\mathbb S$ is a strip. Then
\begin{enumerate}
\item  $\C\setminus \overline{\Omega}$ has exactly one connected component
if and only if $\Re\Om$ is bounded from below.
\item
Now assume that $\Re\Om$ is not bounded from below.
Set
\[
y_0=\inf\{y: L_y\subset \overline \Om\},
\quad
y_1=\sup\{y: L_y\subset \overline \Om\}.
\]
Then $\C\setminus\overline{\Om}$ has exactly two connected components if
$\{z\in \C: y_0\le \Im z\le y_1\}\subset \overline \Om$, and has more than two
connected components if
$\{z\in \C: y_0\le \Im z\le y_1\}\not\subset \overline \Om$.
\end{enumerate}
\end{lemma}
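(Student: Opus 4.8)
\emph{Setup.} In this paper a strip is horizontal, so $\Omega\subseteq\mathbb S$ forces the defining interval $I$ of $\Omega=\Omega_\psi$ to be bounded; write $\overline I=[\alpha,\beta]$ with $\alpha<\beta$ finite, and let $\psi_\ast(y):=\lim_{r\to0^+}\inf\{\psi(y'):y'\in I,\ |y'-y|<r\}$ be the lower semicontinuous regularization of $\psi$, valued in $[-\infty,+\infty]$ and extended to $y\in\overline I$. A short computation gives the slice description $\overline\Omega\cap L_y=\{x+iy:x\ge\psi_\ast(y)\}$ for $y\in\overline I$ (with $\overline\Omega\cap L_y=\emptyset$ when $y\notin\overline I$); in particular $L_y\subseteq\overline\Omega$ if and only if $\psi_\ast(y)=-\infty$. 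I will use two facts repeatedly: (i) since $\overline\Omega+t\subseteq\overline\Omega$ for $t\ge0$, both $\C\setminus\overline\Omega$ and each of its connected components are invariant under $z\mapsto z-t$, $t\ge0$ (join $z_0$ and $z_0-t$ by the segment between them, which avoids $\overline\Omega$); (ii) the half-planes $P_+:=\{\Im z>\beta\}$ and $P_-:=\{\Im z<\alpha\}$ are contained in $\C\setminus\overline\Omega$, and I write $W_\pm$ for the components containing them. By the remark preceding the lemma, $\Re\Omega$ is not bounded below if and only if $\Lambda:=\{y:L_y\subseteq\overline\Omega\}=\{y\in\overline I:\psi_\ast(y)=-\infty\}$ is nonempty; $\Lambda$ is closed and contained in $[\alpha,\beta]$, so when nonempty $y_0=\min\Lambda$ and $y_1=\max\Lambda$ are attained.

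\emph{Proof of (1).} If $\Re\Omega$ is bounded below, say $\overline\Omega\subseteq\{\Re z\ge m\}$, then $W:=P_+\cup P_-\cup\{\Re z<m\}$ is connected and contained in $\C\setminus\overline\Omega$, and any component of $\C\setminus\overline\Omega$ disjoint from the one containing $W$ would lie in $\{\Re z\ge m\}$, contradicting (i); hence $\C\setminus\overline\Omega$ is connected. Conversely, if $\Re\Omega$ is not bounded below, fix $y^\ast$ with $L_{y^\ast}\subseteq\overline\Omega$: any path in $\C\setminus\overline\Omega$ joining $P_+$ to $P_-$ would have to cross the line $L_{y^\ast}\subseteq\overline\Omega$, which is impossible, so $W_+\ne W_-$ and $\C\setminus\overline\Omega$ is disconnected.

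\emph{Proof of (2), ``exactly two'' case.} Assume $\Re\Omega$ unbounded below and $\{z:y_0\le\Im z\le y_1\}\subseteq\overline\Omega$ (equivalently $\Lambda=[y_0,y_1]$). By (1) there are at least two components, so it suffices to prove $\C\setminus\overline\Omega=W_+\cup W_-$. Let $z_0=x_0+i\eta\in\C\setminus\overline\Omega$. If $\eta\ge\beta$ (resp. $\eta\le\alpha$) then $z_0\in W_+$ (resp. $W_-$), since every neighbourhood of $z_0$ meets $P_+$ (resp. $P_-$); if $y_0\le\eta\le y_1$ then $z_0\in\overline\Omega$, which is excluded. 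The remaining ranges $y_1<\eta<\beta$ and $\alpha<\eta<y_0$ are symmetric, so I treat only the first, the crux of the lemma: I claim $z_0\in W_+$. On $(y_1,\beta)$ we have $\Lambda\cap(y_1,\beta)=\emptyset$, so $\psi_\ast$ is real-valued and lower semicontinuous there, hence bounded below on each compact subinterval; and $y_1<\beta$ forces $\beta\notin\Lambda$, i.e. $\psi_\ast(\beta)>-\infty$, from which one extracts $M>0$ and $\epsilon\in(0,\min(\beta-\eta,\beta-y_1))$ with $\psi_\ast(y)\ge-M$ for all $y\in(\beta-\epsilon,\beta]$. Let $m':=\min_{[\eta,\,\beta-\epsilon/2]}\psi_\ast$ and choose $x_1<\min(x_0,m',-M)$. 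Then the polygonal path $z_0\to x_1+i\eta\to x_1+i(\beta-\epsilon/2)\to x_1+i\beta\to x_1+i(\beta+1)$ stays in $\C\setminus\overline\Omega$: on the horizontal leg each point has real part $\le x_0<\psi_\ast(\eta)$; on the first vertical leg a point $x_1+iy$ satisfies $x_1<m'\le\psi_\ast(y)$; on the second vertical leg $x_1<-M\le\psi_\ast(y)$; and on the last leg $\Im z\ge\beta$. Its endpoint lies in $P_+\subseteq W_+$, whence $z_0\in W_+$; symmetrically $z_0\in W_-$ in the other range. Therefore $\C\setminus\overline\Omega=W_+\cup W_-$ has exactly two components.

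\emph{Proof of (2), ``more than two'' case, and main difficulty.} Now assume $\{z:y_0\le\Im z\le y_1\}\not\subseteq\overline\Omega$, i.e. $\Lambda\subsetneq[y_0,y_1]$; then $y_0<y_1$ and there is $y^\ast\in(y_0,y_1)$ with $\psi_\ast(y^\ast)\in\R$. Fix $x^\ast<\psi_\ast(y^\ast)$, so $z^\ast:=x^\ast+iy^\ast\notin\overline\Omega$, and let $C^\ast$ be its component. A path in $\C\setminus\overline\Omega$ from $z^\ast$ to $P_+$ would cross $L_{y_1}\subseteq\overline\Omega$ (since $y^\ast<y_1\le\beta$), and a path to $P_-$ would cross $L_{y_0}\subseteq\overline\Omega$; hence $C^\ast\ne W_+$ and $C^\ast\ne W_-$, and together with $W_+\ne W_-$ from (1) this shows $\C\setminus\overline\Omega$ has more than two components. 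I expect the genuinely delicate point to be the crux step above --- producing the explicit escape path from $z_0$ to $P_+$ --- which is exactly where lower semicontinuity and finiteness of $\psi_\ast$ on $(y_1,\beta)$, together with its boundedness below as $\Im z\to\beta$, are indispensable; the rest is formal, resting on the translation invariance (i) and the slice description of $\overline\Omega$.
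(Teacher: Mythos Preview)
Your proof is correct and follows essentially the same strategy as the paper's: separate via a contained line $L_y$ when $\Re\Omega$ is unbounded below, and in the two-component case connect any point of the complement to the appropriate half-plane $P_\pm$ by first translating left and then moving vertically. The only notable difference is packaging: you set up the slice description $\overline\Omega\cap L_y=\{x\ge\psi_\ast(y)\}$ and invoke lower semicontinuity of $\psi_\ast$ to build the escape path explicitly, whereas the paper argues by contradiction (if no far-left vertical segment avoids $\overline\Omega$, a compactness argument produces a line $L_y\subset\overline\Omega$ with $y$ outside $[y_0,y_1]$); both encode the same fact that $\psi_\ast$ is finite and locally bounded below on $(y_1,\beta)$.
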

\begin{proof}
(1) Since $\Omega$ is starlike at infinity, it follows that if $p,q \in \C\setminus\overline{\Omega}$, then $p-t, q-t\in \C\setminus \overline{\Omega}$ for all $t\geq 0$. Thus, if $\Omega\subset \{z\in\C:\Re z>R\}$,  then $p-t, q-t\in \{z\in\C:\Re z<R\}$ for $t$ large enough, and hence $p, q$ can be joined
in $\C\setminus \overline{\Omega}$
by a Jordan arc, showing that $\C\setminus \overline{\Omega}$ has one connected component.

Conversely, if
$\overline{\Om}$ contains a line $L_y$, then this line separates
$\C\setminus \overline{\Omega}$ into two open nonempty sets, so that
$\C\setminus \overline{\Omega}$ has at least two connected components.

(2) Suppose that $\{z\in \C: y_0\le \Im z\le y_1\}\not\subset \overline \Om$.
Notice that
	the lines $L_{y_0}$, $L_{y_1}$ are contained in $\overline{\Om}$.
	Hence $y_0\neq y_1$.

The set $\C\setminus (L_{y_0}\cup L_{y_1})$
 has three connected components. Clearly, each of these components contains a non-empty part of $\C\setminus \overline{\Omega}$. Therefore
$\C\setminus \overline{\Omega}$ has at least three connected components.

Now consider the remaining case when
$\{z\in \C: y_0\le \Im z\le y_1\}\subset \overline \Om$.
Then $\C\setminus\overline{\Om}$ is a disjoint union
of open sets $A_-:=\{z: \Im z<y_0\}\setminus \overline{\Om}$ and
$A_+:=\{z: \Im z>y_1\}\setminus \overline{\Om}$.
We assert that $A_-$ and $A_+$ are the two connected components of
$\C\setminus\overline{\Om}$. We need only to check that these two sets are connected.
Notice that for any sequence of points $\{w_n\}\subseteq \overline{\Om}$
such that $\lim_{n\to\infty}\Re w_n=-\infty$ and the limit $y:=\lim_{n\to\infty} \Im w_n$ exists, one has
$y_0\le y\le y_1$.

Let $a<b$ be real numbers such that
\[
\mathbb S=\{z\in \C: a<\Im z<b\}.
\]
Fix any point $z_0$ with $\Im z_0< a$, then $z_0\in A_-$.
Take any $z_1\in A_-$ with $a\le \Im z_0 <y_0$.
We assert that there exists $t<\min(\Re z_0, \Re z_1)$ such that the interval
$[t+i\Im z_0, t+i\Im z_1]$ is contained in $\C\setminus \overline{\Om}$.
Indeed, if it were not the case, then
one could find a sequence of points $t_n+is_n\in \overline{\Om}$ such
that $t_n\to-\infty$ and $\Im z_0 \le s_n\le \Im z_1$ for all $n$.
As in the argument preceding the formulation of Lemma, this would imply
that there is a line $L_y\subset\overline{\Om}$ with
$\Im z_0 \le y\le \Im z_1<y_0$. This contradicts the definition of $y_0$.

If $[t+i\Im z_0, t+i\Im z_1]$ is contained in $\C\setminus \overline{\Om}$,
then the union of three segments $[z_0, t+i\Im z_0]$,
$[t+i\Im z_0, t+i\Im z_1]$, $[t+i\Im z_1, z_1]$ connects $z_0$ with
$z_1$ inside $\C\setminus \overline{\Om}$. It follows that $A_-$ is connected.

A similar argument shows that $A_+$ is connected.
\end{proof}

\begin{theorem}\label{Thm:density-in-strip}
Let $\Omega$ be the Koenigs domain of a non-elliptic semigroup in $\D$. Suppose $\Omega\subseteq \mathbb S$, where $\mathbb S$ is a strip. The following are equivalent:
\begin{enumerate}
\item $\mathcal E_\infty(\mathbb S)$ is weak-star dense in $H^\infty(\Omega)$.
\item $\hbox{Int}(\overline{\Omega})=\Omega$ and $\C\setminus\overline{\Omega}$ has at most two connected components.
\end{enumerate}
\end{theorem}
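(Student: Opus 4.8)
The plan is to imitate the proof of Theorem~\ref{Thm:density-in-half-plane}, replacing the half-plane by the strip $\mathbb S$, Theorem~\ref{Thm:half-plane-group} by Theorem~\ref{Thm:hyperbolic-group}, and using Lemma~\ref{Lem:meaning-connect-strip} as the dictionary between the number of connected components of $\C\setminus\overline\Omega$ and the geometry of $\Omega$ inside $\mathbb S$. As a first reduction, by Lemma~\ref{Lem:dense-complete} together with Theorem~\ref{Thm:hyperbolic-group}, condition (1) is equivalent to the weak-star density of $H^\infty(\mathbb S)$ in $H^\infty(\Omega)$, and by Proposition~\ref{Prop:Sarason} this is in turn equivalent to the condition ($\star$): \emph{for every domain $E$ with $\Omega\subsetneq E\subseteq\mathbb S$ there exist $f\in H^\infty(E)$ and $z_0\in E$ with $|f(z_0)|>\sup_{z\in\Omega}|f(z)|$.}

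For the implication $(1)\Rightarrow(2)$: since $\hbox{Int}(\overline{\mathbb S})=\mathbb S$, Corollary~\ref{Cor:no-int-diff} gives $\hbox{Int}(\overline\Omega)=\Omega$. To see that $\C\setminus\overline\Omega$ has at most two connected components, argue by contradiction: if it had at least three, then by Lemma~\ref{Lem:meaning-connect-strip} the set $\Re\Omega$ would not be bounded below, the lines $L_{y_0}$ and $L_{y_1}$ would be contained in $\overline\Omega$ (here $y_0<y_1$ are the infimum and supremum of $\{y:L_y\subset\overline\Omega\}$), and $\{z:y_0\le\Im z\le y_1\}\not\subset\overline\Omega$. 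The idea is then to manufacture a domain $E$ with $\Omega\subsetneq E\subseteq\mathbb S$ for which the conclusion of ($\star$) fails, i.e. $\sup_E|f|=\sup_\Omega|f|$ for every $f\in H^\infty(E)$. A natural candidate is $E:=\mathbb S\setminus\overline C$, where $C$ is a connected component of $\C\setminus\overline\Omega$ contained in the open band $M:=\{z:y_0<\Im z<y_1\}$ (such a $C$ exists: by $L_{y_0}\cup L_{y_1}\subset\overline\Omega$ the component of any point of $M\setminus\overline\Omega$ stays inside $M$); that $\Omega\subsetneq E$ is checked using the confinement of $\overline\Omega$ near $\Re z=-\infty$ to the band $\{y_0\le\Im z\le y_1\}$ recorded in Lemma~\ref{Lem:meaning-connect-strip}. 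To prove $\sup_E|f|\le\sup_\Omega|f|$ one distinguishes cases according to $\Im z_0$ for $z_0\in E$: the cases $z_0\in\overline\Omega$ and $z_0\in L_{y_0}\cup L_{y_1}$ are immediate; for $\Im z_0>y_1$ one works inside the horizontal sub-strip $Q_+:=\{z:y_1<\Im z<b\}\subseteq E$ and runs, exactly as in the proof of Theorem~\ref{Thm:density-in-half-plane}, the argument via a conformal map of $Q_+$ onto $\D$ and Lemma~\ref{Lem:top-cara-Sarason} --- using $L_{y_1}\subset\overline\Omega$ and the starlikeness at infinity of $\Omega$, so that a crosscut of $\D$ missing the image of $Q_+\cap\Omega$ would pull back to an arc whose leftward translation shadow contradicts $L_{y_1}\subset\overline\Omega$ --- obtaining $\sup_{Q_+}|f|=\sup_{Q_+\cap\Omega}|f|\le\sup_\Omega|f|$; the case $\Im z_0<y_0$ is symmetric, with $Q_-:=\{z:a<\Im z<y_0\}$ and $L_{y_0}$ in place of $Q_+$ and $L_{y_1}$. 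This contradicts ($\star$).

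For the implication $(2)\Rightarrow(1)$: by Lemma~\ref{Lem:dense-complete} and Theorem~\ref{Thm:hyperbolic-group} it suffices to show that $H^\infty(\mathbb S)$ is weak-star dense in $H^\infty(\Omega)$. If $\C\setminus\overline\Omega$ is connected, write $\mathbb S\subseteq H:=\{\Im z<b\}$; Theorem~\ref{Thm:density-in-half-plane} applied to $\Omega\subseteq H$ shows that $\mathcal E_\infty(H)$ is weak-star dense in $H^\infty(\Omega)$, and since $\mathbb S\subseteq H$ implies $\mathcal E_\infty(H)\subseteq\mathcal E_\infty(\mathbb S)$ (Proposition~\ref{Prop:easy}(5)), (1) follows. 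If $\C\setminus\overline\Omega$ has exactly two components, then by Lemma~\ref{Lem:meaning-connect-strip} they are $A_-=\{\Im z<y_0\}\setminus\overline\Omega$ and $A_+=\{\Im z>y_1\}\setminus\overline\Omega$, with $\{z:y_0\le\Im z\le y_1\}\subset\overline\Omega$; one ``caps off'' $A_-$ by setting $\widetilde\Omega:=\hbox{Int}(\overline\Omega\cup\{z:\Im z\le y_0\})$, which is a simply connected domain starlike at infinity --- hence the Koenigs domain of a non-elliptic semigroup --- contained in the half-plane $\{\Im z<b\}$, and satisfies $\hbox{Int}(\overline{\widetilde\Omega})=\widetilde\Omega$ and $\C\setminus\overline{\widetilde\Omega}=A_+$ connected. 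Theorem~\ref{Thm:density-in-half-plane} then gives that $\mathcal E_\infty(\{\Im z<b\})$ is weak-star dense in $H^\infty(\widetilde\Omega)$, and one checks via Proposition~\ref{Prop:Sarason} that $H^\infty(\widetilde\Omega)$ is weak-star dense in $H^\infty(\Omega)$ --- here the new piece $\widetilde\Omega\setminus\Omega$ sits on the $L_{y_0}$-side, and since $L_{y_0}\subset\overline\Omega$ and $\Omega$ is starlike at infinity the same crosscut argument as above applies --- and composing the two density statements yields (1).

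The step I expect to be the main obstacle is the construction of the correct domain $E$ in the implication $(1)\Rightarrow(2)$. Since the strip has two edges and, unlike a half-plane, is not a Jordan domain in $\C_\infty$, the complement $\C\setminus\overline\Omega$ may have several components inside the band $M$, and $\Omega$ may be pinched against one or both edges of $\mathbb S$; one must then produce a single $E$, strictly between $\Omega$ and $\mathbb S$, that fills every offending obstruction of $\overline\Omega$ while keeping $\Omega$ ``full enough'' in $E$ for the maximum-principle/crosscut estimate to force $\sup_E|f|=\sup_\Omega|f|$, and one must also treat the degenerate configurations separately. Once the correct $E$ is isolated, each individual estimate is a transcription of the corresponding step in the proof of Theorem~\ref{Thm:density-in-half-plane}.
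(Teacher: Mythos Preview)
Your attack on $(1)\Rightarrow(2)$ has a genuine gap, precisely at the point you flag. With $E=\mathbb S\setminus\overline C$ and the \emph{outer} strip $Q_+=\{y_1<\Im z<b\}$, the crosscut argument breaks down: a crosscut of $\D$ both of whose endpoints lie on the image of $L_b$ (or at the two infinities of $Q_+$) pulls back to an arc $\Gamma\subset Q_+\setminus\Omega$ whose leftward shadow $\bigcup_{t\ge0}(\Gamma-t)$ stays in $\{\Im z>y_1\}$ and never touches $L_{y_1}$, so you get no contradiction with $L_{y_1}\subset\overline\Omega$; nothing forces $\Omega\cap Q_+$ to be ``large'' in $Q_+$. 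Your case split also omits points $z_0\in M\setminus(\overline\Omega\cup\overline C)$, which occur whenever $\C\setminus\overline\Omega$ has more than one component inside $M$. The paper resolves both issues by taking $E:=\Omega\cup Q$ with $Q:=\{y_0<\Im z<y_1\}$ the \emph{inner} band and running the crosscut argument in $Q$ itself. There \emph{both} edges $L_{y_0},L_{y_1}$ lie in $\overline\Omega$: a crosscut with a finite endpoint on $L_{y_0}$ or $L_{y_1}$ is handled exactly as in Theorem~\ref{Thm:density-in-half-plane}, while the only remaining case---both endpoints at the two infinities of $Q$, so $\Gamma$ runs from $\Re z=-\infty$ to $\Re z=+\infty$---is dispatched by observing that such a $\Gamma\subset\C\setminus\Omega$ separates $\C$ into a piece containing $L_{y_0}$ and one containing $L_{y_1}$, and $\Omega$, being connected, must miss one of them.

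For $(2)\Rightarrow(1)$ your one-component reduction to Theorem~\ref{Thm:density-in-half-plane} via $\mathbb S\subset H$ is correct and a bit slicker than what the paper does in that case. For two components the paper does not pass through an auxiliary $\widetilde\Omega$: it maps $\mathbb S$ onto $\D$ by the explicit $g(w)=(e^w-1)/(e^w+1)$, sets $G=g(\Omega)$, checks directly that $\hbox{Int}(\overline G)=G$ and $\C\setminus\overline G$ is connected---the latter because Lemma~\ref{Lem:meaning-connect-strip} lets one join any $p\in(\C\setminus\overline\Omega)\cap\mathbb S$ to $\partial\mathbb S$ inside $\mathbb S\setminus\overline\Omega$---and concludes with Lemma~\ref{Lem:approx-Hardy}. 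Your route via $\widetilde\Omega$ is plausible, but ``$H^\infty(\widetilde\Omega)$ weak-star dense in $H^\infty(\Omega)$'' is not established by ``the same crosscut argument as above'' (that argument, run in $Q_-$, has the same defect as in $Q_+$); carrying it out correctly amounts to essentially the same Carath\'eodory verification the paper performs with $\mathbb S$.
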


\begin{proof}
	Let $a<b$ be real numbers such that $\mathbb S=\{w\in\C: a<\Im w<b\}$.
	
	Suppose (1) holds. By Lemma~\ref{Lem:dense-complete} and by Corollary~\ref{Cor:no-int-diff}, $\hbox{Int}(\overline{\Omega})=\Omega$.
	
	Also, by Lemma~\ref{Lem:dense-complete} and Proposition~\ref{Prop:Sarason}, (1) is equivalent to
	
	\smallskip
	($\star\star$) For every domain $E\subseteq \mathbb S$ such that $\Omega\subsetneq E$ there exist $f\in H^\infty(E)$ and $z_0\in E$ such that $|f(z_0)|>\sup_{z\in \Omega}|f(z)|$.
	\smallskip
	
	Assume now by contradiction that $\C\setminus\overline{\Omega}$ has more than two connected components.
	Define $y_0, y_1$ as in Lemma~\ref{Lem:meaning-connect-strip}, and
    let
	\[
	Q:=\{w\in \C: y_0 < \Im w < y_1\}.
	\]
	By Lemma~\ref{Lem:meaning-connect-strip},
	 $L_{y_0}, L_{y_1}\subset\overline{\Omega}$ and
	 $Q\not\subset \overline{\Omega}$.
	Let
	\[
	E:=\Omega\cup Q.	
	\]
	It is easy to see that, by construction, $E$ is a domain starlike at infinity, $\Omega\subsetneq E\subseteq \mathbb S$.
	
	Now let $f\in H^\infty(E)$,
	$M:=\sup_{\zeta\in\Omega}|f(\zeta)|$ and $z\in E$. If $z\in\overline{\Omega}\cap E$ then clearly $|f(z)|\leq M$. If $z\in E\setminus\overline{\Omega}$, then  by construction, $z\in Q$.
	
	Let  $g$ be a Riemann map from $Q$ onto $\D$ and let $G:=g(Q\cap\Omega)$. Note that $Q\cap \Omega$ is a (simply connected) starlike at infinity domain, so $G$ is simply connected. We claim that every crosscut of $\D$ intersects $G$. Once the claim is proved, by Lemma~\ref{Lem:top-cara-Sarason}, this implies that  $|f(g^{-1}(\zeta))|\leq \sup_{\xi\in \D}|f(g^{-1}(\xi))|=\sup_{\xi\in G}|f(g^{-1}(\xi))|$ for all $\zeta\in \D$ and thus
	\[
	|f(z)|\leq \sup_{\zeta\in Q}|f(\zeta)|=\sup_{\zeta\in Q\cap\Omega}|f(\zeta)|\leq M.
	\]
	Hence, $|f(z)|\leq \sup_{\zeta\in \Omega}|f(\zeta)|$ for every $z\in E$,  contradicting  ($\star\star$).
	
	So we have to prove that every crosscut of $\D$ intersects $G$. Assume by contradiction that there is a crosscut $\tilde{\Gamma}$ in $\D$ which does not intersect $G$. Let $\tilde{a},\tilde{b}\in\partial\D$, $\tilde a\neq \tilde b$ be the end points of $\tilde{\Gamma}$. Then $\tilde{\Gamma}\setminus\{\tilde{a},\tilde{b}\}\subset \D\setminus G$.
	
	Now,
	by applying translations and dilations, we can assume that
	$y_0=-\pi/2, y_1=\pi/2$.
	Then we can take $g(w)=\frac{e^w-1}{e^w+1}$. In particular, $\lim_{\Re w\to -\infty}g(w)=-1$, $\lim_{\Re w\to +\infty}g(w)=1$ and $g$ extends continuously and injectively  on $\partial  Q$, with values different from $\pm 1$.
	
	Hence, $\Gamma:=g^{-1}(\tilde{\Gamma})$ is a Jordan arc in $\C_\infty$ with end points $a:=g^{-1}(\tilde{a})$ and $b:=g^{-1}(\tilde{b})$ such that $a, b\in \partial_\infty Q$,  and $\Gamma\setminus\{a,b\}\subset Q\setminus\Omega$. Moreover, if either $a\in\partial Q$ or $b\in \partial Q$ then $a\neq b$.
	
	In case $a\in\partial Q$ or $b\in\partial Q$, we get a contradiction arguing as in Theorem~\ref{Thm:density-in-half-plane}.
	
	In case $a=b=\infty$, let $\gamma:\R\to\C$ be a continuous injective parametrization of $\Gamma\cap \C$. Hence $y_0<\Im \gamma(t)<y_1$ for all $t\in \R$ and $\lim_{t\to\pm\infty}\gamma(t)=\infty$ in $\C_\infty$. Now, the curve $\R\ni t\mapsto g(\gamma(t))$ parametrizes $\tilde{\Gamma}\cap \D$, and, by the previous discussion on $g$,  we can assume $\lim_{t\to+\infty}g(\gamma(t))=1$ and $\lim_{t\to-\infty}g(\gamma(t))=-1$. It follows  that $\lim_{t\to+\infty}\Re\gamma(t)=+\infty$ and $\lim_{t\to-\infty}\Re\gamma(t)=-\infty$. Then $\Gamma\cap\C$ is a Jordan arc outside $\Omega$ which divides $\C$ into two connected components, one, say $V_1$ containing $L_{y_1}$, and the other, say $V_0$, containing $L_{y_0}$. Then $\Omega$ is contained either in $V_0$ or in $V_1$. If $\Omega\subset V_0$ then clearly $L_{y_1}\not\subset\overline{\Omega}$, while, if $\Omega\subset V_1$ then  $L_{y_0}\not\subset\overline{\Omega}$. In both cases, we have a contradiction and the claim is proved.
	
	Assume (2) holds. Let $g:\mathbb S\to \D$ be a Riemann map. As before, we assume that $y_0=-\pi/2, y_1=\pi/2$ and $g(w)=\frac{e^w-1}{e^w+1}$.
	
	Let $G:=g(\Omega)$. We claim that  $\C\setminus \overline{G}$ is connected and $\hbox{Int}(\overline{G})=G$. As soon as the claim is proved, arguing as in the proof of ``(2) implies (1)'' of Theorem~\ref{Thm:density-in-half-plane}, one can show that (1) holds.
	
	Let us prove the claim. It is clear that $\hbox{Int}(\overline{G})=G$ because $g:\mathbb S\to\D$ is a homeomorphism and $\hbox{Int}(\overline{\Omega})=\Omega$.
	
	Next, note that $\C\setminus\overline{\Omega}=\C_\infty\setminus\overline{\Omega}^\infty$.
	If $\C\setminus \overline{\Omega}$---and hence $\C_\infty\setminus \overline{\Omega}^\infty$---has one connected component, then by Lemma~\ref{Lem:meaning-connect-strip}, $\Omega\subset H:=\{z\in\C:\Re z>R\}$, for some $R\in \R$. Since $H\cap \mathbb S$ is a Jordan domain in $\C_\infty$, it follows by the Jordan-Schoenflies theorem that $g|_{H\cap \mathbb S}$ extends  to a homeomorphism $\tilde{g}:\C_\infty\to\C_\infty$. Taking into account that $G=g(\Omega)=\tilde{g}(\Omega)$, we have that $\C\setminus\overline{G}$ is connected.
	
	If $\C\setminus \overline{\Omega}$ has two connected components, by Lemma~\ref{Lem:meaning-connect-strip}, it follows that if $p\in (\C\setminus \overline{\Omega})\cap \mathbb S$ then there exists  a Jordan arc $\Gamma$ in $\C$ with one end point $p$ and the other end  point $\xi\in\partial\mathbb S$ such that $\Gamma\setminus\{\xi\}\subset\mathbb S$ and
	$\Gamma\cap \overline{\Omega}=\emptyset$. Therefore, $g(\Gamma)$ is a Jordan arc in $\D\setminus \overline{G}$ which connects $g(p)$ to a point of $\partial \D$, from which it follows immediately that $\C\setminus \overline{G}$ is connected.
\end{proof}

The next corollary complements the previous theorems:

\begin{corollary}\label{Cor:complement}
Let $\Omega$ be the Koenigs domain of a non-elliptic semigroup in $\D$.
\begin{enumerate}
\item Suppose there is a strip $\mathbb S$ such that
$\Omega\subseteq \mathbb S$.  Then the following are equivalent:
\begin{itemize}
\item[(1.a)] $\mathcal E_\infty(\mathbb S)$ is weak-star dense in $H^\infty(\Omega)$.
\item[(1.b)] $\mathcal E_\infty(\Omega)$ is weak-star dense in $H^\infty(\Omega)$.
\end{itemize}
\item Suppose there is a half-plane $H$ such that
$\Omega\subseteq H$ and $\Omega$ is not contained in any  strip.  Then the following are equivalent:
\begin{itemize}
\item[(2.a)] $\mathcal E_\infty(H)$ is weak-star dense in $H^\infty(\Omega)$.
\item[(2.b)] $\mathcal E_\infty(\Omega)$ is weak-star dense in $H^\infty(\Omega)$.
\end{itemize}
\end{enumerate}
\end{corollary}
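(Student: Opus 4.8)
The plan is to obtain the statement as a rather direct consequence of Theorems~\ref{Thm:density-in-strip}--\ref{Thm:density-in-half-plane} and Proposition~\ref{Prop:Sarason}. First note that a strip, and a half-plane that contains a starlike at infinity domain, are themselves Koenigs domains (of a hyperbolic, resp.\ of a parabolic, semigroup); hence Proposition~\ref{Prop:easy}(5) gives $\mathcal{E}_\infty(\mathbb S)\subseteq\mathcal{E}_\infty(\Omega)$ and $\mathcal{E}_\infty(H)\subseteq\mathcal{E}_\infty(\Omega)$, so that $(1.a)\Rightarrow(1.b)$ and $(2.a)\Rightarrow(2.b)$ are immediate. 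For the reverse implications, by Theorems~\ref{Thm:density-in-strip} and~\ref{Thm:density-in-half-plane} it suffices to show that if $\mathcal{E}_\infty(\Omega)$ is weak-star dense in $H^\infty(\Omega)$, then $\hbox{Int}(\overline{\Omega})=\Omega$ and $\C\setminus\overline{\Omega}$ has at most two connected components (resp.\ is connected). I would argue by contraposition, using the following trivial but decisive mechanism: if in a ``bad'' configuration one can produce a \emph{simply connected} domain $E$ with $\Omega\subsetneq E$ such that every function of $\mathcal{E}_\infty(\Omega)$ extends to a bounded holomorphic function on $E$, and such that $H^\infty(E)$ is \emph{not} weak-star dense in $H^\infty(\Omega)$, then $\mathcal{E}_\infty(\Omega)$, being contained in the (restriction of the) non-weak-star-dense subspace $H^\infty(E)$, cannot be weak-star dense either.

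To treat the case $\hbox{Int}(\overline{\Omega})\supsetneq\Omega$, I would take $E:=\hbox{Int}(\overline{\Omega})$. One checks that $E$ is a simply connected domain, proper in $\C$ (since $\Omega\subseteq\mathbb S$ or $H$), in which $\Omega$ is dense: $E$ is starlike at infinity because $\overline{\Omega}$ is; a starlike at infinity open set has a complement which is a union of leftward half-lines, hence has only unbounded complementary components and is therefore simply connected; and $E$ is connected because the connected set $\Omega$ is dense in it. Each $e^{\lambda z}\in H^\infty(\Omega)$ is entire and, by Proposition~\ref{Prop:easy}(4), bounded on $\overline{\Omega}\supseteq E$, whence $\mathcal{E}_\infty(\Omega)\subseteq H^\infty(E)|_\Omega$. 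Finally, density of $\Omega$ in $E$ forces $\sup_E|f|=\sup_\Omega|f|$ for every $f\in H^\infty(E)$, so condition (2) of Proposition~\ref{Prop:Sarason} (applied with $D=E$, taking the domain there to be $E$ itself) cannot hold; hence $H^\infty(E)$ is not weak-star dense in $H^\infty(\Omega)$. By the mechanism above this contradicts the hypothesis, so $\hbox{Int}(\overline{\Omega})=\Omega$.

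For the complement condition, the common point in the remaining bad cases is that $\overline{\Omega}$ contains a horizontal line $L_{y_0}$: in the strip case this is exactly Lemma~\ref{Lem:meaning-connect-strip}(1) once $\C\setminus\overline{\Omega}$ has more than one component; in the half-plane case it follows, together with the fact that $\partial H$ must be parallel to $\R$ (so that, after a vertical translation and possibly a conjugation, $H=\{\Im z>0\}$), from a short topological argument exploiting that $\C\setminus\overline{\Omega}$ is invariant under $z\mapsto z-t$ when $\Omega$ is starlike at infinity (a leftward half-line in a bounded complementary component or sitting ``above'' the line $\partial H$ must eventually reach the component containing $\C\setminus\overline{H}$). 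Since $L_{y_0}\subseteq\{\Re(\lambda z)\le M\}$ forces $\Re\lambda=0$, one gets $\Lambda_\infty(\Omega)\subseteq i\R$; combined with $\Omega\subseteq\mathbb S$ (resp.\ $\Omega\subseteq\{\Im z>0\}$ and $\Omega$ contained in no strip, which pins down $\Lambda_\infty(\Omega)=\{is:s\ge 0\}$) this yields that every element of $\mathcal{E}_\infty(\Omega)$ is bounded on $\mathbb S$ (resp.\ on $H$), i.e.\ $\mathcal{E}_\infty(\Omega)\subseteq H^\infty(\mathbb S)|_\Omega$ (resp.\ $\subseteq H^\infty(H)|_\Omega$). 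Taking $E=\mathbb S$ (resp.\ $E=H$) — note that $\Omega\subsetneq E$ precisely in these configurations — and using that $H^\infty(\mathbb S)$ (resp.\ $H^\infty(H)$) is \emph{not} weak-star dense in $H^\infty(\Omega)$, which is Theorem~\ref{Thm:density-in-strip} (resp.\ Theorem~\ref{Thm:density-in-half-plane}) reformulated through Lemma~\ref{Lem:dense-complete} and Theorem~\ref{Thm:hyperbolic-group} (resp.\ Theorem~\ref{Thm:half-plane-group}) when the topological condition fails, the mechanism of the first paragraph produces the desired contradiction. Together with the previous paragraph this gives $\C\setminus\overline{\Omega}$ at most two components (resp.\ connected), and then Theorems~\ref{Thm:density-in-strip} and~\ref{Thm:density-in-half-plane} yield $(1.b)\Rightarrow(1.a)$ and $(2.b)\Rightarrow(2.a)$.

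I do not expect a single genuinely hard step; the difficulty is organisational — choosing the right comparison domain $E$ in each configuration, and, above all, verifying that every exponential in $\mathcal{E}_\infty(\Omega)$ actually extends boundedly to $E$, which rests entirely on locating the frequencies (the elementary but crucial observation that a horizontal line inside $\overline{\Omega}$ forces $\Lambda_\infty(\Omega)$ onto the imaginary axis). The point I expect to demand the most care is the purely topological claim used in the disconnected half-plane case, namely that $\partial H$ must be horizontal and that $\overline{\Omega}$ then contains a full horizontal line.
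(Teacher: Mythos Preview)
Your argument is correct, but the paper proceeds differently and, arguably, more economically. Instead of proving by contraposition that (1.b)/(2.b) forces the topological conditions on $\Omega$, the paper introduces the \emph{convex hull} $Q$, defined as the intersection of all half-planes containing $\Omega$; then $\Omega\subseteq Q\subseteq D$, $Q$ is convex and starlike at infinity, and by Proposition~\ref{Prop:easy}(4) one has $\Lambda_\infty(Q)=\Lambda_\infty(\Omega)$, hence $\mathcal E_\infty(Q)=\mathcal E_\infty(\Omega)$. Assuming (1.b)/(2.b), Lemma~\ref{Lem:dense-complete} gives that $H^\infty(Q)$ is weak-star dense in $H^\infty(\Omega)$; since $Q$ is convex, it automatically satisfies $\hbox{Int}(\overline{Q})=Q$ and $\C\setminus\overline{Q}$ has at most two components, so Theorems~\ref{Thm:density-in-half-plane}/\ref{Thm:density-in-strip} applied to $Q$ give that $\mathcal E_\infty(D)$ is weak-star dense in $H^\infty(Q)$, and composition yields (1.a)/(2.a). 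Your approach instead locates $\Lambda_\infty(\Omega)$ directly: the observation that a horizontal line in $\overline{\Omega}$ forces $\Lambda_\infty(\Omega)\subseteq i\R$ (so in the bad configurations $\mathcal E_\infty(\Omega)=\mathcal E_\infty(D)$, making (1.b) and (1.a) literally the same statement) is a nice insight, but to get the horizontal line you are essentially replaying a chunk of the proof of Theorem~\ref{Thm:density-in-half-plane}; also your detour through $H^\infty(\mathbb S)$ via Lemma~\ref{Lem:dense-complete} is unnecessary once you know $\mathcal E_\infty(\Omega)=\mathcal E_\infty(\mathbb S)$, since Theorem~\ref{Thm:density-in-strip} then applies directly. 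The paper's convex-hull trick handles both halves of the topological condition at once and avoids the case analysis and the horizontal-line lemma entirely.
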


\begin{proof} Since $\mathcal E_\infty(\mathbb S)\subseteq \mathcal E_\infty(\Omega)$, it is clear that (1.a) implies (1.b) ({\sl respectively}, (2.a) implies (2.b)), so we are left to show that (1.b) implies (1.a) ({\sl respect.}, (2.b) implies (2.a)).

Let $D=\mathbb S$ or $D=H$, according to hypothesis (1) or (2).

If $\Lambda_\infty(\Omega)=\Lambda_\infty(D)$, there is nothing to prove. Thus,  we assume

\smallskip
\noindent $\Lambda_\infty(D)\subsetneq \Lambda_\infty(\Omega)$ and $\mathcal E_\infty(\Omega)$ is  weak-star dense in $H^\infty(\Omega)$.
 \smallskip

Let $\mathcal H$ be the family of all half-planes which contain $\Omega$. Note that each $V\in \mathcal H$ is starlike at infinity and convex. Then let $Q$ be the intersection of all $V\in \mathcal H$. It is clear that $Q$ is a convex domain, starlike at infinity and $\Omega\subseteq Q\subseteq D$.

By Proposition~\ref{Prop:easy}.(4), $\Lambda_\infty(Q)=\Lambda_\infty(\Omega)$. Thus  $\mathcal E_\infty(Q)=\mathcal E_\infty(\Omega)$, and, since we are assuming that $\mathcal E_\infty(\Omega)$ is weak-star dense in $H^\infty(\Omega)$,
Lemma~\ref{Lem:dense-complete} implies that $H^\infty(Q)$ is weak-star dense in $H^\infty(\Omega)$.

Moreover, since $Q$ is convex, then $\C\setminus\overline Q$ is either  connected or has two connected components, and the latter possibility occurs if and only if $Q$ is a strip.

Therefore, if $\C\setminus\overline Q$ is connected, by either Theorem~\ref{Thm:density-in-strip} or Theorem~\ref{Thm:density-in-half-plane} (depending whether we are in case (1) or (2)), we have that $\mathcal E_\infty(D)$ is weak-star dense in $H^\infty(Q)$, and since the latter is weak-star dense in $H^\infty(\Omega)$, we have that (1.b) implies (1.a) ({\sl respect.}, (2.b) implies (2.a)).

If $\C\setminus\overline Q$ has two connected components, then $Q$ is a strip. Hence, we are in case (1), and arguing as before using Theorem~\ref{Thm:density-in-strip} we are done.
\end{proof}

In case $\Omega$ is not contained in a half-plane, then $\Omega$ has no $\infty$-frequencies, but it might have $p$-frequencies. For this case, we have a sufficient condition for completeness:

\begin{theorem}\label{Thm:density-in-log}
Let $\Omega$ be the Koenigs domain of a non-elliptic semigroup in $\D$. Suppose $\Omega\subseteq D$, where $D$ is the rotation of  logarithmic starlike at infinity domain with $p$-frequencies (that is, \eqref{Eq:full-freq} is satisfied). If $\hbox{Int}(\overline{\Omega})=\Omega$ and $\C\setminus \overline{\Omega}$ is connected, then $\mathcal E_p(D)$---and hence $\mathcal E_p(\Omega)$---is dense in $H^p(\Omega)$ for all $p\in[1,\infty)$.
\end{theorem}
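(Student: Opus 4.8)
The plan is to reduce the statement to the logarithmic domain $D$, where Theorem~\ref{thm-log-domains} already provides the density, and then to transport the approximation from $D$ down to $\Omega$ using the topological hypotheses. Write $D=e^{i\theta}\Omega_0$, where $\Omega_0$ is a logarithmic starlike at infinity domain satisfying \eqref{Eq:full-freq}. Applying Lemma~\ref{Lem:under-affine} to the linear map $T(z)=e^{-i\theta}z$ (so that $D=T^{-1}(\Omega_0)$), together with Theorem~\ref{thm-log-domains}(2), we get that $\mathcal E_p(D)$ is dense in $H^p(D)$ for every $p\in[1,\infty)$. On the other hand, since $\Omega\subseteq D$, restricting harmonic majorants exactly as in the proof of Proposition~\ref{Prop:easy}(5) gives $\Lambda_p(D)\subseteq\Lambda_p(\Omega)$, hence $\mathcal E_p(D)\subseteq\mathcal E_p(\Omega)$, and the restriction map $\iota\colon H^p(D)\to H^p(\Omega)$ is a continuous injection. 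Thus it suffices to show that $\iota$ has dense image, i.e.\ that $H^p(D)$ is dense in $H^p(\Omega)$: once this is known, $\iota(\mathcal E_p(D))$ is dense in $\iota(H^p(D))$, which is dense in $H^p(\Omega)$, so $\mathcal E_p(D)$ --- and a fortiori the larger space $\mathcal E_p(\Omega)$ --- is dense in $H^p(\Omega)$.

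To prove that $H^p(D)$ is dense in $H^p(\Omega)$, I would pass to a conformal model in which one can invoke Lemma~\ref{Lem:approx-Hardy}. As noted after Definition~\ref{Def:log-star-dom}, $D$ is a Jordan domain in $\C_\infty$; composing the biholomorphism $\alpha\circ T_b$ of Lemma~\ref{lem-alpha} with the rotation $z\mapsto e^{-i\theta}z$ yields a biholomorphism $\beta\colon D\to G_0$ onto a bounded Jordan domain $G_0\subset\C$ that extends to a homeomorphism $\Phi\colon\overline{D}^{\infty}\to\overline{G_0}$ of the closures in $\C_\infty$. Set $G:=\beta(\Omega)\subset G_0$; by conformal invariance of $H^p$ it is enough to show $H^p(G_0)$ is dense in $H^p(G)$. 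The key step is that $G$ inherits the topological hypotheses on $\Omega$. Since $\Phi$ is a homeomorphism of the closures, $\overline G=\Phi(\overline{\Omega}^{\infty})$; hence $\hbox{Int}(\overline G)=G$ (an interior point of $\overline G$ lies in $\hbox{Int}(\overline{G_0})=G_0$, and pulling a small neighborhood back by $\Phi^{-1}$ shows its $\beta$-preimage is interior to $\overline\Omega$, so it lies in $\Omega$); and $\C\setminus\overline G$ is connected, because $\C\setminus\overline\Omega$ is connected and any component of $\C_\infty\setminus\overline G$ other than the one containing the exterior domain $\C_\infty\setminus\overline{G_0}$ would be an open subset of $G_0$ whose $\beta$-preimage is clopen in $\C\setminus\overline\Omega$ --- hence all of $\C\setminus\overline\Omega$, which is impossible since $\C\setminus\overline D\ne\emptyset$. (Alternatively one may extend the boundary homeomorphism $\partial_\infty D\to\partial G_0$ to a homeomorphism of $\C_\infty$ by the Schoenflies theorem, and transfer both conditions verbatim.) Now scale $G_0$ into $\D$ by an affine map and apply Lemma~\ref{Lem:approx-Hardy} to $G$: $H^p(\D)$ is dense in $H^p(G)$, and since that restriction factors through $H^p(G_0)$, the map $H^p(G_0)\to H^p(G)$ has dense image. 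Transporting back through $\beta$ yields $H^p(D)$ dense in $H^p(\Omega)$, which by the previous paragraph completes the proof.

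The main obstacle is precisely this topological transfer. In the half-plane and strip cases (Theorems~\ref{Thm:density-in-half-plane} and \ref{Thm:density-in-strip}) the relevant change of coordinates was a M\"obius transformation, i.e.\ a homeomorphism of the whole sphere $\C_\infty$, so the conditions $\hbox{Int}(\overline\Omega)=\Omega$ and ``$\C\setminus\overline\Omega$ connected'' passed to the model for free; here the conformal model $\beta$ is only a homeomorphism of the closures, so connectedness of the complement must be obtained by the component bookkeeping above (or repaired through Schoenflies), and some care with the point at infinity is needed since $\Omega$ is unbounded while $G_0$ is bounded.
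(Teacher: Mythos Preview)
Your proof is correct and follows essentially the same strategy as the paper: reduce to showing $H^p(D)$ is dense in $H^p(\Omega)$ (this is Lemma~\ref{Lem:dense-complete}), then conformally map $D$ to a disc-like model and apply Lemma~\ref{Lem:approx-Hardy} once the topological hypotheses have been transferred.

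The paper's argument is slightly more streamlined in the transfer step. Rather than using the specific map $\alpha\circ T_b$ from Lemma~\ref{lem-alpha} to a bounded Jordan domain $G_0$ and then doing the component bookkeeping you sketch, the paper takes any Riemann map $h:\D\to D$ and immediately invokes the Jordan--Schoenflies theorem (since $D$ is a Jordan domain in $\C_\infty$) to extend $h$ to a homeomorphism $\tilde h:\C_\infty\to\C_\infty$. With a global homeomorphism of the sphere in hand, the conditions $\hbox{Int}(\overline{\Omega})=\Omega$ and ``$\C\setminus\overline{\Omega}$ connected'' transfer to $G=h^{-1}(\Omega)$ for free, and Lemma~\ref{Lem:approx-Hardy} applies directly. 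You yourself flag Schoenflies as the clean alternative in your last paragraph; that is exactly the paper's route, and it avoids the care with the point at infinity that your primary argument requires.
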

\begin{proof}
Let $h:\D\to D$ be a Riemann map and let $G:=h^{-1}(\Omega)$. By Lemma~\ref{Lem:dense-complete} it is enough to prove that $H^p(\D)$ is dense in $H^p(G)$.

Since $D$ is  the rotation of  logarithmic starlike at infinity domain, then $D$ is a Jordan domain in $\C_\infty$. Therefore, by the Jordan-Schoenflies theorem, $h$ extends as a homeomorphism $\tilde{h}:\C_\infty\to \C_\infty$. It is then clear that  $\hbox{Int}(\overline{\Omega})=\Omega$  if and only if $\hbox{Int}(\overline{G})=G$ and $\C\setminus \overline{\Omega}$ is connected if and only if $\C\setminus \overline{G}$ is connected. Thus the result follows from Lemma~\ref{Lem:approx-Hardy}.
\end{proof}

\section{Characterization of Koenigs  domains $\Omega$ such that $\hbox{Int}(\overline{\Omega})=\Omega$}\label{Sec:char1}

The aim of this section is to give a characterization of Koenigs  domains of non-elliptic semigroups for which the interior of the closure in the Riemann sphere coincide with the domain. To this aim, we need some definitions and preliminary results.

\subsection{Prime ends and impressions}

Let $h$ be the Koenigs function of non-elliptic semigroup in $\D$ and let $\Omega=h(\D)$.

Let $\widehat{\Omega}$ denote the Carath\'eodory compactification of $\Omega$, endowed with the Carath\'eodory topology, and let $\partial_C\Omega$ be the set of prime ends of $\Omega$.  Moreover, let $\hat{h}:\widehat{\D}\to\widehat{\Omega}$ be the homeomorphism induced by $h$ and, for $\sigma\in\partial\D$, let $\underline{\sigma}$ be the prime end defined by $\sigma$---more precisely, the identity map ${\sf id}_\D:\D\to\D$ extends as a homeomorphism ${\Phi_\D}$ from $\overline{\D}$ onto $\widehat{\D}$, and we let $\underline{\sigma}:={\Phi_\D}(\sigma)$.  If $(C_n)$, $n\geq 0$, $n\in \N$, is a null-chain, every $C_n$ divides $\Omega$ in two simply connected domains.  The interior part $V_n$ of $C_n$, $n\geq 1$, is the connected component which does not contain $C_0$.  The impression of the prime end $[(C_n)]$ represented by $(C_n)$ is then
\[
I([(C_n)])=\bigcap_{n\geq 1}\overline{V_n}^\infty\subset \partial_\infty\Omega.
\]
See, {\sl e.g.} \cite[Chapter 4]{BCDbook} for details about Carath\'eodory's prime ends and Carath\'eodory's topology.

Recall (see, {\sl e.g.} \cite[Corollary 11.1.7]{BCDbook}) that, if $h$ is the Koenigs function of a semigroup, then for every $\sigma\in\partial \D$, there exists
\[
h^\ast(\sigma):=\angle\lim_{z\to \sigma}h(z).
\]
Therefore (see, {\sl e.g.}, \cite[Prop. 4.4.4, Thm. 4.4.9]{BCDbook}), if $\sigma\in\partial\D$, the {\sl principal part} of $\hat{h}(\underline{\sigma})$ (that is, the set of points which are accumulation points of any null chain representing $\hat{h}(\underline{\sigma})$) is $\{h^\ast(\sigma)\}$, while
\begin{equation}\label{Eq:impress-cluster}
I(\hat{h}(\underline{\sigma}))=\{p\in\partial_\infty\Omega: \exists \{z_n\}\subset\D, \lim_{n\to\infty}z_n=\sigma, \lim_{n\to\infty}h(z_n)=p\}.
\end{equation}
In particular, $h$ is continuous at $\sigma\in\partial\D$ if and only if the impression of $I(\hat{h}(\underline{\sigma}))$ is a point.

\subsection{Maximal contact arcs}\label{Subsec:maximal-arcs} We are going to recall briefly the theory of maximal contact arcs for semigroups of the unit disc (we refer the reader to \cite[Cap 14.2]{BCDbook} for details).

If $(\varphi_t)_{t\geq 0}$ is a semigroup in $\D$, then for every $\sigma\in\partial \D$ and for every $t\geq 0$, there exists the non-tangential limit
\[
\varphi^\ast_t(\sigma):=\angle\lim_{z\to\sigma}\varphi_t(z).
\]
 If for some $t_0>0$ (and hence any $0\leq t\leq t_0$) $\varphi^\ast_{t_0}(\sigma)\in\partial \D$, then the curve $[0,t_0)\ni t\mapsto \varphi^\ast_t(\sigma)$ is a continuous injective curve on $\partial \D$, with the natural orientation.

\begin{definition}
Let $(\varphi_t)_{t\geq 0}$ be a semigroup in $\D$.  A {\sl  contact arc} is an open arc $A\subset\partial\D$ such that for every $\sigma\in A$ there exists $t_\sigma>0$ such that $\varphi^\ast_{t_\sigma}(\sigma)\in\partial \D$.

A contact arc is {\sl maximal} if there exists no contact arc $B$ such that $A\subsetneq B$.
\end{definition}

It is clear that every contact arc is contained in a maximal contact arc. Let $A$ be a maximal contact arc  for  a semigroup $(\varphi_t)_{t\geq 0}$. As we remarked above, $A$ has a natural orientation given by flowing $A$ with $\varphi^\ast_t$ and thus we can define  the {\sl starting point} $x_0(A)$ of $A$ to be the end point of $A$ where the flow starts and the {\sl final point} $x_1(A)$ of $A$ to be the end point of $A$ where the flow ends.

A maximal contact arc $A$ is {\sl exceptional} if $x_1(A)$ is the Denjoy-Wolff point of the semigroup.

Let $(\varphi_t)_{t\geq 0}$ be a semigroup  with Koenigs function $h$ and $\Omega=h(\D)$. If $A$ is a maximal contact arc for $(\varphi_t)_{t\geq 0}$, with initial point $x_0(A)$ and final point $x_1(A)$ then for $j=0,1$ we have (see \cite[Proposition~14.2.13]{BCDbook})
\begin{equation}\label{Eq:where-go-int-fin-max-arc}
\lim_{A\ni\xi\to x_j(A)}h(\xi)=h^\ast(x_j(A)).
\end{equation}
Moreover, either $x_1(A)$ is the Denjoy-Wolff point of the semigroup, or $h^\ast(x_1(A))\in\C$ and $h^\ast(x_1(A))+t\in \Omega$ for all $t>0$ (see  \cite[Proposition~14.2.6]{BCDbook}).

For our aims, we also need  the following lemma (which follows easily from  \cite[Thm. 14.2.10]{BCDbook}):

\begin{lemma}\label{Lem:cara-contact}
Let $(\varphi_t)_{t\geq 0}$ be a non-elliptic semigroup with Denjoy-Wolff point $\tau\in\partial\D$,    Koenigs function $h$ and Koenigs domain $\Omega$. An open arc $A\subset \partial \D$ is a  contact arc if and only if there exist $y_0\in \R$, $-\infty\leq a<b\leq +\infty$ such that
\[
h^\ast(A)=\{w\in \C: \Im w=y_0, a<\Re w<b\}.
\]
If this is the case, $h$ extends holomorphically through $A$. Moreover,  there exist two maximal contact arcs $A, B$ with final point $\sigma=x_1(A)=x_1(B)\in\partial\D\setminus\{\tau\}$ if and only if $h^\ast(\sigma)\in \C$ and there exists  $r>0$ such that
\[
 D(h^\ast(\sigma),r)
\setminus \{z: \Re z\leq \Re h^\ast(\sigma), \Im z=\Im h^\ast(\sigma)\}= \Omega\cap D(h^\ast(\sigma),r).
\]
Also, $h$ extends holomorphically through $A\cup B$ and continuously through $\sigma$.
\end{lemma}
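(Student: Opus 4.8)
The plan is to deduce everything from \cite[Thm.~14.2.10]{BCDbook}, which describes the contact arcs of a semigroup in terms of its Koenigs function, combined with \eqref{Eq:where-go-int-fin-max-arc} and the facts recalled above about final points of maximal contact arcs (in particular \cite[Prop.~14.2.6]{BCDbook}). The only genuinely geometric step is a short local conformal analysis near the final point $\sigma$.

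For the first equivalence: if $A$ is a contact arc, then for each $\sigma\in A$ there is $t_\sigma>0$ with $\varphi^\ast_{t_\sigma}(\sigma)\in\partial\D$, the curve $t\mapsto\varphi^\ast_t(\sigma)$ is continuous and injective on $\partial\D$, and passing to non-tangential limits in $h(\varphi_t(z))=h(z)+t$ gives $h^\ast(\varphi^\ast_t(\sigma))=h^\ast(\sigma)+t$; hence $h^\ast(\sigma)\in\C$ and $\Im h^\ast$ is locally, so globally, constant on the connected arc $A$, equal to some $y_0$. Theorem~14.2.10 gives that $h$ extends holomorphically through $A$ with nonvanishing derivative, so $h^\ast|_A$ is a local homeomorphism onto its image, and the injectivity of the flow curves makes $h^\ast|_A$ injective; therefore $h^\ast(A)$ is a connected open subset of $\{\Im w=y_0\}$, i.e. $\{w:\Im w=y_0,\ a<\Re w<b\}$ for some $-\infty\le a<b\le+\infty$. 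Conversely, if $h^\ast(A)$ has this form then $A$ is a contact arc and $h$ extends holomorphically through $A$ by the same theorem, which also proves the second assertion.

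For the ``moreover'' part, assume $A,B$ are maximal contact arcs with common final point $\sigma=x_1(A)=x_1(B)\neq\tau$. By \cite[Prop.~14.2.6]{BCDbook}, $h^\ast(\sigma)\in\C$ and $h^\ast(\sigma)+t\in\Omega$ for all $t>0$, while $h^\ast(\sigma)\in\partial\Omega$ by \eqref{Eq:impress-cluster}, so starlikeness at infinity forces $\{h^\ast(\sigma)-t:t>0\}\cap\Omega=\emptyset$. By \eqref{Eq:where-go-int-fin-max-arc}, $\lim_{A\ni\xi\to\sigma}h(\xi)=\lim_{B\ni\xi\to\sigma}h(\xi)=h^\ast(\sigma)$, and since the flow moves the $h^\ast$-images to the right, both $h^\ast(A)$ and $h^\ast(B)$ are initial left segments of the horizontal ray running leftwards from $h^\ast(\sigma)$. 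Near $\sigma$, $\partial\D$ is the union of two sub-arcs $A_0\subset A$, $B_0\subset B$ meeting smoothly at $\sigma$, and $h$ maps the lune $R:=\D\cap D(\sigma,\rho)$ conformally onto a region bounded by the two overlapping horizontal segments $h^\ast(A_0),h^\ast(B_0)$ abutting $h^\ast(\sigma)$ from the left, together with $h(\partial D(\sigma,\rho)\cap\D)$; using $h^\ast(\sigma)+t\in\Omega$ and the fact that $h$ behaves like $w=h^\ast(\sigma)+c(z-\sigma)^2+\cdots$ near $\sigma$, a boundary-correspondence argument forces $\Omega\cap D(h^\ast(\sigma),r)=D(h^\ast(\sigma),r)\setminus\{z:\Re z\le\Re h^\ast(\sigma),\ \Im z=\Im h^\ast(\sigma)\}$ for small $r$, and also yields the unrestricted limit $\lim_{\D\ni z\to\sigma}h(z)=h^\ast(\sigma)$. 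Conversely, if $h^\ast(\sigma)\in\C$ and $\Omega$ has this slit structure at $h^\ast(\sigma)$, the two sides of the slit pull back under $h^{-1}$ to two arcs of $\partial\D$ approaching $\sigma$ from opposite sides; starlikeness at infinity shows each is a contact arc with final point $\sigma$, and they lie in two \emph{distinct} maximal contact arcs, since a single contact arc through $\sigma$ would contradict the interior angle $2\pi$ at $h^\ast(\sigma)$. Holomorphic extension through $A\cup B$ is the first part applied to $A$ and to $B$, and continuity at $\sigma$ is the unrestricted limit just obtained together with \eqref{Eq:where-go-int-fin-max-arc}.

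The main obstacle is exactly this local analysis at $\sigma$: one must show that the image of the lune $R$ is precisely the slit disc (not merely a region with the same boundary set), which comes down to controlling the angle-doubling of $h$ at $\sigma$ and ruling out that the two sides of the slit are pieces of one and the same maximal contact arc. Everything else is a routine translation of Theorem~14.2.10 and the cited properties of final points.
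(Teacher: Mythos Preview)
Your approach is exactly the one the paper takes: the lemma is stated there with the parenthetical remark ``which follows easily from \cite[Thm.~14.2.10]{BCDbook}'' and no proof is given. Your sketch unpacks that citation along the intended lines---using the flow relation $h^\ast(\varphi_t^\ast(\sigma))=h^\ast(\sigma)+t$, Theorem~14.2.10 for the holomorphic extension, \eqref{Eq:where-go-int-fin-max-arc} and \cite[Prop.~14.2.6]{BCDbook} for the behavior at the final point---so there is nothing to compare; you have simply written out what the paper leaves implicit. The one place where you go beyond a routine translation is the local picture at $\sigma$, and you correctly flag it as the only real point: note that the quadratic normal form $h(z)=h^\ast(\sigma)+c(z-\sigma)^2+\cdots$ is not something you can assume a priori (you do not yet know $h$ extends holomorphically, or even continuously, to $\sigma$), so the clean way is to argue via prime ends/Carath\'eodory extension on the slit disc rather than via a Taylor expansion at $\sigma$; the continuity of $h$ at $\sigma$ then comes out as a consequence, not an input.
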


From this lemma it follows immediately:

\begin{corollary}\label{Cor:arco-en-def}
Let $(\varphi_t)_{t\geq 0}$ be a non-elliptic semigroup with Denjoy-Wolff point $\tau\in\partial\D$ and  Koenigs  domain $\Omega$ and let $\psi:I\to [-\infty,+\infty)$  be the upper semicontinuous function defining $\Omega$, that is, $\Omega=\{z\in\C: \Re z>\psi(\Im z), \Im z\in I\}$. Then there exist two maximal contact arcs $A, B$ with final point $\sigma=x_1(A)=x_1(B)\in\partial\D\setminus\{\tau\}$ if and only if there exist $q\in \R$, an open interval $J\subset I$ and $c_0\in J$ such that $\psi(c_0)>q$ and $\psi(c)\leq q$ for all $c\in J\setminus\{c_0\}$.
\end{corollary}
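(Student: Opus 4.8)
The plan is to read the corollary off from Lemma~\ref{Lem:cara-contact} by translating its ``slit disc'' condition into a statement about the defining function $\psi$. Recall that, by that lemma, there exist two maximal contact arcs $A,B$ with common final point $\sigma\in\partial\D\setminus\{\tau\}$ precisely when there are a point $w_0=x_0+iy_0\in\C$ and a radius $r>0$ with
\[
D(w_0,r)\setminus\{z:\Re z\le x_0,\ \Im z=y_0\}=\Omega\cap D(w_0,r);
\]
in the ``if'' direction the point $\sigma$ is recovered as the (unique) boundary point of $\D$ whose prime end has principal part $\{w_0\}$ --- such $\sigma$ exists because $w_0$ is accessible from the slit disc $\Omega\cap D(w_0,r)$ --- and $\sigma\ne\tau$ because $w_0+t\in\Omega$ for all $t>0$ (cf.\ the remark following \eqref{Eq:where-go-int-fin-max-arc}). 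So it is enough to show that this geometric condition holds for some $w_0,r$ if and only if $\psi$ has an isolated upward spike of the stated type.

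For the forward implication I would intersect the displayed identity with horizontal lines. On the line $\{\Im z=y_0\}$ one gets $\max\bigl(\psi(y_0),x_0-r\bigr)=x_0$, whence $\psi(y_0)=x_0$; on a line $\{\Im z=y\}$ with $0<|y-y_0|<r$ (where the removed slit contributes nothing) one gets $\psi(y)\le x_0-\sqrt{r^2-(y-y_0)^2}$, and in particular $(y_0-r,y_0+r)\subseteq I$. Taking $c_0:=y_0$, $J:=(y_0-\tfrac r2,y_0+\tfrac r2)$ and $q:=x_0-\tfrac{\sqrt3}{2}r$, the last inequality gives $\psi(c)\le q$ for $c\in J\setminus\{c_0\}$, while $\psi(c_0)=x_0>q$, as required.

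For the converse, given $q\in\R$, an open interval $J\subset I$ and $c_0\in J$ with $\psi(c_0)>q\ge\psi(c)$ for $c\in J\setminus\{c_0\}$, note $\psi(c_0)\in\R$, set $w_0:=\psi(c_0)+ic_0$ and pick $r>0$ with $(c_0-r,c_0+r)\subseteq J$ and $r<\psi(c_0)-q$. Any $z=x+iy\in D(w_0,r)$ has $y\in(c_0-r,c_0+r)\subseteq J$: if $y=c_0$ then $z\in\Omega$ exactly when $x>\psi(c_0)=\Re w_0$, which is precisely the condition ``$z\in D(w_0,r)$ and $z$ is off the slit''; if $y\ne c_0$ then $x>\Re w_0-r>q\ge\psi(y)$, so $z\in\Omega$, and $z$ is off the slit since $\Im z\ne y_0$. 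Hence the displayed identity holds at $w_0$, and Lemma~\ref{Lem:cara-contact} supplies the two maximal contact arcs with common final point $\sigma\ne\tau$.

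Since everything reduces to elementary bookkeeping once Lemma~\ref{Lem:cara-contact} is in hand, I do not expect a serious obstacle; the one point deserving explicit care is the passage, in the converse direction, from the slit-disc identity back to the existence of $\sigma$ in the lemma --- i.e.\ checking that the vertex $w_0$ of the slit really is $h^\ast(\sigma)$ for some $\sigma\in\partial\D\setminus\{\tau\}$, which is where the prime-end description of $h$ and the starlikeness of $\Omega$ at infinity come in.
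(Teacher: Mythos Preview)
Your proposal is correct and follows the same approach as the paper, which simply asserts that the corollary follows immediately from Lemma~\ref{Lem:cara-contact}; you have carried out the routine translation between the slit-disc condition there and the condition on the defining function $\psi$. The one point you flag for care---producing $\sigma\in\partial\D\setminus\{\tau\}$ with $h^\ast(\sigma)=w_0$ in the converse direction---is indeed the only step requiring any input beyond bookkeeping, and your argument via accessibility of $w_0$ along $t\mapsto w_0+t$ (together with $h^\ast(\tau)=\infty$) handles it.
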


\subsection{Discontinuities of Koenigs  functions}\label{subsect:discont}

\begin{definition}
Let $h:\D\to \C$ be the Koenigs  function of a non-elliptic semigroup. The set of  points of continuity of $h$ is
\[
\mathcal C(h):=\{\sigma\in \partial \D: \lim_{z\to \sigma}h(z) \ \hbox{exists in $\C_\infty$}\}.
\]
The set of points of discontinuity is
\[
\mathcal S(h):=\partial \D\setminus \mathcal C(h).
\]
\end{definition}
By Collingwood maximality theorem (see, {\sl e.g.}, \cite[Prop. 2.23]{Pom}) and since the Koenigs  function $h$ has non-tangential limit everywhere on $\partial \D$, the set $\mathcal S(h)$ is a set of Baire first category, that is, it is the countable union of nowhere dense subsets of $\partial \D$. In particular $\hbox{Int}(\mathcal S(h))=\emptyset$.

Let $(\varphi_t)_{t\geq 0}$ be a semigroup in $\D$ with Denjoy-Wolff point $\tau\in\partial \D$. If $\sigma\in \partial \D\setminus\{\tau\}$ then $\sigma\in \mathcal S(h)$ if and only if  $\varphi_{t_0}$ does not extend continuously at $\sigma$ for some $t_0>0$. This is the case if and only if $\varphi_{t}$ does not extend continuously at $\sigma$ for all  $t>0$ (see, \cite[Proposition~11.3.2]{BCDbook}).

For $-\infty\leq a<b<+\infty$ and $y\in \R$ we let
\begin{equation}\label{Eq:L-con-subindex}
L_{a,b;y}:=\{w\in\C: a\leq \Re w\leq b, \Im w=y\}.
\end{equation}

\begin{lemma}\label{Lem:discont-Koenigs-prel}
	Let $h:\D\to \C$ be the Koenigs  function of a non-elliptic semigroup with Denjoy-Wolff point $\tau\in\partial\D$ and let $\psi:I\to [-\infty,\infty)$ be the defining function of $\Om=h(\D)$.
	If $\sigma\in \mathcal S(h)\setminus\{\tau\}$ then  $b:=\Re h^\ast(\sigma)\in \R$ and $y=\Im  h^\ast(\sigma)\in\overline{I}$ and there  exists $a\in [-\infty,b)$ such that $I(\hat{h}(\underline{\sigma}))=L_{a,b;y}$. Moreover, one of the two following two cases (not mutually disjoint) are possible:
	\begin{enumerate}
		\item $(y-\eps,y)\subset I$ for some $\eps>0$, $a=\liminf_{t\to y^-} \psi(t)$
		and $b=\limsup_{t\to y^-} \psi(t)$;
		
		\item $(y,y+\eps)\subset I$ for some $\eps>0$, $a=\liminf_{t\to y^+} \psi(t)$
		and $b=\limsup_{t\to y^+} \psi(t)$.
	\end{enumerate}
	
	Conversely, if there exist $y\in \overline{I}$, $b\in (-\infty, +\infty)$,  and $a\in  [-\infty,b)$ such that (1) or (2) (or both) hold, then there exists $\sigma\in \mathcal S(h)\setminus\{\tau\}$ such that $h^\ast(\sigma)=b+iy$.
\end{lemma}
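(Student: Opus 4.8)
The statement relates boundary discontinuities of the Koenigs function $h$ to the behaviour of the defining function $\psi$ near points of $\overline{I}$. The key tool is the description of impressions of prime ends in \eqref{Eq:impress-cluster}, together with the fact that $h$ has non-tangential limit $h^\ast(\sigma)$ at every $\sigma\in\partial\D$, so the principal part of $\hat h(\underline\sigma)$ is the single point $\{h^\ast(\sigma)\}$ while the impression $I(\hat h(\underline\sigma))$ is the full cluster set. The first thing to observe is that since $\sigma\ne\tau$, the semigroup $(\varphi_t)$ does not extend continuously at $\sigma$ for any $t>0$ (the remark after the definition of $\mathcal S(h)$), and the curve $t\mapsto\varphi_t^\ast(\sigma)$ together with the relation $h(\varphi_t(z))=h(z)+t$ forces the impression to be invariant under real translation by $t>0$; more precisely, since $\Omega$ is starlike at infinity and the cluster set is connected and closed in $\partial_\infty\Omega$, $I(\hat h(\underline\sigma))$ must be a horizontal segment (possibly a half-line going to $\Re w=-\infty$). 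This pins down $I(\hat h(\underline\sigma))=L_{a,b;y}$ with $b=\Re h^\ast(\sigma)$ (the principal part being the ``rightmost'' point, which follows from $h^\ast(\sigma)+t\in\Omega$ for small $t$ when $b$ is finite, or from the structure of contact arcs) and some $a\in[-\infty,b)$; the inequality $a<b$ is exactly the statement that $\sigma$ is a discontinuity.

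For the identification of $a$ and $b$ with the one-sided $\liminf$ and $\limsup$ of $\psi$: the point $y=\Im h^\ast(\sigma)$ lies in $\overline I$, and since $\sigma$ is an isolated-side boundary point of the ``slit'' created at height $y$, at least one of the intervals $(y-\epsilon,y)$ or $(y,y+\epsilon)$ lies in $I$ — this is because $\partial\Omega$ near the segment $L_{a,b;y}$ looks like a horizontal slit, and the null-chains representing $\underline\sigma$ approach from one side. I would argue that a sequence $z_n\to\sigma$ with $h(z_n)\to p=x+iy\in L_{a,b;y}$ produces points $h(z_n)\in\Omega$ with $\Im h(z_n)\to y$ and $\Re h(z_n)\to x$, so $x\ge\psi(\Im h(z_n))$; taking $x$ close to $a$ forces $\liminf$ of $\psi$ along the relevant side to be $\le a$, and conversely any value strictly below $\liminf\psi$ is not in $\overline\Omega$ hence not in the impression, giving $a=\liminf$. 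The identity $b=\limsup\psi$ along that side is the matching statement: $b=\Re h^\ast(\sigma)$ is in the closure of $\C\setminus\Omega$ (it is the tip of the slit) and by the upper-semicontinuity argument in the proof of Proposition~\ref{Prop:definingfnc} one has $\limsup_{t\to y^\pm}\psi(t)=b$ on the side from which $\sigma$ is approached.

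For the converse, given $y\in\overline I$, $b\in\R$ and $a\in[-\infty,b)$ with (say) condition (1) holding, I would construct the discontinuity as follows: the hypothesis $a=\liminf_{t\to y^-}\psi(t)<\limsup_{t\to y^-}\psi(t)=b$ means the region $\Omega$ has, arbitrarily close to height $y$ from below, both points with real part near $a$ and a ``barrier'' with real part near $b$; this creates a null-chain in $\Omega$ whose interior parts shrink onto the segment $L_{a,b;y}$, i.e. a prime end $\mathfrak p$ with $I(\mathfrak p)=L_{a,b;y}$ and principal part $\{b+iy\}$. Pulling back by $\hat h^{-1}$ gives $\sigma:=\Phi_\D^{-1}(\hat h^{-1}(\mathfrak p))\in\partial\D$ with $h^\ast(\sigma)=b+iy$ and non-degenerate impression, hence $\sigma\in\mathcal S(h)$; and $\sigma\ne\tau$ because the impression at $\tau$ in the hyperbolic/parabolic-positive cases is contained in a union of lines through a special configuration and in the parabolic-zero case $\tau$ is a point of continuity (or its impression is controlled differently) — in any case $b+iy$ being the tip of a genuine horizontal slit at finite height rules out $\sigma=\tau$.

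**Main obstacle.** The delicate point is the passage between ``one-sided behaviour of $\psi$'' and ``structure of the null-chain / impression'', in both directions. In the forward direction one must show the impression is a \emph{horizontal} segment and that exactly the right-hand endpoint is $h^\ast(\sigma)$; this uses the starlike-at-infinity geometry but needs care because a priori the cluster set could be more complicated — the argument that it is invariant under $w\mapsto w+t$ and connected, hence an interval, and that the principal part is its rightmost point, is where the real work lies. In the converse direction, explicitly building a null-chain with prescribed impression $L_{a,b;y}$ from the oscillation hypothesis on $\psi$ requires choosing the crosscuts carefully (using the barrier at level $\approx b$ to ``pinch'' the domain) and verifying it is genuinely a null-chain; this is the kind of construction that is intuitively clear from a picture but fiddly to write down rigorously.
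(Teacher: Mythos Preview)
The paper does not prove this lemma directly: it observes that Lemma~\ref{Lem:discont-Koenigs-prel} is a straightforward reinterpretation (via the defining function $\psi$) of the geometric Lemma~\ref{Lem:discont-Koenigs}, and the latter is taken from \cite[Thm.~11.1.4]{BCDbook}. So the substantive content---that impressions of prime ends at $\sigma\neq\tau$ are horizontal segments $L_{a,b;y}$, with $b=\Re h^\ast(\sigma)$ the rightmost point and $a$ coming from the one-sided oscillation of the boundary---is imported wholesale. Your plan, by contrast, attempts a self-contained proof. That is a legitimate and more ambitious route, but the key step has a genuine gap.

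The problematic step is your argument that $I(\hat h(\underline\sigma))$ is a horizontal segment. You offer two mechanisms. First, that the relation $h(\varphi_t(z))=h(z)+t$ ``forces the impression to be invariant under real translation by $t>0$''. This does not follow: if $z_n\to\sigma$ with $h(z_n)\to p$, then $\varphi_t(z_n)$ need not converge to $\sigma$ (indeed $\varphi_t$ is \emph{not} continuous at $\sigma$ by hypothesis), so $p+t$ lands in the cluster set at some \emph{other} boundary point, not at $\sigma$. Going backwards is worse, since $\varphi_t$ is not invertible. Second, you write ``more precisely, since $\Omega$ is starlike at infinity and the cluster set is connected and closed in $\partial_\infty\Omega$, $I(\hat h(\underline\sigma))$ must be a horizontal segment''. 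But connected closed subsets of $\partial\Omega$ are not horizontal in general: when $\psi$ is continuous, $\partial\Omega$ is the graph of $\psi$, a connected curve with no horizontal pieces. So neither mechanism delivers horizontality.

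What actually forces horizontality is the structure of \emph{null chains} in a starlike-at-infinity domain, not merely connectedness of the impression. A crosscut $C_n$ of small diameter has two endpoints $p_n,q_n\in\partial\Omega$; since $\Omega$ is starlike at infinity, each of the half-lines $\{p_n-t:t\ge 0\}$ and $\{q_n-t:t\ge 0\}$ lies in $\C\setminus\Omega$, and the interior part $V_n$ is trapped between the horizontal strip they bound (on one side of $C_n$). As the diameters of the $C_n$ go to $0$, the heights $\Im p_n,\Im q_n$ converge to a common $y$, and $\bigcap\overline{V_n}$ is squeezed into the line $\Im w=y$. This is the content of \cite[Thm.~11.1.4]{BCDbook}, and it is exactly the ``fiddly'' construction you flag in your obstacle paragraph---but it is needed in the forward direction too, not only in the converse. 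Once horizontality is established this way, your identifications $a=\liminf_{t\to y^\pm}\psi(t)$ and $b=\limsup_{t\to y^\pm}\psi(t)$, and your construction for the converse, are along the right lines and match the geometric conditions (1.a)--(1.d), (2.a)--(2.d) of Lemma~\ref{Lem:discont-Koenigs}.
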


 The previous lemma is a straightforward reinterpretation, using the defining function of the Koenigs domain, of the next lemma. The proof of the latter immediately ensues from the depiction of prime ends impressions in starlike at infinity domains (see \cite[Thm. 11.1.4]{BCDbook} and its proof):

\begin{lemma}\label{Lem:discont-Koenigs}
	Let $h:\D\to \C$ be the Koenigs  function of a non-elliptic semigroup with Denjoy-Wolff point $\tau\in\partial\D$.  Let $\sigma\in\partial\D\setminus\{\tau\}$. Let $p:=h^\ast(\sigma)$, and, if $p\in\C$, set $b:=\Re h^\ast(\sigma)$ and $y=\Im  h^\ast(\sigma)$.
	
	Then $\sigma\in \mathcal S(h)$ if and only if $p\in\C$ and there exists $a\in [-\infty,b)$ such that the impression  of $\hat{h}(\underline{\sigma})$ is $I(\hat{h}(\underline{\sigma}))=L_{a,b;y}$.
	
	If this is true,
	the two following cases (not mutually disjoint) are possible:
	\begin{enumerate}
		\item The following conditions hold:
		\begin{itemize}
			\item[(1.a)] there exists  a sequence $\{z^-_n\}\subset \C$ converging to $p$ such that
			$\{\Im z^-_n\}$ is  strictly increasing, $L_{-\infty,\Re z^-_n;\Im z^-_n}\subset \C\setminus\Omega$, and $z_n^-+t\in\Omega$ for all $t>0$,
			\item[(1.b)]  for any $x>b$ there exists $\delta>0$ such that $\{w\in \C: y-\delta <\Im w<y, \Re w>x\}\subset\Omega$,
			\item[(1.c)]     $\{w\in \C: y-\epsilon<\Im w<y, \Re w<\al\}\cap \Omega\neq\emptyset$ for any $\alpha>a$ and   $\epsilon>0$,
			\item[(1.d)]  if $a\neq-\infty$,
			for any $\beta<a$ there exists $\delta>0$ such that
			$\{w\in \C: y-\delta <\Im w<y, \Re w<\beta\}\cap\Omega=\emptyset$.
		\end{itemize}
		
		\item The following conditions hold:
		\begin{itemize}
			\item[(2.a)] there exists  a sequence $\{z^+_n\}\subset \C$ converging to $p$ such that
			$\{\Im z^+_n\}$ is  strictly decreasing, $L_{-\infty,\Re z^+_n;\Im z^+_n}\subset \C\setminus\Omega$, and $z_n^++t\in\Omega$ for all $t>0$,
			\item[(2.b)] for any $x>b$ there exists $\delta>0$ such that $\{w\in \C: y <\Im w<y+\delta, \Re w>x\}\subset\Omega$,
			\item[(2.c)]    $\{w\in \C: y<\Im w<y+\epsilon, \Re w<\al\}\cap \Omega\neq\emptyset$ for any $\alpha>a$ and   $\epsilon>0$,
			\item[(2.d)] if $a\neq-\infty$,
			for any $\beta<a$ there exists $\delta>0$ such that
			$\{w\in \C: y <\Im w<y+\delta, \Re w<\beta\}\cap\Omega=\emptyset$.
		\end{itemize}
	\end{enumerate}
	Conversely,  if there exist  $p\in\partial\Omega$, $-\infty\leq a<b<+\infty$ and $y\in \R$ which verify (1) or (2) (or both), then there exists $\sigma\in\mathcal S(h)\setminus\{\tau\}$ such that $h^\ast(\sigma)=p$ and $L_{a,b;y}\subseteq I(\hat{h}(\underline{\sigma}))$.
\end{lemma}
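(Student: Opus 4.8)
The lemma is, modulo the translation between a starlike at infinity domain and its defining function (Proposition~\ref{Prop:definingfnc}), a restatement of the description of prime end impressions of starlike at infinity domains in \cite[Thm.~11.1.4]{BCDbook}; the plan is to quote that description and unwind it. Two inputs drive everything. By \eqref{Eq:impress-cluster}, $I(\hat h(\underline\sigma))$ is the whole cluster set of $h$ at $\sigma$ in $\C_\infty$, its principal point is $h^\ast(\sigma)$, and $h$ is continuous at $\sigma$ exactly when $I(\hat h(\underline\sigma))$ reduces to a point. On the other hand $\Omega=h(\D)$ is starlike at infinity, so by \cite[Thm.~11.1.4]{BCDbook} (and its proof) a prime end of $\Omega$ whose impression is not a single point has finite principal point $p=b+iy$, impression a horizontal segment $L_{a,b;y}$ (a left half line if $a=-\infty$) with right endpoint $p$, and can be represented by a null chain $(C_n)$ whose interior parts $V_n$ decrease onto $L_{a,b;y}$ and whose crosscuts, for $n$ large, lie either all in $\{\Im w<y\}$ with the heights of their endpoints increasing to $y$, or all in $\{\Im w>y\}$ with those heights decreasing to $y$ (possibly both sides occur). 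Each $V_n$ is a slab of $\Omega$ lying just below, respectively above, the level $y$, and its shape is encoded by the defining function $\psi$.

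For the necessity direction, take $\sigma\in\mathcal S(h)\setminus\{\tau\}$. Since $h$ is discontinuous at $\sigma$, $I(\hat h(\underline\sigma))$ is not a point, so by the previous paragraph $p:=h^\ast(\sigma)\in\C$, $I(\hat h(\underline\sigma))=L_{a,b;y}$ with $b=\Re p$, $y=\Im p$, and $a\in[-\infty,b)$. Which of the options $(y-\eps,y)\subset I$ or $(y,y+\eps)\subset I$ occurs, and the identification of $a$ and $b$ with the relevant one-sided $\liminf$ and $\limsup$ of $\psi$ at $y$, are precisely the two admissible directions for the null chain, read off via $\psi$; conditions (1.a)--(1.d) (resp.\ (2.a)--(2.d)) then merely describe the position of $\Omega$ near the slabs $V_n$: (1.a) that the left ends of the crosscuts lie on $\partial\Omega$ and that moving rightward from them enters $\Omega$ (here one uses that $\Omega$ is starlike at infinity), (1.b) that $\Omega$ fills the slab to the right of $\Re w=b$, (1.c) that $\Omega$ approaches the left endpoint of the segment, and (1.d) that it stays to the right of $a$. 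At least one of the two families holds because the null chain is eventually contained in one of the two half planes bounded by $L_y$.

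For the sufficiency directions, the easy half of the equivalence is immediate: if $p\in\C$ and $I(\hat h(\underline\sigma))=L_{a,b;y}$ with $a\in[-\infty,b)$, the impression is not a point and \eqref{Eq:impress-cluster} gives $\sigma\in\mathcal S(h)$. For the last assertion I would, given $p=b+iy\in\partial\Omega$, $a\in[-\infty,b)$ and the list (1) (the list (2) being symmetric), build a null chain $(C_n)$ in $\Omega$ by hand: using (1.a) anchor pairwise disjoint crosscuts on $\partial\Omega$ at boundary heights $y_n\nearrow y$, extend them to the right past $\Re w=b$ staying in $\Omega$ by (1.b), and choose them nested with interior parts $V_n$ eventually inside $\{\Im w<y\}$; then (1.c)--(1.d) force $\bigcap_n\overline{V_n}^\infty=L_{a,b;y}$. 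The prime end $[(C_n)]$ corresponds via $\hat h$ to a point $\sigma\in\partial\D$, necessarily distinct from $\tau$ since $h^\ast(\sigma)=p\in\C$ while $h^\ast(\tau)=\infty$, with $L_{a,b;y}\subseteq I(\hat h(\underline\sigma))$; as $a<b$ this impression is non-degenerate, so $\sigma\in\mathcal S(h)$.

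The only genuine work is this last explicit construction of a null chain: one must check that the crosscuts can be taken pairwise disjoint and correctly nested and that the intersection of the $\overline{V_n}^\infty$ is exactly $L_{a,b;y}$ and not larger, which needs some care precisely when $a=-\infty$ and the segment is unbounded. Everything else is a direct unwinding of \cite[Thm.~11.1.4]{BCDbook} together with the starlikeness at infinity of $\Omega$, which is why the lemma can fairly be said to ensue immediately from that theorem.
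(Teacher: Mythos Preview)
Your proposal is correct and follows exactly the paper's approach: the paper's proof consists of a single sentence stating that the lemma ``immediately ensues from the depiction of prime ends impressions in starlike at infinity domains (see \cite[Thm.~11.1.4]{BCDbook} and its proof)'', and your write-up is precisely an unwinding of that citation. Your added detail on the explicit null-chain construction for the converse is more than the paper provides, and your remark that this is the only place requiring genuine care is accurate.
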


\begin{remark}
Although we are not going to use the following, for the sake of precision, the complete ``converse'' conclusion of Lemma~\ref{Lem:discont-Koenigs} is the following. If there exist  $p\in\partial\Omega$, $-\infty\leq a<b<+\infty$ and $y\in \R$ which verify (1) but not (2) (or (2) but not (1)) then  there exists $\sigma\in\mathcal S(h)\setminus\{\tau\}$ such that $h^\ast(\sigma)=p$ and $L_{a,b;y}=I(\hat{h}(\underline{\sigma}))$. On the other hand, if  there exist  $p\in\partial\Omega$, $-\infty<b<+\infty$, $y\in \R$ and $a_+,a_- \in [-\infty, b)$ which verify both (1) with $a=a_-$ and  (2) with  $a=a_+$, we have:
\begin{itemize}
\item if $p+t\in\Omega$ for all $t>0$ then there exists $\sigma\in\mathcal S(h)\setminus\{\tau\}$ such that $h^\ast(\sigma)=p$ and $L_{\min\{a_-, a_+\},b;y}= I(\hat{h}(\underline{\sigma}))$,
\item  if $p+t_0\not\in\Omega$ for some $t_0>0$ then there exist $\sigma_-, \sigma_+\in\mathcal S(h)\setminus\{\tau\}$, $\sigma_-\neq\sigma_+$ such that $h^\ast(\sigma_-)=h^\ast(\sigma_+)=p$ and $L_{a_-,b;y}= I(\hat{h}(\underline{\sigma_-}))$, $L_{a_+,b;y}= I(\hat{h}(\underline{\sigma_+}))$.
\end{itemize}
\end{remark}

\begin{definition}
	Let $h:\D\to \C$ be the Koenigs  function of a non-elliptic semigroup with Denjoy-Wolff point $\tau\in\partial\D$.  Let $\sigma\in S(h)\setminus\{\tau\}$.  We say that $\sigma\in\mathcal S^-(h)$ if
    $I(\hat{h}(\underline{\sigma}))=L_{a,b;y}$ for some  $a,b\in   [-\infty, \infty)$,  where
    $a<b$ and (1) of Lemma~\ref{Lem:discont-Koenigs-prel} holds.
    While, we say that $\sigma\in\mathcal S^+(h)$ if $I(\hat{h}(\underline{\sigma}))=L_{a,b;y}$ for some
	$a,b\in [-\infty, \infty)$, 	where
	$a<b$ and (2) of Lemma~\ref{Lem:discont-Koenigs-prel} holds.
\end{definition}

	Note that $\mathcal S(h)\setminus\{\tau\}=\mathcal S^+(h)\cup \mathcal S^-(h)$.  In general, $\mathcal S^-(h)\cap\mathcal S^+(h)\neq\emptyset$.

\begin{remark}
	Let $\sigma\in\partial\D\setminus\{\tau\}$.
	Notice that the condition (1) in Lemma~\ref{Lem:discont-Koenigs-prel}
	is equivalent to the condition (1) in   Lemma~\ref{Lem:discont-Koenigs}; the conditions (2)
	in these two lemmas are also mutually equivalent. Moreover,
	$\sigma\in\mathcal S^-(h)$ if and only if for some $a<b$, $a,b\in [-\infty,\infty)$,
	(1.a), (1.b) and (1.c) of Lemma~\ref{Lem:discont-Koenigs} hold. While,  $\sigma\in\mathcal S^+(h)$ if and only if
	for some $a<b$, (2.a), (2.b) and (2.c)  of Lemma~\ref{Lem:discont-Koenigs} hold. 	Note that here it is not necessary to require (1.d) or (2.d).
\end{remark}

\begin{proposition}\label{Lem:two-disc}
Let $h$ be the Koenigs  function of a non-elliptic semigroup  $(\varphi_t)_{t\geq 0}$ in $\D$ with Denjoy-Wolff point $\tau$ and let $\Omega=h(\D)$ be the Koenigs  domain. Let $\psi:I\to\R$ be the defining function of $\Omega$. The fibres of the map
	\[
	\mathcal S(h)\setminus\{\tau\}\ni \sigma\mapsto \Im h^\ast(\sigma)\in \overline{I}
	\]
	have at most two points.
Moreover  if $\sigma_1,\sigma_2\in \mathcal S(h)\setminus\{\tau\}$, $\sigma_1\neq\sigma_2$ are such that $\Im h^\ast(\sigma_1)=\Im h^\ast(\sigma_2)$ then $\Im h^\ast(\sigma_1)\in I$ and,
\begin{enumerate}
	\item if $h^\ast(\sigma_1)=h^\ast(\sigma_2)$, then
\begin{itemize}
	\item  $\sigma_1, \sigma_2 \in \mathcal  S^+(h)\cap \mathcal  S^-(h)$
	\item  there exists $t>0$ such that $h^\ast(\sigma_1)+t \in \partial\Omega$,
	\item $(\varphi_t)_{t\geq 0}$ has two  non-exceptional maximal contact arcs  starting, respectively, at  $\sigma_1$ and $\sigma_2$ and having the same final point (different from $\sigma_1,\sigma_2$).
\end{itemize}
	\item If $h^\ast(\sigma_1)\neq h^\ast(\sigma_2)$, then
$\sigma_1\in \mathcal  S^\pm (h)\setminus \mathcal  S^\mp (h)$, $\sigma_2\in \mathcal  S^\mp (h)\setminus \mathcal  S^\pm (h)$.
\end{enumerate}\end{proposition}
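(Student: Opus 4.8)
The plan is to reduce everything to the structural description of prime‑end impressions in starlike‑at‑infinity domains encoded in Lemma~\ref{Lem:discont-Koenigs-prel} and Lemma~\ref{Lem:discont-Koenigs}, together with the characterization of pairs of maximal contact arcs with a common final point in Lemma~\ref{Lem:cara-contact} and Corollary~\ref{Cor:arco-en-def}. First I would observe that by Lemma~\ref{Lem:discont-Koenigs-prel}, if $\sigma\in\mathcal S(h)\setminus\{\tau\}$ with $h^\ast(\sigma)=b+iy$, then one of cases (1) or (2) of that lemma holds, and these are governed entirely by the one‑sided behaviour of $\psi$ at $y$: case (1) involves $\liminf$ and $\limsup$ of $\psi(t)$ as $t\to y^-$, case (2) the corresponding one‑sided limits as $t\to y^+$. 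Since at a given height $y$ there are only two ``sides'' available, and each side, through the $(\liminf,\limsup)$ pair, determines the value $b=\limsup$ and the impression $L_{a,b;y}$ uniquely (hence determines the corresponding $\sigma$ up to the finitely many prime ends over that boundary segment), a fibre over $y$ can contain at most one element of $\mathcal S^-(h)$ and at most one element of $\mathcal S^+(h)$. This already gives the ``at most two points'' claim, provided we also rule out that $y\in\overline I\setminus I$ can carry two distinct discontinuities: if $y$ is an endpoint of $I$, only one side lies in $I$, so only one of (1), (2) is available, forcing a single preimage; hence as soon as the fibre has two points, $y\in I$.

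Next I would handle the dichotomy between $h^\ast(\sigma_1)=h^\ast(\sigma_2)$ and $h^\ast(\sigma_1)\neq h^\ast(\sigma_2)$. Suppose $\sigma_1\neq\sigma_2$ lie in the same fibre over $y\in I$. If $\sigma_1\in\mathcal S^-(h)\setminus\mathcal S^+(h)$ then necessarily $\sigma_2\in\mathcal S^+(h)$, and symmetrically; but the $\limsup$ of $\psi$ from the left and from the right at $y$ need not agree, so in general $h^\ast(\sigma_1)\neq h^\ast(\sigma_2)$, and moreover $\sigma_2\notin\mathcal S^-(h)$ because the left side is already ``used'' by $\sigma_1$ and a second left‑side discontinuity at the same height would have the same impression and hence be the same prime end. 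This yields case (2): $\sigma_1\in\mathcal S^\pm(h)\setminus\mathcal S^\mp(h)$ and $\sigma_2\in\mathcal S^\mp(h)\setminus\mathcal S^\pm(h)$. The remaining possibility is that the two sides produce the \emph{same} value $b=\Re h^\ast(\sigma_1)=\Re h^\ast(\sigma_2)$, i.e.\ $h^\ast(\sigma_1)=h^\ast(\sigma_2)=:p$. In that situation the Remark following Lemma~\ref{Lem:discont-Koenigs} (the ``both (1) and (2)'' alternative) applies: the only way to get two distinct prime ends $\sigma_-\neq\sigma_+$ over $p$ is the case $p+t_0\notin\Omega$ for some $t_0>0$, which is exactly the second bullet of case~(1). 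In particular both $\sigma_1$ and $\sigma_2$ realize both (1) and (2) of Lemma~\ref{Lem:discont-Koenigs-prel}, i.e.\ $\sigma_1,\sigma_2\in\mathcal S^+(h)\cap\mathcal S^-(h)$, giving the first bullet of case~(1).

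Finally, for the contact‑arc statement in case (1), I would invoke Corollary~\ref{Cor:arco-en-def}: the condition $h^\ast(\sigma_1)=h^\ast(\sigma_2)=p=b+iy$ with $p+t_0\notin\Omega$ for some $t_0>0$ translates, through the defining function, into the existence of $q\in\R$, an open interval $J\subset I$ with $y\in J$, such that $\psi(y)=b>q$ while $\psi(c)\le q$ for all $c\in J\setminus\{y\}$ (the strict drop on both sides being precisely what produces two distinct prime ends over $p$ rather than one). By Corollary~\ref{Cor:arco-en-def} this is equivalent to the existence of two maximal contact arcs $A,B$ with $x_1(A)=x_1(B)=:\varsigma\in\partial\D\setminus\{\tau\}$, and by Lemma~\ref{Lem:cara-contact} together with \eqref{Eq:where-go-int-fin-max-arc} these arcs have $h^\ast(x_0(A))=h^\ast(x_0(B))=p$; identifying the two starting points with the two prime ends over $p$ (again via Lemma~\ref{Lem:discont-Koenigs-prel}, since each of $x_0(A)$, $x_0(B)$ is a point of discontinuity mapping to $p$) shows $\{x_0(A),x_0(B)\}=\{\sigma_1,\sigma_2\}$, and these arcs are non‑exceptional because $\varsigma\neq\tau$ while $\varsigma\notin\{\sigma_1,\sigma_2\}$ since a maximal contact arc cannot contain its own final point. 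I expect the main obstacle to be the careful bookkeeping in this last identification — matching the abstract prime ends $\underline{\sigma_1},\underline{\sigma_2}$ with the concrete starting points of the contact arcs, and checking that the one‑sided $\psi$‑data in Lemma~\ref{Lem:discont-Koenigs-prel} really does pin down the prime end uniquely on each side so that ``at most two'' is sharp and no third preimage can sneak in.
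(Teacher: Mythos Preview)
Your central claim --- that a fibre over $y$ can contain at most one element of $\mathcal S^-(h)$ and at most one element of $\mathcal S^+(h)$ --- is false, and this undermines the whole argument. In case~(1) of the Proposition both $\sigma_1$ and $\sigma_2$ lie in $\mathcal S^-(h)\cap\mathcal S^+(h)$, so $\mathcal S^-(h)$ already contains two distinct points over $y$. The one-sided $(\liminf,\limsup)$ data of $\psi$ at $y$ determines the \emph{impression} $L_{a,b;y}$, but not the \emph{prime end}: when $p=b+iy$ is accessible from $\Omega$ along two topologically distinct routes (from $\{\Im w>y\}$ and from $\{\Im w<y\}$, separated by a slit to the right of $p$), the two resulting prime ends share the same impression and each satisfies \emph{both} conditions (1) and (2) of Lemma~\ref{Lem:discont-Koenigs-prel}. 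So ``the left side is already used by $\sigma_1$'' is not a valid exclusion principle. Relatedly, your appeal to the Remark after Lemma~\ref{Lem:discont-Koenigs} is problematic: the paper explicitly says it will not use that Remark, and its content (the exact count of prime ends over $p$ in each sub-case) is essentially part of what the Proposition is asserting, so invoking it is circular unless you supply an independent proof.

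The paper's proof fills this gap by a direct geometric argument that you are missing entirely. Given $\sigma_1\neq\sigma_2$ both in $\mathcal S^-(h)$ with the same height, it maps the chord $[\sigma_1,\sigma_2]\subset\overline\D$ to a Jordan curve in $\overline\Omega$, reads off from the enclosed region the slit structure \eqref{Eq:creo-arco-doppio}--\eqref{Eq:creo-arco-doppio2}, and then builds two explicit non-equivalent null-chains $\{C_n^+\}$, $\{C_n^-\}$ approaching $p$ from above and below the slit. From these it proves the key claim~$(\star)$: any curve in $\Omega$ landing at $p$ has $h^{-1}$-image landing at $\sigma_1$ or $\sigma_2$. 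This simultaneously (i) rules out a third preimage over $y$, (ii) forces $\sigma_1,\sigma_2\in\mathcal S^+(h)$, and (iii) provides the mechanism to identify the starting points of the two contact arcs with $\sigma_1,\sigma_2$ --- precisely the ``bookkeeping'' step you flag as the main obstacle but for which you offer no tool. Your plan via Corollary~\ref{Cor:arco-en-def} produces the two contact arcs but cannot pin their starting points to $\sigma_1,\sigma_2$ without something like~$(\star)$.
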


\begin{proof}
Let $\sigma_1\in \mathcal S(h)\setminus\{\tau\}$. We can assume that $\sigma_1\in \mathcal S^-(h)$ (the case $\sigma_1\in \mathcal S^+(h)$ is similar). Let $x_0+iy_0:= h^\ast(\sigma_1)\in \C$. Hence, by Lemma~\ref{Lem:discont-Koenigs-prel}, there  exists $a_0\in [-\infty,x_0)$ such that  $(y_0-\eps,y_0)\subset I$ for some $\eps>0$, $a_0=\liminf_{t\to y_0^-} \psi(t)$ and $x_0=\limsup_{t\to y_0^-} \psi(t)$.

Assume now $\sigma_2\in \mathcal S^-(h)$, $\sigma_2\neq \sigma_1$, and $\Im h^\ast(\sigma_2)=y_0$. It follows again by Lemma~\ref{Lem:discont-Koenigs-prel} that  $\Re h^\ast(\sigma_2)=\limsup_{t\to y_0^-} \psi(t)=x_0$. That is, $h^\ast(\sigma_1)=h^\ast(\sigma_2)$.

Consider the continuous injective curve $\gamma:(0,1)\to\D$ defined by $\gamma(t)=(\sigma_2-\sigma_1)t+\sigma_1$. The closure of the image of $\gamma$ is the segment $T$ joining $\sigma_1$ with $\sigma_2$. Hence, $\Gamma:=h(T)$ is a Jordan curve, $p:=h^\ast(\sigma_2)=h^\ast(\sigma_1)\in\Gamma$ and  $\Gamma\setminus\{p\}\subset \Omega$. Let $U$ be the bounded connected component of $\C\setminus\Gamma$ and let $V=h^{-1}(U)$. Hence, $\partial V\cap\partial \D$ is a closed arc $A$ (of positive measure) in $\partial \D$ joining $\sigma_1$ with $\sigma_2$. Since $h$ is proper, it follows that $h^\ast(\zeta)\in\partial\Omega$ for all $\zeta\in A$. Taking into account that $\Gamma$ is a continuous curve and $\Omega$ is starlike at infinity, it follows easily that
\[
h^\ast(A)\subset \{w\in \C: \Im w=y_0, \Re w\geq x_0\}.
\]
If it were $h^\ast(A)=\{p\}$, then $h$ is constant. In particular, this means that $U$ cannot be contained in either $\{w\in \C: \Im w>y_0\}$ or $\{w\in \C: \Im w<y_0\}$, and thus $y_0\in I$.

Therefore, taking again into account that $\Omega$ is starlike at infinity and $\Gamma$ is continuous, there exists $t_0>0$ such that $x_0+t_0+\delta+iy_0\in\Omega$ for all $\delta>0$,
\begin{equation}\label{Eq:creo-arco-doppio}
\{w\in \C: x_0\leq \Re w\leq x_0+t_0, \Im w=y_0\}=h^\ast(A),
\end{equation}
and for every $t>0$ there exists $\epsilon>0$ such that
\begin{equation}\label{Eq:creo-arco-doppio2}
\{w\in\C: \Re w>x_0+t, 0<|\Im w-y_0|<\epsilon\}\subset \Omega.
\end{equation}

Now, we can assume that $\Im h(\gamma(t))<y_0$ for $t$ close to $0$ and $\Im h(\gamma(t))>y_0$ for $t$ close to $1$. Let $(C_n)$, $n\geq 0$, be a null chain in $\Omega$ representing $\hat{h}(\underline{\sigma_2})$ and let $V_n$ be the interior of $C_n$, $n\geq 1$.

For $n$ sufficiently large, $V_n\subset \{w\in\C: \Re w< x_0+\frac{t_0}{2}\}$. Moreover, taking into account that $\{w\in \C: \Im w=y_0, \Re w\leq x_0+t_0\}\cap\Omega=\emptyset$,
for $n$ sufficiently large we have that $V_n$ is either contained in $\{w\in\C: \Im w>y_0\}$ or $\{w\in\C: \Im w<y_0\}$.

Since $\lim_{t\to 1^-}\gamma(t)=\sigma_2$, it follows that $h(\gamma(t))$ is eventually contained in $V_n$ for all $n\geq 1$ and the condition $\Im h(\gamma(t))>y_0$ for $t$ close to $1$ implies then $V_n\subset\{w\in\C: \Im w>y_0\}$ for $n$ sufficiently large.

Since by hypothesis $\sigma_2\in\mathcal S^-(h)\subset \mathcal S(h)$, we have that $\bigcap_n \overline{V_n}^\infty$ is not reduced to a single point, and thus (2.a), (2.b), (2.c) of Lemma~\ref{Lem:discont-Koenigs} are satisfied, hence, $\sigma_1,\sigma_2\in \mathcal S^+(h)$.

Now we claim:

\smallskip

$(\star)$ if $\eta:[0,1)\to \Omega$ is a continuous curve such that $\lim_{t\to1^-}\eta(t)=x_0+iy_0$ then either $\lim_{t\to1^-}h^{-1}(\eta(t))=\sigma_1$ or $\lim_{t\to1^-}h^{-1}(\eta(t))=\sigma_2$. As a consequence, if $\sigma\in\partial\D$ is such that $h^\ast(\sigma)=x_0+iy_0$ then either $\sigma=\sigma_1$ or $\sigma=\sigma_2$.

\smallskip

In order to prove the claim $(\star)$, we construct two (non equivalent) null-chains as follows. Let $n_0\in\N$ be such that $\frac{1}{n_0}<t_0$. For $n\geq 1$ let 
\[
\Theta^+_n:=\{z\in\C: |z-(x_0+iy_0)|=\frac{1}{n}, \Im z\geq y_0\},
\]
and
\[
 \Theta^-_n:=\{z\in\C: |z-(x_0+iy_0)|=\frac{1}{n}, \Im z\leq y_0\}.
\]
By \eqref{Eq:creo-arco-doppio} and \eqref{Eq:creo-arco-doppio2}, for all $n\geq n_0$, the set 
$\Theta^+_n\cap \Omega$ is a union of open arcs. One of them has $x_0+\frac{1}{n}+iy_0$ as an endpoint. We denote by 
$C^+_n$ the closure of such an open arc.  

Since $\sigma_2\in \mathcal S^+(h)$, by Lemma~\ref{Lem:discont-Koenigs}.(2), the final points of the arc $C^+_n$ are given by $x_0+\frac{1}{n}+iy_0$ and some $w^+_n\in\partial\Omega$ with $\Im w^+_n>y_0$  and $\lim_{n\to\infty} w_n^+=x_0+iy_0$. By construction, $\{C^+_n\}_{n\geq n_0}$ is a null-chain in $\Omega$. Let
\[
R^+_n:=\{w\in\C:  y_0<\Im w<\Im w^+_n, \Re w< \Re w^+_n\},
\]
and 
\[
W_n^+:=\{w\in\C: |w-(x_0+iy_0)|<\frac{1}{n}, \Im w>y_0, \Re w\geq \Re w_n^+\}.
\]
Since $\Omega$ is starlike at infinity, the interior part $V^+_n$ of $C^+_n$, $n>n_0$, is given by 
\begin{equation}\label{Eq:innerV_n+}
V^+_n=\Omega\cap(R^+_n\cup W^+_n). 
\end{equation}
Similarly, one can define a null-chain $\{C^-_n\}_{n\geq n_0}$ such that, for every $n\geq n_0$, $C^-_n$ is a closed arc in $\Theta^-_n$ with final points $x_0+\frac{1}{n}+iy_0$ and some $w^-_n\in\partial\Omega$ with $\Im w^-_n<y_0$ and $\lim_{n\to\infty} w_n^-=x_0+iy_0$. Setting 
\[
R^-_n:=\{w\in\C:  \Im w^-_n<\Im w<y_0, \Re w< \Re w^-_n\},
\]
and 
\[
W_n^-:=\{w\in\C: |w-(x_0+iy_0)|<\frac{1}{n}, \Im w<y_0, \Re w\geq \Re w_n^-\},
\]
the interior part $V^-_n$ of $C^-_n$ is given by $V^-_n=\Omega\cap(R^-_n\cup W^-_n)$, $n>n_0$.
Note that, in particular, $\{C^+_n\}_{n\geq n_0}$ and $\{C^-_n\}_{n\geq n_0}$ are not equivalent. Let $\zeta_1\in\partial\D$ be such that $\{C_n^-\}$ represents the prime end $\hat{h}(\underline{\zeta_1})$ and let $\zeta_2\in\partial\D$ be such that $\{C_n^+\}$ represents the prime end $\hat{h}(\underline{\zeta_2})$. 

Now, if $\eta:[0,1)\to \Omega$ is a continuous curve such that $\lim_{t\to1^-}\eta(t)=x_0+iy_0$, by \eqref{Eq:creo-arco-doppio} it follows that either $\Im \eta(t)>y_0$ or $\Im \eta(t)<y_0$ for $t$ close enough to $1$. Suppose that $\Im \eta(t)>y_0$ for every $t> t_0$, for some fixed $t_0\in[0,1)$. Fix $n>n_0$. Since  $\overline{R^+_n\cup W^+_n}$ is a neighborhood  of $x_0+iy_0$ in $\{w\in\C: \Im w\geq y_0\}$, it follows that $\eta(t)\in  R^+_n\cup W^+_n$ eventually. But $\eta(t)\in\Omega$ for all $t\in[0,1)$, hence, $\eta(t)\in V^+_n$ eventually, by \eqref{Eq:innerV_n+}. This implies that  $\eta(t)$ converges to $\hat{h}(\underline{\zeta_2})$ in the Carath\'eodory topology of $\Omega$ as $t\to 1$. Therefore, $\lim_{t\to 1^-}h^{-1}(\eta(t))=\zeta_2$. But, if $\gamma:(0,1)\to \D$ is the continuous curve defined above such that $\lim_{t\to 1^-}\gamma(t)=\sigma_2$, we have that $h(\gamma(t))$ is a continuous curve in $\Omega$ converging to $x_0+iy_0$ as $t\to 1^-$ such that $\Im h(\gamma(t))>y_0$ for $t$ close to $1$. Hence, $\zeta_2=\sigma_2$. A similar argument shows that if $\Im \eta(t)<y_0$ for every $t> t_0$, for some fixed $t_0\in[0,1)$ then $h^{-1}(\eta(t))$ converges to $\sigma_1$ as $t\to 1$. This proves the first part of $(\star)$. As for the last part, assume $h^\ast(\sigma)=x_0+iy_0$. Hence, the continuous curve $[0,1)\ni t\mapsto h(t\sigma)$ converges to $x_0+iy_0$ for $t\to 1^-$. Thus $t\sigma=h^{-1}(h(t\sigma))$ converges to either $\sigma_1$ or $\sigma_2$ as $t\to 1^-$, that is, either $\sigma=\sigma_1$ or $\sigma=\sigma_2$.

Now we show that $\sigma_1$ and $\sigma_2$ are the starting points of two non-exceptional maximal contact arcs $A_1$ and $A_2$ with the same final point.

To this aim, \eqref{Eq:creo-arco-doppio} and \eqref{Eq:creo-arco-doppio2} and Schwarz reflection principle imply that $h^{-1}$ restricted to $\Omega\cap\{w\in\C: \Im w>y_0\}$ extends holomorphically through $B:=\{w\in \C: x_0< \Re w< x_0+t_0, \Im w=y_0\}=h^\ast(A)$. Hence, there exists an open arc $A_1\subset A$ such that $h$ extends holomorphically through $A_1$ and $h(A_1)=B$. Similarly, considering $\Omega\cap\{w\in\C: \Im w<y_0\}$, one can show that there exists an open arc $A_2\subset A$ such that $h$ extends holomorphically through $A_2$ and $h(A_2)=B$. By Lemma~\ref{Lem:cara-contact}, $A_1, A_2$ are (different) contact arcs  with the same final point different from the Denjoy-Wolff point. 

Since $\sigma_2\in \mathcal S^+(h)\cap \mathcal S^-(h)$, it follows from Lemma~\ref{Lem:discont-Koenigs}  that for every $x<x_0$ the point $x+iy_0$ is not accessible from $\Omega$ by any injective continuous curve, and thus cannot belong to the image via $h$ of any point of $A_1, A_2$. On the other hand, if $x_0<x<x_0+t_0$, then $x+t+iy_0\in B$ for all $t\in [0,x_0+t_0-x]$. Therefore, the image of the starting point of $A_j$ under $h^\ast$, $j=1,2$, is either $x_0+iy_0$ or $x_0+t_0+iy_0$. Since the flow is given by $z\mapsto z+t$, $t\geq 0$, it follows  that the starting point of $A_j$ has image $x_0+iy_0$ via $h^\ast$, $j=1,2$. Hence, by \eqref{Eq:where-go-int-fin-max-arc} and $(\star)$ we have that the starting points of $A_1$ and $A_2$ are $\sigma_1$ and $\sigma_2$.

By $(\star)$ we also see that there cannot be three different points $\sigma_1, \sigma_2, q\in\mathcal S^-(h)$ such that $\Im h^\ast(\sigma_1)=\Im h^\ast(\sigma_2)=\Im h^\ast(q)$. Indeed, as we already noticed at the beginning of the proof, this would imply that $h^\ast(\sigma_1)= h^\ast(\sigma_2)= h^\ast(q)$, contradicting $(\star)$.

Summing up, we have proved that for every $y\in \overline{I}$, the pre-image of $y$ under the map  $\mathcal S^-(h)\ni \sigma\mapsto \Im h^\ast(\sigma)$ has at most two points. Moreover, if  $\sigma_1, \sigma_2\in \mathcal S^-(h)$, $\sigma_1\neq\sigma_2$ are such that $\Im h^\ast(\sigma_1)=\Im h^\ast(\sigma_2)=y$ then $y\in I$, $h^\ast(\sigma_1)=h^\ast(\sigma_2)$, $\sigma_1, \sigma_2\in \mathcal S^+(h)$ and there exist two maximal contact arcs starting, respectively, from $\sigma_1$ and $\sigma_2$ and ending at the same final point. A similar conclusion holds for $\mathcal S^+(h)$.

Since $\mathcal S(h)\setminus\{\tau\}=\mathcal S^-(h)\cup \mathcal S^+(h)$, it follows that the fibres of $\mathcal S(h)\setminus\{\tau\}\ni \sigma\mapsto \Im h^\ast(\sigma)\in \overline{I}$ have at most two points. 
Moreover, if a fiber of this map has two points, then either one point is in  $\mathcal S^+(h)\setminus \mathcal S^-(h)$ and the other in $\mathcal S^-(h)\setminus \mathcal S^+(h)$ or they are both in $\mathcal S^-(h)\cap \mathcal S^+(h)$.

In order to end the proof, we have to show that if, say, $\sigma_1\in \mathcal S^+(h)\setminus \mathcal S^-(h)$ and  $\sigma_2 \in \mathcal S^-(h)\setminus \mathcal S^+(h)$ are such that $y:=\Im h^\ast(\sigma_1)=\Im h^\ast(\sigma_2)$, then $y\in I$ and $h^\ast(\sigma_1)\neq h^\ast(\sigma_2)$.
But this follows immediately from the very definitions of  $\mathcal S^-(h)$ and $\mathcal S^+(h)$.
 \end{proof}

Now we examine certain special discontinuities. To this aim, we need a definition:
\begin{definition}
Let $H=\R$ or $H=\partial \D$. A subset $A\subset H$ is a {\sl Cantor set} if every point of $A$ is an accumulation point of $A$ and $A$ has empty interior.
\end{definition}

Note that, by definition, we are not assuming that a Cantor set is closed.

Now, for discontinuities of a Koenigs  function we give a couple of definitions that depend on their impressions.

Given $z_0\in\C, \delta>0, \epsilon>0$ an {\sl open box} is
\[
B(z_0,\epsilon, \delta):=\{z\in \C: |\Re z-\Re z_0|<\epsilon, |\Im z-\Im z_0|<\delta\}.
\]

\begin{definition}
Let $h$ be a Koenigs  function of a non-elliptic semigroup and let $\Omega=h(\D)$.

A set $\{\sigma_\alpha\}_{\alpha\in \mathcal C}\subset \mathcal S(h)$ is a {\sl Cantor comb of discontinuities} if it forms a Cantor set on $\partial \D$ and if there exists an open box $B\subset \overline{\Omega}$ such that $h^\ast(\sigma_\alpha)\in B$ for all $\alpha\in \mathcal  C$ and
\begin{equation}\label{Eq:Comb-bajo}
\begin{split}
\Omega\cap B&=B\setminus \cup_{\alpha\in\mathcal  C} I(\hat{h}(\underline{\sigma_\alpha}))\\ &=B\setminus \cup_{\alpha\in \mathcal C}\{z\in \C: \Im z=\Im h^\ast(\sigma_\alpha), \Re z\leq \Re h^\ast(\sigma_\alpha)\}.
\end{split}
\end{equation}
\end{definition}

We are now going to give some better description of Cantor combs of discontinuities, also in terms of the defining function of the Koenigs  domain (see Proposition~\ref{Prop:definingfnc}).

As a matter of notation, if $x,y\in \R$, we let
\[
L_{x,y}:=\{z\in \C: \Re z\leq x, \Im z=y\}.
\]

\begin{proposition}\label{Prop:CantorComb}
Let $(\varphi_t)_{t\geq 0}$ be a non-elliptic semigroup, with Denjoy-Wolff point $\tau\in\partial\D$, $h:\D\to\C$  its Koenigs  function and $\Omega:=h(\D)$. Let  $\psi:I\to [-\infty,+\infty)$ be the defining  function of $\Omega$.  Then the following are equivalent:
\begin{enumerate}
\item  There exists $\{\sigma_\alpha\}_{\alpha\in C_1}\subset \mathcal S(h)$  a Cantor comb of discontinuities.
\item There exist an open box $B\subset \overline{\Omega}$, a Cantor set $\mathcal C_2\subset\R$ and a set of real numbers $\{x_c\}_{c\in \mathcal C_2}$ such that  $x_c+ic\in  B$ for all $c\in \mathcal C_2$,
\begin{equation}\label{Eq:Cantor-mas-fino}
\Omega\cap B=B\setminus \cup_{c\in \mathcal C_2} L_{x_c,c},
\end{equation}
and $\limsup_{\mathcal C_2\ni c\to c_0}x_{c}=x_{c_0}$ for every $c_0\in \mathcal C_2$.
\item There exist an interval $J\subset I$, a Cantor set $\mathcal C_3\subset J$ and $q \in \R$,  such that
$\psi(c)> q$ for all $c\in \mathcal C_3$ and $\psi(y)\leq q$ for all $y\in J\setminus \mathcal C_3$ and $\limsup_{\mathcal C_3\ni c\to c_0}\psi(c)=\psi({c_0})$ for every $c_0\in \mathcal C_3$.
\end{enumerate}
\end{proposition}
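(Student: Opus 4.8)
The strategy is to prove the cycle of implications $(2)\Rightarrow(1)\Rightarrow(3)\Rightarrow(2)$, using Lemma~\ref{Lem:discont-Koenigs-prel} (and its geometric companion Lemma~\ref{Lem:discont-Koenigs}) to translate between discontinuities of $h$ and the behaviour of the defining function $\psi$, and using the description of $I(\hat h(\underline\sigma))$ via $L_{a,b;y}$ throughout. The first observation to record is a normalization: by definition of a Cantor comb the open box $B$ satisfies $\Omega\cap B = B\setminus\bigcup_{\alpha}\{\Im z=\Im h^\ast(\sigma_\alpha),\ \Re z\le \Re h^\ast(\sigma_\alpha)\}$, and since $B\subset\overline\Omega$ with $\hbox{Int}(\mathcal S(h))=\emptyset$, the set $\{\Im h^\ast(\sigma_\alpha)\}$ is a Cantor set in the vertical projection $J$ of $B$ onto the imaginary axis; writing $c:=\Im h^\ast(\sigma_\alpha)$ and $x_c:=\Re h^\ast(\sigma_\alpha)$ already produces the data of (2), and conversely the data of (2) produces (1) once we know each $x_c+ic$ really is a discontinuity. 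So the content of $(1)\Leftrightarrow(2)$ is the semicontinuity condition $\limsup_{c\to c_0}x_c=x_{c_0}$ together with the verification that these points lie in $\mathcal S(h)$.

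For $(2)\Rightarrow(1)$: fix $c_0\in\mathcal C_2$ and set $y=c_0$, $b=x_{c_0}$. Because $\mathcal C_2$ is a Cantor set and $\limsup_{\mathcal C_2\ni c\to c_0}x_c=x_{c_0}$, one checks directly from \eqref{Eq:Cantor-mas-fino} that $\limsup_{t\to y^-}\psi(t)\ge b$ (approaching $c_0$ by points of $\mathcal C_2$ from below with $x_c\to b$) and also $\le b$ (points outside $\mathcal C_2$ contribute $\le q<b$, or limit behaviour of $x_c$), hence $b=\limsup_{t\to y^-}\psi(t)$; likewise from the right. Setting $a:=\liminf_{t\to y^-}\psi(t)$ (which is $\le q<b$, possibly $-\infty$), the hypotheses of case (1) of Lemma~\ref{Lem:discont-Koenigs-prel} are satisfied, so there is $\sigma\in\mathcal S(h)\setminus\{\tau\}$ with $h^\ast(\sigma)=b+iy$ and $I(\hat h(\underline\sigma))=L_{a,b;y}$. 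The impression contains in particular $L_{x_{c_0},c_0}\cap B$, and matching these impressions against the complement $B\setminus\Omega$ via \eqref{Eq:impress-cluster} shows that the collection $\{\sigma_{c_0}\}_{c_0\in\mathcal C_2}$ is exactly a Cantor comb for a possibly smaller box (shrink $B$ so that $B\subset\overline\Omega$ still holds and only the relevant impression-rays meet it); that the $\sigma_{c_0}$ form a Cantor set on $\partial\D$ follows because $h$ is a homeomorphism on prime ends and $\mathcal C_2$ is a Cantor set.

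For $(1)\Rightarrow(3)$ and $(3)\Rightarrow(2)$: given the comb $\{\sigma_\alpha\}$, put $c:=\Im h^\ast(\sigma_\alpha)$, let $\mathcal C_3$ be the resulting Cantor set of ordinates, pick $J\subset I$ to be the open vertical projection of the box $B$, and let $q:=\sup\{\Re z: z\in B\setminus\Omega\}$ truncated appropriately so that the left wall of $B$ lies to the left of $q$; then $\psi(c)=\Re h^\ast(\sigma_\alpha)>q$ for $c\in\mathcal C_3$ (the ray $L_{x_c,c}$ sits inside $B\setminus\Omega$ with right endpoint strictly inside $B$), while for $y\in J\setminus\mathcal C_3$ the horizontal line at height $y$ meets $B$ inside $\Omega$, forcing $\psi(y)\le q$. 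The semicontinuity statement $\limsup_{\mathcal C_3\ni c\to c_0}\psi(c)=\psi(c_0)$ is exactly the ``principal-part-equals-impression-endpoint'' information extracted from Lemma~\ref{Lem:discont-Koenigs-prel} (the $b=\limsup\psi$ clause), combined with upper semicontinuity of $\psi$. Conversely, $(3)\Rightarrow(2)$ just sets $x_c:=\psi(c)$ for $c\in\mathcal C_3$ and takes $B$ to be a small box around a point $x_{c_0}+ic_0$; the identity \eqref{Eq:Cantor-mas-fino} on $\Omega\cap B$ is immediate from $\Omega=\Omega_\psi$ and the dichotomy $\psi>q$ on $\mathcal C_3$, $\psi\le q$ off it, provided the box is chosen thin enough in the real direction that its left wall lies between $\sup_{J\setminus\mathcal C_3}\psi$ (which is $\le q$) and $\inf_{\mathcal C_3}\psi$ near $c_0$ — here one uses the semicontinuity $\limsup_{c\to c_0}\psi(c)=\psi(c_0)$ to guarantee such a thin box exists with $B\subset\overline\Omega$.

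The main obstacle is the bookkeeping around the box $B$: in each implication one must shrink or translate $B$ so that simultaneously $B\subset\overline\Omega$, the relevant left-rays $L_{x_c,c}$ have their right endpoints strictly interior to $B$, and no \emph{other} part of $\partial\Omega$ enters $B$; getting this right requires the $\limsup$-semicontinuity condition in an essential way (it is exactly what prevents the ``comb teeth'' from accumulating onto a longer tooth or a solid wall), and it is the reason that condition appears in all three equivalent formulations. The translation dictionary between $I(\hat h(\underline\sigma))=L_{a,b;y}$ and the pair $(\liminf\psi,\limsup\psi)$ at $y$ is supplied verbatim by Lemma~\ref{Lem:discont-Koenigs-prel}, so no new analysis of prime ends is needed beyond careful application of that lemma to each height in the Cantor set.
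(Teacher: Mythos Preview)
Your cycle $(2)\Rightarrow(1)\Rightarrow(3)\Rightarrow(2)$ is a legitimate route and the dictionary you invoke (Lemma~\ref{Lem:discont-Koenigs-prel} between $I(\hat h(\underline\sigma))=L_{a,b;y}$ and the pair $(\liminf\psi,\limsup\psi)$) is the right tool. But there is a genuine gap at the step where you transfer the Cantor property between $\partial\D$ and $\R$.

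In $(2)\Rightarrow(1)$ you write that ``the $\sigma_{c_0}$ form a Cantor set on $\partial\D$ \dots\ because $h$ is a homeomorphism on prime ends and $\mathcal C_2$ is a Cantor set.'' The homeomorphism $\hat h:\widehat{\D}\to\widehat\Omega$ is between $\partial\D$ and the \emph{prime ends} of $\Omega$, not between $\partial\D$ and the heights $\mathcal C_2\subset\R$. What you actually need is continuity of the map $\mathcal C_2\ni c\mapsto$ (prime end at height $c$), i.e.\ that $c_m\to c_0$ with $x_{c_m}\to x_{c_0}$ forces $\sigma_{c_m}\to\sigma_{c_0}$. This is not formal: two distinct prime ends can have principal parts as close as you like in $\C$. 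The paper supplies this by an explicit no-Koebe-arcs argument: one builds short Jordan arcs $\Gamma_n\subset\Omega$ joining $h^\ast(\sigma_{c_0})$ to $h^\ast(\sigma_{c_m})$ whose diameters shrink, and observes that if $\sigma_{c_m}\not\to\sigma_{c_0}$ the preimages $h^{-1}(\Gamma_n)$ would be Koebe arcs for $h$. Without something of this kind your implication is incomplete.

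There is a symmetric issue in the first paragraph, where you claim that $\{\Im h^\ast(\sigma_\alpha)\}$ is a Cantor set ``since $B\subset\overline\Omega$ with $\hbox{Int}(\mathcal S(h))=\emptyset$.'' Empty interior of $\mathcal S(h)$ on $\partial\D$ does not directly give empty interior of the heights in $\R$; one needs to argue that an interval of heights with teeth of length bounded below would place an open subset of $B$ inside $\partial\Omega$, which is impossible. The paper does this via a Baire-category decomposition $\mathcal C_2=\bigcup_n\mathcal K(n)$. Finally, your choice of $q$ in $(1)\Rightarrow(3)$ as a sup over $B\setminus\Omega$ gives $q=\sup_c x_c$, which makes $\psi(c)>q$ fail; you want $q$ strictly between the left wall of $B$ and some $x_{c_0}$, then restrict the Cantor set to $\{c:x_c>q\}$.
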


\begin{proof}  (1) implies (2).
Let $\mathcal C_2:=\{\Im h^\ast(\sigma_\alpha): \alpha\in \mathcal C_1\}$. By definition of Cantor comb of discontinuities, \eqref{Eq:Cantor-mas-fino} holds.

Now we show that $\mathcal C_2$ is  a Cantor set.
Since by definition of Cantor comb of discontinuities, for all $\alpha\in \mathcal C_1$ and $t>0$ we have $h^\ast(\sigma_\alpha)+t\in\Omega$, it follows by
Proposition~\ref{Lem:two-disc}
that
\[
x_c:=\max\{\Re h^\ast(\sigma_\alpha): \alpha\in \mathcal C_1, \Im h^\ast(\sigma_\alpha)=c\}
\]
is well defined. Note that, since the impressions of prime ends are contained in $\partial\Omega$, we have $L_{x_c,c}\cap B\subset \partial \Omega$.

First we show that the interior of $\mathcal C_2$ is empty. If $R_0=\inf\{x: x+iy\in B\}$, by definition of Cantor comb, we have $x_c>R_0$ for all $c\in \mathcal C_2$. Let $n=1, 2\ldots$ and set
\[
\mathcal K(n):=\{c\in \mathcal C_2: x_c>R_0+\frac{1}{n}\}.
\]
Clearly  $\mathcal C_2=\cup_n \mathcal K(n)$. We claim that $\hbox{Int}(\overline{\mathcal K(n)})=\emptyset$ for all $n$, from which it follows from Baire category theorem that $\mathcal C_2$ has empty interior.

Assume there exists $n$ and an open set $A$ such that  $A\subset \overline{\mathcal K(n)}$. Thus, any point  $a\in A$ is a limit of points in $\mathcal K(n)$. Since for every point $c\in K(n)$
	one has $\{w\in\C:\Im w=c, R_0\leq\Re w\leq R_0+\frac{1}{n}\}\subset\partial\Omega$ and $\partial \Omega$ is closed, it follows that also $\{w\in\C:\Im w=a, R_0\leq \Re w\leq R_0+\frac{1}{n}\}\subset\partial\Omega$.
Therefore, the open set $\{w\in\C:\Im w\in A, R_0<\Re w< R_0+\frac{1}{n}\}\subset\partial\Omega$, which is a contradiction.

Since $\partial \Omega$ is closed, we have $\limsup_{\mathcal C_2\ni \tilde c\to c} x_{\tilde c}\leq x_{c}$ for every $c\in\mathcal C_2$.

Therefore, in order to prove (2), we are left to show that every point in $\mathcal C_2$ is an accumulation point in $\mathcal C_2$ and that for every $c\in \mathcal  C_2$ there exists a sequence $\{c_n\}\subset \mathcal C_2$ such that $\lim_{n\to \infty}x_{c_n}=x_{c}$. To this aim, given  $c\in \mathcal C_2$, let $\alpha\in \mathcal C_1$ be such that $h^\ast(\sigma_\alpha)=x_c+ic$. By Lemma~\ref{Lem:discont-Koenigs},  we can assume that there exists a sequence $\{z_n^-\}\subset\partial\Omega$ such that $\{\Im z_n^-\}$ is strictly increasing, $z_n^-+t\in\Omega$ for $t>0$ and $\{z_n^-\}$ converges to $x_c+ic$ (the case of a sequence  $\{z_n^+\}\subset\partial\Omega$ such that $\{\Im z_n^+\}$ is strictly increasing is similar). For $n$ sufficiently large, $\{z_n^-\}$ is contained in the open box $B$ given by the definition of Cantor comb of discontinuities. Therefore, since $z_n^-+t\in \Omega$ for $t>0$, from \eqref{Eq:Comb-bajo} we obtain that $z_n^-=h^\ast(\sigma_{\alpha_n})$ for some $\alpha_n\in \mathcal C_1$. Letting $x_{c_n}+ic_n=h^\ast(\sigma_{\alpha_n})$, it is clear that $\{c_n\}$ accumulates to $c$ and that $\lim_{n\to \infty}x_{c_n}=x_{c}$.

(2) implies (3).
 Let $z_0\in\C$,
$\epsilon>0, \delta>0$ be such that $B=B(z_0,\epsilon,\delta)$. Let $J:=(\Im z_0-\delta, \Im z_0+\delta)$. Note that, since $x_c+ic\in B$ for all $c\in \mathcal C_2$, it follows that $\mathcal  C_2\subset J$. By the very definition of $\psi$ and \eqref{Eq:Cantor-mas-fino}, we have $\psi(c)=x_{c}$ for all $c\in \mathcal C_2$. Take $c_0\in \mathcal C_2$ and let
$\Re z_0-\epsilon<q<x_{c_0}$.
Note that $q+ic_0\in B$. Then define $\mathcal C_3:=\{c\in \mathcal C_2: x_{c}>q\}$. For every $c\in \mathcal C_2$ there exists a sequence $\{c_n\}$
of points of $C_2$ such that $c_n\ne c$ for all $n$ and $\lim_{n\to\infty}c_n=c$. It follows that $\mathcal C_3$ is a Cantor set. By construction, $\psi(c)>q$ for all $c\in C_3$ and $\psi(y)\leq q$ for all $q\in J\setminus \mathcal C_3$.

(3) implies (1). Let $c_0\in \mathcal  C_3$ be a point in the interior of $J$. Since $\psi$ is upper semicontinuous, for every $\epsilon>0$ there exists $\delta>0$ such that $\psi(t)<\psi(c_0)+\epsilon$ for every $t$ such that $|t-c_0|<\delta$. Fix $\epsilon>0$ such that $\psi(c_0)-2\epsilon>q$ and let $\delta>0$ as before. We can assume, up to taking $\delta$ smaller, that  $[c_0-\delta,c_0+\delta]\subset J$.  Let
$\mathcal C_1:=\mathcal C_3\cap (c_0-\delta,c_0+\delta)$.
Note that $\mathcal C_1$ is a Cantor set.
Let $B:=\{z\in\C: |\Re z-\psi(c_0)|<2\epsilon, |\Im z-c_0|<\delta\}$. By construction and hypothesis on $\psi$, we have
\begin{equation}\label{Eq:box-from-psi}
B\cap \Omega=B\setminus \cup_{c\in \mathcal C_1} L_{\psi(c),c},
\end{equation}
and $\psi(c)+ic\in B$ for all $c\in \mathcal C_1$.

Therefore, for every $c\in \mathcal C_1$  the Jordan arc $\gamma_c:(0,+\infty)\to\C$ defined as $\gamma_c(t):=\psi(c)+t+ic$ is contained in $\Omega$.
Hence,  (see, {\sl e.g.}, \cite[Prop. 3.3.3]{BCDbook}), for every $c\in \mathcal C_1$  there exists $\sigma_c\in\partial\D$ such that $\sigma_c=\lim_{t\to 0^+}h^{-1}(\gamma_c(t))$ and
\[
\psi(c)+ic=h^\ast(\sigma_c).
\]
By  Lemma~\ref{Lem:discont-Koenigs}, it follows that $L_{\psi(c),c}\cap B$ contains the impression of the prime-end $\hat{h}(\underline{\sigma_c})$.
Hence, $\sigma_c\in \mathcal S(h)$. By \eqref{Eq:box-from-psi}, we are left to prove that $\{\sigma_c\}_{c\in\mathcal C_1}$ forms indeed a Cantor set in $\partial \D$.

Fix $c\in \mathcal C_1$. Let $\{c_m\}\subset\mathcal C_1$ be a sequence converging to $c$ such that $\psi(c_m)\to \psi(c)$.  We prove that $\{\sigma_{c_m}\}$ converges, up to subsequences,  to $\sigma_c$.

Indeed, assume by contradiction that there exists $C>0$ such that $|\sigma_{c_m}-\sigma_c|\geq C$ for all $m$. Let $\{r_n\}$ be a sequence of positive numbers strictly decreasing to $0$. Since $\psi$ is upper semicontinuous, for every $n$ there exists $\delta_n>0$ such that $\psi(y)<\psi(c)+r_n$ for all $y\in\R$ such that $|y-c|<\delta_n$.  Thus, if $c_m$ is such that $|c_m-c|<\delta_n$, the  Jordan arc
\[
\Gamma_n:=([\psi(c_m), \psi(c)+r_n]+ic_m) \cup (\psi(c)+r_n+i[c_m,c])\cup ([\psi(c), \psi(c)+r_n]+ic)
\]
connects $h^\ast(\sigma_c)$ to $h^\ast(\sigma_{c_m})$ and $\Gamma_n\setminus\{h^\ast(\sigma_c), h^\ast(\sigma_{c_m})\}\subset\Omega$.

It follows that $h^{-1}(\Gamma_n)$ is a Jordan arc  connecting $\sigma_c$ and $\sigma_{c_m}$ and $A_n:=h^{-1}(\Gamma_n)\setminus\{\sigma_c, \sigma_{c_m}\}\subset\D$. By hypothesis, the diameter of $A_n$ is bounded from below by $C$ (since $|\sigma_{c_m}-\sigma_c|\geq C$ for all $m$). However, $\{h(A_n)\}$ converges to $h^\ast(\sigma_c)$---in the sense that for every $w_n\in h(A_n)$, $\lim_{n\to\infty}w_n=h^\ast(\sigma_c)$. Thus, $\{A_n\}$ is a sequence of Koebe arcs for $h$, against the ``no Koebe arcs Theorem'' (see, {\sl e.g.}, \cite[Theorem~3.2.4]{BCDbook}.

Thus, $\{\sigma_{c_m}\}$ converges, up to subsequences,  to $\sigma_c$---actually, using the same token for every subsequence, one can prove that $\{\sigma_{c_m}\}$ converges  to $\sigma_c$, although this is not needed here. By the arbitrariness of $c$, it follows that  $\{\sigma_{c}\}$ is a Cantor set, and we are done.
\end{proof}


\begin{example}
We construct a non-elliptic semigroup such that its Koenigs  function has  a Cantor comb of discontinuities. To this aim, let $\mathcal C\subset \R$ be a closed Cantor set. Let $\psi:\R\to [0,1]$ be defined as $\psi(c)=1$ for $c\in \mathcal C$ and $\psi(y)=0$ for $y\in \R\setminus\mathcal C$. By construction $\psi$ is upper semicontinuous. Then
$\Omega_\psi$ is the Koenigs  domain of a parabolic semigroup in $\D$. By Proposition~\ref{Prop:CantorComb} such a semigroup has a Cantor comb of discountinuities.
\end{example}

\subsection{Characterization}

Now we characterize those Koenigs  domain $\Omega$ such that $\hbox{Int}(\overline{\Omega})=\Omega$. To this aim we give the following:

\begin{definition}\label{Def:psi-tilde}
	Let $I\subseteq \R$ be an open interval and let $\psi:I\to [-\infty,+\infty)$ be an upper semicontinuous function. The lower semicontinuous regularization $\psi_\ast:  I\to [-\infty,+\infty)$ of $\psi$ is defined, for $x\in  I$, as
	\[
	\psi_\ast(x):=\liminf_{t\to x}\psi(t).
	\]
	Then we let $\tilde{\psi}:I\to [-\infty,+\infty)$ be the upper semicontinuous regularization of $\psi_\ast$, defined for $x\in I$ as
	\[
	\tilde{\psi}(x):=\limsup_{t\to x}\psi_\ast(t).
	\]
\end{definition}

Note that, since $\psi$ is upper semicontinuous, it follows that $\psi_\ast(x)\leq \psi(x)$---and  hence $\tilde{\psi}(x)\leq \psi(x)$---for all $x\in I$.

\begin{theorem}\label{Thm:Cantor-no-approx}
Let $(\varphi_t)_{t\geq 0}$ be a non-elliptic semigroup, with Denjoy-Wolff point $\tau\in\partial\D$, $h:\D\to\C$  its Koenigs  function and $\Omega:=h(\D)$. Let $\psi:I\to [-\infty,+\infty)$ be the defining function of $\Omega$. Then the following are equivalent:
\begin{enumerate}
\item  If $A, B$ are non-exceptional maximal contact arcs for $(\varphi_t)_{t\geq 0}$ with final points $x_1(A)$ and $x_1(B)$ then   $x_1(A)\neq x_1(B)$, and  $\mathcal S(h)$ does not contain any Cantor comb.
\item $\psi(y)=\tilde{\psi}(y)$ for every $y\in I$.
\item $\hbox{Int}(\overline{\Omega})=\Omega$.
\end{enumerate}
\end{theorem}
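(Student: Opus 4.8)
The plan is to route the equivalences through $(2)$: establish $(2)\Leftrightarrow(3)$ by identifying $\hbox{Int}(\overline{\Omega})$ with $\Omega_{\tilde{\psi}}$, and $(2)\Leftrightarrow(1)$ by testing $\psi$ against $\tilde{\psi}$ in the presence or absence of the two configurations forbidden by $(1)$. The implication $(1)\Rightarrow(2)$ carries the real content.

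\emph{$(2)\Leftrightarrow(3)$.} First I would compute $\hbox{Int}(\overline{\Omega})$ in terms of the defining function. As $\Omega$ is starlike at infinity, so is $\overline{\Omega}$; for $y\in I$ an elementary argument (take $t_n\to y$ with $\psi(t_n)\to\psi_\ast(y)$, let the points $\psi(t_n)+1/n+it_n\in\Omega$ converge, and use starlikeness at infinity for the opposite inclusion) shows the horizontal section $\overline{\Omega}\cap L_y$ is $\{x+iy: x\ge\psi_\ast(y)\}$, the whole of $L_y$ when $\psi_\ast(y)=-\infty$. A second elementary argument shows $x_0+iy_0\in\hbox{Int}(\overline{\Omega})$ if and only if $\psi_\ast\le x_0-\varepsilon$ on a neighbourhood of $y_0$ for some $\varepsilon>0$, i.e. if and only if $x_0>\limsup_{t\to y_0}\psi_\ast(t)=\tilde{\psi}(y_0)$; moreover the vertical projection of $\hbox{Int}(\overline{\Omega})$ is still $I$, since a ball centred over a finite endpoint of $I$ cannot lie in $\overline{\Omega}$. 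Hence $\hbox{Int}(\overline{\Omega})=\Omega_{\tilde{\psi}}$, a domain starlike at infinity by Proposition~\ref{Prop:definingfnc} ($\tilde{\psi}$ is upper semicontinuous by construction), and since $\tilde{\psi}\le\psi$ on $I$ this equals $\Omega_{\psi}$ exactly when $\psi=\tilde{\psi}$ on $I$.

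\emph{$(2)\Rightarrow(1)$, contrapositive.} If the first half of $(1)$ fails, Corollary~\ref{Cor:arco-en-def} gives $q\in\R$, an open interval $J\subset I$ and $c_0\in J$ with $\psi(c_0)>q$ and $\psi\le q$ on $J\setminus\{c_0\}$; if the second half fails, Proposition~\ref{Prop:CantorComb} gives $q\in\R$, an open interval $J\subset I$, a Cantor set $F\subset J$ and a point $c_0\in F$ with $\psi>q$ on $F$ and $\psi\le q$ on $J\setminus F$. In either case the distinguished set ($\{c_0\}$, resp. $F$) has empty interior, so $\{y:\psi(y)\le q\}$ is dense in $J$ and hence $\psi_\ast\le q$ on $J$; then $\tilde{\psi}(c_0)\le q<\psi(c_0)$, contradicting $(2)$.

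\emph{$(1)\Rightarrow(2)$, contrapositive --- the main step.} Suppose $\psi(y_0)>\tilde{\psi}(y_0)$ for some $y_0\in I$ (so $\psi(y_0)\in\R$), and fix $q$ with $\tilde{\psi}(y_0)<q<\psi(y_0)$. Since $\limsup_{t\to y_0}\psi_\ast(t)=\tilde{\psi}(y_0)<q$, there is an open interval $J_0\subset I$ with $y_0\in J_0$ and $\psi_\ast<q$ on $J_0$. Put $F:=\{y\in J_0:\psi(y)>q\}$; then $y_0\in F$, and $F$ has empty interior, since $\psi_\ast\ge q$ on any interval contained in $F$. The key remark is that Corollary~\ref{Cor:arco-en-def} and the first half of $(1)$ forbid isolated spikes at \emph{every} level: there are no $q'\in\R$, open $J'\subset I$ and $c_1\in J'$ with $\psi(c_1)>q'$ and $\psi\le q'$ on $J'\setminus\{c_1\}$. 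Applying this at level $q$ shows $F$ has no isolated point (an isolating interval of $c_1\in F$ would be a spike at level $q$, since $\psi\le q$ off $F$), so $F$ is a Cantor set; applying it at a level $q'\in\big(\limsup_{F\ni c\to c_1}\psi(c),\,\psi(c_1)\big)$ (a non-empty interval, as $\limsup_{F\ni c\to c_1}\psi(c)\ge q$) for a point $c_1\in F$ at which reachability would fail shows, since $\psi\le q<q'$ on $J_0\setminus F$, that $c_1$ would be a spike at level $q'$; hence $\limsup_{F\ni c\to c_1}\psi(c)=\psi(c_1)$ for every $c_1\in F$. Thus $(J_0,F,q)$ realizes condition $(3)$ of Proposition~\ref{Prop:CantorComb}, so $\mathcal S(h)$ contains a Cantor comb, contradicting the second half of $(1)$. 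I expect this to be the main obstacle: one must see that suppressing isolated spikes at all levels is precisely what forces the level set $F$ to be at once perfect and internally reachable, i.e. a Cantor comb.
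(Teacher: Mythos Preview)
Your proof is correct and follows essentially the same route as the paper: identifying $\hbox{Int}(\overline{\Omega})=\Omega_{\tilde\psi}$ for $(2)\Leftrightarrow(3)$, and reducing $(1)\Leftrightarrow(2)$ to Corollary~\ref{Cor:arco-en-def} and Proposition~\ref{Prop:CantorComb}. Your reachability step is in fact cleaner than the paper's---where the paper only explicitly verifies $\limsup_{C_{\epsilon_0}(q)\ni y\to y_0}\psi(y)=\psi(y_0)$ at the single point $y_0$ (leaving the extension to all points implicit), your ``isolated spike at level $q'$'' argument handles every $c_1\in F$ at once; the parenthetical ``a non-empty interval, as $\limsup_{F\ni c\to c_1}\psi(c)\ge q$'' is slightly garbled (non-emptiness comes from the assumed failure of reachability, while $\limsup\ge q$ is what ensures $q'>q$), but the logic is sound.
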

\begin{proof}
First, we extend $\psi_\ast$ to $\overline I$ by setting $\psi_\ast(a):=\liminf_{I\ni y \to a}\psi(y)$ for $a\in\partial I$. Then it is easy to see that
\[
\overline \Om=\overline \Om_\psi=\{x+iy: y\in \overline I, \,
x\ge \psi_\ast(y)\},
\]	
where, if $y\in\partial I$ and $\psi_\ast(y)=+\infty$, the inequality $x \ge +\infty$ means that $x+iy
\not\in\overline{\Omega}$ for any $x\in\R$. Using the previous equality, one can easily check that
\[
\hbox{Int}(\overline \Om)=\{x+iy: y\in I, \,
x > \tilde\psi(y)\}.
\]
 Hence (2) is equivalent to (3).

Now let us see that (1) implies (2). Arguing by contradiction, we assume that there exists $y_0\in I$ such that $\psi(y_0)>\tilde{\psi}(y_0)$. Let $q\in \R$ be such that $\psi(y_0)>q>\tilde{\psi}(y_0)$.  Now, for every $\epsilon>0$ such that $(y_0-\epsilon, y_0+\epsilon)\subset I$  let
\[
C_\epsilon(q):=\{y\in (y_0-\epsilon, y_0+\epsilon): \psi(y)>q\}.
\]
If for every $\epsilon>0$ the set $C_\epsilon(q)$ contains some open interval $(a_\epsilon, b_\epsilon)$, it would follow that $\psi_\ast(a_\epsilon)>q$, and hence, since $a_\epsilon\to y_0$ as $\epsilon\to 0$, we would have $\tilde{\psi}(y_0)\geq q$. Hence, there exists $\epsilon_0>0$ such that $\hbox{Int}(C_{\epsilon}(q))=\emptyset$ for all $0<\epsilon\leq \epsilon_0$.

If there exists $0<\epsilon\leq \epsilon_0$ such that $C_\epsilon(q)=\{y_0\}$, then  by Corollary~\ref{Cor:arco-en-def} the semigroup $(\varphi_t)_{t\geq 0}$ has two non-exceptional maximal contact arcs $A, B$  with final points $x_1(A)=x_2(B)$, contradicting (1).

Therefore, we can assume that for every $\epsilon>0$ the set $C_\epsilon(q)$ contains infinitely many points. If some point in $C_\epsilon(q)$ is isolated, repeating the previous argument at such a point, we would get again two   non-exceptional maximal contact arcs $A, B$  with final points $x_1(A)=x_2(B)$, contradicting (1). Therefore, $C_\epsilon(q)$ is a Cantor set in $(y_0-\epsilon, y_0+\epsilon)$ for every $0<\epsilon\leq \epsilon_0$.

Let $\{q_n\}$ be a increasing sequence converging to $\psi(y_0)$ and let $\{\epsilon_n\}$ be a decreasing sequence of positive numbers converging to $0$. Repeating the previous argument, for every $n$ large, we can find $y_n\in C_{\epsilon_n}(q_n)\subset C_{\epsilon_0}(q)$. Thus,
\[
\limsup_{C_{\epsilon_0}(q)\ni y\to y_0}\psi(y) \geq \limsup_{n\to\infty}\psi(y_n)\geq \psi(y_0).
\]
 Since $\psi$ is upper semicontinuous, it actually follows that $\limsup_{C_{\epsilon_0}(q)\ni y\to y_0}\psi(y)=\psi(y_0)$. Hence, by Proposition~\ref{Prop:CantorComb}, $\mathcal S(h)$ contains a Cantor comb, against ~(1). Therefore, ~(1) implies ~(2).

Next, if (3)  holds, it follows from Lemma~\ref{Lem:cara-contact} and the second equivalent condition in Proposition~\ref{Prop:CantorComb} that (1) holds.
\end{proof}

\begin{example}\label{Ex:Cantor-good}
	We construct a semigroup such that its Koenigs function has  a Cantor set of  discontinuities which is not a Cantor comb of discontinuities.  Let $\mathcal C$ be the ternary Cantor set in $[0,1]$.
	Define a function $\psi$ on $(0,1)$ as follows.
	If $(a,b)\subset (0,1)$ is one of the complementary intervals
	of $\mathcal C$ and $x\in (a,b)$, then we set $\psi(x)=\sin((b-x)^{-1}(x-a)^{-1})$. If $x\in (0,1)\cap C$, then we set $\psi(x)=1$. It is easy to see that $\psi$ is upper semicontinuous and its set of discontinuities is $\mathcal C\setminus \{0,1\}$.
	
	Consider a hyperbolic semigroup whose
	Koenigs domain is $\Om_\psi$.
	To each point  $c\in \mathcal C \setminus \{0,1\}$ there corresponds a prime end whose impression is $[-1,1]+ic$. Hence the Koenigs function of the semigroup has a Cantor set of discontinuities, but there are no Cantor combs.
\end{example}

Observe that by Theorem~\ref{Thm:density-in-strip},  Corollary~\ref{Cor:complement}, and Theorem~\ref{Thm:Cantor-no-approx},  for the domain $\Om$ of the previous example, $\mathcal E_\infty(\Omega)$ is weak-star dense in $H^\infty(\Omega)$.

\section{Boundary fixed points, unbounded discontinuities, and contact arcs}\label{Sec:bum}

The aim of this section is to study the interplay between boundary fixed points, a type of discontinuities that we call ``unbounded'' and maximal contact arcs, also in relation with the defining function of the Koenigs' domain.

\subsection{Boundary fixed points}

As we already recalled, if $(\varphi_t)_{t\geq 0}$ is a semigroup in $\D$, then for every $\sigma\in\partial \D$ and for every $t\geq 0$, the non-tangential limit $\varphi^\ast_t(\sigma):=\angle\lim_{z\to\sigma}\varphi_t(z)$
exists.

\begin{definition}
Let $(\varphi_t)_{t\geq 0}$ be a semigroup in $\D$ and $\sigma\in \partial \D$. Then $\sigma$ is a {\sl boundary fixed point} of $(\varphi_t)_{t\geq 0}$ if $\varphi_t^\ast(\sigma)=\sigma$ for every $t\geq 0$.
\end{definition}

It can be proved that a point $\sigma$ is a boundary fixed point of $(\varphi_t)_{t\geq 0}$ if and only if there exists $t_0>0$ such that $\varphi_{t_0}^\ast(\sigma)=\sigma$. Moreover, if $(\varphi_t)_{t\geq 0}$ is non-elliptic, the Denjoy-Wolff point is a boundary fixed point (see {\sl e.g.}, \cite[Chapter 11]{BCDbook} for details).

In the sequel we will use some facts, that we are going to collect in the following lemmas. We start with the following (see, {\sl e.g.} \cite[Prop.~13.6.1]{BCDbook})

\begin{lemma}\label{lem:h-bfp-simple} Let $(\varphi_t)_{t\geq 0}$ be a non-elliptic semigroup in $\D$ with Koenigs  function  $h$ and Denjoy-Wolff point $\tau$. Let $\sigma\in\partial\D$. Then $\sigma$ is a boundary fixed point of $(\varphi_t)_{t\geq 0}$ if and only if
\begin{equation}\label{Eq:lim-h-at-bfp-simple}
h^\ast(\sigma)=\angle\lim_{z\to \sigma} h(z)=\infty.
\end{equation}
Moreover, if $\sigma\neq\tau$, then $\lim_{z\to\sigma}h(z)=\infty$.
\end{lemma}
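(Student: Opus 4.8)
\emph{Set-up.} The only structural input is the defining identity of the model, $h\circ\varphi_t=h+t$ for all $t\ge0$ (Theorem~\ref{Thm:models}), equivalently $\varphi_t=h^{-1}\circ T_t\circ h$ with $T_t(w)=w+t$; to this I add the two facts recalled before \eqref{Eq:impress-cluster}: $h$ has an angular limit $h^\ast(\sigma)\in\C_\infty$ at every $\sigma\in\partial\D$, this angular limit is the principal point of the prime end $\hat h(\underline\sigma)$, and the impression $I(\hat h(\underline\sigma))$ coincides with the full cluster set of $h$ at $\sigma$. Since $\tau$ is always a boundary fixed point (Denjoy--Wolff) and $h^\ast(\tau)=\infty$ for the Koenigs function of any non-elliptic semigroup (a basic property of the model, see \cite{BCDbook}), the plan is to treat from now on $\sigma\in\partial\D\setminus\{\tau\}$.

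\emph{Boundary fixed point $\implies h^\ast(\sigma)=\infty$, and the ``moreover''.} Assume $\sigma$ is a boundary fixed point with $c:=h^\ast(\sigma)\in\C$, aiming at a contradiction. For each $s>0$ the map $\varphi_s$ has angular, hence radial, limit $\sigma$ at $\sigma$, so $\varphi_s(r\sigma)\to\sigma$ as $r\to1^-$; on the other hand $h(\varphi_s(r\sigma))=h(r\sigma)+s\to c+s$. Hence $c+s$ lies in the cluster set of $h$ at $\sigma$ for every $s\ge0$, i.e. $\{c+s:s\ge0\}\subseteq I(\hat h(\underline\sigma))$; as this set has more than one point, $\sigma\in\mathcal S(h)$. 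But then Lemma~\ref{Lem:discont-Koenigs-prel} gives $I(\hat h(\underline\sigma))=L_{a,b;y}$ with $b=\Re c<+\infty$, which cannot contain the horizontal half-line $\{c+s:s\ge0\}$ issuing to the right from $c=b+iy$; contradiction. Thus $h^\ast(\sigma)=\infty$. The ``moreover'' part now follows at once, since Lemma~\ref{Lem:discont-Koenigs-prel} forces $h^\ast\in\C$ at every point of $\mathcal S(h)\setminus\{\tau\}$: so $h^\ast(\sigma)=\infty$ with $\sigma\ne\tau$ implies $\sigma\in\mathcal C(h)$, i.e. $\lim_{z\to\sigma}h(z)=\infty$, and the impression of $\hat h(\underline\sigma)$ reduces to $\{\infty\}$.

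\emph{$h^\ast(\sigma)=\infty\implies$ boundary fixed point.} By the previous step $h$ is continuous at $\sigma$ with value $\infty$. I would pick a curve $\delta\colon[0,1)\to\Omega$ representing the prime end $\hat h(\underline\sigma)$, chosen so that $\delta(r)\to\infty$ while $\dist(\delta(r),\partial\Omega)$ stays bounded below (possible because $h^\ast(\sigma)=\infty$ forces $\Omega$ to contain a channel running off to $\infty$ along which the boundary recedes); put $\gamma:=h^{-1}\circ\delta$, so $\gamma(r)\to\sigma$. Fix $t>0$. Then $\varphi_t(\gamma(r))=h^{-1}(\delta(r)+t)$, and $\delta(r)$, $\delta(r)+t$ are joined in $\Omega$ by a segment of length $t$ (starlikeness at infinity), whose hyperbolic length in $\Omega$ is $\lesssim t$ by Koebe's distortion theorem and the distance bound. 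Hence $\varphi_t(\gamma(r))$ and $\gamma(r)$ stay at bounded hyperbolic distance in $\D$, and since $\gamma(r)\to\sigma$ we get $\varphi_t(\gamma(r))\to\sigma$. As $\varphi_t$ is a bounded holomorphic function on $\D$ tending to $\sigma$ along the curve $\gamma$ landing at $\sigma$, Lindel\"of's theorem yields $\varphi_t^\ast(\sigma)=\sigma$; letting $t$ vary, $\sigma$ is a boundary fixed point.

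\emph{Where the work is.} The forward implication is a short and robust computation once Lemma~\ref{Lem:discont-Koenigs-prel} is available. In the converse, the one non-formal ingredient is the choice of the representing curve $\delta$ together with the uniform lower bound on $\dist(\delta(r),\partial\Omega)$: this is exactly where the explicit description of prime-end impressions in starlike-at-infinity domains \cite[Thm.~11.1.4]{BCDbook} (the tool underlying Lemmas~\ref{Lem:discont-Koenigs-prel} and \ref{Lem:discont-Koenigs}) would be invoked, to see that a prime end whose impression is $\{\infty\}$ sits at the end of a channel of width tending to infinity, along which one can travel at bounded distance from the boundary. I expect this geometric step, rather than any of the function-theoretic manipulations, to be the main obstacle to turning the sketch into a complete argument; an alternative handling of the forward direction via Julia's lemma (so that $\varphi_t$ maps Stolz angles at a boundary regular fixed point into Stolz angles) requires, in turn, a separate treatment of super-repulsive fixed points, which is why I prefer the route above.
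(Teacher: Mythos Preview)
The paper does not give its own proof of this lemma; it is quoted from \cite[Prop.~13.6.1]{BCDbook}. So there is no paper argument to compare against, and your attempt must stand on its own.

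Your forward implication and the ``moreover'' clause are clean and correct: the observation that $c+s$ must lie in the impression for every $s\ge0$, combined with Lemma~\ref{Lem:discont-Koenigs-prel}, rules out $h^\ast(\sigma)\in\C$; and the same lemma then forces $\sigma\in\mathcal C(h)$ once $h^\ast(\sigma)=\infty$ with $\sigma\ne\tau$.

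The converse, however, contains a genuine error, not merely a detail to be filled in. Your claim that ``a prime end whose impression is $\{\infty\}$ sits at the end of a channel of width tending to infinity, along which one can travel at bounded distance from the boundary'' is \emph{false} at super-repelling boundary fixed points. Take for instance $\psi(y)=-1/y^{2}$ on $(0,1)$ and $\psi\equiv0$ elsewhere: the slice $\Omega\cap\{\Re w=-R\}$ has height of order $R^{-1/2}$, so $\dist(\delta(r),\partial\Omega)\to0$ along \emph{any} curve $\delta$ running out that channel, and the hyperbolic distance between $\delta(r)$ and $\delta(r)+t$ tends to $+\infty$. Your Koebe-distortion bound therefore breaks down precisely in the super-repelling case; the Julia-lemma alternative you mention fails there for the same reason (no finite angular derivative).

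A repair that stays entirely within the tools already used in the paper is the no-Koebe-arcs theorem. Once you know $\lim_{z\to\sigma}h(z)=\infty$, consider the horizontal segments $S_r:=[\,h(r\sigma),\,h(r\sigma)+t\,]\subset\Omega$: they have fixed Euclidean length $t$, hence spherical diameter tending to $0$ since $h(r\sigma)\to\infty$. Their preimages $\gamma_r:=h^{-1}(S_r)$ join $r\sigma$ to $\varphi_t(r\sigma)$ and eventually leave every compact of $\D$ (because $h(\gamma_r)\to\infty$). If $\varphi_t(r\sigma)$ failed to converge to $\sigma$, a subsequence of the $\gamma_r$ would have Euclidean diameter bounded away from $0$ while $h(\gamma_r)$ shrinks to the single point $\infty$ in $\C_\infty$: a sequence of Koebe arcs for the univalent map $h$, contradicting \cite[Thm.~3.2.4]{BCDbook} (invoked in exactly this way, e.g., in the proof of Proposition~\ref{Prop:CantorComb}). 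Hence $\varphi_t(r\sigma)\to\sigma$, so $\varphi_t^\ast(\sigma)=\sigma$ for every $t>0$.
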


Next, (see, {\sl e.g.} \cite[Theorem~13.5.5 and Theorem~13.6.6]{BCDbook}):

\begin{lemma}\label{lem:h-bfp} Let $(\varphi_t)_{t\geq 0}$ be a non-elliptic semigroup in $\D$ with Koenigs  function  $h$ and Denjoy-Wolff point $\tau$. Let $\sigma\in\partial\D\setminus\{\tau\}$. Then $\sigma$ is a boundary fixed point if and only if
\begin{equation}\label{Eq:lim-h-at-bfp}
\lim_{z\to \sigma}\Re h(z)=-\infty, \quad -\infty<a:=\liminf_{z\to \sigma}\Im h(z)\leq b:=\limsup_{z\to \sigma}\Im h(z)<+\infty.
\end{equation}
Moreover, if  $a<b$, then $\{w\in\C: a<\Im w<b\}\subset \Omega$ and  $\{w\in\C: a-\delta<\Im w<b\}\not\subset \Omega$, $\{w\in\C: a<\Im w<b+\delta\}\not\subset \Omega$ for all $\delta>0$. Also, $\lim_{t\to+\infty}h^{-1}(\gamma(t))=\sigma$ for every continuous curve $\gamma:[0,+\infty)\to\Omega$ such that $\lim_{t\to+\infty}\Im \gamma(t)\in (a,b)$
and $\lim_{t\to+\infty}\Re \gamma(t)=-\infty$.

While, if $a=b$, then $\{w\in\C: a-\delta<\Im w\leq a\}\not\subset \Omega$ and $\{w\in\C: a\leq \Im w<a+\delta\}\not\subset \Omega$ for all $\delta>0$.
\end{lemma}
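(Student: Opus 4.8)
The plan is to reduce everything to the structure of the Koenigs domain $\Omega$ as a starlike-at-infinity domain, using the description $\Omega = \Omega_\psi$ from Proposition~\ref{Prop:definingfnc} and the prime-end picture (\ref{Eq:impress-cluster}), together with Lemma~\ref{lem:h-bfp-simple}. First I would handle the equivalence: by Lemma~\ref{lem:h-bfp-simple}, $\sigma\neq\tau$ is a boundary fixed point if and only if $\lim_{z\to\sigma}h(z)=\infty$ (unrestricted limit), which, writing $h=\Re h + i\,\Im h$, forces $|h(z)|\to\infty$. Now the key point is that, since $\Omega$ is starlike at infinity and contained in a half-plane, a strip, or $\C$ (Theorem~\ref{Thm:models}), the only way for $h(z)\to\infty$ is that $\Re h(z)\to-\infty$: indeed $\Re\Omega$ is bounded below on any horizontal strip, and more precisely, if $\Im h(z_n)$ stayed bounded while $|h(z_n)|\to\infty$, then $\Re h(z_n)\to\pm\infty$; the $+\infty$ case is excluded because $\{z_n\}$ would then eventually leave $\Omega$ in the direction of $-\Re$ yet $\Omega+t\subseteq\Omega$, and one shows the pre-images cannot converge to a single $\sigma$ unless $\sigma=\tau$ (this is precisely where the hypothesis $\sigma\neq\tau$ is used; compare \cite[Chapter 13]{BCDbook}). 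So $\Re h(z)\to-\infty$. That $a:=\liminf\Im h$ and $b:=\limsup\Im h$ are finite then follows because $\Omega$ lies in a strip or half-plane in the relevant cases, and in the parabolic-zero-step case ($U=\C$) one uses that the null-chain / prime-end impression attached to $\sigma$ projects to a bounded interval on the imaginary axis (by \cite[Thm.~11.1.4]{BCDbook} and its proof, which is the same machinery underlying Lemma~\ref{Lem:discont-Koenigs}). Conversely, if (\ref{Eq:lim-h-at-bfp}) holds then $h(z)\to\infty$, so $h^\ast(\sigma)=\infty$, and Lemma~\ref{lem:h-bfp-simple} gives that $\sigma$ is a boundary fixed point.

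Next I would treat the case $a<b$. The inclusion $\{w:a<\Im w<b\}\subset\Omega$: take $y\in(a,b)$; since $a=\liminf\Im h$ and $b=\limsup\Im h$ along $z\to\sigma$, there are sequences $z_n\to\sigma$ with $\Im h(z_n)\to a'$, $z_n'\to\sigma$ with $\Im h(z_n')\to b'$ for suitable $a<a'<y<b'<b$, and along each $\Re h\to-\infty$; joining $h(z_n)$ to $h(z_n')$ by a path in $\Omega$ (possible since $\Omega$ is connected) and pushing to the left using $\Omega+t\subseteq\Omega$, one produces points $x+iy\in\Omega$ with $x$ arbitrarily negative, hence by starlikeness the whole horizontal line $\{x+iy:x>\psi(y)\}$ with $\psi(y)=-\infty$; this gives $\{w:a<\Im w<b\}\subset\Omega$. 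The statements $\{w:a-\delta<\Im w<b\}\not\subset\Omega$ and $\{w:a<\Im w<b+\delta\}\not\subset\Omega$ follow from minimality of $a=\liminf$ and maximality of $b=\limsup$: if, say, $\{w:a-\delta<\Im w<b\}\subset\Omega$, then in the defining function $\psi\equiv-\infty$ on $(a-\delta,b)$, and one could accumulate at $\sigma$ a curve with imaginary part tending to a value $<a$, contradicting that $a$ is the liminf — here I would use the structural result of Lemma~\ref{lem:h-bfp-simple} together with the fact (from \cite[Chapter 13]{BCDbook}) that the boundary fixed point $\sigma$ corresponds exactly to the \emph{maximal} horizontal strip in $\Omega$ through which $\Re$ is unbounded below. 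For the curve statement: if $\gamma:[0,+\infty)\to\Omega$ has $\Im\gamma(t)\to y_0\in(a,b)$ and $\Re\gamma(t)\to-\infty$, then for large $t$, $\gamma(t)$ lies in the strip $\{a<\Im w<b\}\cap\{\Re w<-M\}$, a neighborhood in $\Omega$ of the prime end attached to $\sigma$ (by the description of null-chains in starlike-at-infinity domains), so $h^{-1}(\gamma(t))\to\sigma$ in the Carathéodory topology, i.e., in $\overline\D$.

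Finally, the case $a=b$: here $\Im h$ has a limit (equal to $a$) along $z\to\sigma$. The non-inclusions $\{w:a-\delta<\Im w\le a\}\not\subset\Omega$ and $\{w:a\le\Im w<a+\delta\}\not\subset\Omega$ express that $\Omega$ does not contain a horizontal line slightly below (resp. above) level $a$ plus the line $L_a$ itself; equivalently, in terms of $\psi$, that $\psi$ is not identically $-\infty$ on $(a-\delta,a]$ (resp. on $[a,a+\delta)$). This again follows from the characterization of the boundary fixed point via the maximal strip through which $\Re$ is unbounded below, which in the degenerate case has width zero: the relevant prime-end impression is the half-line $L_{-\infty,b_0;a}$ for some $b_0$ (by Lemma~\ref{Lem:discont-Koenigs} applied with $b:=b_0$), and one-sided accessibility at level $a$ forces $\psi$ to be finite somewhere in every one-sided neighborhood of $a$. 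I expect the main obstacle to be making rigorous the passage, in both the forward direction and the non-inclusion statements, from ``$h(z)\to\infty$ along $z\to\sigma$'' to ``$\Re h\to-\infty$'' and to the precise boundedness of $a,b$ — i.e., ruling out $\Re h\to+\infty$ and ruling out $\limsup|\Im h|=\infty$ — since this is exactly where the geometry (starlikeness at infinity, containment in a strip/half-plane, and the no-Koebe-arcs / prime-end structure) must be invoked carefully; all of this is, however, available from \cite[Chapters 11 and 13]{BCDbook} and the lemmas already quoted, so the argument is a matter of assembling these ingredients rather than proving anything genuinely new.
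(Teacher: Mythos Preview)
The paper does not give its own proof of this lemma: it is stated with the pointer ``see, e.g., \cite[Theorem~13.5.5 and Theorem~13.6.6]{BCDbook}'' and nothing more. So there is no in-paper argument to compare your sketch against; your proposal is effectively an attempt to reconstruct (parts of) the cited book proofs, and you correctly identify at the end that the substantive work lives there.

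That said, two points in your sketch are not right and would need repair. First, in the forward direction, the sentence excluding $\Re h\to+\infty$ (``$\{z_n\}$ would then eventually leave $\Omega$ in the direction of $-\Re$ yet $\Omega+t\subseteq\Omega$'') does not parse: points with large positive real part are \emph{deeper} in $\Omega$, not leaving it. The clean way to exclude $\Re h(z_n)\to+\infty$ is the one hidden in your parenthetical ``unless $\sigma=\tau$'': for any $w_0\in\Omega$ one has $h^{-1}(w_0+t)=\varphi_t(h^{-1}(w_0))\to\tau$ as $t\to+\infty$, so any sequence in $\Omega$ drifting to $+\infty$ in the real direction is mapped by $h^{-1}$ toward $\tau$, not toward $\sigma\neq\tau$. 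Second, in the case $a=b$ you invoke Lemma~\ref{Lem:discont-Koenigs} and claim the prime-end impression at $\sigma$ is a half-line $L_{-\infty,b_0;a}$. That is incorrect: by Lemma~\ref{lem:h-bfp-simple} one has $\lim_{z\to\sigma}h(z)=\infty$, so by \eqref{Eq:impress-cluster} the impression of $\hat h(\underline\sigma)$ is $\{\infty\}$, and Lemma~\ref{Lem:discont-Koenigs} (which assumes $h^\ast(\sigma)\in\C$) does not apply to boundary fixed points. The non-inclusion statements in the $a=b$ case must instead be argued directly from the defining function $\psi$ and the uniqueness of the boundary fixed point at level $a$, which is exactly the content of \cite[Theorem~13.6.6]{BCDbook} that the paper cites. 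Your overall strategy is fine as a roadmap, but these two steps need the cited book results rather than Lemma~\ref{Lem:discont-Koenigs}.
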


The previous lemma allows us to give the following definition:

\begin{definition}
Let $(\varphi_t)_{t\geq 0}$ be a non-elliptic semigroup in $\D$ with Koenigs function $h$ and Denjoy-Wolff point $\tau\in\partial D$. Let $\sigma\in \partial\D\setminus\{\tau\}$ be a boundary fixed point for $(\varphi_t)_{t\geq 0}$. Let
\begin{equation}\label{Eq:defino-a-por-sigma}
a_-(\sigma):=\liminf_{z\to \sigma}\Im h(z), \quad a_+(\sigma):=\limsup_{z\to \sigma}\Im h(z).
\end{equation}
If $a_-(\sigma)<a_+(\sigma)$, the point $\sigma$ is a {\sl boundary regular fixed point}. We denote by   $\mathcal B\mathcal F_R$  the set of  boundary regular fixed points $\sigma\neq\tau$. If $a_-(\sigma)=a_+(\sigma)$, the point $\sigma$ is a {\sl super-repelling boundary fixed point} and we denote by  ${\mathcal B\mathcal F}_N$   the set of super-repelling boundary fixed points.
\end{definition}

\begin{remark}
It is known (see, {\sl e.g.}, \cite[Chapters 12 and 13]{BCDbook}) that $\sigma\in \mathcal B\mathcal F_R$  if and only if $\lim_{z\to \sigma}\varphi_t(z)=\sigma$ and $\lim_{z\to \sigma}|\varphi'_t(z)|<+\infty$ for every $t>0$, while $\sigma\in \mathcal B\mathcal F_N$  if and only  $\lim_{z\to \sigma}\varphi_t(z)=\sigma$ and $\lim_{z\to \sigma}|\varphi'_t(z)|=+\infty$ for every $t>0$.
\end{remark}

\begin{proposition}\label{Prop:char-bfp}
Let $(\varphi_t)_{t\geq 0}$ be a non-elliptic semigroup in $\D$ with Denjoy-Wolff point $\tau\in\partial\D$. Let $h$ be its Koenigs function, $\Omega=h(\D)$  the associated Koenigs  domain. Let $\psi:I\to \R$ be the defining function of $\Omega$.
\begin{enumerate}
\item Let $\Theta_\infty$  be the (possibly empty) union of the open unbounded connected components of $\psi^{-1}(-\infty)$.
Then $\lim_{n\to\infty}h^{-1}(w_n)=\tau$ for every sequence $\{w_n\}\subset\Omega$ such that  $\lim_{n\to \infty}\Re w_n=-\infty$ and $\lim_{n\to \infty}\Im w_n=y_0$ for some $y_0\in \Theta_\infty$.
\item Let $I_R$ be the set of all bounded open connected components of $\psi^{-1}(-\infty)$. There is a one-to-one correspondence between $\mathcal B\mathcal F_R$ and $I_R$   given by
\[
\mathcal B\mathcal F_R\ni\sigma\mapsto (a_-(\sigma), a_+(\sigma))\subset\R.
\]
\item Assume that $\hbox{Int}(\overline\Omega)=\Omega$. Let $J_\infty$ be the union of the open connected components of $\psi^{-1}(-\infty)$ and let $I_N$ be the set of points $y_0\in \overline{I}\setminus \overline{J_\infty}$ such that either $\lim_{y\to y_0^-}\psi(y)=-\infty$ or $\lim_{y\to y_0^+}\psi(y)=-\infty$. There is a one-to-one correspondence between $\mathcal B\mathcal F_N$ and $I_N$ given by
\[
\mathcal B\mathcal F_N\ni\sigma \mapsto a_-(\sigma).
\]
\end{enumerate}
\end{proposition}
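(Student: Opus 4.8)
The plan is to exploit the correspondence between boundary fixed points and the behaviour of $h$ described in Lemmas~\ref{lem:h-bfp-simple}, \ref{lem:h-bfp} and the depiction of prime-end impressions in starlike at infinity domains (Lemma~\ref{Lem:discont-Koenigs} and \cite[Thm. 11.1.4]{BCDbook}), translating everything into statements about the defining function $\psi$. The key dictionary is: a point $\sigma\in\partial\D\setminus\{\tau\}$ is a boundary fixed point iff $\Re h(z)\to-\infty$ as $z\to\sigma$ while $\Im h$ stays bounded, say between $a_-(\sigma)$ and $a_+(\sigma)$; this means, in terms of $\psi$, that as the imaginary coordinate ranges near $[a_-(\sigma),a_+(\sigma)]$ the function $\psi$ becomes unboundedly negative, i.e. we are sitting over a connected component of $\psi^{-1}(-\infty)$ or over a point where $\psi$ has a one-sided limit $-\infty$.

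For part (1), I would argue as follows. Let $y_0\in\Theta_\infty$, so there is an unbounded open interval $K\subset\psi^{-1}(-\infty)$ with $y_0\in K$; then the horizontal strip (or half-strip) $\{w:\Im w\in K\}$ is entirely contained in $\Omega$ and is ``escaping to $-\infty$'' in the real direction. Take a sequence $w_n\in\Omega$ with $\Re w_n\to-\infty$ and $\Im w_n\to y_0$. Pick any fixed base point $w_*\in\Omega$ and connect $w_n$ to $w_*$ by a curve $\gamma_n$ inside $\Omega$ that first moves horizontally to the right to a point with imaginary part $y_0$ (legitimate since $K$ is open and $\Omega$ is starlike at infinity) and then inside the strip over $K$ to $w_*$; letting $n\to\infty$ these curves exit every compact subset of $\D$ under $h^{-1}$ only near one prime end. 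Since $K$ is unbounded, there is no ``vertical wall'' of $\partial\Omega$ bounding the escape, so the relevant prime end cannot be a boundary regular fixed point of the semigroup distinct from $\tau$; by the classification of where curves escaping horizontally to $-\infty$ land (Lemma~\ref{lem:h-bfp} together with the fact that $\tau$ is the Denjoy–Wolff point and attracts everything), $h^{-1}(w_n)\to\tau$. The cleanest route is probably to invoke directly that for a curve $\gamma(t)$ with $\Re\gamma(t)\to-\infty$ and $\Im\gamma(t)\to y_0$ lying in such an unbounded strip, $h^{-1}(\gamma(t))$ must converge to the Denjoy–Wolff point — because any other boundary fixed point $\sigma$ has $a_-(\sigma)<a_+(\sigma)$ finite and its ``strip'' $\{a_-(\sigma)<\Im w<a_+(\sigma)\}$ is maximal with walls on both sides, contradicting that $K$ is unbounded.

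For part (2), I would build the bijection both ways. Given $\sigma\in\mathcal{BF}_R$, by Lemma~\ref{lem:h-bfp} the open strip $\{a_-(\sigma)<\Im w<a_+(\sigma)\}$ lies in $\Omega$ while neither strip obtained by enlarging it downward or upward does; translating via the defining function, this says $\psi\equiv-\infty$ on $(a_-(\sigma),a_+(\sigma))$ and $\psi$ is not $-\infty$ on any strictly larger interval, i.e. $(a_-(\sigma),a_+(\sigma))$ is exactly a connected component of $\psi^{-1}(-\infty)$, and it is bounded since $a_\pm(\sigma)$ are finite — so it lies in $I_R$. Conversely, given a bounded open component $(a,b)$ of $\psi^{-1}(-\infty)$, the half-strip escaping to $-\infty$ over $(a,b)$ is in $\Omega$, and the curve $t\mapsto -t + i\frac{a+b}{2}$ lands (by \cite[Prop. 3.3.3]{BCDbook} / Lemma~\ref{lem:h-bfp}) at some $\sigma\in\partial\D$ with $\Re h\to-\infty$, $\Im h$ bounded between $a$ and $b$; since $(a,b)$ is a maximal interval on which $\psi=-\infty$, we get $a_-(\sigma)=a$, $a_+(\sigma)=b$, so $\sigma\in\mathcal{BF}_R$ and $a\neq b$. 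The two maps are mutually inverse because $\sigma$ is determined by $(a_-(\sigma),a_+(\sigma))$ via the landing of that horizontal curve (distinct $\sigma$'s give disjoint strips, hence distinct components, and distinct components give distinct landing points); this last injectivity uses a no-Koebe-arcs argument as in the proof of Proposition~\ref{Lem:two-disc}.

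For part (3), the hypothesis $\hbox{Int}(\overline\Omega)=\Omega$, equivalently $\psi=\tilde\psi$ by Theorem~\ref{Thm:Cantor-no-approx}, is used to rule out pathological behaviour of $\psi$ near super-repelling fixed points. Given $\sigma\in\mathcal{BF}_N$, Lemma~\ref{lem:h-bfp} gives $a:=a_-(\sigma)=a_+(\sigma)$ with $\{a-\delta<\Im w\le a\}\not\subset\Omega$ and $\{a\le\Im w<a+\delta\}\not\subset\Omega$ for all $\delta>0$, while $\Re h\to-\infty$; this forces a one-sided limit $\lim_{y\to a^-}\psi(y)=-\infty$ or $\lim_{y\to a^+}\psi(y)=-\infty$ (the curve escaping to $-\infty$ under $h$ lands at $\sigma$ from one side), and $a\notin\overline{J_\infty}$ because if a whole subinterval on that side had $\psi=-\infty$ we would be in the situation of part (2) (a regular fixed point) or part (1) (the Denjoy–Wolff point), not a super-repelling one — here $\psi=\tilde\psi$ prevents $\psi$ from being $-\infty$ on a dense-but-not-all set. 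So $a\in I_N$. Conversely, if $y_0\in I_N$, say $\lim_{y\to y_0^-}\psi(y)=-\infty$, the curve $t\mapsto -t + i(y_0-\epsilon_t)$ with $\epsilon_t\downarrow 0$ chosen to stay in $\Omega$ lands at some $\sigma\in\partial\D$; one checks $\Re h\to-\infty$ and $a_-(\sigma)=a_+(\sigma)=y_0$ using $y_0\notin\overline{J_\infty}$ (so the strip immediately above/below is genuinely not in $\Omega$), whence $\sigma\in\mathcal{BF}_N$. Injectivity and surjectivity of $\sigma\mapsto a_-(\sigma)$ again follow from the landing-point uniqueness plus no-Koebe-arcs.

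I expect the main obstacle to be part (3): pinning down, under only the hypothesis $\hbox{Int}(\overline\Omega)=\Omega$, that every super-repelling fixed point corresponds to exactly one point of one-sided divergence of $\psi$ outside $\overline{J_\infty}$, and conversely — one must carefully exclude the case where $\psi\to-\infty$ along a sequence but not a one-sided limit, and the case of accumulation of components of $\psi^{-1}(-\infty)$ at $y_0$. This is exactly where $\psi=\tilde\psi$ (no Cantor combs, no ``hidden'' behaviour below the lower-semicontinuous regularization) does the work, and the argument should mirror, in a simpler one-sided form, the null-chain constructions of Proposition~\ref{Lem:two-disc}.
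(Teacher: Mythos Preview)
Your overall plan is correct and matches the paper's structure, but you are missing one concrete tool that does most of the heavy lifting in parts (1) and (3), and without it several of your steps remain unjustified.

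The paper's engine is the following classical fact (\cite[Prop.~3.3.5]{BCDbook}): if $\gamma:(-\infty,+\infty)\to\Omega$ is continuous and injective with $\lim_{t\to\pm\infty}\gamma(t)=\infty$, and one connected component $U$ of $\C_\infty\setminus\overline{\gamma(\R)}^\infty$ is contained in $\Omega$, then there exists a single $\xi\in\partial\D$ such that $\lim_{t\to\pm\infty}h^{-1}(\gamma(t))=\xi$. In part (1) the paper applies this to the horizontal line $\gamma(t)=t+iy_1$ with $y_1$ just above $y_0$ inside the unbounded component: the half-plane below it lies in $\Omega$, so both ends of $h^{-1}\circ\gamma$ land at the same $\xi$; since the end $t\to+\infty$ is an orbit of the semigroup, $\xi=\tau$. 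Then $h^{-1}(w_n)$ lies in the bounded Jordan domain enclosed by $h^{-1}(\gamma(\R))\cup\{\tau\}$ and must converge to $\tau$. Your argument instead tries to rule out other boundary fixed points case by case; that only excludes regular ones and never shows that a limit exists at all.

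In part (3) the paper uses the same tool twice. For injectivity: if $\sigma_1\neq\sigma_2$ in $\mathcal{BF}_N$ share $a_-(\sigma_1)=a_-(\sigma_2)$, the two radial images $h(t\sigma_j)$ form a Jordan curve through $\infty$; the enclosed bounded region $U$ cannot be in $\Omega$ (otherwise both ends would land at the same point by the lemma), and any $w\in U\setminus\Omega$ forces $\Im w=a_-(\sigma_1)$ with $\lim_{y\to a_-(\sigma_1)}\psi(y)=-\infty<\psi(a_-(\sigma_1))$, contradicting $\psi=\tilde\psi$ via Theorem~\ref{Thm:Cantor-no-approx}. This is exactly where $\hbox{Int}(\overline\Omega)=\Omega$ enters---not to ``prevent $\psi$ from being $-\infty$ on a dense-but-not-all set'', but to forbid a single isolated spike of $\psi$ at $a_-(\sigma_1)$. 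Your ``landing-point uniqueness plus no-Koebe-arcs'' does not capture this. For surjectivity you also omit a nontrivial step: having produced a curve $\theta$ landing at some $\sigma$ with $h^\ast(\sigma)=\infty$, you must show $\sigma\neq\tau$. The paper does this by extending $\theta$ to a bi-infinite curve (going to $+\infty$ on one side) and applying the lemma again: if $\sigma=\tau$, the enclosed region would force $a$ to lie in the closure of an unbounded component of $\psi^{-1}(-\infty)$, contradicting $a\in I_N$.
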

\begin{proof}
	In the sequel we will use the following classical result about boundary behaviour of univalent functions (see, {\sl e.g.}, \cite[Prop. 3.3.5]{BCDbook}):
	
	\medskip
	$(\star)$ Let $\gamma:(-\infty,+\infty)\to \Omega$ be an injective continuous curve such that $\lim_{t\to \pm \infty}\gamma(t)=\infty$. Let $U$ be a connected component of $\C_\infty\setminus \overline{\gamma((-\infty,+\infty))}^\infty$. If $U\subset\Omega$ then there exists  $\xi\in\partial \D$ such that $\lim_{t\to\pm \infty}h^{-1}(\gamma(t))=\xi$.
	\medskip
	
	(1) The result follows from the behaviour of $h$ on the so-called ``parabolic petals'' (see, {\sl e.g.}, \cite[Chapter 13]{BCDbook}). However, we give here a direct proof. Assume $\Theta_\infty\neq \emptyset$. Note that $\Theta_\infty$ has one or two connected components. We assume $(-\infty, c)$ is one of such component for some $c\in\R$ (the other case is similar and we omit it).  Let $\{w_n\}\subset \Omega$ be such that $\lim_{n\to\infty}\Re w_n=-\infty$ and $\lim_{n\to\infty}\Im w_n=y_0$ for some $y_0<c$. Let $y_1\in (y_0,c)$. We can assume that $\Im w_n<y_1$ for all $n$. Let $\gamma(t):=t+iy_1$, $t\in \R$. Since $U:=\{w\in\C: \Im w<y_1\}\subset\Omega$, it follows by $(\star)$ that $\Gamma:=\overline{h^{-1}(\gamma((-\infty,+\infty))}$ is a Jordan curve such that $\Gamma\cap\partial\D=\{\tau\}$. Note also that $h^{-1}(U)$ is the bounded connected component of $\C\setminus \Gamma$. Since $\{h^{-1}(w_n)\}\subset h^{-1}(U)$ and it converges to the boundary of $\D$, it follows that $\{h^{-1}(w_n)\}$ converges to $\tau$.
	
	(2) follows at once from Lemma~\ref{lem:h-bfp}.
	
	(3) First, we show that if $\sigma\in \mathcal B\mathcal F_N$ then $a_-(\sigma)\in I_N$. Indeed,
	let $\alpha_1(t)=h(t\sigma)$ and let
	$M_1:=\Im \alpha_1([0,1))$. By  Lemma~\ref{lem:h-bfp},  $M_1\subset I$ is either an interval or a point such that $a_-(\sigma)\in\overline{M_1}$. If $M_1$ is an interval, since $\psi(\Im \alpha_1(t))<\Re \alpha_1(t)$ for every $t\in [0,1)$ and $\lim_{t\to 1^-}\Re \alpha_1(t)=-\infty$, it follows easily that  either $\lim_{y\to a_-(\sigma)^-}\psi(y)=-\infty$ or $\lim_{y\to a_-(\sigma)^+}\psi(y)=-\infty$ (or both). If $M_1$ is a point, then necessarily $M_1=\{a_-(\sigma)\}$ and $\alpha_1(t)\in \{w\in\C: \Im w=a_-(\sigma)\}$ for all $t\in [0,1)$---in particular, $a_-(\sigma)\in I$.
	Then choose any $\xi$ in $\D$ such that $\xi\overline{\sigma}\notin\R$. Put $\alpha_2(t)=h(t\sigma+(1-t)\xi)$. Since $h$ is injective, it is easy to see that $M_2:=\Im \alpha_2([0,1))$
	has to be an interval and cannot be a point. By repeating
	the above argument for the curve $\alpha_2$ in place of
	$\alpha_1$, we conclude that $a_-(\sigma)\in I_N$.

	Next, we show that $\mathcal B_N\ni\sigma \mapsto a_-(\sigma)\in I_N$ is injective. Assume by contradiction that there exist $\sigma_1, \sigma_2\in \mathcal B_N$, $\sigma_1\neq \sigma_2$, such that $a_-(\sigma_1)=a_-(\sigma_2)$. Let	$\gamma_j(t):=h(t\sigma_j)$,
	$t\in [0,1)$ and $j=1,2$. Let $\Gamma$ be the closure in $\C_\infty$ of $\gamma_1([0,1))\cup\gamma_2([0,1))$. Then by Lemma~\ref{lem:h-bfp}, $\Gamma$ is a Jordan curve in $\C_\infty$. By the same lemma, it follows that there exists $R\in\R$ such that $\Gamma\cap\C$ is contained in $\{w\in\C: \Re w<R\}$. Let $U$ be the connected component of $\C_\infty\setminus\Gamma$		which is contained in $\{w\in\C: \Re w<R\}$.
	
	Since we are assuming that $\sigma_1\neq\sigma_2$, by $(\star)$ we have $U\not\subset\Omega$. We show that, if this is the case, then $\hbox{Int}(\overline\Omega)\neq \Omega$, reaching a contradiction.
	
	Indeed, assume $w\in U\setminus\Omega$. We have that $w-t\in U\setminus\Omega$ for all $t>0$, since  $\Omega$ is starlike at infinity and $\partial U=\Gamma\cap \C\subset\Omega$.
	
	Taking into account  Lemma~\ref{lem:h-bfp}, we see that for every $\epsilon>0$ there exists $R_\epsilon\in \R$ so that
	\[
	U\cap\{w\in \C: \Re w<R_\epsilon\}\subset \{w\in \C: |\Im w-a_-(\sigma_1)|<\epsilon\}.
	\]
	Hence, if $\Im w\neq a_-(\sigma_1)$, it follows that there exists $t_0>0$ such that $w-t_0\in \Gamma\cap\C\subset\Omega$, which is a contradiction. Therefore, $\Im w=a_-(\sigma_1)$. In particular $\psi(a_-(\sigma_1))>-\infty$. Also, since $\Re w+ i a_-(\sigma_1)-t\in U$ for all $t>0$, we have that  either $\Im \gamma_1(t)>a_-(\sigma_1)$ and $\Im \gamma_2(t)<a_-(\sigma_1)$ or $\Im \gamma_1(t)<a_-(\sigma_1)$ and $\Im \gamma_2(t)>a_-(\sigma_1)$, for $t$ close to $1$. In both cases, $\lim_{y\to a_-(\sigma_1)}\psi(y)=-\infty<\psi(a_-(\sigma_1))$. Thus, by Theorem~\ref{Thm:Cantor-no-approx} we have $\hbox{Int}(\overline\Omega)\neq \Omega$, contradicting our hypothesis. Hence the map is injective.
	
	Finally, we prove that $\mathcal B_N\ni\sigma \mapsto a_-(\sigma)\in I_N$ is surjective. Let $a\in I_N$. We assume $\lim_{y\to a^+}\psi(y)=-\infty$ (the other case is similar). Therefore, for every $n=1,2,\dots$ there exists $\epsilon_n>0$ such that  $\psi(y)<-n$ for all $y\in I$ such that
	$0<y-a<\epsilon_n$.
	It follows that there exists a  continuous injective curve
	$\theta:[1,+\infty)\to \Omega$ such that $\lim_{t\to +\infty}\Re \theta(t)=-\infty$,
	$\Im \theta$ strictly decreases
	and $\lim_{t\to +\infty}\Im \theta(t)=a$. Hence (see, {\sl e.g.}, \cite[Prop. 3.3.3]{BCDbook}), there exists $\sigma\in\partial\D$ such that $\lim_{t\to+\infty}h^{-1}(\theta(t))=\sigma$ and $h^\ast(\sigma)=\infty$. Therefore, $\sigma$ is a boundary fixed point of $(\varphi_t)_{t\geq 0}$.
	
	Now, let us prove that $\sigma\neq \tau$. Indeed, if this were the case, we can define the injective continuous curve $\eta:(-\infty,+\infty)\to \Omega$ by setting $\eta(t):=\theta(t)$ for $t\geq 1$ and $\eta(t):=\theta(1)-(t-1)$ for $t<1$. By construction, and since we are assuming $\sigma=\tau$, it follows that $L:=\overline{h^{-1}(\theta((-\infty,+\infty))}$ is a Jordan curve such that $L\setminus\{\tau\}\subset\D$ and $L\cap\partial \D=\{\tau\}$. Let $U\subset \D$ be the bounded connected component of $\C\setminus L$.
	Then $h\big|_U$ extends to a continuous function from $\overline U$ to $\C_\infty$.
	Thus $h(U)\subset\Omega$ is one of the connected components of $\C\setminus \overline{\eta(-\infty,+\infty)}$. But then $a$ belongs to the closure of one open unbounded connected component of $\psi^{-1}(-\infty)$, against the hypothesis $a\in I_N$. Thus, $\sigma\neq \tau$.
	
	Finally,
	$\lim_{t\to +\infty}\Im h(h^{-1}(\theta(t)))=a$, and $\lim_{t\to+\infty}h^{-1}(\theta(t))=\sigma$. Therefore,
	\[
	\liminf_{z\to\sigma}\Im h(z)\leq a\leq \limsup_{z\to\sigma}\Im h(z).
	\]
	Thus, by Lemma~\ref{lem:h-bfp} and since $a\in I_N$, we have $\lim_{z\to\sigma}\Im h(z)=a$.
	This implies that $\sigma\in\mathcal B\mathcal F_N$, $a=a_-(\sigma)$, and we are done.
\end{proof}


\subsection{Unbounded discontinuities}\label{Subsect:bdd-disc}

\begin{definition}
Let $h$ be the Koenigs  function of a non-elliptic semigroup in $\D$. We say that $\sigma\in \mathcal S(h)$ is {\sl bounded}, provided there exists $R\in\R$ such that $\{w\in\C: \Re w<R\}\cap I(\hat{h}(\underline{\sigma}))=\emptyset$. If $\sigma\in \mathcal S(h)$ is not bounded, we say it is {\sl unbounded}. We let $\mathcal S_u(h)$ denote the set of all unbounded discontinuities.
\end{definition}

Except for the Denjoy-Wolff point, unbounded discontinuities can be deduced by analytic properties of the semigroup:

\begin{proposition}
Let $(\varphi_t)_{t\geq 0}$ be a non-elliptic semigroup in $\D$ with Denjoy-Wolff point $\tau\in\partial\D$. Let $h$ be its Koenigs function, and $\Omega=h(\D)$. Let $\sigma\in\partial\D\setminus\{\tau\}$. Then the following are equivalent:
\begin{enumerate}
\item   $\lim_{t\to+\infty}\varphi_t^\ast(\sigma)=\tau$, and for every $t_0>0$ there exists a sequence $\{w_n\}\subset\D$ converging to $\sigma$ such that $\lim_{n\to\infty}\varphi_{t_0}(w_n)=\sigma$.
\item  $\sigma\in\mathcal S_u(h)$.
\end{enumerate}
\end{proposition}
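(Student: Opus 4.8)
The plan is to prove the two implications separately, using the description of impressions of prime ends of starlike-at-infinity domains (Lemma~\ref{Lem:discont-Koenigs-prel} and Lemma~\ref{Lem:discont-Koenigs}), the functional relation $h\circ\varphi_t=h+t$, and the fact that boundary fixed points are exactly the points with $h^\ast(\sigma)=\infty$ (Lemma~\ref{lem:h-bfp-simple}). Throughout I use the trichotomy for $\sigma\in\partial\D\setminus\{\tau\}$: either (i) $\varphi^\ast_{t_0}(\sigma)\in\D$ for some $t_0>0$ (hence for all $t\ge t_0$), or (ii) $\sigma$ is a boundary fixed point, or (iii) $\varphi^\ast_t(\sigma)\in\partial\D$ for all $t\ge 0$ and the orbit $t\mapsto\varphi^\ast_t(\sigma)$ runs along a maximal contact arc, $h$ being continuous at $\sigma$ unless $\sigma$ is the starting point of that arc (Lemma~\ref{Lem:cara-contact} and \cite[Ch.~14]{BCDbook}). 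I also record that if $\sigma\in\mathcal S(h)\setminus\{\tau\}$ then $h^\ast(\sigma)\in\C$ and $I(\hat h(\underline{\sigma}))=L_{a,b;y}$ with $b=\Re h^\ast(\sigma)$, $y=\Im h^\ast(\sigma)$, $a\in[-\infty,b)$, so that $\sigma\in\mathcal S_u(h)$ exactly when $a=-\infty$.

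For (1)$\Rightarrow$(2): the second clause of (1) forces $\varphi_{t_0}$ to be discontinuous at $\sigma$ for every $t_0>0$, because otherwise $\varphi_{t_0}(w_n)\to\varphi^\ast_{t_0}(\sigma)$ for any sequence $w_n\to\sigma$, hence $\varphi^\ast_{t_0}(\sigma)=\sigma$, so $\sigma$ is a boundary fixed point and $\varphi^\ast_t(\sigma)\equiv\sigma\neq\tau$, contradicting the first clause. Thus $\sigma\in\mathcal S(h)\setminus\{\tau\}$ and $h^\ast(\sigma)=p\in\C$. Now fix $k\in\N$ and pick $w^{(k)}_n\to\sigma$ with $\varphi_k(w^{(k)}_n)\to\sigma$; after passing to a subsequence $h(w^{(k)}_n)\to q_k\in\C_\infty$, and since $h$ is proper and $w^{(k)}_n,\varphi_k(w^{(k)}_n)\to\sigma$, formula \eqref{Eq:impress-cluster} gives $q_k\in I(\hat h(\underline{\sigma}))$ and also $q_k+k=\lim_n\big(h(w^{(k)}_n)+k\big)=\lim_n h(\varphi_k(w^{(k)}_n))\in I(\hat h(\underline{\sigma}))$. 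As $I(\hat h(\underline{\sigma}))=L_{a,b;y}\subset\{\Re w\le b\}$, if $q_k\in\C$ then $\Re q_k\le b-k$; letting $k\to\infty$ forces $\inf\{\Re w:w\in L_{a,b;y}\}=-\infty$, i.e. $a=-\infty$ (and if some $q_k=\infty$ then $\infty\in I(\hat h(\underline{\sigma}))$, which again forces $a=-\infty$, the impression being closed in $\C_\infty$). Hence $\sigma\in\mathcal S_u(h)$.

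For (2)$\Rightarrow$(1): assume $\sigma\in\mathcal S_u(h)$, so $\sigma\in\mathcal S(h)\setminus\{\tau\}$, $h^\ast(\sigma)=p=b+iy\in\C$ and, by Lemma~\ref{Lem:discont-Koenigs-prel} (after a reflection, WLOG in case (1)), $(y-\epsilon,y)\subset I$, $\limsup_{t\to y^-}\psi(t)=b$, $\liminf_{t\to y^-}\psi(t)=-\infty$, and $I(\hat h(\underline{\sigma}))$ is the unbounded half-line $\{w:\Im w=y,\ \Re w\le b\}$. Since $h^\ast(\sigma)\neq\infty$, case (ii) is excluded, and in case (iii) the hypothesis $\sigma\in\mathcal S(h)$ forces $\sigma$ to be the starting point of a maximal contact arc. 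In cases (i) and (iii) the forward orbit $t\mapsto\varphi^\ast_t(\sigma)$ either lies in $\D$ for large $t$, whence $\varphi^\ast_t(\sigma)=\varphi_{t-t_0}(\varphi^\ast_{t_0}(\sigma))\to\tau$, or runs along the contact arc toward its final point, which by maximality (the orbit never entering $\D$) must be $\tau$; either way $\lim_{t\to+\infty}\varphi^\ast_t(\sigma)=\tau$ — this sub-step is where the contact-arc/petal structure of \cite[Ch.~14]{BCDbook} is needed and is the most delicate point of this direction. It remains to produce, for each $t_0>0$, a sequence $w_n\to\sigma$ with $\varphi_{t_0}(w_n)\to\sigma$.

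For that last point I would construct a curve $\gamma:[0,1)\to\Omega$ representing the prime end $\underline{\sigma}$ with $|\gamma(s)|\to\infty$ (concretely $\Im\gamma(s)\uparrow y$, $\Re\gamma(s)\to-\infty$, staying in the horizontal channel attached to $I(\hat h(\underline\sigma))$ from below), using conditions (1.a)--(1.c) of Lemma~\ref{Lem:discont-Koenigs} and the explicit form of the interior parts $V_n$ of null-chains representing $\underline{\sigma}$ as in the proof of Proposition~\ref{Lem:two-disc}; then $h^{-1}(\gamma(s))\to\sigma$. Since $\Omega$ is starlike at infinity, $\gamma(s)+t_0\in\Omega$, and because the impression is a horizontal half-line, far out (where $\Re\to-\infty$) the prime-end neighbourhoods $V_n$ are invariant under $w\mapsto w+t_0$ up to a bounded set; hence $\gamma(\cdot)+t_0$ is eventually in each $V_n$ as well, so it represents $\underline{\sigma}$ and $h^{-1}(\gamma(s)+t_0)\to\sigma$. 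Setting $w_n:=h^{-1}(\gamma(s_n))$ for $s_n\uparrow 1$ yields $w_n\to\sigma$ and $\varphi_{t_0}(w_n)=h^{-1}(\gamma(s_n)+t_0)\to\sigma$, which finishes (1). The main obstacle is exactly this construction and the verification that $\gamma$ and its horizontal translate define the \emph{same} prime end $\underline{\sigma}$ (not a neighbouring one): this is precisely where $a=-\infty$ is used, since it forces the two curves to end up in a common, slit-free horizontal channel; the possible double-slit situation of the Remark after Lemma~\ref{Lem:discont-Koenigs} causes no trouble, as $\gamma$ is taken on the side from which $\underline{\sigma}$ is accessed and escapes to $\infty$ to the left rather than through $p+t_0$.
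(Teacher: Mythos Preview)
Your argument is correct, and for the direction $(1)\Rightarrow(2)$ it is in fact more direct than the paper's. The paper assumes by contradiction that the impression is the bounded segment $L_{a,b;y}$, fixes a single $t>b-a$, and then uses the Carath\'eodory topology of $\widehat{\Omega}$ to argue that the translates $h(w_n)+t$ cannot converge to the prime end $\hat h(\underline{\sigma})$, because the interiors $V_m$ of a representing null chain are eventually contained in $\{\Re w<s\}$ for some $s\in(b,a+t)$. Your argument bypasses this machinery entirely: using the hypothesis at every integer $k$, you produce points $q_k$ and $q_k+k$ both lying in the impression $L_{a,b;y}$, which forces $\Re q_k\le b-k$ and hence $a=-\infty$ directly. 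This is cleaner and avoids any appeal to prime-end convergence.

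For $(2)\Rightarrow(1)$ the two proofs run in parallel, with two remarks. First, your trichotomy and the case analysis on (i)/(iii) are unnecessary: the paper simply invokes the boundary Denjoy--Wolff theorem \cite[Theorem~14.1.1]{BCDbook}, which already gives $\lim_{t\to+\infty}\varphi_t^\ast(\sigma)=\tau$ for every $\sigma$ that is not a boundary fixed point. Your case (iii) essentially re-derives a fragment of that theorem, and the step ``the final point must be $\tau$'' still needs the same reference you cite, so nothing is gained by the decomposition. Second, for the construction of the sequence $\{w_n\}$ the paper is just as sketchy as you are (it says ``a similar argument as before''); your curve $\gamma$ with $\Re\gamma\to-\infty$, $\Im\gamma\uparrow y$, together with the explicit description $V_n^-=\Omega\cap(R_n^-\cup W_n^-)$ of the null-chain interiors from Proposition~\ref{Lem:two-disc}, is a perfectly good way to make that step rigorous. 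The only point that genuinely needs checking---and you flag it---is that the null chain $\{C_n^-\}$ built there really represents $\underline{\sigma}$; this follows as in the proof of Proposition~\ref{Lem:two-disc} by testing against any curve in $\Omega$ landing at $h^\ast(\sigma)$ from the correct side.
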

\begin{proof}
(1) implies (2). It is clear that $\sigma$ is not a boundary fixed point.
Moreover, $\varphi_t$ does not extend  continuously at $\sigma$  for $t>0$ by hypothesis. Hence, as we already remarked, it follows that $h$ is not continuous at $\sigma$, thus $\sigma\in\mathcal S(h)$. The cluster set of $h$ at $\sigma$ is the impression of $\hat{h}(\underline{\sigma})$---which is either a segment or a half-line parallel to the real axis (see Lemma~\ref{Lem:discont-Koenigs}).  We have to prove that $\sigma$ is an unbounded discontinuity, that is, $I(\hat{h}(\underline{\sigma}))$ is a half-line.

 Suppose by contradiction that $K:=I(\hat{h}(\underline{\sigma}))=\{w\in\C: a\leq \Re w\leq b, \Im w=y\}$ for some $-\infty< a<b=\Re h^\ast(\sigma)<+\infty$. Fix $t>b-a$. Recall that   $\varphi_t(z)=h^{-1}(h(z)+t)$ for all $z\in\D$. Let $\{w_n\}\subset\D$ be the sequence converging to $\sigma$ such that $\lim_{n\to \infty}\varphi_t(w_n)=\sigma$ given by hypothesis (1). Up to subsequences, we can assume that $\{h(w_n)\}$ converges to some $p\in K$. Let $\xi_n:=h(w_n)+t$. Hence, $\{\xi_n\}$ converges to $p+t$. Since the identity map $\sf{id}_\D$ extends as a homeomorphism between $\widehat{\D}$ (the Carath\'eodory closure of $\D$) and $\overline{\D}$, it follows that $\{\varphi_t(w_n)\}$ converges to $\sigma$ if and only if $\{h^{-1}(\xi_n)\}$ converges to $\underline{\sigma}\in\partial_C\D$ in the Carath\'eodory topology. Moreover, taking into account that $\hat{h}$ is a homeomorphism between $\widehat{\D}$ and $\widehat{\Omega}$, it follows that $\{h^{-1}(\xi_n)\}$ converges to $\underline{\sigma}\in\partial_C\D$ if and only if $\{\xi_n\}$ converges to $\hat{h}(\underline{\sigma})$ in the Carath\`eodory topology. Let $(C_n)$, $n\geq 0$ be a null-chain representing $\hat{h}(\underline{\sigma})$ and let $V_n$, $n\geq 1$ denotes the interior part of $C_n$. Fix $s\in (b,t)$. Since $\bigcap_{n\geq 1} \overline{V_n}=K$ and $V_{n+1}\subset V_n$ it follows that there exists $m$ such that $\overline{V_m}\subset \{w\in\C: \Re w<s\}$. But this implies that $\{\xi_n\}$ is not eventually contained in $V_m$, and hence (see, {\sl e.g.}, \cite[Remark~4.2.2]{BCDbook}), $\{\xi_n\}$ does not converge to $\hat{h}(\underline{\sigma})$ in the Carath\'eodory topology, contradiction. Therefore (1) implies (2).

(2) implies (1). Recall that, by the so-called ``boundary Denjoy-Wolff Theorem'' (see, {\sl e.g.}, \cite[Theorem~14.1.1]{BCDbook}), for every $\sigma\in\partial \D$ which is not a boundary fixed point of $(\varphi_t)_{t\geq 0}$, the curve $[0,+\infty)\ni t\mapsto \phi_t^\ast(\sigma)$ is a continuous injective curve converging to $\tau$.  Thus $\lim_{t\to+\infty}\varphi_t^\ast(\sigma)=\tau$. Finally, since $I(\hat{h}(\underline{\sigma}))$ is a half-line,  using again that $\varphi_t(z)=h^{-1}(h(z)+t)$ for all $z\in\D$ and $t\geq 0$, that $I(\hat{h}(\underline{\sigma}))$ is the cluster set of $h$ at $\sigma$ and a similar argument as before, we have the conclusion.
\end{proof}

Now we  relate unbounded discontinuities to the defining function of the Koenigs domain.

\begin{proposition}\label{Prop:char-bdd-dis}
Let $(\varphi_t)_{t\geq 0}$ be a non-elliptic semigroup in $\D$ with Denjoy-Wolff point $\tau\in\partial\D$. Let $h$ be its Koenigs function, $\Omega=h(\D)$  the associated Koenigs  domain . Let $\psi:I\to \R$ be the defining function of $\Omega$. Let $D_U$ be the set of points $y_0\in \overline{I}$ such that either
\[
\liminf_{y\to y_0^-}\psi(y)=-\infty, \quad -\infty<\limsup_{y\to y_0^-}\psi(y)<+\infty,
\]
or
\[
\liminf_{y\to y_0^+}\psi(y)=-\infty, \quad -\infty<\limsup_{y\to y_0^+}\psi(y)<+\infty.
\]
Then the map  $\mathcal U:\mathcal S_u(h)\setminus\{\tau\}\to D_U$ defined by
$\mathcal U(\sigma)= \Im h^\ast(\sigma)$ is surjective. Moreover, for each $y_0\in I\cap D_U$ the fibre $\mathcal U^{-1}(y_0)$   contains at most two points, while, if $y_0\in\partial I\cap D_U$ the fibre  $\mathcal U^{-1}(y_0)$ contains only one point.

Finally,  assume $\hbox{Int}(\overline\Omega)=\Omega$ and let $y_0\in I\cap D_U$. Then, there exist $\sigma_1\neq\sigma_2$ such that $\mathcal U^{-1}(y_0)=\{\sigma_1, \sigma_2\}$ if and only if  either $\sigma_1\in\mathcal S_u(h)\cap(\mathcal S^-(h)\setminus \mathcal S^+(h))$ and  $\sigma_2\in\mathcal S_u(h)\cap (\mathcal  S^+(h)\setminus \mathcal S^-(h))$ or $\sigma_1\in\mathcal S_u(h)\cap (\mathcal  S^+(h)\setminus \mathcal S^-(h))$ and  $\sigma_2\in \mathcal S_u(h)\cap(\mathcal  S^-(h)\setminus \mathcal S^+(h))$  and $\sigma_1,\sigma_2$ are the end points of a non-exceptional maximal contact arc of $(\varphi_t)_{t\geq 0}$.
\end{proposition}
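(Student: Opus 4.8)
The plan is to handle the three assertions in turn, relying throughout on Lemma~\ref{Lem:discont-Koenigs-prel}, Proposition~\ref{Lem:two-disc}, Corollary~\ref{Cor:arco-en-def} and Theorem~\ref{Thm:Cantor-no-approx}.

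First I would settle that $\mathcal{U}$ is well defined into $D_U$ and is surjective. If $\sigma\in\mathcal S_u(h)\setminus\{\tau\}$, Lemma~\ref{Lem:discont-Koenigs-prel} gives $I(\hat{h}(\underline{\sigma}))=L_{a,b;y}$ with $y=\Im h^\ast(\sigma)\in\overline I$ and $b=\Re h^\ast(\sigma)\in\R$; unboundedness of $\sigma$ forces $a=-\infty$, and whichever of (1), (2) of that lemma holds says precisely that $\liminf\psi=-\infty$ while $\limsup\psi=b$ along the corresponding one-sided approach to $y$, i.e. $y\in D_U$. Conversely, fix $y_0\in D_U$; say $\liminf_{y\to y_0^-}\psi=-\infty$ and $\limsup_{y\to y_0^-}\psi=b\in\R$ (the right-hand case is symmetric). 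Then condition (1) of Lemma~\ref{Lem:discont-Koenigs-prel} holds with $a=-\infty$, so its converse part produces $\sigma\in\mathcal S(h)\setminus\{\tau\}$ with $h^\ast(\sigma)=b+iy_0$ and, by the same lemma, $I(\hat{h}(\underline{\sigma}))=L_{-\infty,b;y_0}$, a half-line; hence $\sigma\in\mathcal S_u(h)$ and $\mathcal{U}(\sigma)=y_0$.

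For the cardinality of the fibres, note that $\mathcal{U}^{-1}(y_0)$ is contained in the fibre over $y_0$ of the map $\mathcal S(h)\setminus\{\tau\}\ni\sigma\mapsto\Im h^\ast(\sigma)$, which by Proposition~\ref{Lem:two-disc} has at most two points and is a singleton whenever $y_0\notin I$; this gives the stated bounds. Now assume $\hbox{Int}(\overline{\Omega})=\Omega$ — equivalently, by Theorem~\ref{Thm:Cantor-no-approx}, $\psi=\tilde{\psi}$, and in particular there are no two non-exceptional maximal contact arcs with a common final point — and let $y_0\in I\cap D_U$. For the implication ``$\Leftarrow$'': if $\sigma_1\neq\sigma_2$ lie in $\mathcal S_u(h)$, are of opposite type $\mathcal S^\mp(h)\setminus\mathcal S^\pm(h)$ and are the endpoints of a non-exceptional maximal contact arc $A$, then $h^\ast(A)$ is a horizontal segment by Lemma~\ref{Lem:cara-contact}, so \eqref{Eq:where-go-int-fin-max-arc} gives $\Im h^\ast(\sigma_1)=\Im h^\ast(\sigma_2)=y_0$; since both are unbounded discontinuities, $\{\sigma_1,\sigma_2\}\subseteq\mathcal{U}^{-1}(y_0)$, and the previous paragraph forces $\mathcal{U}^{-1}(y_0)=\{\sigma_1,\sigma_2\}$.

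For ``$\Rightarrow$'', suppose $\mathcal{U}^{-1}(y_0)=\{\sigma_1,\sigma_2\}$ with $\sigma_1\neq\sigma_2$. Since $\Im h^\ast(\sigma_1)=\Im h^\ast(\sigma_2)$, Proposition~\ref{Lem:two-disc} puts us in its case (1) or (2). Case (1), $h^\ast(\sigma_1)=h^\ast(\sigma_2)$, yields two non-exceptional maximal contact arcs with a common final point, hence by Corollary~\ref{Cor:arco-en-def} numbers $q\in\R$, an open interval $J\subseteq I$ and $c_0\in J$ with $\psi(c_0)>q\geq\psi(c)$ for all $c\in J\setminus\{c_0\}$; but then $\tilde{\psi}(c_0)\leq q<\psi(c_0)$, contradicting $\psi=\tilde{\psi}$. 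So case (2) holds: $h^\ast(\sigma_1)\neq h^\ast(\sigma_2)$ and, after relabelling, $\sigma_1\in\mathcal S^-(h)\setminus\mathcal S^+(h)$, $\sigma_2\in\mathcal S^+(h)\setminus\mathcal S^-(h)$, and both being unbounded this is the ``types'' half of the right-hand side. There remains to exhibit the contact arc. Writing $b_j=\Re h^\ast(\sigma_j)$, Lemma~\ref{Lem:discont-Koenigs-prel} gives $b_1=\limsup_{y\to y_0^-}\psi$, $b_2=\limsup_{y\to y_0^+}\psi$ and $\liminf_{y\to y_0^\pm}\psi=-\infty$; from $h^\ast(\sigma_j)\in\partial\Omega$ together with $\psi=\tilde{\psi}$ one gets $\psi(y_0)=\max(b_1,b_2)$ and $b_1\neq b_2$. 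Using conditions (1.b)/(2.b) of Lemma~\ref{Lem:discont-Koenigs}, the facts that $\sigma_1\notin\mathcal S^+(h)$ and $\sigma_2\notin\mathcal S^-(h)$, and once more $\psi=\tilde{\psi}$, I would show that $\Omega$ coincides, in a neighbourhood of the open horizontal segment from $h^\ast(\sigma_1)$ to $h^\ast(\sigma_2)$, with one fixed side of that segment; Schwarz reflection then extends $h$ holomorphically through an arc $A'$ mapping onto that segment, and by Lemma~\ref{Lem:cara-contact} the maximal contact arc $A\supseteq A'$ has, by \eqref{Eq:where-go-int-fin-max-arc}, starting point $\sigma_1$ and final point $\sigma_2$ according as $b_1<b_2$ or $b_2<b_1$; it is non-exceptional because $\sigma_2\neq\tau$.

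The main obstacle is exactly this last step: verifying, under $\psi=\tilde{\psi}$ and the hypothesis that $\sigma_1,\sigma_2$ are the only unbounded discontinuities at height $y_0$, that $\Omega$ really is a one-sided slit neighbourhood along $[h^\ast(\sigma_1),h^\ast(\sigma_2)]$, so that a contact arc actually materialises. This is the only place where $\hbox{Int}(\overline{\Omega})=\Omega$ enters in an essential rather than formal way, and it demands careful bookkeeping of the defining function $\psi$ on both sides of $y_0$ in tandem with the regularisation identity $\psi=\tilde{\psi}$; the remaining arguments are routine applications of the cited lemmas.
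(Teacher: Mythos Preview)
Your proposal is correct and follows essentially the same route as the paper: well-definedness and surjectivity via Lemma~\ref{Lem:discont-Koenigs-prel}, fibre bounds via Proposition~\ref{Lem:two-disc}, exclusion of case~(1) of that proposition under $\hbox{Int}(\overline\Omega)=\Omega$ via Theorem~\ref{Thm:Cantor-no-approx}, and construction of the contact arc via Schwarz reflection.

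One remark: the obstacle you flag at the end is lighter than you make it. You do not need $\Omega$ to coincide with a one-sided slit in a full neighbourhood of the segment $B$ between $h^\ast(\sigma_1)$ and $h^\ast(\sigma_2)$; it suffices that (i) $B\subset\partial\Omega$ and (ii) on the \emph{lower} side (i.e.\ $\{\Im w<y_0\}$, when $\sigma_1\in\mathcal S^-(h)$ and $x_1<x_2$) a half-neighbourhood of $B$ lies in $\Omega$. Item~(ii) is precisely condition (1.b) of Lemma~\ref{Lem:discont-Koenigs} for $\sigma_1$. Item~(i) follows because $h^\ast(\sigma_2)\in\partial\Omega$ with $y_0\in I$ forces $\psi(y_0)\ge x_2$, so every point of $B$ has real part below $\psi(y_0)$ and is outside $\Omega$, while (ii) makes it a limit of points of $\Omega$. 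Schwarz reflection then applies to $h^{-1}$ restricted to $\Omega\cap\{\Im w<y_0\}$ alone, producing the contact arc; the identity $\psi(y_0)=x_2$ (from $\psi=\tilde\psi$ together with (1.a), (2.a) of Lemma~\ref{Lem:discont-Koenigs}) then pins down the final point as $\sigma_2$, and the argument of Proposition~\ref{Lem:two-disc} identifies the starting point with $\sigma_1$. What happens on the upper side of $B$ is irrelevant for this reflection step, so the ``careful bookkeeping on both sides'' you anticipate is not needed.
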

\begin{proof}
Given $\sigma\in\mathcal S_u(h)$, it follows easily from
Lemma~\ref{Lem:discont-Koenigs-prel}
that $\Im h^\ast(\sigma)\in D_U$. Hence, the map $\mathcal U$ is well defined.

In order to see that  $\mathcal U$  is surjective, take $y_0\in D_U$. We can assume without loss of generality that $\liminf_{y\to y_0^-}\psi(y)=-\infty$ and $-\infty<M:=\limsup_{y\to y_0^-}\psi(y)<+\infty$. By definition of $\psi$  and since $\psi$ is upper semicontinuous, it is clear from the reverse implication of Lemma~\ref{Lem:discont-Koenigs}  that $L_{-\infty,M; y_0}$
is the impression of a prime end corresponding to a point $\sigma\in\mathcal S_u(h)\setminus\{\tau\}$ such that $\mathcal U(\sigma)=y_0$.

By Proposition~\ref{Lem:two-disc}, the fibre $\mathcal U^{-1}(y_0)$ contains at most two points for every $y_0\in D_U$ and only one if $y_0\in \partial I\cap D_U$,

Assume now $\hbox{Int}(\overline\Omega)=\Omega$. By Theorem~\ref{Thm:Cantor-no-approx},  every two non-exceptional  maximal contact arcs of $(\varphi_t)_{t\geq 0}$ have different final points. Thus, by Proposition~\ref{Lem:two-disc}, if $\sigma_1, \sigma_2\in\mathcal S_u(h)\setminus\{\tau\}$ are such that $\sigma_1\neq\sigma_2$ and $y_0:=\mathcal U(\sigma_1)=\mathcal U(\sigma_1)$ then $y_0\in I$ and, either $\sigma_1\in\mathcal S^+(h)\setminus \mathcal S^-(h)$ and $\sigma_2\in\mathcal S^-(h)\setminus \mathcal S^+(h)$ or $\sigma_1\in\mathcal S^-(h)\setminus \mathcal S^+(h)$ and $\sigma_2\in\mathcal S^+(h)\setminus \mathcal S^-(h)$. Moreover,
$\Re h^\ast(\sigma_1)\neq\Re h^\ast(\sigma_2)$.

We assume $x_1:=\Re h^\ast(\sigma_1)<x_2:=\Re h^\ast(\sigma_2)$ and $\sigma_1\in \mathcal S^-(h)$, $\sigma_2\in \mathcal S^+(h)$ (the other cases are similar).

We need to show that there exists a maximal contact arc for $(\varphi_t)_{t\geq 0}$ starting at $\sigma_1$ and ending at $\sigma_2$. The proof  is very similar to the one for the proof of the existence of the two maximal contact arcs in Proposition~\ref{Lem:two-disc}, so we only sketch it.

Since $\sigma_1\in \mathcal S^-(h)$, using (1.b) of Lemma~\ref{Lem:discont-Koenigs}, by Schwarz reflection principle, $h^{-1}$ restricted to $\Omega\cap \{w\in\C:\Im w<y_0\}$ extends holomorphically through $B:=\{w\in\C: x_1<\Re w<x_2, \Im w=y_0\}$. Hence, there exists an open arc $A\subset \partial \D$ such that $h$ extends holomorphically through $A$ and $h(A)=B$. By Lemma~\ref{Lem:cara-contact}, $A$ is a contact arc.

Since $\sigma_1\in \mathcal S^-(h)$ and $\sigma_2\in \mathcal S^+(h)$, properties (1.a) and (1.b) of Lemma~\ref{Lem:cara-contact} show that for every $x<x_1$ the point $x+iy_0$ is not accessible with injective continuous curves in $\Omega$ and thus cannot belong to  the image of any contact arc. Since $x_1+iy_0\in \overline{B}$, it follows then that the starting point $\xi_1$ of $A$ satisfies $h^\ast(\xi_1)=h^\ast(\sigma_1)$.

Now, arguing exactly as in the proof of Proposition~\ref{Lem:two-disc} one can show that $\xi_1=\sigma_1$.

In order to show that $\sigma_2$ is the final point of $A$, it is enough to prove that $\psi(y_0)=x_2$.  Indeed, by construction, $\psi(y_0)\geq x_2$. Since $\sigma_1\in \mathcal S^-(h)$ and $\sigma_2\in \mathcal S^+(h)$ (here we use (2.a) and (2.b) of Lemma~\ref{Lem:discont-Koenigs}), for every $\epsilon>0$ we can find  $\delta>0$ such that $\liminf_{t\to y}\psi(y)\leq x_2+\epsilon$ for all $|y-y_0|<\delta$. This clearly implies that $\tilde\psi(y_0)\leq x_2$ and hence, by Theorem~\ref{Thm:Cantor-no-approx}, $\psi(y_0)=\tilde\psi(y_0)=x_2$.

Finally, if $\sigma_1, \sigma_2$  are two unbounded discontinuities  which are the end points of a non-exceptional maximal contact arc $A$ of $(\varphi_t)_{t\geq 0}$ then clearly $\mathcal U(\sigma_1)=\mathcal U(\sigma_2)$ since $h^\ast(A)$ is contained in a line parallel to the real axis.
\end{proof}

\subsection{Boundary fixed points and unbounded discontinuities}\label{sub:all}

Let $(\varphi_t)_{t\geq 0}$ be a non-elliptic semigroup in $\D$, with Koenigs' function $h$, Koenigs' domain $\Omega$ and Denjoy-Wolff point $\tau\in\partial\D$. Let $\psi:I\to\R$ be the defining function of $\Omega$. In the previous sections, we introduced the following sets:
\begin{itemize}
\item $\mathcal B\mathcal F_R\subset\partial\D$, the set of all boundary regular fixed points of $(\varphi_t)_{t\geq 0}$ different from $\tau$,
\item $\mathcal B\mathcal F_N\subset\partial\D$, the set of all boundary super-repelling fixed points of $(\varphi_t)_{t\geq 0}$,
\item $\mathcal S^+(h), \mathcal S^-(h)\subset\partial\D\setminus\{\tau\}$, the sets of ``upper''/``lower'' discontinuities,
\item $\mathcal S_u(h)\subset\partial\D$, the set of all unbounded discontinuities,
\item $I_R\subset I$, the set of all bounded open connected components of $\psi^{-1}(-\infty)$,
\item $J_\infty\subset I$, the union of all open connected components of $\psi^{-1}(-\infty)$,
\item $I_N\subset\overline{I}$, the set of all points $y_0\in \overline{I}\setminus \overline{J_\infty}$ such that either $\lim_{y\to y_0^+}\psi(y)=-\infty$ or  $\lim_{y\to y_0^-}\psi(y)=-\infty$ (or both),
\item $D_U\subset\overline{I}$, the set of all points $y_0\in\overline{I}$ such that either
\[
\liminf_{y\to y_0^+}\psi(y)=-\infty<\limsup_{y\to y_0^+}\psi(y)<+\infty,
\]
or
\[
\liminf_{y\to y_0^-}\psi(y)=-\infty<\limsup_{y\to y_0^-}\psi(y)<+\infty.
\]
\end{itemize}

Assuming $\hbox{Int}(\overline\Omega)=\Omega$, we proved that there is a one-to-one correspondence between $\mathcal B\mathcal F_R$ and $I_R$; a one-to-one correspondence between $\mathcal B\mathcal F_N$ and $I_N$; and a surjective map from $\mathcal S_u(h)$ to $D_U$, so that, if the fibre above $y_0$ contains two points $\sigma_1,\sigma_2$, then $y_0\in I$ and there is a maximal contact arc starting from $\sigma_1$ and ending at $\sigma_2$.

 Let us now explore what happens at those points of intersection between $D_U$ and $I_N$ or those points of $D_U$ which belong to the closure of an element of $I_R$.

Note that, by definition, if $(a,b)\in I_R$ then $x\not\in D_U$ for every $x\in (a,b)$. However, it might happen that $a$ or $b$ belongs to $D_U$.

\begin{proposition}\label{Prop:fixed-vs-unbd}
Let $(\varphi_t)_{t\geq 0}$ be a non-elliptic semigroup in $\D$ with Denjoy-Wolff point $\tau\in\partial\D$. Let $h$ be its Koenigs function, $\Omega=h(\D)$  the associated Koenigs  domain  and assume that $\hbox{Int}(\overline\Omega)=\Omega$. Let $\psi:I\to \R$ be the defining function of $\Omega$.
\begin{enumerate}
\item $y_0\in D_U\cap I_N$ if and only  there exist a unique $\sigma\in \mathcal B\mathcal F_N$ and a unique $\xi\in \mathcal S_u(h)\setminus\{\tau\}$ such that $\Im h^\ast(\sigma)=\Im h^\ast(\xi)=y_0$.
\item  Let $(y_0,b)\in I_R$. Then $y_0\in D_U$ if and only if there exist a unique $\sigma\in \mathcal B\mathcal F_R$ and a unique $\xi\in \mathcal S_u(h)\cap \mathcal S^-(h)$ such that $\liminf_{z\to \sigma}\Im h^\ast(z)=\Im h^\ast(\xi)=y_0$.
\item  Let $(a,y_0)\in I_R$. Then $y_0\in D_U$ if and only if there exist a unique $\sigma\in \mathcal B\mathcal F_R$ and a unique $\xi\in \mathcal S_u(h)\cap \mathcal S^+(h)$  such that $\limsup_{z\to \sigma}\Im h^\ast(z)=\Im h^\ast(\xi)=y_0$.
\end{enumerate}
Moreover, in the hypothesis of (1), (2) or (3), $y_0\in I$ and there exists a maximal contact arc $A$ starting from $\sigma$ and ending at $\xi$ such that $\Im h^\ast(A)=y_0$.
\end{proposition}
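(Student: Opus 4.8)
The statement ties together three previously-established correspondences—$\mathcal B\mathcal F_N\leftrightarrow I_N$ (Proposition~\ref{Prop:char-bfp}(3)), $\mathcal B\mathcal F_R\leftrightarrow I_R$ (Proposition~\ref{Prop:char-bfp}(2)), and the surjection $\mathcal S_u(h)\setminus\{\tau\}\to D_U$ with at-most-two-point fibres (Proposition~\ref{Prop:char-bdd-dis})—and analyzes the overlap loci. The key observation driving everything is that, under the assumption $\hbox{Int}(\overline{\Omega})=\Omega$, Theorem~\ref{Thm:Cantor-no-approx} gives $\psi=\tilde\psi$, so one-sided limits of $\psi$ control the local geometry tightly. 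First I would treat case (1). If $y_0\in D_U\cap I_N$, then $y_0\in\overline I\setminus\overline{J_\infty}$ with (say) $\lim_{y\to y_0^-}\psi(y)=-\infty$, while on the other side $\liminf_{y\to y_0^+}\psi(y)=-\infty$ but $-\infty<\limsup_{y\to y_0^+}\psi(y)<+\infty$ (the mixed assignment of the two one-sided behaviours is forced because $y_0\notin\overline{J_\infty}$ rules out $-\infty$ on both sides simultaneously near $y_0$, and $y_0\in D_U$ forbids a finite $\liminf$ on both sides). The left-hand collapse produces, via Proposition~\ref{Prop:char-bfp}(3), the super-repelling fixed point $\sigma$ with $a_-(\sigma)=y_0$; the right-hand oscillation produces, via Proposition~\ref{Prop:char-bdd-dis}, the unbounded discontinuity $\xi\in\mathcal S_u(h)\setminus\{\tau\}$ with $\Im h^\ast(\xi)=y_0$, lying in $\mathcal S^+(h)$. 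Uniqueness of each follows from the injectivity statements already proved, once we check $y_0\in I$ (if $y_0\in\partial I$, the fibre of $\mathcal U$ over $y_0$ has only one point and there is no room for both a fixed point and a discontinuity accumulating there with the required side behaviours—this needs a short argument that $\overline I$ cannot have $D_U\cap I_N$ on its boundary, using that $\psi$ is defined on the open interval $I$). Conversely, given such $\sigma$ and $\xi$, Lemma~\ref{lem:h-bfp} forces $\lim_{z\to\sigma}\Re h(z)=-\infty$ with $\Im h$ tending to $y_0$, which translates (again via $\psi=\tilde\psi$) into a one-sided $-\infty$ limit of $\psi$ at $y_0$, i.e.\ $y_0\in I_N$; and $\xi\in\mathcal S_u(h)$ with $\Im h^\ast(\xi)=y_0$ gives $y_0\in D_U$ by Proposition~\ref{Prop:char-bdd-dis}.

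**Cases (2) and (3).** These are essentially mirror images of one another, so I would do (2) and note the symmetry. Let $(y_0,b)\in I_R$, so $\psi\equiv-\infty$ on $(y_0,b)$ and $\psi(y_0)>-\infty$ is possible or not. The hypothesis $y_0\in D_U$ says one-sidedly (necessarily from the left, since from the right $\psi\equiv-\infty$ gives $\limsup=-\infty$, excluded) that $\liminf_{y\to y_0^-}\psi(y)=-\infty$ and $-\infty<\limsup_{y\to y_0^-}\psi(y)<+\infty$. The datum $(y_0,b)\in I_R$ already gives, via Proposition~\ref{Prop:char-bfp}(2), a boundary regular fixed point $\sigma$ with $(a_-(\sigma),a_+(\sigma))=(y_0,b)$, hence $\liminf_{z\to\sigma}\Im h(z)=y_0$. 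The left-sided oscillation of $\psi$ at $y_0$ gives, by Proposition~\ref{Prop:char-bdd-dis}, an unbounded discontinuity $\xi$ with $\Im h^\ast(\xi)=y_0$; that $\xi\in\mathcal S^-(h)$ is read off from condition (1) of Lemma~\ref{Lem:discont-Koenigs-prel}, because the approximating behaviour sits on the lower side $y<y_0$. Uniqueness of $\sigma$ is the injectivity in Proposition~\ref{Prop:char-bfp}(2); uniqueness of $\xi$ follows because the fibre $\mathcal U^{-1}(y_0)$ has at most two points and the second point, if any, would lie in $\mathcal S^+(h)$—but on the side $y>y_0$ we have $\psi\equiv-\infty$, which is incompatible with a finite $\limsup$, so there is no $\mathcal S^+(h)$-discontinuity over $y_0$. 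The converse runs as in case (1): a BRFP $\sigma$ with $a_-(\sigma)=y_0$ and $a_+(\sigma)=b$ forces $(y_0,b)\subset\psi^{-1}(-\infty)$ by Lemma~\ref{lem:h-bfp} (the strip $\{a_-<\Im w<a_+\}\subset\Omega$ plus $\{a_-(\sigma)-\delta<\Im w<a_+\}\not\subset\Omega$ pins down the component), hence $(y_0,b)\in I_R$ (maximality of the component is the only subtlety, handled by the $\not\subset$ conditions), while the existence of $\xi\in\mathcal S_u(h)\cap\mathcal S^-(h)$ over $y_0$ yields $y_0\in D_U$.

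**The final "Moreover".** In each of the three cases one must exhibit a non-exceptional maximal contact arc $A$ from $\sigma$ to $\xi$ with $\Im h^\ast(A)=y_0$. I would obtain this exactly as in the proof of Proposition~\ref{Prop:char-bdd-dis}: in case (2), the side $y<y_0$ is where $\psi$ oscillates and $\sigma\in\mathcal S^-$ relative to that side, so $h^{-1}$ restricted to $\Omega\cap\{\Im w<y_0\}$ extends holomorphically through a segment $B=\{x_1<\Re w<x_2,\ \Im w=y_0\}$ by Schwarz reflection (here $x_1=-\infty$ effectively, since $\Re h\to-\infty$ along the approach to $\sigma$, but one works on truncations and takes a limit), giving a contact arc $A$ with $h(A)=B$; then Lemma~\ref{Lem:cara-contact} makes $A$ a maximal contact arc, its starting point is $\sigma$ by the accessibility argument (points $x+iy_0$ with $x$ to the left are inaccessible because $\sigma\in\mathcal S^-(h)$), and its final point is $\xi$ because $\psi(y_0)$ equals the right-hand endpoint of $B$—which one checks using $\psi=\tilde\psi$ together with the $\limsup$ condition defining $D_U$. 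Cases (1) and (3) are the reflections. That $A$ is non-exceptional is automatic since $x_1(A)=\xi\neq\tau$. Finally, $y_0\in I$ in all three cases: the existence of a contact arc with $h^\ast(A)$ a non-degenerate horizontal segment at height $y_0$ forces, by Lemma~\ref{Lem:cara-contact}, an open sub-interval of $I$ on which $\psi$ is bounded near $y_0$ from at least one side while being $-\infty$ from the other, and in particular $y_0$ is an interior point of $I$.

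**Main obstacle.** The delicate point throughout is the bookkeeping of which one-sided limit of $\psi$ carries which behaviour, and correspondingly which of $\mathcal S^+$/$\mathcal S^-$ the discontinuity $\xi$ belongs to; getting this consistently right—and, in the converse directions, verifying that the configurations produced by $\sigma$ and $\xi$ are genuinely compatible (they sit on opposite sides of $y_0$) so that one does not accidentally double-count or create a contradiction with $\hbox{Int}(\overline\Omega)=\Omega$—is where the real work lies. Everything else is an assembly of Proposition~\ref{Prop:char-bfp}, Proposition~\ref{Prop:char-bdd-dis}, Lemma~\ref{lem:h-bfp}, Lemma~\ref{Lem:discont-Koenigs-prel}, Lemma~\ref{Lem:cara-contact}, and Theorem~\ref{Thm:Cantor-no-approx}.
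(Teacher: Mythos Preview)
Your proposal is correct and follows the same overall strategy as the paper: assemble the correspondences of Proposition~\ref{Prop:char-bfp} and Proposition~\ref{Prop:char-bdd-dis}, then argue that the side-behaviours at $y_0$ must be complementary, and finally produce the contact arc by Schwarz reflection as in Proposition~\ref{Lem:two-disc}.

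The one tactical difference worth noting concerns the uniqueness of $\xi$ in case~(1). You argue via the defining function: since $y_0\in I_N$ forces $\lim_{y\to y_0^\pm}\psi(y)=-\infty$ on one side, the $\limsup$ on that side is $-\infty$, so the $D_U$-condition (finite $\limsup$) can only hold on the \emph{other} side, hence only one class $\mathcal S^\pm(h)$ is populated over $y_0$ and Proposition~\ref{Prop:char-bdd-dis} gives a single $\xi$. The paper instead argues geometrically: if there were $\xi^+\in\mathcal S^+(h)$ and $\xi^-\in\mathcal S^-(h)$ over $y_0$, then Lemma~\ref{Lem:discont-Koenigs}(1.a),(2.a) would provide ``wall'' sequences $\{z_n^\pm\}\subset\partial\Omega$ approaching $h^\ast(\xi^\pm)$ from above and below with $L_{-\infty,\Re z_n^\pm;\Im z_n^\pm}\subset\C\setminus\Omega$; but the radial curve $t\mapsto h(t\sigma)$ lies in $\Omega$ with $\Re h(t\sigma)\to-\infty$ and $\Im h(t\sigma)\to y_0$, and it would have to thread between those walls---impossible. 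Your $\psi$-side argument is cleaner; the paper's curve argument is more robust (it does not rely on the hypothesis $\hbox{Int}(\overline\Omega)=\Omega$ at that step, whereas the precise fibre description in Proposition~\ref{Prop:char-bdd-dis} that you invoke does). The same dichotomy lets the paper read off $y_0\in I$ directly (the curve forces $\Im h(t\sigma)$ to sit strictly on one side of $y_0$ for $t$ near $1$), whereas you postpone this and later sketch an argument via the contact arc that is slightly circular; your earlier observation---that at a boundary point of $I$ only one one-sided neighbourhood exists, so the incompatible $I_N$ and $D_U$ conditions cannot both be realized---is the cleaner way to get $y_0\in I$ in your framework.

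For the contact arc, the paper simply defers to the construction in Proposition~\ref{Lem:two-disc}; your outline is in the right spirit, but two small points: Lemma~\ref{Lem:cara-contact} gives you a contact arc, not a \emph{maximal} one, and since the starting endpoint here is a boundary fixed point (so $h^\ast(\sigma)=\infty$), the identification of $x_0(A)$ with $\sigma$ is done not by ``inaccessibility'' but as in the proof of Lemma~\ref{Prop:exceptional-maximal-disc}, via the homeomorphic extension of $h$ to the closure of the reflected half-plane and the flow.
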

\begin{proof}
(1) Assume $y_0\in D_U\cap I_N$. Then, by Proposition~\ref{Prop:char-bfp} there exists a unique $\sigma\in \mathcal B\mathcal F_N$ such that
$\Im h^\ast(\sigma)=\lim_{z\to \sigma}\Im h(z)=y_0$. By Proposition~\ref{Prop:char-bdd-dis}, either there exist a unique $\xi\in \mathcal S_u(h)\setminus\{\tau\}$ such that $\Im h^\ast(\xi)=y_0$ or there are $\xi^+,\xi^-\in \mathcal S_u(h)\setminus\{\tau\}$ such that $\Im h^\ast(\xi^\pm)=y_0$ and  $\xi^+\in S^+(h)$,   $\xi^-\in S^-(h)$. By (1.a) and (1.b) of Lemma~\ref{Lem:discont-Koenigs}, the existence of such $\xi^+$ and $\xi^-$
implies that there exist two sequences $\{z_n^+\}$ and $\{z_n^-\}$ such that $\Im z_n^+>y_0$, $\Im z_n^+<y_0$, $\{z_n^\pm\}$ converges to $h^\ast(\xi^\pm)$ and $z_n^\pm-t\in\C\setminus\Omega$ for all $t\geq 0$. Since  the continuous curve $[0,1)\ni t\mapsto h(t\sigma)$
in $\Omega$
has the property that $\lim_{t\to 1^-}\Re h(t\sigma)=-\infty$ and $\lim_{t\to 1^-}\Im h(t\sigma)=y_0$, this immediately leads to a contradiction.

Therefore, there exists a unique $\xi\in \mathcal S_u(h)\setminus\{\tau\}$ such that $\Im h^\ast(\xi)=y_0$. By the same argument as before, it follows that either $\xi\in\mathcal S^-(h)\setminus \mathcal S^+(h)$ and $\Im h(t\sigma)>y_0$ for all $t$ close to $1$
or $\xi\in\mathcal S^+(h)\setminus \mathcal S^-(h)$ and $\Im h(t\sigma)<y_0$ for all $t$ close to $1$. In both cases, $y_0\in I$.

The converse implication of (1) is straightforward from Proposition~\ref{Prop:char-bfp} and Proposition~\ref{Prop:char-bdd-dis}.

(2) If $(y_0,b)\in I_R$ then $\psi(y)=-\infty$ for all $y\in (y_0,b)$. Hence, one can argue as in the proof of (1) showing the existence of a unique $\sigma\in\mathcal B\mathcal F_R$ and  a unique $\xi\in S^-(h)$ such that $\liminf_{z\to \sigma}\Im h^\ast(z)=\Im h^\ast(\xi)=y_0$, and that $y_0\in I$. The reverse implication follows again by Proposition~\ref{Prop:char-bfp} and Proposition~\ref{Prop:char-bdd-dis}.

(3) is similar to (2).

 We are left to show that there exists a maximal contact arc starting from $\sigma$ and ending at $\xi$. Since the argument is quite similar to the one for proving the existence of the two maximal contact arcs in Proposition~\ref{Lem:two-disc}, we omit it and leave details to the reader.
\end{proof}

\subsection{Discontinuities and the Denjoy-Wolff point}

For the Denjoy-Wolff point, recalling the notation introduced in \eqref{Eq:L-con-subindex} and setting $L_y:=\{w\in\C: \Im w=y\}$ for $y\in\R$, we have (see \cite[Thm.11.1.4]{BCDbook}),

\begin{lemma}\label{Lem:DW-discont}
Let $h$ be the Koenigs  function of a non-elliptic semigroup in $\D$ with Denjoy-Wolff point $\tau$ and let $\Omega=h(\D)$ the Koenigs  domain. If $\tau\in  \mathcal S(h)$  then
\begin{enumerate}
\item  either there exists $y\in \R$ such that  $\Omega\subset \{w\in \C: \Im w>y\}$ or $\Omega\subset \{w\in \C: \Im w<y\}$ and $I(\hat{h}(\underline{\tau}))=\overline{L_{a,+\infty;y}}^\infty$,
where $a\in [-\infty,\infty)$,
\item or, there exist real numbers $y_1<y_2$ such that  $\Omega\subset \{w\in \C:y_1< \Im w<y_2\}$  and
$ I(\hat{h}(\underline{\tau}))	
=\overline{L_{a_1,+\infty;y_1}}^\infty\cup\overline{L_{a_2,+\infty;y_2}}^\infty$,
where $a_1,a_2\in [-\infty,\infty)$.
\end{enumerate}
Moreover, in the first case, there are no $\sigma\in\partial\D$ such that $h^\ast(\sigma)\in L_y$, while in the second case there are no $\sigma\in\partial\D$ such that $h^\ast(\sigma)\in L_{y_1}\cup L_{y_2}$.
\end{lemma}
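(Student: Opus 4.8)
The plan is to reduce everything to the description of prime end impressions in starlike at infinity domains, i.e. to \cite[Thm. 11.1.4]{BCDbook}, which is already invoked as the source for Lemma~\ref{Lem:discont-Koenigs}. First I would recall that by Lemma~\ref{lem:h-bfp-simple}, since $\tau$ is always a boundary fixed point of a non-elliptic semigroup, we have $h^\ast(\tau)=\angle\lim_{z\to\tau}h(z)=\infty$; in particular $\infty\in I(\hat h(\underline\tau))$. The hypothesis $\tau\in\mathcal S(h)$ means $h$ is not continuous at $\tau$, so the impression $I(\hat h(\underline\tau))$ is strictly larger than the singleton $\{\infty\}$. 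Now I would apply the structure theorem: a null-chain $(C_n)$ representing $\hat h(\underline\tau)$ has interior parts $V_n$ with $\bigcap_n\overline{V_n}^\infty = I(\hat h(\underline\tau))$, and since $\Omega$ is starlike at infinity, each $V_n$ is itself (essentially) starlike at infinity ``to the left'', so the finite part of the impression is a union of horizontal half-lines of the form $\overline{L_{a,+\infty;y}}^\infty$. The key point distinguishing $\tau$ from a generic $\sigma\in\partial\D\setminus\{\tau\}$ (treated in Lemma~\ref{Lem:discont-Koenigs}) is the direction in which the principal part escapes: since $h^\ast(\tau)=\infty$, the principal part is $\{\infty\}$ rather than a finite point, which forces the impression to consist of \emph{full} half-lines accumulating $\infty$, not bounded segments.

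Next I would argue that there are at most two such half-lines. Because $U=\bigcup_{t\ge 0}(\Omega-t)$ is either $\C$, a half-plane, or a strip (Theorem~\ref{Thm:models}), and the horizontal lines $L_y$ appearing in $I(\hat h(\underline\tau))$ must lie in $\partial_\infty\Omega$, each such line $L_y$ separates $\C$; two of them already bound a strip containing $\Omega$, and a third is impossible since then $\Omega$ would be squeezed between two of them with the third lying inside, contradicting $L_y\subset\partial\Omega$. This gives the dichotomy: either one half-line, in which case $\Omega$ lies on one side of the corresponding line $L_y$ (giving case (1) with $\Omega\subset\{\Im w>y\}$ or $\Omega\subset\{\Im w<y\}$), or two half-lines at heights $y_1<y_2$ with $\Omega$ trapped in the strip $\{y_1<\Im w<y_2\}$ (case (2)). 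The precise left endpoints $a, a_1, a_2\in[-\infty,\infty)$ are just $\liminf$s of the defining function $\psi$ as one approaches the relevant boundary heights, exactly as in Lemma~\ref{Lem:discont-Koenigs-prel}.

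Finally, the ``Moreover'' clause: I would show no $\sigma\in\partial\D$ has $h^\ast(\sigma)\in L_y$ (resp. $\in L_{y_1}\cup L_{y_2}$). Suppose $h^\ast(\sigma)=x_0+iy\in L_y$ for some $\sigma$; since $h^\ast(\sigma)\in\C$ is finite, $\sigma\ne\tau$. But $L_y\subset\partial_\infty\Omega$ and, in case (1), $\Omega$ lies strictly on one side of $L_y$; then the whole line $L_y$ would have to be the impression of the prime end $\hat h(\underline\tau)$ (it is already contained in it), and a second prime end landing on $L_y$ would force $\Omega$ to have boundary behaviour inconsistent with $L_y$ being approached from only one side — more precisely, the point $x_0+iy$ would need to be accessible from $\Omega$ by an injective curve, which contradicts $L_y\subset\partial\Omega$ with $\Omega$ entirely on one side (there is a neighbourhood of $x_0+iy$ in which $\Omega$ misses an entire half-disc, so by the classification of prime ends over such a boundary point there is no non-tangential limit of $h$ equal to it). The same local argument applies at $y_1$ and $y_2$ in case (2). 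I expect the main obstacle to be making this last step fully rigorous without simply re-deriving \cite[Thm. 11.1.4]{BCDbook}: one needs to be careful that ``$L_y$ approached from one side only'' genuinely precludes a second prime end with that impression, and here the cleanest route is to quote the explicit description of prime ends of starlike at infinity domains from \cite[Chapter~11]{BCDbook} rather than re-prove it.
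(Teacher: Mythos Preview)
Your approach is essentially the same as the paper's: the paper does not give a self-contained proof of this lemma at all, but simply cites \cite[Thm.~11.1.4]{BCDbook} for the full statement, exactly as you propose to do. Your elaboration of how the argument would run (principal part $\{\infty\}$ forcing half-lines rather than bounded segments, the dichotomy via Theorem~\ref{Thm:models}, and the ``Moreover'' clause via accessibility) is a reasonable sketch but goes beyond what the paper itself provides; your own caveat that the final step is cleanest by quoting the structure theorem rather than re-proving it is precisely the paper's attitude.
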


Such lemma suggests the following definition:

\begin{definition}
Let $(\varphi_t)_{t\geq 0}$ be a non-elliptic semigroup with Denjoy-Wolff point $\tau\in\partial \D$ and Koenigs function $h$. Suppose $\tau\in\mathcal S_u(h)$. If $I(\hat{h}(\underline{\tau}))= L_{y_1}\cup L_{y_2}\cup\{\infty\}$, for some $-\infty<y_1<y_2<+\infty$, we say that $\tau$ is a {\sl double} unbounded discontinuity.  In all other cases, we say that $\tau$ is a {\sl simple} unbounded discontinuity.
\end{definition}

\begin{remark}\label{rem:disco-tau-double-beahv-psi}
By Lemma~\ref{Lem:DW-discont} it follows that
if $\tau\in  \mathcal S(h)$ then the semigroup $(\varphi_t)_{t\geq 0}$ is either hyperbolic or parabolic with positive hyperbolic step.

In particular, $\tau\in\mathcal S_u(h)$ if and only if, given $\psi:(a_0,a_1)\to\R$ the defining function of the associated Koenigs domain, $-\infty\leq a_0<a_1\leq +\infty$, then, either $a_0>-\infty$ and
\[
\liminf_{y\to a_0^+}\psi(y)=-\infty, \quad \limsup_{y\to a_0^+}\psi(y)= +\infty,
\]
or, $a_1<+\infty$ and
\[
\liminf_{y\to a_1^-}\psi(y)=-\infty, \quad \limsup_{y\to a_1^-}\psi(y) +\infty,
\]
Moreover,  $\tau\in  \mathcal S(h)$ is a double unbounded discontinuity if and only if  $(\varphi_t)_{t\geq 0}$ is  hyperbolic (that is, $-\infty<a_0<a_1<+\infty$) and both the two previous conditions hold.
\end{remark}

 For future aims, we need the following proposition (recall the definition of $a_\pm(\sigma)$ from \eqref{Eq:defino-a-por-sigma}):

\begin{proposition}\label{Prop:tau-disc-unbd}
Let $(\varphi_t)_{t\geq 0}$ be a non-elliptic semigroup in $\D$ with Denjoy-Wolff point $\tau\in\partial\D$. Let $h$ be the Koenigs function and $\Omega=h(\D)$  the  Koenigs  domain. Assume that  the defining function $\psi$ of $\Omega$ is defined in $(a_0,a_1)$ for some $-\infty<a_0<a_1< +\infty$. Then the following are equivalent:
\begin{enumerate}
\item $\tau$ is a simple unbounded discontinuity,
\item one and only one of the following two conditions holds:
\begin{itemize}
\item[(2.a)]   $\liminf_{y\to a_0^+}\psi(y)=-\infty$,  $\limsup_{y\to a_0^+}\psi(y)= +\infty$ and, either  $\limsup_{y\to a_1^-}\psi(y)< +\infty$ or   $\liminf_{y\to a_1^-}\psi(y)> -\infty$   (or both).
\item[(2.b)]   $\liminf_{y\to a_1^-}\psi(y)=-\infty$,  $\limsup_{y\to a_1^-}\psi(y)= +\infty$ and, either  $\limsup_{y\to a_0^+}\psi(y)< +\infty$ or   $\liminf_{y\to a_0^+}\psi(y)> -\infty$   (or both).
\end{itemize}
\end{enumerate}
Moreover,  if (2.a) holds, then
\begin{itemize}
\item[(a1)] for every boundary fixed point $\sigma\in\partial\D\setminus\{\tau\}$ we have  $a_-(\sigma)>a_0$,
\item[(a2)] for every   $\xi\in\mathcal S_u(h)\setminus \{\tau\}$
we have $\Im h^\ast(\xi)\neq a_0$.
\end{itemize}
While,
 if (2.b) holds, then
\begin{itemize}
\item[(b1)] for every boundary fixed point $\sigma\in\partial\D\setminus\{\tau\}$ we have
$a_+(\sigma)<a_1$,
\item[(b2)] for every  $\xi\in\mathcal S_u(h)\setminus \{\tau\}$
we have $\Im h^\ast(\xi)\neq a_1$.
\end{itemize}
\end{proposition}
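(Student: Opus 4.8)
The plan is to split the statement into the equivalence (1)$\Leftrightarrow$(2), which I would obtain as a purely logical consequence of Remark~\ref{rem:disco-tau-double-beahv-psi}, and the ``moreover'' assertions (a1)--(b2), which need the behaviour of the Koenigs function near boundary fixed points. Note first that, since $a_0,a_1$ are finite, the semigroup is hyperbolic, and by Lemma~\ref{Lem:DW-discont} any discontinuity of $h$ at $\tau$ is automatically unbounded.

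For (1)$\Leftrightarrow$(2): write $(P_0)$ for the conjunction ``$\liminf_{y\to a_0^+}\psi(y)=-\infty$ and $\limsup_{y\to a_0^+}\psi(y)=+\infty$'' and $(P_1)$ for the analogous pair of conditions at $a_1^-$. Remark~\ref{rem:disco-tau-double-beahv-psi} says that $\tau\in\mathcal S_u(h)$ iff $(P_0)$ or $(P_1)$ holds, and that $\tau$ is a double unbounded discontinuity iff both hold; hence $\tau$ is a \emph{simple} unbounded discontinuity exactly when precisely one of $(P_0),(P_1)$ holds. On the other hand, unravelling the definitions shows that (2.a) is the conjunction of $(P_0)$ with the negation of $(P_1)$, and (2.b) is the conjunction of $(P_1)$ with the negation of $(P_0)$; these are mutually exclusive, so ``one and only one of (2.a),(2.b) holds'' means exactly that precisely one of $(P_0),(P_1)$ holds. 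Comparing the two characterizations gives (1)$\Leftrightarrow$(2); I would just write this out as a short truth-table argument.

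Now assume (2.a), so in particular $\limsup_{y\to a_0^+}\psi(y)=+\infty$. For (a2): if $\xi\in\mathcal S_u(h)\setminus\{\tau\}$, then by Proposition~\ref{Prop:char-bdd-dis} (the assertion that $\mathcal U$ maps $\mathcal S_u(h)\setminus\{\tau\}$ into $D_U$, for which no hypothesis on $\hbox{Int}(\overline\Omega)$ is needed) we get $\Im h^\ast(\xi)\in D_U$; but $a_0$ is the left endpoint of $I$, so $a_0\in D_U$ would force $-\infty<\limsup_{y\to a_0^+}\psi(y)<+\infty$, a contradiction. Hence $\Im h^\ast(\xi)\neq a_0$. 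For (a1): let $\sigma\in\partial\D\setminus\{\tau\}$ be a boundary fixed point. Since $\Omega\subset\{w: a_0<\Im w<a_1\}$ we have $a_-(\sigma)\geq a_0$, so it suffices to rule out $a_-(\sigma)=a_0$. Assuming this, Lemma~\ref{lem:h-bfp-simple} gives $\lim_{z\to\sigma}h(z)=\infty$, and Lemma~\ref{lem:h-bfp} gives $\lim_{z\to\sigma}\Re h(z)=-\infty$ together with $a_+(\sigma)=\limsup_{z\to\sigma}\Im h(z)<+\infty$. Fixing $N>0$ and a small disc $B(\sigma,r)$, we may assume $\Re h(z)<-N$ and $\Im h(z)<a_+(\sigma)+1$ on $B(\sigma,r)\cap\D$, so that $V:=h(B(\sigma,r)\cap\D)$ is an open connected subset of $\Omega$ lying in $\{w:\Re w<-N,\ a_0<\Im w<a_+(\sigma)+1\}$. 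Its vertical projection $\Im(V)$ is an interval with (unattained) infimum $a_0$, because $\Omega\subset\{\Im w>a_0\}$ while $\liminf_{z\to\sigma}\Im h(z)=a_0$ produces points of $V$ of height arbitrarily close to $a_0$; hence $(a_0,c)\subseteq\Im(V)$ for some $c>a_0$, and for every such $y$ there is a point of $V\subset\Omega$ at height $y$ with real part $<-N$, whence $\psi(y)<-N$. Then $\limsup_{y\to a_0^+}\psi(y)\leq-N<+\infty$, contradicting (2.a); this also gives the final clause of the statement. The assertions (b1),(b2) under (2.b) follow by applying all of the above to the conjugated semigroup $z\mapsto\overline{\varphi_t(\bar z)}$, whose Koenigs domain is the reflection of $\Omega$ in the real axis (this interchanges $a_0\leftrightarrow -a_1$, $(P_0)\leftrightarrow(P_1)$, (2.a)$\leftrightarrow$(2.b), and turns the conclusion $a_-(\sigma)>a_0$ into $a_+(\sigma)<a_1$).

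The delicate step is the super-repelling case of (a1). For a boundary \emph{regular} fixed point one can argue much more cheaply, since Lemma~\ref{lem:h-bfp} then forces an entire horizontal strip $\{a_-(\sigma)<\Im w<a_+(\sigma)\}$ into $\Omega$, so $a_-(\sigma)=a_0$ would give $\psi\equiv-\infty$ just above $a_0$, contradicting $\limsup_{y\to a_0^+}\psi=+\infty$ at once. For a super-repelling fixed point, however, Lemma~\ref{lem:h-bfp} gives nothing usable on the edge $a_-(\sigma)=a_0$ (its conditions there hold vacuously), and a single approaching curve does not suffice --- such a curve could in principle thread between the horizontal slits that make $\limsup_{y\to a_0^+}\psi=+\infty$. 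The resolution is to exploit the \emph{full} (not merely non-tangential) limit $\lim_{z\to\sigma}h(z)=\infty$ from Lemma~\ref{lem:h-bfp-simple}, which yields not a curve but an entire open piece $V$ of $\Omega$ lying deep to the left near height $a_0$; this is exactly what bounds $\psi$ from above there. The remaining verifications are routine.
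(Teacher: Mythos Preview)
Your proof is correct. The treatment of (1)$\Leftrightarrow$(2) and of (a2)/(b2) is essentially the paper's (the paper just points to Lemma~\ref{Lem:DW-discont}; your route through Remark~\ref{rem:disco-tau-double-beahv-psi} and the well-definedness of $\mathcal U$ in Proposition~\ref{Prop:char-bdd-dis} is a minor repackaging). The real difference is in (a1). The paper, assuming $a_-(\sigma)=a_0$, uses the radial curve $t\mapsto h(t\sigma)$ together with starlike-at-infinity to produce, for every $x\in\R$, a half-strip $\{w:\Re w>x,\ a_0<\Im w<a_0+\delta\}\subset\Omega$, and then derives a contradiction from the last clause of Lemma~\ref{Lem:DW-discont} (points $x+ia_0$ become accessible, giving $\zeta\in\partial\D$ with $h^\ast(\zeta)\in L_{a_0}$). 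You instead use the full (unrestricted) limit of $h$ at $\sigma$ to get an open piece $V\subset\Omega$ with $\Re<-N$, read off $\psi(y)<-N$ on an interval $(a_0,c)$, and contradict $\limsup_{y\to a_0^+}\psi=+\infty$ directly. Both routes are valid; yours is arguably more direct since it avoids the prime-end accessibility step. One remark: your stated concern that ``a single approaching curve does not suffice'' is not quite right---since $\Im h(t\sigma)$ is continuous with $\Im h(t\sigma)>a_0$ and $\Im h(t\sigma)\to a_0$, the intermediate value theorem already forces the radial curve to visit \emph{every} height in some interval $(a_0,\delta_1)$ once $\Re h<-N$, so the paper's single-curve argument does cover a full interval of heights and is not threatened by ``threading between slits.'' Your open-set argument works for the same reason (connectedness of $V$), so this does not affect correctness, only the motivation.
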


\begin{proof}
From Lemma~\ref{Lem:DW-discont} it follows immediately that  (1) and (2) are equivalent. Moreover, by the same token, if (2.a) ({\sl respectively (2.b)} holds then (a2) ({\sl respect.} (b2)) holds.

As for (a1),
if $\sigma$ is a boundary  fixed point different from $\tau$, and $a_0=a_-(\sigma)$, then for every $x\in\R$ there exists $\delta>0$ such that $\{w\in \C: \Re w>x-\delta, a_0<\Im w<a_0+\delta\}\subset\Omega$. This is immediate if $a_+(\sigma)>a_0$, and it follows from the fact that $\Omega$ is starlike at infinity and the continuous injective curve $[0,1)\ni t\mapsto h(t \sigma)$ has the property that $\lim_{t\to 1^-}\Re h(t\sigma)=-\infty$ and $\lim_{t\to 1^-}\Im h(t\sigma)=a_0$ in case $a_+(\sigma)=a_0$. But this implies that for every $x\in\R$ there exists $\zeta\in\partial\D$ such that $h^\ast(\zeta)=x+ia_0$ (the point $\zeta$ being the limit of $h^{-1}(x+i(a_0+s))$ for $s\to 0^+$), against Lemma~\ref{Lem:DW-discont}.
The same argument applies to deduce (b1) from (2.b).
\end{proof}

For our aims, we need also the following (see Proposition~\ref{Prop:char-bdd-dis} for the definition of the map $\mathcal U$):

\begin{lemma}\label{Prop:exceptional-maximal-disc}
Let $(\varphi_t)_{t\geq 0}$ be a non-elliptic semigroup in $\D$ with Denjoy-Wolff point $\tau\in\partial\D$. Let $h$ be the Koenigs function and $\Omega=h(\D)$  the  Koenigs  domain. Assume that  the defining function $\psi$ of $\Omega$ is defined in $(a_0,a_1)$ for some $-\infty<a_0<a_1\leq +\infty$. Then the following are equivalent:
\begin{enumerate}
\item There exists a maximal contact arc for $(\varphi_t)_{t\geq 0}$ with starting point $\tau$ and final point  $\sigma\in\mathcal S_u(h)$.
\item $a_1=+\infty$ and there exists
$a\in (a_0,+\infty)$ such that
$\psi(y)=-\infty$ for all $y>a$ and
\[
-\infty=\liminf_{y\to a^-}\psi(y)<\limsup_{y\to a^-}\psi(y)= \psi(a).
\]
\end{enumerate}
Moreover, if either (1) or (2)---and hence both---holds, then $\mathcal U(\sigma)=a$, $\mathcal U^{-1}(a)=\{\sigma\}$ and $h^\ast(\sigma)=\psi(a)+ia$.
\end{lemma}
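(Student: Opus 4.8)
The plan is to prove the equivalence in two steps, carrying the ``moreover'' clauses along. As a preliminary remark, since $\psi$ is defined on $(a_0,a_1)$ with $a_0>-\infty$ the domain $\Omega$ is contained in the half-plane $\{\Im w>a_0\}$, so the semigroup is hyperbolic or parabolic of positive hyperbolic step and $h^\ast(\tau)=\infty$ by Lemma~\ref{lem:h-bfp-simple}. I would use throughout the dictionary between contact arcs and maximal horizontal free boundary segments of $\Omega$ (Lemma~\ref{Lem:cara-contact}), the description of discontinuities through prime-end impressions (Lemmas~\ref{Lem:discont-Koenigs} and~\ref{Lem:discont-Koenigs-prel}), and the identity~\eqref{Eq:where-go-int-fin-max-arc} for the images of the end points of a maximal contact arc.

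For $(2)\Rightarrow(1)$, set $\beta:=\psi(a)$ and $p:=\beta+ia$. Since $\psi\equiv-\infty$ on $(a,+\infty)$ we have $\{w:\Im w>a\}\subseteq\Omega$, and since $\psi(a)=\beta$ we have $\{w:\Im w=a,\ \Re w>\beta\}\subseteq\Omega$ and $\{w:\Im w=a,\ \Re w\le\beta\}\subseteq\partial\Omega$. Thus the open half-line $\ell:=\{w:\Im w=a,\ \Re w<\beta\}$ is a free boundary arc of $\Omega$ with a one-sided neighbourhood (from above) contained in $\Omega$, so by the Schwarz reflection principle $h^{-1}$ extends holomorphically across an open arc $A\subseteq\partial\D$ with $h(A)=\ell$; by Lemma~\ref{Lem:cara-contact} this $A$ is a contact arc, and I enlarge it to a maximal contact arc $\widetilde A$. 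Since the flow translates $h$-images to the right, \eqref{Eq:where-go-int-fin-max-arc} gives $h^\ast(x_0(\widetilde A))=\infty$, so $x_0(\widetilde A)$ is a boundary fixed point (Lemma~\ref{lem:h-bfp-simple}), and I claim it equals $\tau$: a boundary fixed point $\sigma'\ne\tau$ reached along $\widetilde A$ at level $a$ would, by Lemma~\ref{lem:h-bfp}, either force $\psi$ to be identically $-\infty$ on a left neighbourhood of $a$ (contradicting $\limsup_{y\to a^-}\psi(y)=\beta\in\R$) or, if super-repelling, be excluded because $a$ lies in the closure of $\psi^{-1}(-\infty)$ (cf. Proposition~\ref{Prop:char-bfp}(3)). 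Next I would check that conditions (1.a)--(1.c) of Lemma~\ref{Lem:discont-Koenigs} hold at $p$ with $y=a$, $b=\beta$: (1.a) from $\limsup_{y\to a^-}\psi(y)=\beta$ (for suitable $y_n\nearrow a$ the half-lines $L_{-\infty,\psi(y_n);y_n}$ lie in $\C\setminus\Omega$ while $\psi(y_n)+iy_n+t\in\Omega$ for $t>0$), (1.b) again from $\limsup_{y\to a^-}\psi(y)=\beta$, and (1.c) from $\liminf_{y\to a^-}\psi(y)=-\infty$. Hence by Lemma~\ref{Lem:discont-Koenigs} there is $\sigma\in\mathcal S(h)\setminus\{\tau\}$ with $h^\ast(\sigma)=p$ and $L_{-\infty,\beta;a}\subseteq I(\hat h(\underline\sigma))$, so $\sigma\in\mathcal S_u(h)$, and a null-chain/``no Koebe arcs'' argument as in the proof of Proposition~\ref{Lem:two-disc} identifies this $\sigma$ with $x_1(\widetilde A)$. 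This proves (1); moreover $\mathcal U(\sigma)=\Im h^\ast(\sigma)=a$ and $h^\ast(\sigma)=\psi(a)+ia$, and since $\{\Im w>a\}\subseteq\Omega$ rules out case (2.a) of Lemma~\ref{Lem:discont-Koenigs}, one has $\sigma\in\mathcal S^-(h)\setminus\mathcal S^+(h)$, whence $\mathcal U^{-1}(a)=\{\sigma\}$ by Proposition~\ref{Prop:char-bdd-dis} (no second discontinuity can lie over $a$, because case (2) of Lemma~\ref{Lem:discont-Koenigs-prel} there would require $\limsup_{y\to a^+}\psi$ finite, which fails).

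For $(1)\Rightarrow(2)$, let $A$ be a maximal contact arc with $x_0(A)=\tau$ and $\sigma:=x_1(A)\in\mathcal S_u(h)$; note $\sigma\ne\tau$ since the two end points of $A$ are distinct. By Lemma~\ref{Lem:cara-contact}, $h$ extends holomorphically through $A$ and $h^\ast(A)=\{w:\Im w=c,\ \alpha<\Re w<\beta\}$ for some $c\in\R$ and $-\infty\le\alpha<\beta\le+\infty$; since the flow increases $\Re h^\ast$ and $h^\ast(\tau)=\infty$ while $\sigma\ne\tau$, the end-point identity forces $\alpha=-\infty$ and $h^\ast(\sigma)=\beta+ic\in\C$ with $\beta+ic+t\in\Omega$ for all $t>0$, so $\beta<+\infty$. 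Hence $\{w:\Im w=c,\ \Re w\le\beta\}\subseteq\partial\Omega$ while $\{w:\Im w=c,\ \Re w>\beta\}\subseteq\Omega$, giving $\psi(c)=\beta$ and $c\in(a_0,a_1)$; also $I(\hat h(\underline\sigma))=L_{-\infty,\beta;c}$, so by Lemma~\ref{Lem:discont-Koenigs-prel} one of its cases (1),(2) holds at $c$. I would then argue that the free boundary arc $h^\ast(A)$ has $\Omega$ on its upper side (the lower side being excluded since, combined with $x_0(A)=\tau$ and the impression of $\tau$ from Lemma~\ref{Lem:DW-discont}, it would place $\Omega$ on the wrong side of the bottom line $L_{a_0}$), and that the Schwarz reflection attached to $A$ then yields $\psi(y)\to-\infty$ as $y\to c^+$ and, using the maximality of $A$ together with $\{w:\Im w=c,\ \Re w>\beta\}\subseteq\Omega$, that $\psi\equiv-\infty$ on $(c,a_1)$ and therefore $a_1=+\infty$. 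Finally case (1) of Lemma~\ref{Lem:discont-Koenigs-prel} must hold at $c$ — case (2) is impossible since $\{\Im w>c\}\subseteq\Omega$ — so $\liminf_{y\to c^-}\psi(y)=-\infty$ and $\limsup_{y\to c^-}\psi(y)=\beta=\psi(c)$; taking $a:=c$ gives (2), and the ``moreover'' part is as before.

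The step I expect to be the main obstacle is the direction $(1)\Rightarrow(2)$, specifically recovering the \emph{global} shape of $\psi$ (that $\psi\equiv-\infty$ above level $c$ and $a_1=+\infty$) from a single contact arc: this requires first deciding correctly on which side of the horizontal segment $h^\ast(A)$ the domain lies and then propagating the half-plane behaviour across the whole of $(c,a_1)$, which rests on the impression analysis of the Denjoy--Wolff point (Lemma~\ref{Lem:DW-discont}), the ``no Koebe arcs'' theorem, and the bookkeeping for $\mathcal S^{\pm}(h)$ already established in Propositions~\ref{Lem:two-disc} and~\ref{Prop:char-bdd-dis}; some care is also needed to separate the ``slit-disc'' configuration near $h^\ast(\sigma)$ (where two maximal contact arcs share the final point $\sigma$) from the generic one.
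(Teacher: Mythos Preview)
Your outline follows the paper's overall strategy, but the step you yourself flag as the obstacle in $(1)\Rightarrow(2)$ is a genuine gap. From a maximal contact arc $A$ with $x_0(A)=\tau$ and $h^\ast(A)=\{\Im w=c,\ \Re w<\beta\}$ you try to obtain the \emph{global} conclusion $\psi\equiv-\infty$ on $(c,a_1)$ (hence $a_1=+\infty$) from three local ingredients: the side of $h^\ast(A)$ via Lemma~\ref{Lem:DW-discont}, Schwarz reflection, and maximality of $A$. None of these delivers that: Lemma~\ref{Lem:DW-discont} applies only when $\tau\in\mathcal S(h)$, which is not assumed, and even then speaks about the impression line $L_{a_0}$, not about the side of $L_c$; Schwarz reflection only gives a thin one-sided neighbourhood of the infinite segment $h^\ast(A)$, not $\psi(y)\to-\infty$ as $y\to c^+$; and maximality of $A$ rules out extending the contact arc, not the existence of boundary of $\Omega$ above level $c$. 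The paper's proof supplies the missing global step by a Jordan-curve argument: set $\Gamma_1:=\overline{h^{-1}\big(h^\ast(\sigma)+(0,\infty)\big)}$, so that $\Gamma:=\overline A\cup\Gamma_1$ is a Jordan curve in $\overline\D$ with $\Gamma\cap\partial\D=\overline A$; the bounded component $V\subset\D$ is a Jordan domain, $h$ extends to a homeomorphism on $\overline V$, and since $h(\Gamma\setminus\{\sigma,\tau\})=L_c\setminus\{\beta+ic\}$ one gets $h(\partial V)=L_c\cup\{\infty\}$, so $h(V)$ is one of the two half-planes bounded by $L_c$. Because $h(V)\subset\Omega\subset\{\Im w>a_0\}$ with $a_0>-\infty$, the lower half-plane is excluded, whence $\{\Im w>c\}=h(V)\subset\Omega$, i.e.\ $\psi\equiv-\infty$ on $(c,+\infty)$. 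This single topological step replaces your three local arguments and is the key idea you are missing.

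A smaller issue in $(2)\Rightarrow(1)$: to rule out that $x_0(\widetilde A)$ is a super-repelling fixed point $\sigma'\neq\tau$ you invoke Proposition~\ref{Prop:char-bfp}(3), but that item assumes $\hbox{Int}(\overline\Omega)=\Omega$, which is not a hypothesis here. The paper avoids any case analysis by computing the initial point directly: with $V:=h^{-1}(\{\Im w>a\})$ a Jordan domain, the homeomorphic extension $\tilde h:\overline V\to\overline{\{\Im w>a\}}^\infty$ sends $x_0(A)$ to $\infty$, so $x_0(A)=\lim_{t\to+\infty}h^{-1}(w_0+t)=\lim_{t\to+\infty}\varphi_t(h^{-1}(w_0))=\tau$. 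This is both shorter and hypothesis-free.
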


\begin{proof}
(1) implies (2). Let $A\subset\partial\D$ be a maximal contact arc starting at $\tau$ and ending at $\sigma\in\mathcal S_u(h)$. If $\sigma=\tau$, then $(\varphi_t)_{t\geq 0}$ is a parabolic group (see \cite[Proposition~14.2.7]{BCDbook}), in particular, $\sigma\not\in\mathcal S(h)$. Hence, $\sigma\neq\tau$ and, since it is not a boundary fixed point of $(\varphi_t)_{t\geq 0}$, it follows that $h^\ast(\sigma)\in\C$. Since $x_0(A)=\tau$, a fixed point of $(\varphi_t)_{t\geq 0}$, by~Lemma~\ref{Lem:cara-contact} and \eqref{Eq:where-go-int-fin-max-arc}, there exists $a\geq a_0$ and $b\in (-\infty,+\infty]$ such that $h(A)=\{w\in \C: \Im w=a, \Re w<b\}$ and $\lim_{A\ni \xi\to\sigma}h(\xi)=h^\ast(\sigma)$. Thus $a=\Im h^\ast(\sigma)$ and $b=\Re h^\ast(\sigma)\in \R$. In particular, $\mathcal U(\sigma)=a$.

Moreover, we already recalled (see the discussion on maximal contact arcs) that
$h^\ast(\sigma)+t\in\Omega$ for all $t>0$.

Let $\Gamma_1$ be the closure of the image of the continuous injective curve given by $(0,+\infty)\ni t\mapsto h^{-1}(h^\ast(\sigma)+t)$. Since $\varphi_t(w)=h^{-1}(h(w)+t)$ for all $w\in\D$ and $t\geq 0$, it follows that $\Gamma_1$ is a Jordan arc with end points $\sigma$ and $\tau$ and $\Gamma_1\setminus\{\sigma,\tau\}\subset\D$. Hence, $\Gamma:=\overline{A}\cup\Gamma_1$ is a Jordan curve. Let $V\subset\D$ be the bounded connected component of $\C\setminus\Gamma$. Note that $V$ is a Jordan domain and hence $h$ extends as a homeomorphism to $\partial V$. Since by construction $h(\Gamma\setminus\{\sigma,\tau\})=\{w\in\C:\Im w=a\}\setminus\{b+ia\}$, it follows that $h(\partial V)=\{w\in\C:\Im w=a\}\cup\{\infty\}$. Thus, $h(V)=\{w\in\C: \Im w<a\}$ or $h(V)=\{w\in\C: \Im w>a\}$. But $h(V)\subset\Omega$ and $\Omega\subset \{w\in \C: \Im w>a_0\}$, therefore, $\{w\in\C: \Im w>a\}=h(V)\subset\Omega$. This implies that $\psi$ is defined on $(a_0,+\infty)$ and $\psi(y)=-\infty$ for all $y>a$.

This also implies that $\sigma\in\mathcal S^-(h)$ and there cannot be $\xi\in \mathcal S^+(h)$ such that $\Im h^\ast(\xi)=a$. Therefore, Proposition~\ref{Prop:char-bdd-dis} implies that $\mathcal U^{-1}(a)=\{\sigma\}$. Finally, Lemma~\ref{Lem:discont-Koenigs} shows that
\[
-\infty<\Re h^\ast(\sigma)=\limsup_{y\to a^-}\psi(y)\leq \psi(a).
\]
Since $\Re h^\ast(\sigma)+t\in\Omega$ for all $t>0$, we have actually $\limsup_{y\to a^-}\psi(y)=\psi(a)$ and hence $h^\ast(\sigma)=\psi(a)+ia$.

Thus (2) and the last part of the proposition are proved.

(2) implies (1). By Proposition~\ref{Prop:char-bdd-dis}, and arguing as before one can see that there exists a unique $\sigma\in \mathcal S_u(h)\setminus\{\tau\}$ such that $h^\ast(\sigma)=\psi(a)+ia$.

Next, the hypothesis on $\psi$ implies that $H:=\{w\in\C: \Im w>a\}\subset\Omega$, $\{w\in\C: \Im w=a, \Re w\leq\psi(a)\}\cap\Omega=\emptyset$ and $\{w\in\C: \Im w=a, \Re w>\psi(a)\}\subset\Omega$. Since for every sequence $\{w_n\}\subset H$ converging to any point of $\{w\in\C: \Im w=a, \Re w\leq\psi(a)\}\cup\{\infty\}$ the cluster set of $h^{-1}(w_n)$ is contained in $\partial\D$, by Schwarz reflection principle it follows that $h^{-1}$ extends holomorphically through $\partial_\infty H$. Let $V:=h^{-1}(H)$.  Then  $V\subset\D$ is a Jordan domain and $\partial V$ is the union of an open arc $A\subset\partial\D$ and a Jordan arc $\Gamma$ with the same end points as $A$ and such that $\Gamma$ minus its end points is contained in $\D$. By construction, $h^\ast(A)=\{w\in\C: \Im w=a, \Re w<\psi(a)\}$. Therefore, $A$ is a contact arc for $(\varphi_t)_{t\geq 0}$ (see Lemma~\ref{Lem:cara-contact}). The order on $A$ is given by the direction of the flow $(h|_{\partial H})^{-1}(w+t)$, $t>0$, $w\in\C$ with $\Im w=a, \Re w<\psi(a)$. Hence, if $\zeta\in\partial A$ is such that $h^\ast(\zeta)=\psi(a)+ia$, it follows that $\zeta$ is the final point of $A$. Using an argument similar to the one in  the proof of Proposition~\ref{Lem:two-disc} one can easily show that $\zeta=\sigma$.

We are left to prove that $\tau$ is the initial point of $A$.
Denote by $\tilde{h}$ the homeomorphic extension of $h|_V$ to $\overline{V}$. By construction,  $\tilde{h}(\overline{V}\setminus\{x_0(A)\})=\overline{H}$. Since $\tilde{h}$ is a homeomorphism from $\overline{V}$ onto $\overline{H}^\infty$, it follows that $\tilde{h}(x_0(A))=\infty$. Thus $h^\ast(x_0(A))=\lim_{A\ni\xi\to x_0(A)}h(\xi)=\infty$.  Therefore, $x_0(A)=\lim_{t\to+\infty}h^{-1}(\gamma(t))$ for any continuous curve $\gamma:[0,+\infty)\to H$ such that $\lim_{t\to+\infty}\gamma(t)=\infty$ (see, {\sl e.g.}, \cite[Proposition~3.3.3]{BCDbook}).  Fix $w_0\in H$ and let $\gamma(t):=w_0+t$. Hence
\[
\lim_{t\to+\infty}h^{-1}(\gamma(t))=\lim_{t\to+\infty}\varphi_t(h^{-1}(w_0))=\tau,
\]
and therefore $\tau=x_0(A)$. Thus (1) holds.
\end{proof}

\section{Koenigs  domains $\Omega$ such that  $\hbox{Int}(\overline\Omega)=\Omega$  and $\C\setminus\overline{\Omega}$ has at most two connected components}\label{Sec:char2}

 In this section we characterize Koenigs  domains whose complementary in $\C$ is connected or doubly connected (and contained in a strip).

\begin{remark}\label{Rem:easy-contain-plane}
Suppose $\Omega$ is the Koenigs domain of a non-elliptic semigroup $(\varphi_t)_{t\geq 0}$. Let $\psi:I\to [-\infty,+\infty)$ be the defining function of $\Omega$. If $\Omega\subset H$, where $H$ is a half-plane not parallel to $\{w\in \C: \Im w>0\}$, then $\C\setminus\overline{\Omega}$ is connected.
This follows from the fact that for any $p\in \C\setminus\overline{\Omega}$ there exists $t_0\geq 0$ such that $p-t_0\in \C\setminus\overline{H}$ and $p-t\in \C\setminus\overline{\Omega}$ for all $t\geq 0$.

Moreover, it is clear that in such a case $(\varphi_t)_{t\geq 0}$ has neither boundary fixed points other than the Denjoy-Wolff point nor unbounded discontinuities. Also $\liminf_{y\to y_0}\psi(y)>-\infty$ for all $y_0\in I$.
\end{remark}

\begin{theorem}\label{Thm:connect-half-plane}
Let $(\varphi_t)_{t\geq 0}$ be a non-elliptic semigroup in $\D$ with Denjoy-Wolff point $\tau\in\partial\D$. Let $h$ be the Koenigs  function, $\Omega=h(\D)$ the Koenigs
domain  and let $\psi:I\to \R$ be the defining function of $\Omega$. Suppose $\hbox{Int}(\overline\Omega)=\Omega$ and $\Omega\subset \{w\in\C: \Im w>0\}$.  Let $I_\infty$ denotes the unbounded connected component of $\psi^{-1}(-\infty)$ (possibly empty).  Then the following are equivalent:
\begin{enumerate}
\item $\C\setminus \overline{\Omega}$ is connected.
\item $(\varphi_t)_{t\geq 0}$ has no boundary fixed points other than $\tau$ and $\mathcal S(h)$ contains only bounded discontinuities except, at most,  one unbounded discontinuity $\sigma\in\partial\D\setminus\{\tau\}$ which is the final point of a maximal contact arc starting at $\tau$.
\item  $\liminf_{y\to y_0}\psi(y)>-\infty$
for every $y_0\in \overline{I}\setminus \overline{I_\infty}$.
\end{enumerate}
\end{theorem}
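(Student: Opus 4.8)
The plan is to prove the equivalences $(1)\Leftrightarrow(3)$ and $(2)\Leftrightarrow(3)$, keeping the standing hypothesis $\hbox{Int}(\overline\Omega)=\Omega$ in force throughout; by Theorem~\ref{Thm:Cantor-no-approx} this is the same as $\psi=\tilde\psi$, and I would exploit this from the outset. Write $I=(a_0,a_1)$ with $0\le a_0<a_1\le+\infty$ (the inclusion $\Omega\subset\{\Im w>0\}$ forces $a_0\ge 0$ and, incidentally, excludes the parabolic zero hyperbolic step case, since there $\bigcup_{t\ge0}(\Omega-t)=\C$). First I would record the description $\overline\Omega=\{x+iy:\ y\in\overline I,\ x\ge\psi_\ast(y)\}$ with $\psi_\ast$ extended to $\overline I$ as in the proof of Theorem~\ref{Thm:Cantor-no-approx}, note that $\psi=\tilde\psi$ forces $\psi^{-1}(-\infty)$ to be an open subset of $I$ (a disjoint union of open intervals, of which $I_\infty$ is the unbounded one when nonempty), and observe that a horizontal line $L_{y}$ lies inside $\overline\Omega$ exactly when $y\in\overline I$ and $\liminf_{I\ni y'\to y}\psi(y')=\psi_\ast(y)=-\infty$. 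Thus negating $(3)$ is precisely the statement that some $L_{y_0}$ with $y_0\in\overline I\setminus\overline{I_\infty}$ lies in $\overline\Omega$.

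For $(1)\Leftrightarrow(3)$ I would argue topologically. The set $\{\Im z<a_0\}$ is a nonempty connected subset of $\C\setminus\overline\Omega$, and $\C\setminus\overline\Omega$ is a union of horizontal left rays because $\overline\Omega$ is starlike at infinity; moreover any line $L_{y_0}\subseteq\overline\Omega$ separates $\C$, hence $\C\setminus\overline\Omega$. If $(3)$ fails, choose such an $L_{y_0}$ with $y_0\notin\overline{I_\infty}$; then $\C\setminus\overline\Omega$ meets $\{\Im z<y_0\}$ (it contains $\{\Im z<a_0\}\ne\emptyset$) and also $\{\Im z>y_0\}$, where one uses $y_0\notin\overline{I_\infty}$ together with the fact that a connected block of heights above $y_0$ all satisfying $\psi=-\infty$ would have to lie in $I_\infty$ (or else $a_1<+\infty$, and then $\{\Im z>a_1\}$ does the job); hence $\C\setminus\overline\Omega$ is disconnected. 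Conversely, if $(3)$ holds then $\psi_\ast>-\infty$ on every compact subinterval of $\overline I$ below $c_\infty$ (where $I_\infty=(c_\infty,+\infty)$, or on all of $\overline I$ if $I_\infty=\emptyset$); since $\psi_\ast$ is lower semicontinuous it attains a finite minimum there, which furnishes a vertical segment of $\C\setminus\overline\Omega$ joining each left ray of $\C\setminus\overline\Omega$ down into $\{\Im z<a_0\}$, while the full lines $L_y\subseteq\overline\Omega$ all sit at heights $\ge c_\infty$ and contribute nothing; this yields connectedness. The degenerate case $\psi\equiv-\infty$ (where $\Omega$ is a half-plane) and the endpoints $a_0,a_1$ need to be handled separately, but routinely.

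For $(2)\Leftrightarrow(3)$ the idea is to translate the dynamical data of $(2)$ into properties of $\psi$ via Section~\ref{Sec:bum}: by Proposition~\ref{Prop:char-bfp}(2)--(3), $\mathcal B\mathcal F_R$ corresponds to the bounded components of $\psi^{-1}(-\infty)$ and $\mathcal B\mathcal F_N$ to the set $I_N$; by Remark~\ref{rem:disco-tau-double-beahv-psi}, $\tau\in\mathcal S_u(h)$ is detected by $\liminf\psi=-\infty$, $\limsup\psi=+\infty$ at a finite endpoint of $I$; by Proposition~\ref{Prop:char-bdd-dis}, $\mathcal S_u(h)\setminus\{\tau\}$ surjects onto $D_U$ with fibres of at most two points; and by Lemma~\ref{Prop:exceptional-maximal-disc}, a maximal contact arc from $\tau$ to an unbounded discontinuity exists iff $a_1=+\infty$ and $I_\infty=(c_\infty,+\infty)$ with $c_\infty\in I$ and $\liminf_{y\to c_\infty^-}\psi=-\infty$. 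Assuming $\psi=\tilde\psi$, one checks that whenever $I_\infty$ has this form its upper endpoint $c_\infty$ is the only point of $\overline I$ carrying a $D_U$-datum without contradicting $(3)$, and that its preimage under $\mathcal U$ is then the unique unbounded discontinuity, necessarily the end of a contact arc from $\tau$. Unwinding, $(2)$ becomes: $\psi^{-1}(-\infty)$ has no bounded component, $I_N=\emptyset$, $\tau\notin\mathcal S_u(h)$, and $D_U\subseteq\{c_\infty\}$; the work is to see this is exactly $(3)$. In the non-obvious direction I would start from $y_0\in\overline I\setminus\overline{I_\infty}$ with $\liminf_{I\ni y\to y_0}\psi=-\infty$ and split cases: if $\psi(y_0)=-\infty$ one lands in a bounded component of $\psi^{-1}(-\infty)$; if $y_0$ is a finite endpoint one gets either $\tau\in\mathcal S_u(h)$ (when $\limsup\psi=+\infty$) or $y_0\in D_U\setminus\{c_\infty\}$; and if $y_0$ is interior with $\psi(y_0)$ finite, then upper semicontinuity forces $\limsup_{y\to y_0}\psi\le\psi(y_0)<+\infty$, so $y_0\in D_U\setminus\{c_\infty\}$, or $y_0\in I_N$ if the one-sided limit is genuinely $-\infty$ — in every case $(2)$ fails.

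The main obstacle is this final case analysis. Its two delicate points are: (a) using $\psi=\tilde\psi$ — equivalently upper semicontinuity of $\psi$ plus openness of $\psi^{-1}(-\infty)$ — to rule out pathological oscillatory boundary behaviour of $\psi$ at interior points, so that every occurrence of $\liminf\psi=-\infty$ is ``witnessed'' by a genuine dynamical object (a boundary fixed point, an unbounded discontinuity, or $\tau$ itself); and (b) the bookkeeping at the endpoints of $I$ and the interaction between $I_\infty$, the Denjoy-Wolff point, and the single exceptional discontinuity permitted by $(2)$ — i.e.\ confirming that this exceptional discontinuity is exactly the one attached to $\overline{I_\infty}$, which is precisely the portion of $\overline I$ left unconstrained by $(3)$.
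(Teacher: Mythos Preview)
Your proposal is correct and follows essentially the same route as the paper: the topological argument for $(1)\Leftrightarrow(3)$ via horizontal lines in $\overline\Omega$ and the finite infimum of $\psi_\ast$ on compacta, together with the reduction of $(2)\Leftrightarrow(3)$ to the classification results of Section~\ref{Sec:bum} (Propositions~\ref{Prop:char-bfp}, \ref{Prop:char-bdd-dis} and Lemma~\ref{Prop:exceptional-maximal-disc}), is exactly how the paper proceeds. One small correction: the hypothesis $\psi=\tilde\psi$ does \emph{not} force $\psi^{-1}(-\infty)$ to be open (take $\psi(y)=-|y-y_0|^{-1}$ for $y\ne y_0$ and $\psi(y_0)=-\infty$), so in your final case split the subcase ``$\psi(y_0)=-\infty$'' may yield a singleton component, landing in $I_N$ or $D_U$ rather than in $I_R$---but the conclusion that $(2)$ fails is unaffected.
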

\begin{proof}
(2) implies (3). Let $W\subset I$ be the set of points $y$ such that either $\liminf_{y\to y_0^+}\psi(y)=-\infty$ or $\liminf_{y\to y_0^-}\psi(y)=-\infty$. Using the subsets of $\overline{I}$ we introduced so far (see the beginning of Subsection~\ref{sub:all}), we have that
\[
W\setminus\overline{I_\infty}=(I_N\cap I) \cup I_R \cup (\mathcal D_U\cap I).
\]
Since, by hypothesis, $(\varphi_t)_{t\geq 0}$ has no boundary fixed points other than $\tau$, Proposition~\ref{Prop:char-bfp} implies that $I_R=I_N=\emptyset$.  Also, by hypothesis, $\mathcal S(h)$ contains no unbounded discontinuity except, at most, one unbounded discontinuity $\sigma\in\partial\D\setminus\{\tau\}$ which is the final point of a maximal contact arc starting at $\tau$. Therefore, Proposition~\ref{Prop:char-bdd-dis} (together with Lemma~\ref{Prop:exceptional-maximal-disc} in case of the existence of such a  $\sigma$) implies that $\mathcal D_U\cap I=\emptyset$. Thus, $W\setminus\overline{I_\infty}=\emptyset$, and (3) follows.

(3) implies (2).  Indeed, if either $I_\infty=\emptyset$ or
$I_\infty=[a,+\infty)$, where $a>0$,
it follows from Proposition~\ref{Prop:char-bfp} and Proposition~\ref{Prop:char-bdd-dis} that (2) holds. Similarly, if
$I_\infty=(a,+\infty)$ for some $a>0$ and $\liminf_{y\to a^-}\psi(y)>-\infty$.

Thus, we assume that (3) holds, that $I_\infty=(a,+\infty)$ for some $a>0$, $\psi(a)>-\infty$ and $\liminf_{y\to a^-}\psi(y)=-\infty$.
Notice that $\limsup_{y\to a^-}\psi(y)=\limsup_{y\to a}\psi(y)\le \psi(a)$.
If $M:=\limsup_{y\to a^-}\psi(y)<\psi(a)$,
then the interval $(M,\psi(a)] + ia$ is contained in $\hbox{Int}(\overline\Omega)\setminus\Omega$,
which contradicts our hypothesis. Therefore $\limsup_{y\to a^-}\psi(y)=\psi(a)$.
By Lemma~\ref{Prop:exceptional-maximal-disc}, (2) holds.

%

(1) implies (3). Arguing by contradiction we assume that there exists  $y_0\in \overline{I}\setminus \overline{I_\infty}$ such that  $\liminf_{I\ni y\to y_0}\psi(y)=-\infty$.

By the same argument as in the proof of Lemma~\ref{Lem:meaning-connect-strip}, $L_{y_0}\subset\overline{\Omega}$.

Since by hypothesis $\{w\in\C: \Im w<0\}\subset\C\setminus\overline{\Omega}$ and we are assuming that $\C\setminus\overline{\Omega}$ is connected, it follows that $\{w\in\C: \Im w>y_0\}\subset\overline\Omega$. But $\{w\in\C: \Im w>y_0\}$ is open, thus $\{w\in\C: \Im w>y_0\}\subset\hbox{Int}(\overline{\Omega})=\Omega$, by hypothesis. Therefore, $(y_0,+\infty)\subset I$ and $\psi(y)=-\infty$ for all $y>y_0$. In other words,  $y_0\in\overline{I_\infty}$, a contradiction. Thus (3) holds.

(3) implies (1). Notice that
$\Pi=\{w\in\C: \Im w<0\}$ is contained in $\C\setminus \overline{\Omega}$.
The condition (3) implies that $\inf_{y\in J} \psi(y) >-\infty$ for any compact interval $J\subset \overline I\setminus \overline {I_\infty}$.
It follows that any point in $\C\setminus \overline{\Omega}$ can be connected with a point in $\Pi$ in $\C\setminus \overline{\Omega}$ by a union of a horizontal
interval (going to the left) and a vertical one (going down). Therefore
$\C\setminus \overline{\Omega}$ is connected.

\end{proof}

As a direct corollary of Lemma~\ref{Prop:exceptional-maximal-disc} and Theorem~\ref{Thm:connect-half-plane}, we have:
\begin{corollary}\label{Cor:connex-in-strip}
Let $(\varphi_t)_{t\geq 0}$ be a non-elliptic semigroup in $\D$ with Denjoy-Wolff point $\tau\in\partial\D$. Let $h$ be the Koenigs  function, $\Omega=h(\D)$ the Koenigs  domain  and let $\psi:I\to \R$ be the defining function of $\Omega$. Suppose $\hbox{Int}(\overline\Omega)=\Omega$ and $\Omega\subset \mathbb S$, for some strip $\mathbb S$.   Then the following are equivalent:
\begin{enumerate}
\item $\C\setminus \overline{\Omega}$ is connected.
\item $(\varphi_t)_{t\geq 0}$ has no boundary fixed points other than $\tau$ and $\mathcal S(h)$ contains only bounded discontinuities.
\item  $\liminf_{I\ni y\to y_0}\psi(y)>-\infty$ for every $y_0\in \overline{I}$.
\end{enumerate}
\end{corollary}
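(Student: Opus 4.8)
The plan is to deduce the statement directly from Theorem~\ref{Thm:connect-half-plane} and Lemma~\ref{Prop:exceptional-maximal-disc}, after two elementary observations that exploit the hypothesis $\Omega\subset\mathbb S$. First I would fix notation: write $\mathbb S=\{w\in\C: a<\Im w<b\}$, so the defining interval $I=(a_0,a_1)$ of $\Omega$ satisfies $(a_0,a_1)\subseteq(a,b)$ and in particular $-\infty<a_0<a_1<+\infty$; thus $I$ is bounded. Conditions (1), (2) and (3) are all invariant under a vertical translation $w\mapsto w-ic$ of $\C$ --- (1) and (3) are immediate, and (2) because, under the standing assumption $\hbox{Int}(\overline\Omega)=\Omega$, Proposition~\ref{Prop:char-bfp}, Proposition~\ref{Prop:char-bdd-dis} and Remark~\ref{rem:disco-tau-double-beahv-psi} express both the presence of boundary fixed points other than $\tau$ and the presence of unbounded discontinuities purely in terms of the sets $I_R$, $I_N$, $D_U$ and of the behaviour of $\psi$ at the endpoints of $I$, all of which simply translate. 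Since $w\mapsto w-ia_0$ carries $\Omega$ onto the Koenigs domain of a non-elliptic semigroup contained in $\{\Im w>0\}$, I may and will assume $\Omega\subset\{\Im w>0\}$, so that Theorem~\ref{Thm:connect-half-plane} applies. (Alternatively, one simply observes that the proof of Theorem~\ref{Thm:connect-half-plane} goes through verbatim with $\{\Im w>0\}$ replaced by the half-plane $\{\Im w>a_0\}$, bypassing the translation altogether.)

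The heart of the matter is then to match the three conditions of Theorem~\ref{Thm:connect-half-plane} with those of the corollary. Let $I_\infty$ be, as there, the unbounded connected component of $\psi^{-1}(-\infty)$. Because $I$ is bounded, $\psi^{-1}(-\infty)\subseteq I$ is bounded, hence has no unbounded connected component; so $I_\infty=\emptyset$ and $\overline{I_\infty}=\emptyset$, and condition (3) of Theorem~\ref{Thm:connect-half-plane}, namely $\liminf_{I\ni y\to y_0}\psi(y)>-\infty$ for every $y_0\in\overline I\setminus\overline{I_\infty}$, becomes exactly condition (3) of the corollary. For condition (2) I must in addition rule out the ``exceptional'' unbounded discontinuity permitted by Theorem~\ref{Thm:connect-half-plane}, i.e.\ an unbounded discontinuity $\sigma\neq\tau$ which is the final point of a maximal contact arc starting at $\tau$. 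By Lemma~\ref{Prop:exceptional-maximal-disc}, the existence of such an arc would force $a_1=\sup I=+\infty$, contradicting the boundedness of $I$; hence no such $\sigma$ exists, and condition (2) of Theorem~\ref{Thm:connect-half-plane} collapses precisely to condition (2) of the corollary --- no boundary fixed point other than $\tau$, and $\mathcal S(h)$ consisting only of bounded discontinuities. (The Denjoy-Wolff point needs no separate treatment: condition (2) of Theorem~\ref{Thm:connect-half-plane} already forces $\tau\notin\mathcal S_u(h)$, since the exceptional discontinuity there is explicitly required to differ from $\tau$, and the same holds for condition (2) of the corollary.) Combining the two reductions, Theorem~\ref{Thm:connect-half-plane} yields $(1)\Leftrightarrow(2)\Leftrightarrow(3)$ as stated.

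Since both external inputs are already available, I do not anticipate a genuine difficulty; the argument is essentially bookkeeping. The one point that deserves care is the reduction in the first paragraph: one must check that a vertically translated copy of $\Omega$ is again the Koenigs domain of a non-elliptic semigroup --- it is starlike at infinity and the real translations $w\mapsto w+t$ push it into itself, which is enough to produce such a semigroup via a Riemann map onto it --- and that the dynamical condition (2), which a priori concerns the semigroup rather than merely $\Omega$, is unaffected; invoking Propositions~\ref{Prop:char-bfp} and \ref{Prop:char-bdd-dis} to recast (2) in terms of the defining function makes this transparent, and the shortcut of running the proof of Theorem~\ref{Thm:connect-half-plane} with a translated half-plane removes the issue entirely.
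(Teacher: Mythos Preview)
Your proposal is correct and follows exactly the route the paper indicates: the corollary is stated there simply as ``a direct corollary of Lemma~\ref{Prop:exceptional-maximal-disc} and Theorem~\ref{Thm:connect-half-plane}'', and you have spelled out precisely those details --- that the boundedness of $I$ forces $I_\infty=\emptyset$ (matching condition (3)) and, via Lemma~\ref{Prop:exceptional-maximal-disc}, forbids the exceptional maximal contact arc from $\tau$ (collapsing condition (2)). Your discussion of the vertical translation is more careful than strictly necessary, but your parenthetical shortcut --- that the proof of Theorem~\ref{Thm:connect-half-plane} runs unchanged with $\{\Im w>a_0\}$ in place of $\{\Im w>0\}$ --- is the cleanest way to handle it.
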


Now we examine the case $\Omega$ is contained in a strip and $\C\setminus\overline{\Omega}$ has two connected components.

\begin{theorem}\label{Thm:connect-strip}
Let $(\varphi_t)_{t\geq 0}$ be a non-elliptic semigroup in $\D$ with Denjoy-Wolff point $\tau\in\partial\D$. Let $h$ be the Koenigs  function, $\Omega=h(\D)$ the Koenigs  domain  and let $\psi:I\to \R$ be the defining function of $\Omega$. Suppose $\hbox{Int}(\overline\Omega)=\Omega$ and $\Omega\subset \mathbb S$, for some strip $\mathbb S$.   Then the following are equivalent:
\begin{enumerate}
\item $\C\setminus \overline{\Omega}$ has two connected components,

\item there exist $a\leq b$ such that $[a,b]\subset \overline{I}$,  $\liminf_{y\to y_0}\psi(y)=-\infty$ for all $y_0\in [a,b]$ and $\liminf_{I\ni y\to y_1}\psi(y)>-\infty$ for all $y_1\in\overline{I}\setminus [a,b]$,

\item $(\varphi_t)_{t\geq 0}$ satisfies one, and only one, of the following:
\begin{itemize}
\item [(3.1)]  $(\varphi_t)_{t\geq 0}$ has  one and only one boundary fixed point $\sigma\in\partial\D\setminus\{\tau\}$ and $\mathcal S_u(h)=\emptyset$,
\item [(3.2)] $(\varphi_t)_{t\geq 0}$ has  one and only one boundary fixed point $\sigma\in\partial\D\setminus\{\tau\}$, $\mathcal S_u(h)=\{\xi\}$ with $\xi\neq \tau$, and there exists a maximal contact arc starting from $\sigma$ and ending at $\xi$,
\item [(3.3)] $(\varphi_t)_{t\geq 0}$ has  one and only one boundary fixed point $\sigma\in\partial\D\setminus\{\tau\}$ which is a boundary regular fixed point, $\mathcal S_u(h)=\{\xi_1, \xi_2\}$ with $\xi_j\neq \tau$ and $\xi_1\ne \xi_2$,
and  for $j=1,2$,
there exists a maximal contact arc starting from $\sigma$ and ending at $\xi_j$,
\item[(3.4)]  $(\varphi_t)_{t\geq 0}$ has no boundary fixed points other than $\tau$ and $\mathcal S_u(h)=\{\xi\}$, with $\xi\neq \tau$,
\item[(3.5)]  $(\varphi_t)_{t\geq 0}$ has no boundary fixed points other than $\tau$ and $\mathcal S_u(h)=\{\xi_1, \xi_2\}$, with $\xi_j\neq \tau$, $j=1,2$
	and $\xi_1\ne \xi_2$
and there exists a maximal contact arc  starting from $\xi_1$ and ending at $\xi_2$,
\item[(3.6)]  $(\varphi_t)_{t\geq 0}$ has no boundary fixed points other than $\tau$, $\mathcal S_u(h)=\{\tau\}$ and $\tau$ is a simple unbounded discontinuity.
\end{itemize}
\end{enumerate}
\end{theorem}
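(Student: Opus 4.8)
The plan is to deduce $(1)\Leftrightarrow(2)$ directly from Lemma~\ref{Lem:meaning-connect-strip}, and then to obtain $(2)\Leftrightarrow(3)$ by reading off the boundary dynamics of $(\varphi_t)_{t\geq 0}$ from the shape of the defining function $\psi$, via the dictionary built in Section~\ref{Sec:bum}. \emph{Step 1: $(1)\Leftrightarrow(2)$.} Recall from the proof of Theorem~\ref{Thm:Cantor-no-approx} that $\overline{\Omega}=\{x+iy:\ y\in\overline{I},\ x\ge\psi_\ast(y)\}$, where $\psi_\ast$ is the lower semicontinuous regularization of $\psi$; hence a horizontal line $L_y$ lies in $\overline{\Omega}$ if and only if $\psi_\ast(y)=\liminf_{t\to y}\psi(t)=-\infty$. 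Put $Z:=\{y\in\overline{I}:\ \psi_\ast(y)=-\infty\}$; since $\psi_\ast$ is lower semicontinuous and $\Omega$ lies in a strip, $Z$ is a compact subset of $\R$. By Lemma~\ref{Lem:meaning-connect-strip}(1), $\C\setminus\overline{\Omega}$ is connected exactly when $Z=\emptyset$; and by Lemma~\ref{Lem:meaning-connect-strip}(2), applied with $y_0=\inf Z$, $y_1=\sup Z$, it has exactly two connected components exactly when $Z\ne\emptyset$ and $\{z:\ \inf Z\le\Im z\le\sup Z\}\subset\overline{\Omega}$, i.e.\ when $[\inf Z,\sup Z]\subseteq Z$. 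Thus $(1)$ holds iff $Z$ is a non-empty compact interval $[a,b]\subseteq\overline{I}$, and separating the two inclusions $[a,b]\subseteq Z$ and $Z\subseteq[a,b]$ yields precisely the two clauses of $(2)$.

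\emph{Step 2: reduction of $(2)\Leftrightarrow(3)$ to the endpoints of $Z$.} Suppose $Z=[a,b]$ and write $I=(a_0,a_1)$, $-\infty<a_0<a_1<+\infty$. Since $\hbox{Int}(\overline{\Omega})=\Omega$ gives $\psi=\tilde\psi$ by Theorem~\ref{Thm:Cantor-no-approx}, and $\psi_\ast\equiv-\infty$ on the open interval $(a,b)$, we get $\psi\equiv-\infty$ on $(a,b)$; conversely any connected component of $\psi^{-1}(-\infty)$ is an open interval on which $\psi_\ast\equiv-\infty$, hence is contained in $Z$. Therefore $\psi^{-1}(-\infty)$ is either empty (and then $a=b$) or exactly $(a,b)$, and $\overline{J_\infty}$ is $[a,b]$ or $\emptyset$. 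Consequently: the only candidate for a boundary regular fixed point $\ne\tau$ is the one corresponding, by Proposition~\ref{Prop:char-bfp}(2), to $(a,b)$ --- present iff $a<b$, with $a_-=a$, $a_+=b$; by Proposition~\ref{Prop:char-bfp}(3), a super-repelling fixed point can only come from a point of $\overline{I}\setminus\overline{J_\infty}$ carrying a one-sided limit $-\infty$, forcing that point to be $a$ or $b$ (approached from outside $[a,b]$); by Proposition~\ref{Prop:char-bdd-dis}, every unbounded discontinuity $\ne\tau$ has $\Im h^\ast\in D_U$, and a point of $D_U$ must again be $a$ or $b$; and by Remark~\ref{rem:disco-tau-double-beahv-psi}, $\tau\in\mathcal S_u(h)$ forces one of $a_0,a_1$ (so one of $a,b$) to carry $\liminf\psi=-\infty$, $\limsup\psi=+\infty$, while a \emph{double} unbounded discontinuity at $\tau$ would make $Z$ disconnected (or force $\psi\equiv-\infty$, whence $\tau\notin\mathcal S(h)$) and so is incompatible with $(1)$. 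Thus the entire boundary configuration of $(\varphi_t)_{t\geq 0}$ is determined by the behaviour of $\psi$ just to the left of $a$ and just to the right of $b$.

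\emph{Step 3: case analysis and converse.} At each of the two endpoints exactly one of the following holds: (i) the relevant one-sided $\liminf$ of $\psi$ is finite, and nothing special happens there; (ii) the one-sided $\lim$ of $\psi$ equals $-\infty$ (the endpoint is in $I_N$), producing a super-repelling fixed point; (iii) the one-sided $\liminf$ of $\psi$ is $-\infty$ but the one-sided $\limsup$ is finite (the endpoint is in $D_U$), producing an unbounded discontinuity $\ne\tau$; (iv) the endpoint is $a_0$ or $a_1$ and carries $\liminf\psi=-\infty$, $\limsup\psi=+\infty$, making $\tau$ a simple unbounded discontinuity (by Proposition~\ref{Prop:tau-disc-unbd}; Lemma~\ref{Prop:exceptional-maximal-disc} excludes an accompanying contact arc from $\tau$, since $a_0,a_1$ are finite). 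Running through the admissible combinations of (i)--(iv) at $a$ and at $b$, distinguishing $a<b$ (where the regular fixed point from $(a,b)$ is present: cases (3.1)--(3.3)) from $a=b$ (where the two endpoints coincide, the single admissible behaviour at that point being (ii), (iii) or (iv): cases (3.4)--(3.6)), and invoking Proposition~\ref{Prop:fixed-vs-unbd} to pair a fixed point with an unbounded discontinuity at the same height and produce the joining maximal contact arc (as in (3.2)--(3.3)), and Proposition~\ref{Prop:char-bdd-dis} (through Proposition~\ref{Lem:two-disc}) to produce the contact arc joining two unbounded discontinuities at a common height (as in (3.5)), one recovers exactly the six mutually exclusive possibilities (3.1)--(3.6). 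The converse $(3)\Rightarrow(2)$ is obtained by running each of the six correspondences backwards: each case forces $\psi^{-1}(-\infty)$ to be empty or a single bounded interval and $\{\psi_\ast=-\infty\}$ to be exactly its closure (or the single height at which $\psi$ drops), i.e.\ $Z$ is a non-empty compact interval, which is $(2)$.

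\emph{Main obstacle.} The substantive work, and the likeliest source of error, is the bookkeeping in Step~3: matching each configuration of (behaviour of $\psi$ near $a$, behaviour near $b$, whether $a<b$) with one and only one of (3.1)--(3.6), while correctly handling the degenerate situations --- $a=b$, where the two endpoints merge and no regular fixed point appears; the possibility that a single super-repelling fixed point or a single simple $\tau$-discontinuity absorbs both ends; and the exclusion of a double unbounded discontinuity at $\tau$. One must also verify, in each configuration, that the maximal contact arcs supplied by Propositions~\ref{Prop:fixed-vs-unbd} and \ref{Prop:char-bdd-dis} are non-exceptional and have exactly the endpoints claimed, and that the six cases are genuinely exhaustive and pairwise disjoint.
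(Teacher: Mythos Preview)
Your overall strategy matches the paper's: $(1)\Leftrightarrow(2)$ via Lemma~\ref{Lem:meaning-connect-strip}, and $(2)\Leftrightarrow(3)$ by translating the behaviour of $\psi$ near the endpoints of $Z=[a,b]$ into the dictionary of Section~\ref{Sec:bum}. Step~1 and Step~2 are fine.

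The genuine gap is in Step~3, precisely where you anticipated it. Your partition ``$a<b\Rightarrow(3.1)$--$(3.3)$, $a=b\Rightarrow(3.4)$--$(3.6)$'' is wrong. When $a=b\in I$, the point $a$ still has \emph{two} one-sided approaches, and your behaviour (ii) (one-sided limit $-\infty$) produces a super-repelling fixed point $\sigma\ne\tau$ via Proposition~\ref{Prop:char-bfp}(3). That lands you in case (3.1) or (3.2), not (3.4)--(3.6), which all begin with ``no boundary fixed points other than $\tau$''. Concretely: if $\lim_{y\to a^-}\psi(y)=-\infty$ and $\lim_{y\to a^+}\psi(y)=-\infty$, you get one super-repelling fixed point and no unbounded discontinuities, hence (3.1); if one side has $\lim\psi=-\infty$ and the other has $\liminf\psi=-\infty<\limsup\psi<+\infty$, you get a super-repelling fixed point \emph{and} an unbounded discontinuity joined by a contact arc (Proposition~\ref{Prop:fixed-vs-unbd}), hence (3.2). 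Only when \emph{both} one-sided $\limsup$'s are finite (so $a\notin I_N$) do you reach (3.4) or (3.5). Similarly, when $a=b\in\partial I$, the paper's Case~3 allows (3.1) and (3.4) as well as (3.6). So the statement that $a=b$ yields ``the single admissible behaviour at that point being (ii), (iii) or (iv)'' collapses two independent one-sided choices into one, and the resulting case map is incorrect. The paper's proof handles this by splitting $a=b$ into $a\notin\partial I$ and $a\in\partial I$ and then, in each, separating the subcase where at least one one-sided limit is $-\infty$ from the subcase where both one-sided $\limsup$'s are finite.
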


\begin{proof}
(1) is equivalent to (2) by  Lemma~\ref{Lem:meaning-connect-strip}.

(2) implies (3). We consider three cases:

{\sl Case 1: $a<b$}. Since by hypothesis $\hbox{Int}(\overline\Omega)=\Omega$  and $\liminf_{y\to y_0}\psi(y)=-\infty$ for all $y_0\in [a,b]$, it follows from Theorem~\ref{Thm:Cantor-no-approx}  that $\psi(y)=-\infty$ for all $y\in (a,b)$. By Proposition~\ref{Prop:char-bfp} there exists a unique boundary  fixed point $\sigma\neq \tau$ such $a_-(\sigma)=a$ and $a_+(\sigma)=b$ so that $\sigma$ is a boundary regular fixed point. Since $\liminf_{I\ni y\to y_1}\psi(y)>-\infty$ for all $y_1\in\overline{I}\setminus [a,b]$, the same Proposition~\ref{Prop:char-bfp} implies that there are no other boundary fixed points different from $\sigma$ and $\tau$. By Proposition~\ref{Prop:tau-disc-unbd} and since $\liminf_{I\ni y\to y_1}\psi(y)>-\infty$ for all $y_1\in\overline{I}\setminus [a,b]$, $\tau\not\in\mathcal S_u(h)$. Finally, Proposition~\ref{Prop:char-bdd-dis} and Proposition~\ref{Prop:fixed-vs-unbd} show at once that either (3.1) or (3.2) or (3.3) holds.

{\sl Case 2: $a=b\not\in\partial I$}. By Proposition~\ref{Prop:tau-disc-unbd}, $\tau\not\in \mathcal S_u(h)$.

Since $a\in I$ and $\psi$ is upper semicontinuous, it follows that  $\limsup_{y\to a}\psi(y)\leq \psi(a)<+\infty$.

If either $\lim_{y\to a^-}\psi(y)=-\infty$ or  $\lim_{y\to a^+}\psi(y)=-\infty$ and since $\liminf_{I\ni y\to y_1}\psi(y)>-\infty$ for all $y_1\in\overline{I}\setminus \{a\}$, it follows from Proposition~\ref{Prop:char-bfp} that there exists a unique boundary fixed point $\sigma$ different from $\tau$. Moreover, $\Im h^\ast(\sigma)=a$. Proposition~\ref{Prop:char-bdd-dis} and Proposition~\ref{Prop:fixed-vs-unbd} show at once that either (3.1) or (3.2) holds.

If  $\limsup_{y\to a^\pm}\psi(y)>-\infty$, then by Proposition~\ref{Prop:char-bfp}, there are no boundary fixed points other than $\tau$. Proposition~\ref{Prop:char-bdd-dis} shows that there exist one or two unbounded discontinuities,
and Proposition~\ref{Prop:fixed-vs-unbd} implies that either (3.4) or (3.5) holds.

{\sl Case 3: $a=b\in\partial I$}. We can assume that $I=(a, c)$ for some $c>a$.

If $\limsup_{y\to a^+}\psi(y)=+\infty$, it follows from Proposition~\ref{Prop:tau-disc-unbd}, Proposition~\ref{Prop:char-bfp} and Proposition~\ref{Prop:char-bdd-dis} that $\tau$ is a simple unbounded  discontinuity and there are no other boundary fixed points nor unbounded discontinuities, thus (3.6) holds.

If  $\limsup_{y\to a^+}\psi(y)<+\infty$, then $\tau\not\in\mathcal S_u(h)$ and we can argue as in Case 2 depending on whether $\lim_{y\to a^+}\psi(y)=-\infty$ or not, and we see that in this case  either (3.1)  or (3.4) holds. Note here that in this case, (3.2) and (3.5) cannot hold because  Proposition~\ref{Prop:fixed-vs-unbd} asserts that in such cases $a\in I$.

All possible cases are covered by the previous arguments, and hence (2) implies (3).

(3) implies (2). It follows at once examining the various cases and applying
Proposition~\ref{Prop:char-bfp}, Proposition~\ref{Prop:char-bdd-dis}, Proposition~\ref{Prop:fixed-vs-unbd} and Proposition~\ref{Prop:tau-disc-unbd}.
\end{proof}

\section{Proof of Theorem~\ref{Thm:main-hyp}, Theorem~\ref{Thm:main-para-pos} and Theorem~\ref{Thm:main-para-zero}}\label{Sec:proofs}

The proofs of Theorem~\ref{Thm:main-hyp}, Theorem~\ref{Thm:main-para-pos} and Theorem~\ref{Thm:main-para-zero} follow by gathering together the results we proved in the previous sections.

\begin{proof}[Proof of Theorem~\ref{Thm:main-hyp}]
The semigroup $(\varphi_t)_{t\geq 0}$ is hyperbolic if and only if $\Omega$ is contained in a strip $\mathbb S$. Hence, by Theorem~\ref{Thm:density-in-strip} and Corollary~\ref{Cor:complement}, (1) and (2) are equivalent. Then (2) is equivalent to (3) and (4) by Theorem~\ref{Thm:Cantor-no-approx}, Corollary~\ref{Cor:connex-in-strip} and Theorem~\ref{Thm:connect-strip}. The last assertion follows from (1) and Lemma~\ref{Lem:approx-weak-approx-p}.
\end{proof}

\begin{proof}[Proof of Theorem~\ref{Thm:main-para-pos}]
The semigroup $(\varphi_t)_{t\geq 0}$ is parabolic of positive hyperbolic step if and only if $\Omega$ is contained in a half-plane parallel to the real axes but not contained in a strip.
Hence, by Theorem~\ref{Thm:density-in-half-plane} and Corollary~\ref{Cor:complement}, (1) and (2) are equivalent. Then (2) is equivalent to (3) and (4) by Theorem~\ref{Thm:Cantor-no-approx} and Theorem~\ref{Thm:connect-half-plane}. The last assertion follows from (1) and Lemma~\ref{Lem:approx-weak-approx-p}.
\end{proof}

\begin{proof}[Proof of Theorem~\ref{Thm:main-para-zero}]
The semigroup $(\varphi_t)_{t\geq 0}$ is parabolic of zero hyperbolic step if and only if $\Omega$ is not contained in any half-plane parallel to the real axes. By Proposition~\ref{Prop:easy}.(4), if there is not a half-plane $H$ such that $\Omega\subset H$, then $\Lambda_\infty(\Omega)=\{0\}$, and the first assertion follows.

If there is a half-plane $H$ such that $\Omega\subset H$,  by Theorem~\ref{Thm:density-in-half-plane} and Corollary~\ref{Cor:complement}, (1) and (2) are equivalent.  Since  $H$ cannot be parallel to the real axis, by Remark~\ref{Rem:easy-contain-plane} and Theorem~\ref{Thm:Cantor-no-approx}  we have the equivalence among (2), (3) and (4).
The last assertion follows from (1) and Lemma~\ref{Lem:approx-weak-approx-p}.
\end{proof}

\section{Frequencies  for $p\in[1,\infty)$}\label{Sec:p-freq}

Let $\Omega$ be the Koenigs domain of a non-elliptic semigroup. The aim of this section is to discuss completeness of $\mathcal E_p(\Omega)$ for $p\in[1,\infty)$, giving the proof of Theorem~\ref{Thm:main-density-in-log} and discuss some  necessary conditions for completeness of linear combinations of exponentials.

\subsection{Logarithmic starlike domains at infinity with frequencies and the proof of Theorem~\ref{Thm:main-density-in-log}} We start with the following proposition:

\begin{proposition}\label{Prop:connect-log}
Let $(\varphi_t)_{t\geq 0}$ be a non-elliptic semigroup in $\D$ with Denjoy-Wolff point $\tau\in\partial\D$. Let $h$ be the Koenigs  function, $\Omega=h(\D)$ the Koenigs  domain  and let $\psi:I\to \R$ be the defining function of $\Omega$. Suppose $\hbox{Int}(\overline\Omega)=\Omega$. Assume  $\Omega$ is contained in the rotation of a logarithmic starlike at infinity domain.  Let $I_\infty$ denotes the unbounded connected component of $\psi^{-1}(-\infty)$ (possibly empty).  Then the following are equivalent.
\begin{enumerate}
\item $\C\setminus \overline{\Omega}$ is connected.
\item $(\varphi_t)_{t\geq 0}$ has no boundary fixed points other than $\tau$ and $\mathcal S(h)$ contains only bounded discontinuities except, at most,  one unbounded discontinuity $\sigma\in\partial\D\setminus\{\tau\}$ which is the final point of a maximal contact arc starting at $\tau$.
\item  $\liminf_{I\ni y\to y_0}\psi(y)>-\infty$ for every $y_0\in \overline{I}\setminus \overline{I_\infty}$.
\end{enumerate}
\end{proposition}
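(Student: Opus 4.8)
The statement is the logarithmic analogue of Theorem~\ref{Thm:connect-half-plane} (and contains Corollary~\ref{Cor:connex-in-strip} as a special case), and the plan is to follow that proof almost verbatim, replacing the hypothesis $\Omega\subset\{\Im w>0\}$ by the present one. Two elementary observations make the replacement legitimate. First, $\Omega$ is a proper subdomain of $\C$ (being a univalent image of $\D$), so $\hbox{Int}(\overline\Omega)=\Omega$ forces $\overline\Omega\neq\C$ and hence $\C\setminus\overline\Omega\neq\emptyset$; moreover, since $\Omega$ is starlike at infinity, $\C\setminus\overline\Omega$ is a union of leftward half-lines $\{p-t:\,t\ge 0\}$. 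Second, if $D=e^{i\theta}D_0$ with $D_0=\{x>\psi_0(y)\}$ logarithmic starlike at infinity, then a direct estimate using $\psi_0(v)\ge -a\log|v|$ for $|v|$ large and $|\psi_0'|\le K$ shows that the set $\{y\in\R:\,L_y\subset D\}$ is either empty or a half-line depending only on $D_0$ and $\theta$; since $\psi^{-1}(-\infty)=\{y:\,L_y\subset\Omega\}\subseteq\{y:\,L_y\subset D\}$, this set has at most one unbounded connected component, so $I_\infty$ is well defined (and $I_\infty=\emptyset$ when $D$ is unrotated, giving back Corollary~\ref{Cor:connex-in-strip}).

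Granting this, I would first establish $(2)\Leftrightarrow(3)$. This step is identical to the corresponding one in the proof of Theorem~\ref{Thm:connect-half-plane}: it uses only $\hbox{Int}(\overline\Omega)=\Omega$, Theorem~\ref{Thm:Cantor-no-approx}, and the correspondences between $\mathcal B\mathcal F_R$, $\mathcal B\mathcal F_N$, $\mathcal S_u(h)$ and the subsets $I_R$, $I_N$, $D_U$ of $\overline I$ provided by Proposition~\ref{Prop:char-bfp}, Proposition~\ref{Prop:char-bdd-dis}, Proposition~\ref{Prop:fixed-vs-unbd} and Lemma~\ref{Prop:exceptional-maximal-disc}. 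The only change is that $I_\infty$ may now be a half-line running to $+\infty$ or to $-\infty$ according to the sign of $\sin\theta$; the case analysis (empty $I_\infty$; $I_\infty$ closed, resp. open, at its finite endpoint, the latter requiring the correct value of the one-sided $\limsup$ of $\psi$ there, supplied by Theorem~\ref{Thm:Cantor-no-approx} and Lemma~\ref{Prop:exceptional-maximal-disc}) is symmetric in the two orientations.

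It remains to prove $(1)\Leftrightarrow(3)$. For $(1)\Rightarrow(3)$, argue by contraposition: if $y_0\in\overline I\setminus\overline{I_\infty}$ satisfies $\liminf_{y\to y_0}\psi(y)=-\infty$, then $L_{y_0}\subset\overline\Omega$ (exactly as in the proof of Lemma~\ref{Lem:meaning-connect-strip}); since $\C\setminus\overline\Omega$ is nonempty, connected and disjoint from $L_{y_0}$, it lies in one of the two open half-planes bounded by $L_{y_0}$, so the other half-plane is an open subset of $\overline\Omega=\hbox{Int}(\overline\Omega)$, hence is contained in $\Omega$; consequently $\psi\equiv-\infty$ on an unbounded interval having $y_0$ as an endpoint, which forces $y_0\in\overline{I_\infty}$, a contradiction. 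For $(3)\Rightarrow(1)$, I follow the end of the proof of Theorem~\ref{Thm:connect-half-plane}: given $p,q\in\C\setminus\overline\Omega$, slide both far to the left along their leftward half-lines and join the two endpoints obtained by a vertical segment. The horizontal pieces lie in $\C\setminus\overline\Omega$ by starlikeness at infinity; since $\psi^{-1}(-\infty)$ has a single unbounded component, neither $\Im p$ nor $\Im q$ belongs to $\overline{I_\infty}$ and they lie on the same side of it, so the $y$-interval spanned by the vertical segment meets $\overline I$ only in a compact subset of $\overline I\setminus\overline{I_\infty}$; by $(3)$ and lower semicontinuity of $\psi_\ast$, the latter is bounded below there, so moving the segment far enough to the left keeps it off $\overline\Omega$. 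Hence $\C\setminus\overline\Omega$ is connected.

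The bulk of the work is the case analysis in $(2)\Leftrightarrow(3)$, which is long but routine, running parallel to the proof of Theorem~\ref{Thm:connect-strip}: for each configuration of the endpoints of $I$ and of $I_\infty$ one must determine whether $\tau\in\mathcal S_u(h)$, whether an exceptional maximal contact arc occurs, and the number of bounded/unbounded discontinuities and of boundary fixed points. The only genuinely new point compared with Theorem~\ref{Thm:connect-half-plane} is the well-posedness of $I_\infty$ together with the one-sidedness of $\Im(\C\setminus\overline\Omega)$ relative to $\overline{I_\infty}$, both of which rest on the single elementary estimate for the logarithmic profile of $D$ noted above; I expect no real difficulty there.
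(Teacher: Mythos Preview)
Your approach is correct and takes a genuinely different route from the paper's. The paper argues by a case split on the rotation angle: writing $D=e^{i\theta}D_0$, it singles out a unit ``escape direction'' $v$ with $\Re v\le 0$ along which every point of $D$ exits $\overline D$. When $\Re v<0$ (that is, $\theta\in(-\pi/2,\pi/2)$), every point of $D$ also exits $\overline D$ under leftward translation, so no horizontal line lies in $\overline\Omega$, $\psi^{-1}(-\infty)=\emptyset$, and all three conditions hold trivially by the mechanism of Remark~\ref{Rem:easy-contain-plane}. When $v$ is purely imaginary ($\theta=\pm\pi/2$), the paper notes that after a translation one has $\Omega\subset\{\Im w>0\}\cup\{\Re w>x_0\}$ for a suitable $x_0$, and then invokes the proof of Theorem~\ref{Thm:connect-half-plane} essentially verbatim.

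You instead treat both cases at once: the single geometric observation that $\{y:L_y\subset D\}$ is empty (exactly the paper's first case) or a half-line lets you re-run the proof of Theorem~\ref{Thm:connect-half-plane} uniformly, with $(3)\Rightarrow(1)$ handled by connecting any two points of $\C\setminus\overline\Omega$ directly via a left-slide plus a vertical segment over a compact subset of $\overline I\setminus\overline{I_\infty}$, rather than routing everything through a fixed reference half-plane. Your argument is longer but more self-contained; the paper's is more economical, leaning on earlier results. One small advantage of your route is that it sidesteps the paper's intermediate claim that $D$ itself contains a horizontal half-plane $H$---this is not literally true for all Lipschitz $\psi_0$ (take $\psi_0$ unbounded above), though what is actually needed, namely $\Omega\subset\{\Im w>c\}\cup\{\Re w>x_0\}$, does follow from starlikeness of $\Omega$ alone.
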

\begin{proof}
Observe that if $\Omega$ is contained in $D$, the rotation of a logarithmic starlike at infinity domain, then there is $v\in\C$, $\Re v\leq 0$ and $|v|=1$, such that  for all $w\in D$ there exists $t>0$ so that  $w+tv\in \C\setminus\overline D$. If $v$ is not pure imaginary, then it follows that for all $w\in D$ there exists $t'>0$ such that $w-t'\in \C\setminus\overline D$. Hence, arguing as in Remark~\ref{Rem:easy-contain-plane} we have the result.

If $v$ is pure imaginary, then $D$ is the $\pi/2$ rotation of a logarithmic starlike at infinity domain. Hence, there is a half-plane $H$ parallel to the real axis which is contained in $D$ and such that $\partial H\cap\partial D\neq\emptyset$. We can assume $H=\{w\in\C:\Im w>0\}$ and $x_0\in \R\cap \partial D$ is the maximum of the points in $\partial D\cap\R$.

Since $\Omega$ is starlike at infinity, then
\[
\Omega\subset \{w\in \C:\Im w>0\}\cup \{w\in \C: \Re w>x_0\}.
\]
Then the same argument as in the proof of Theorem~\ref{Thm:connect-half-plane} gives the result.
\end{proof}

With this result at hand, we can give the

\begin{proof}[Proof of Theorem~\ref{Thm:main-density-in-log}]
It follows at once by Theorem~\ref{Thm:density-in-log}, Theorem~\ref{Thm:Cantor-no-approx} and Proposition~\ref{Prop:connect-log}.
\end{proof}

Now, we provide examples of logarithmic starlike at infinity domains with $p$-frequencies, which are not contained in any half-plane:

\begin{proposition}\label{example-log-complete}
Let $\eta:\C_+\to\C$ be defined by $\eta(w):=w-(\log(w+3))^a$, where $a\in (0,1)$ is a constant.  Then $\eta$ is univalent on $\C_+$, and  the domain $\Om_0=\eta(\C_+)$ is a  logarithmic starlike at infinity domain with $p$-frequencies, which is not contained in any half-plane.
\end{proposition}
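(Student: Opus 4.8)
The plan is to prove the four assertions in turn, with the univalence and the geometric identification of $\eta(\C_+)$ as a graph domain being the bulk of the work. For univalence I would compute $\eta'(w)=1-a\,(\log(w+3))^{a-1}/(w+3)$ and note that for $w\in\overline{\C_+}$ one has $|w+3|\ge 3$ and $|\log(w+3)|\ge\log|w+3|\ge\log 3>1$, so (since $a-1<0$)
\[
|\eta'(w)-1|\le\frac{a\,(\log 3)^{a-1}}{3}<\frac13 ,
\]
the last inequality for every $a\in(0,1)$ because $a\mapsto a\,(\log 3)^{a-1}$ is increasing on $(0,1)$ with value $1$ at $a=1$. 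In particular $\Re\eta'>\tfrac23>0$ on the convex domain $\C_+$, so $\eta$ is univalent there by the classical Noshiro--Warschawski argument; and since $|\eta(w)|\ge|w|-|\log(w+3)|^a\to\infty$ it is proper onto $\Om_0:=\eta(\C_+)$, hence a biholomorphism onto a simply connected domain. The same estimate $|\eta'-1|<1/3$ persists on $\{\Re w>-\epsilon_0\}$ for $\epsilon_0=\epsilon_0(a)>0$ small, so $\eta$ is in fact injective on $\overline{\C_+}$.

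Next I would show $\Om_0$ is a graph domain. Setting $\gamma(t)=\eta(it)$ one has $\gamma'(t)=i\eta'(it)$, so $\tfrac{d}{dt}\Im\gamma(t)=\Re\eta'(it)>\tfrac23$ and $\bigl|\tfrac{d}{dt}\Re\gamma(t)\bigr|=|\Im\eta'(it)|<\tfrac13$. Thus $t\mapsto\Im\gamma(t)$ is a $C^1$ increasing diffeomorphism of $\R$; writing $t=t(y)$ for its inverse and $\psi(y):=\Re\gamma(t(y))$, the function $\psi\colon\R\to\R$ is $C^1$ with $|\psi'(y)|=|\Im\eta'(it(y))|/\Re\eta'(it(y))\le\tfrac12$, which is condition (1) of Definition~\ref{Def:log-star-dom} with $K=\tfrac12$. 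Since $\gamma$ parametrises $\Gamma:=\{\Re z=\psi(\Im z)\}$ injectively with $\gamma(t)\to\infty$ as $t\to\pm\infty$, the set $\C\setminus\Gamma$ has exactly the two components $\{\Re z>\psi(\Im z)\}$ and $\{\Re z<\psi(\Im z)\}$; by the argument-principle reasoning used in the proof of Lemma~\ref{lem-alpha} (the bound $|\eta'-1|<\tfrac13$ keeps the tangent direction of $\gamma$ in a half-plane and $\eta$ is injective on $\overline{\C_+}$), $\eta$ maps $\C_+$ biholomorphically onto one of these components, and since $\eta(R)=R-(\log(R+3))^a$ is real with $\eta(R)\to+\infty$ while $\psi(0)=\Re\eta(0)=-(\log 3)^a$, that component is $\Om_0=\{\Re z>\psi(\Im z)\}$.

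For the growth condition I would use $|\log(3+it)|\le\tfrac12\log(9+t^2)+\tfrac\pi2\le\log|t|+C$ for $|t|\ge3$, whence $\bigl|\Re((\log(3+it))^a)\bigr|\le(\log|t|+C)^a$ and $|\Im\gamma(t)-t|\le(\log|t|+C)^a=o(|t|)$, so $\tfrac12|t|\le|\Im\gamma(t)|\le2|t|$ for $|t|$ large; since $(\log|y|+C')^a=o(\log|y|)$ this yields $\psi(y)=-\Re((\log(3+it(y)))^a)\ge-(\log|y|+C')^a\ge-\tfrac12\log|y|$ for $|y|$ large, i.e.\ condition (2) of Definition~\ref{Def:log-star-dom} with $a'=\tfrac12$. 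Hence $\Om_0$ is a logarithmic starlike at infinity domain. Moreover $\arg\log(3+it)\to\pm\tfrac\pi2$ and $a\cdot(\pm\tfrac\pi2)\in(-\tfrac\pi2,\tfrac\pi2)$ force $\Re((\log(3+it))^a)\to+\infty$, so $\psi(y)\to-\infty$ as $y\to\pm\infty$; together with $\psi(y)/|y|\to0$ and the unboundedness of $\Om_0$ in both imaginary directions, a short case analysis on the inner normal of a putative supporting line shows $\Om_0$ lies in no half-plane (so $\Lambda_\infty(\Om_0)=\{0\}$ by Proposition~\ref{Prop:easy}(4), as expected).

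Finally, for the $p$-frequencies I would verify \eqref{Eq:full-freq} through the conformal map: $e^{\lambda z}\in H^{p_0}(\Om_0)$ iff $F:=e^{\lambda\eta}\in H^{p_0}(\C_+)$, i.e.\ iff $|F|^{p_0}=e^{p_0\lambda\Re\eta(w)}$ has a harmonic majorant on $\C_+$. For $w\in\C_+$,
\[
\Re\eta(w)=\Re w-\Re\bigl((\log(w+3))^a\bigr)\ge-|\log(w+3)|^a\ge-\Bigl(\log(|w|+3)+\tfrac\pi2\Bigr)^a=:-g(w),
\]
so $|F(w)|^{p_0}\le e^{p_0|\lambda|g(w)}$; since $g(w)=o(\log(|w|+3))$, for any fixed $\beta\in(0,1)$ there is a constant $C$ with $e^{p_0|\lambda|g(w)}\le C\,|w+3|^\beta$ on $\C_+$, and $\Re((w+3)^\beta)\ge\cos(\tfrac{\beta\pi}{2})\,|w+3|^\beta>0$ there because $|\arg(w+3)|<\tfrac\pi2$. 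Thus $u(w):=C\,(\cos\tfrac{\beta\pi}{2})^{-1}\Re((w+3)^\beta)$ is a positive harmonic majorant of $|F|^{p_0}$, so $F\in H^{p_0}(\C_+)$; taking $p_0=1$ gives \eqref{Eq:full-freq}, and then Theorem~\ref{thm-log-domains}(2) upgrades this to density of the exponentials in $H^p(\Om_0)$ for every $p\in[1,\infty)$. The step I expect to be the main obstacle is the second one, namely pinning down $\eta(\C_+)$ as precisely the graph domain of a Lipschitz $\psi$, since this requires the derivative estimate combined with the argument-principle/orientation bookkeeping of Lemma~\ref{lem-alpha}; the rest reduces to the elementary asymptotics above.
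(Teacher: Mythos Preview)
Your proof is correct and follows essentially the same strategy as the paper's: the Noshiro--Warschawski criterion via the bound $|\eta'-1|<1$ on $\C_+$, the boundary parametrization $t\mapsto\eta(it)$ to realize $\Om_0$ as a Lipschitz graph domain satisfying Definition~\ref{Def:log-star-dom}, and the harmonic majorant $\Re(w+3)^\beta$ (the paper writes $\Re(w+3)^b$ with $b\in(a,1)$) for the $p$-frequencies. One small slip: you claim $\arg\log(3+it)\to\pm\tfrac\pi2$, but in fact $\arg\log(3+it)=\arctan\bigl(\arg(3+it)/\log|3+it|\bigr)\to 0$ as $|t|\to\infty$; this only helps your conclusion, since then $\Re\bigl((\log(3+it))^a\bigr)\sim|\log(3+it)|^a\to+\infty$ and hence $\psi(y)\to-\infty$ as you want.
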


\begin{proof}	It is easy to see that  $|\eta'-1|<a<1$ on $\C_+$, hence $\Re \eta'>0$ on $\C_+$. Since $\C_+$ is convex, it follows by the Noshiro-Warschawski's theorem (see, {\sl e.g.} \cite[Theorem~3.1.3]{BCDbook}) that $\eta$ is univalent on $\C_+$.
	
	Moreover, the boundary curve of the domain $\eta(\C_+)$ is parametrized  as $\R\ni t \mapsto \alpha(t)+i\beta(t)$, where $\alpha(t):=\Re \eta(it)$ and $\beta(t):=\Im \eta(it)$. Since $\beta'(t)=\Re \eta'(it)>0$ for all $t$  and $\beta(t)$ tends to $\pm\infty$ when $t\to\pm\infty$, it follows that $\beta$ is an increasing diffeomorphism of $\R$ to $\R$. 

	Hence the function  $\psi=\alpha\circ\beta^{-1}$ is differentiable, and $\Om_0=\eta(\C_+)$ has the form $\{x>\psi(y): y\in \R\}$.  Therefore, $\Omega_0$ is a starlike at infinity domain and clearly it is not contained in any half-plane, since $\psi(y)\to-\infty$ as $y\to \pm\infty$.

Since $\beta'(t)=\Re \eta'(it)\in (1-a,1+a)$ for all $t$, it follows that $\psi$ satisfies (1) of Definition~\ref{Def:log-star-dom}. By the same token and since $\beta(0)=0$, we have for all $t$
\[
|\beta^{-1}(t)|=\left| \int_0^{t} \frac{ds}{\beta'(\beta^{-1}(s))}\right|=\left|\int_0^{t} \frac{ds}{\Re \eta'(i\beta^{-1}(s))}\right|\leq \frac{|t|}{1-a}.
\]
Now, there exists some $C>0$ and $R>1$ such that $\al(t)\ge -C\big(\log|t|\big)^a$ for $|t|>R$.
Hence  the previous displayed formula implies
	\[
	\psi(t)=\al(\beta^{-1}(t))\ge -C\big(\log\frac{|t|}{1-a}\big)^a, \quad |t|>R, 
	\]
that is, $\psi$ satisfies (2) of Definition~\ref{Def:log-star-dom}. Therefore, $\Omega_0$ is a  logarithmic starlike at infinity domain.

	Finally, let  $\lam\le 0$. By the conformal invariance of the Hardy spaces,  $e^{\lam z}\in H^p(\Om_0)$ if and only if $e^{\lam \eta(z)}\in H^p(\C_+)$. Take any $b\in (a,1)$. It is easy to see that there exists some $R>0$ 
	such that 
	\[
	p\lam \Re\eta(w)\le p|\lam| \,\Re \big(\log (w+3)\big)^a 
	\le \log \Re (w+3)^b
	, \quad \Re w \ge 0, |w|>R. 
	\]	
	Hence for some $C>0$,
	\[
	p\lam \Re\eta(w)\le C + \log \Re (w+3)^b, \quad w\in \C_+,
	\]		
	which implies that $e^{p\lam \Re\eta(w)}\le e^C \cdot  \Re (w+3)^b$
	in $\C_+$. Since the function $\Re (w+3)^b$ is harmonic in $\C_+$,
	$|e^{\lam \eta(w)}|^p$ has a harmonic majorant in $\C_+$, which proves that $e^{\lam z}\in H^p(\Om_0)$. Thus, $\Omega_0$ has $p$-frequencies, and we are done.	
\end{proof}

As a direct application of  Theorem~\ref{Thm:main-density-in-log} and Proposition~\ref{example-log-complete}, we have:
	
\begin{corollary}
Let $\psi$ be any continuous function on $\R$ such that $\psi(y)>C_1-\big(\log(|y|+3)\big)^a$ for all $y$, where $C_1$ and $a\in (0,1)$ are constants. Then $\Omega:=\{x+iy: x>\psi(y)\}$ is a Koenigs domain of a non-elliptic semigroup, and $\mathcal E_p(\Omega)$ is dense in $H^p(\Omega)$  for all $p\in[1,\infty)$.  		
\end{corollary}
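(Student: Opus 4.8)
The plan is to realize $\Omega$ as a Koenigs domain contained in a logarithmic starlike at infinity domain with $p$-frequencies, and then to quote Theorem~\ref{Thm:density-in-log} (equivalently, Theorem~\ref{Thm:main-density-in-log} together with Proposition~\ref{example-log-complete}). First, since $\psi$ is continuous it is in particular upper semicontinuous, so $\Omega=\Omega_\psi$ is a domain starlike at infinity by Proposition~\ref{Prop:definingfnc}, and it is a proper simply connected subdomain of $\C$ ($\Omega\ne\C$ because $\psi\not\equiv-\infty$). Conjugating the semigroup of translations $z\mapsto z+t$ on $\Omega$ by a Riemann map of $\D$ onto $\Omega$ then exhibits a non-elliptic semigroup $(\varphi_t)_{t\ge0}$ in $\D$ whose Koenigs domain is $\Omega$ (the standard correspondence recalled around Theorem~\ref{Thm:models}); since $\psi$ is finite, $\bigcup_{t\ge0}(\Omega-t)=\C$, so $(\varphi_t)_{t\ge0}$ is parabolic of zero hyperbolic step, although only non-ellipticity will be used.

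The heart of the argument is the construction of the enclosing domain. Fix an exponent $a'$ with $a<a'<1$, and, following Proposition~\ref{example-log-complete} with $a'$ in place of $a$, put $\eta_{a'}(w):=w-(\log(w+3))^{a'}$ and $\Omega_{a'}:=\eta_{a'}(\C_+)$; by that proposition $\Omega_{a'}$ is a logarithmic starlike at infinity domain with $p$-frequencies, with some defining function $\psi_{a'}$. The point is the one-sided asymptotic estimate
\[
\psi_{a'}(y)\le -c_0\,(\log|y|)^{a'}\qquad\text{for all }|y|\text{ large},
\]
for a suitable constant $c_0>0$; this is the companion of the bound in the proof of Proposition~\ref{example-log-complete}, and follows from $\psi_{a'}(y)=-\Re\big((\log(i\beta^{-1}(y)+3))^{a'}\big)$ together with $|\beta^{-1}(y)|\ge |y|/(1+a')$, $|\log(i\beta^{-1}(y)+3)|\to\infty$ and $\arg\log(i\beta^{-1}(y)+3)\to 0$ as $|y|\to\infty$, so that the real part of the $a'$-th power is comparable to $|\log(i\beta^{-1}(y)+3)|^{a'}$. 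Since $a'>a$, this estimate dominates $(\log(|y|+3))^a$ up to a constant, so $\psi_{a'}(y)<C_1-(\log(|y|+3))^a<\psi(y)$ for $|y|\ge R$ for some $R>0$; on the compact interval $[-R,R]$ both functions are bounded by continuity, so there is $c\ge 0$ with $\psi_{a'}-c\le\psi$ there as well. Hence $\psi_{a'}-c\le\psi$ on all of $\R$, i.e. $\Omega\subseteq D:=\Omega_{a'}-c$. A real translate of a logarithmic starlike at infinity domain is again one (Definition~\ref{Def:log-star-dom} is translation invariant), and $e^{\lambda z}\in H^{p_0}(D)\iff e^{\lambda w}\in H^{p_0}(\Omega_{a'})$ via $w=z+c$; thus $D$ is a logarithmic starlike at infinity domain with $p$-frequencies.

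It remains to verify that $\hbox{Int}(\overline{\Omega})=\Omega$ and that $\C\setminus\overline{\Omega}$ is connected. Continuity of $\psi$ gives $\psi_\ast=\psi$ and hence $\tilde\psi=\psi$, so $\hbox{Int}(\overline{\Omega})=\Omega$ by Theorem~\ref{Thm:Cantor-no-approx}. Also $\overline{\Omega}=\{x+iy:x\ge\psi(y)\}$, so $\C\setminus\overline{\Omega}=\{x+iy:x<\psi(y)\}$, and any two of its points are joined inside it by a horizontal segment, then a vertical segment taken at a real part below the (finite, by continuity) minimum of $\psi$ over the relevant compact range of ordinates, then a horizontal segment; hence $\C\setminus\overline{\Omega}$ is connected (equivalently, this is condition (3) of Proposition~\ref{Prop:connect-log}, which is vacuous here because $\psi^{-1}(-\infty)=\emptyset$). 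Now $\Omega\subseteq D$ with $\hbox{Int}(\overline{\Omega})=\Omega$ and $\C\setminus\overline{\Omega}$ connected, so Theorem~\ref{Thm:density-in-log} gives that $\mathcal E_p(D)$, hence $\mathcal E_p(\Omega)$, is dense in $H^p(\Omega)$ for all $p\in[1,\infty)$; alternatively, the above verifies hypothesis (2) of Theorem~\ref{Thm:main-para-pos} for $\Omega\subseteq D$ and Theorem~\ref{Thm:main-density-in-log} applies.

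The one delicate point is the rate comparison above: one needs a logarithmic starlike at infinity domain with $p$-frequencies whose boundary recedes to $-\infty$ strictly faster than the curve $x=C_1-(\log(|y|+3))^a$, and the only such domains at hand are translates of the $\eta_{a'}(\C_+)$ of Proposition~\ref{example-log-complete}. Choosing $a'\in(a,1)$ produces exactly this gap, at the cost of extracting from that construction the one-sided bound $\psi_{a'}(y)\le -c_0(\log|y|)^{a'}$; the rest is routine.
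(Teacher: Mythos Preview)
Your proof is correct and follows the same route the paper intends: enclose $\Omega$ in a translate of the domain $\Omega_{a'}=\eta_{a'}(\C_+)$ from Proposition~\ref{example-log-complete} and invoke Theorem~\ref{Thm:density-in-log} (equivalently Theorem~\ref{Thm:main-density-in-log}). The paper states the corollary as a ``direct application'' and omits the details you supply---in particular the one-sided bound $\psi_{a'}(y)\le -c_0(\log|y|)^{a'}$ and the choice $a'\in(a,1)$ to force $\psi_{a'}-c\le C_1-(\log(|y|+3))^a$---so your write-up is a faithful and careful expansion of the intended argument.
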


\subsection{Necessary conditions for completeness of $p$-frequencies}	

The aim of this section is to give some necessary conditions for completeness of frequencies in $H^p(\Omega)$. We first show that a necessary condition is that the frequencies are not reduced to an interval:

\begin{proposition}
	\label{lem-finite-interval-frequencies}
Suppose $\Om$ is a Koenigs domain of a non-elliptic semigroup such that $\C_++c\subset \Om$ for some
$c\in \R$. Suppose that for some $p_0\in [1,+\infty)$ and $M>0$,
\[	
\Lambda_{p_0}(\Om)\subset [-M,0].
\]
Then $\mathcal E_p(\Omega)$ is not dense in $H^p(\Omega)$ for every $p\in[1,\infty)$.
\end{proposition}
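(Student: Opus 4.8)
The plan is, for each fixed $p\in[1,\infty)$, to produce a nonzero bounded linear functional on $H^p(\Omega)$ that kills every $e^{\lambda z}$ with $\lambda\in\Lambda_p(\Omega)$; by the Hahn--Banach theorem this forces $\mathcal E_p(\Omega)$ into a proper closed subspace. First I reduce to a normalized situation. By Proposition~\ref{Prop:easy}.(1)--(2) and the hypothesis, $\Lambda_p(\Omega)$ is a subset of the real interval $[-a,0]$ with $a:=p_0M/p\in(0,\infty)$; and applying Lemma~\ref{Lem:under-affine} with the translation $T(z)=z-c$ (which alters neither the frequencies nor the density question) we may assume $c=0$, so that $\C_+\subseteq\Omega$. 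Let $R\colon H^p(\Omega)\to H^p(\C_+)$ be the restriction operator, which is bounded and injective.

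Next I would build a functional on $H^p(\C_+)$ that annihilates the relevant exponentials. Fix $c_0>0$, so that the line $\{c_0+iy:y\in\R\}$ sits inside $\C_+$. Transferring to the disc, the norm of the point evaluation $g\mapsto g(c_0+iy)$ on $H^p(\C_+)$ is $O\big((1+y^2)^{1/p}\big)$, so for any Schwartz function $\kappa$ on $\R$ the rapid decay of $\kappa$ shows that
\[
L_\kappa(g):=\int_\R g(c_0+iy)\,\kappa(y)\,dy
\]
defines a bounded functional on $H^p(\C_+)$. Moreover, for $\lambda\in\Lambda_p(\Omega)\subseteq[-a,0]$ one has $e^{\lambda z}\in H^p(\C_+)$ and
\[
L_\kappa(e^{\lambda z})=e^{\lambda c_0}\int_\R e^{i\lambda y}\kappa(y)\,dy=e^{\lambda c_0}\,\widehat\kappa(-\lambda),
\]
which vanishes whenever the Fourier transform $\widehat\kappa$ vanishes on the interval $[-a,a]$ (note $-\lambda\in[0,a]\subseteq[-a,a]$). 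Thus, for every Schwartz $\kappa$ with $\widehat\kappa\equiv0$ on $[-a,a]$, the composite $\Lambda_\kappa:=L_\kappa\circ R$ is a bounded functional on $H^p(\Omega)$ that vanishes on $\mathcal E_p(\Omega)$.

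Finally I must choose $\kappa$ so that $\Lambda_\kappa\neq0$. Since $\Omega$ is a proper subdomain of $\C$ (if $\Omega$ were a half-plane one would have $\Lambda_{p_0}(\Omega)=(-\infty,0]$, contradicting the hypothesis), the algebra $H^\infty(\Omega)$ contains a function $g_0$ that does not extend to an entire function: if $\C\setminus\overline\Omega\neq\emptyset$ take $g_0(z)=(z-w_0)^{-1}$ with $w_0\notin\overline\Omega$; if $\overline\Omega=\C$, take a Riemann map $g_0\colon\Omega\to\D$, since an entire function injective on the dense domain $\Omega$ would be injective on $\C$, hence affine, forcing $\Omega$ to be a disc. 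As $g_0$ is holomorphic on a strip-neighbourhood of the line $\{\Re z=c_0\}$, the function $u(y):=g_0(c_0+iy)$ cannot extend to an entire function of $y$, for otherwise $g_0$ would agree on that strip with an entire function of $z$, hence (by connectedness of $\Omega$) would itself be entire. Since $g_0$ is bounded, $u$ is a tempered distribution; were $\widehat u$ supported in $[-a,a]$ it would have compact support, and then by the Paley--Wiener--Schwartz theorem $u$ would extend to an entire function --- which we have just excluded. Hence $\widehat u$ is not supported in $[-a,a]$, so there is a Schwartz $\kappa$ with $\widehat\kappa\equiv0$ on $[-a,a]$ and $L_\kappa(R(g_0))=\int_\R u(y)\kappa(y)\,dy\neq0$. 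With this $\kappa$, $\Lambda_\kappa(g_0)\neq0$, so $\Lambda_\kappa$ is a nonzero bounded functional annihilating $\mathcal E_p(\Omega)$, and $\mathcal E_p(\Omega)$ is not dense in $H^p(\Omega)$.

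The delicate point --- and where I expect the main obstacle to lie --- is this last step: the common kernel of all the functionals $L_\kappa$ is exactly the $H^p(\C_+)$-closure of $\mathrm{span}\{e^{\lambda z}:\lambda\in[-a,0]\}$, which consists only of restrictions of entire functions of exponential type $\le a$; to escape it one must know that $H^p(\Omega)$ contains a non-entire function, and this is precisely where the assumption $\Omega\neq\C$ (forced by the boundedness of $\Lambda_{p_0}(\Omega)$) and the Paley--Wiener description of exponential type enter. One should also double-check the routine analytic points: the point-evaluation estimate on $H^p(\C_+)$ (equivalently, that the line $\{\Re z=c_0\}$ carries a suitable Carleson measure for $H^p(\C_+)$), and that $u\in\mathcal S'(\R)$ so that its Fourier transform and the Paley--Wiener--Schwartz theorem apply.
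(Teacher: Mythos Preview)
Your proof is correct and takes a genuinely different route from the paper's. The paper first proves that $H^p(\Omega)$ is dense in $H^p(\C_+)$: it shows that $h^{-1}$ extends continuously and injectively to $\overline{\C_+}^\infty$, so $h^{-1}(\C_+)$ is a Jordan subdomain of $\D$, and then invokes Mergelyan's theorem; with that density in hand, it reduces to showing that $\mathcal E_p(\Omega)$ is not dense in $H^p(\C_+)$, which it does via an $L^2$ Paley--Wiener functional built from a smooth $s$ on $\R_+$ vanishing on $[0,a]$. You bypass the density step entirely: you pull back line-integral functionals $L_\kappa$ along the restriction $R\colon H^p(\Omega)\to H^p(\C_+)$, and to show non-triviality you exhibit a single non-entire $g_0\in H^\infty(\Omega)$ and invoke Paley--Wiener--Schwartz to conclude that its trace $u(y)=g_0(c_0+iy)$ cannot be annihilated by every Schwartz $\kappa$ with $\widehat\kappa\equiv 0$ on $[-a,a]$. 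Your approach is more self-contained and avoids the boundary-extension and Mergelyan machinery; the paper's approach has the side benefit of yielding the auxiliary fact that $H^p(\Omega)$ is dense in $H^p(\C_+)$. One cosmetic remark: in the case $\overline{\Omega}=\C$, the cleanest way to see that the Riemann map cannot extend to an entire function is simply Liouville (the extension would be bounded entire, hence constant), rather than the injectivity argument you sketch.
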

\begin{proof}

Notice that by Proposition~\ref{Prop:easy}.(1), the condition $\Lambda_{p_0}(\Om)\subset [-M,0]$ implies that
either $\Lambda_{p_0}(\Om)=[-N,0]$ for some $N \ge 0$ or
$\Lambda_{p_0}(\Om)=(-N,0]$ for some $N > 0$. Then, by Proposition~\ref{Prop:easy}.(2), for any $q\in [1,\infty)$,
$\Lambda_q(\Om)$ is also a finite interval.

We show that, if $\Lambda_p(\Omega)$ is a finite interval  then $\mathcal E_p(\Omega)$ is not dense in $H^p(\Omega)$.

We may assume that $c=0$. Arguing by contradiction, we assume that $\mathcal E_p(\Omega)$ is dense in $H^p(\Omega)$.

Let $h:\D\to\Omega$ be a Riemann map. We claim that $h^{-1}$ extends continuously and injectively on $\overline{\C_+}^\infty$. Indeed, if $ir\in \Omega$ for some $r\in\R$ then clearly $h^{-1}$ extends continuously at $ir$. If $ir\in\partial\Omega$, then (see, {\sl e.g.}, \cite[Proposition~3.3.3]{BCDbook}) there exists a point $\sigma\in \partial\D$ such that $\lim_{t\to 0^+}h^{-1}(t+ir)=\sigma$ and $h^\ast(\sigma)=ir$. Now, let $\{z_n\}\subset\C_+$ be a sequence converging to $ir$. We can assume that $\{\Re z_n\}$ is strictly decreasing. Let $\Gamma_n$ be the segment joining $z_n$ to $z_{n+1}$. Hence, $\Gamma=\cup_n \Gamma_n$ is a Jordan arc contained in $\C_+$ (since it is convex) converging to $ir$. It is easy to show (see, {\sl e.g.}, \cite[Proposition~3.3.5]{BCDbook}) that the preimage of $\Gamma$ via $h^{-1}$ converges to $\sigma$. Thus $h^{-1}$ extends univocaly at every $ir$, $r\in \R$. It is also easy to show (by the same token) that there exists $\tau\in\partial\D$ such that $\lim_{\C_+\ni z\to \infty}h^{-1}(z)=\tau$. We set $h^{-1}(\infty)=\tau$. In order to show that $h^{-1}$ is continuous on $\overline{\C_+}$, it is enough to show that if $\{ir_n\}$ is a sequence converging to $ir$ (with $r\in\R\cup\{\infty\}$)  then $\{h^{-1}(ir_n)\}$ converges to $h^{-1}(ir)$. Indeed, let $\Gamma_n$ be a Jordan arc in $\overline{\C_+}^\infty$ such that $ir_n, ir\in \Gamma_n$  and $\Gamma_n\setminus\{ir_n, ir\}\subset\C_+$. We can take $\Gamma_n$ in such a way that its spherical diameter tends to $0$ as $n\to\infty$. Thus, if $\{h^{-1}(ir_n)\}$ does not converge to $h^{-1}(ir)$, there is a subsequence $\{n_k\}$ such that $h^{-1}(\Gamma_{n_k})\cap \D$ has diameter bounded from below by a positive constant. By this violates the no Koebe arcs theorem (see, {\sl e.g.}, \cite[Theorem~3.2.4]{BCDbook}). Thus $h^{-1}$ extends continuously on $\overline{\C_+}$. Finally, it is easy to see that $h^{-1}$ is injective on $\overline{\C_+}$ (see, {\sl e.g.}, \cite[Proposition 3.3.4]{BCDbook}).

Therefore, $G:=h^{-1}(\C_+)$ is a Jordan domain in $\D$. By Mergelyan's approximation theorem, polynomials are dense in the algebra of holomorphic functions which are continuous up to $\overline{G}$, since this last space is dense in $H^p(G)$, it follows that $H^p(\D)$ is dense in $H^p(G)$ and thus  $H^p(\Omega)$ is dense in $H^p(\C_+)$.

By Lemma~\ref{Lem:dense-complete}, the density of $H^p(\Omega)$  in $H^p(\C_+)$ implies that $\mathcal E_p(\Omega)$ is dense in $H^p(\C_+)$.

So to finish the proof it suffices to check that the last statement is false. To this end, choose a nonzero $C^\infty$ function $s$ on $\R_+$, which vanishes on $[0,M]$. Let $\mathcal H^2(\C_+)$ be the Hardy-Smirnov space of the semiplane $\C_+$. By the Paley-Wiener theorem, $(2\pi)^{-1/2}\, \mathcal{L}$ is an isometric isomorphism  of $L^2(\R_+, dx)$ onto $\mathcal H^2(\C_+)$ (here $\mathcal{L}$ is the Laplace transform).
	Set $u=\mathcal{L}s$. Then $u\in \mathcal H^2(\C_+)$,
	and, moreover, $(z+1) u\in H^\infty (\C_+)$.
	The function $u$ gives rise to
	a bounded linear functional on $H^p(\C_+)$, defined by
	\[
	\tilde u(v) =\int_{i\R} u(-z) v(z) |dz|, \quad v\in H^p(\C_+).
	\]
	By the Fourier inversion formula,
	\[
	\tilde u(e^{\lambda z}) =2\pi s(-\lambda), \quad \lambda\leq 0.
	\]
	Since $s$ is nonzero,
	$\tilde u$ is a nonzero functional on $H^p(\C_+)$. On the other hand,
	$s(-\lambda)=0$ for all $\lambda \in \Lambda_p(\Om)$, which implies that $\tilde u$ vanishes on exponentials $e^{\lambda z}$ for
	$\lambda \in \Lambda_p(\Om)$. Hence the space of linear combinations of these exponentials is not dense in $H^p(\C_+)$.
\end{proof}

\begin{example}
Let $\eta(w)=w-\log(w+3)$. The arguments given in Proposition~\ref{example-log-complete}  show that  $\eta$ is univalent on $\C_+$ and  the domain $\Om_0=\eta(\C_+)$ has the form $\Om_0=\{x+iy\in\C:x>\psi_0(y)\}$. 

We claim that $\Lambda_{1}(\Om_0)= (-1,0]$. Hence, by Proposition~\ref{lem-finite-interval-frequencies}, we have that $\mathcal E_p(\Omega_0)$ is not dense in $H^p(\Omega_0)$, for all $p\geq 1$.

In order to prove the claim, let $e^{\lambda z}\in H^1(\Om_0)$, $\lambda\in\C$. Since $\bigcup_{t\geq 0}(\Om_0-t)=\C$, it follows that the semigroup associated to the Koenigs domain $\Om_0$ is parabolic. Hence,  $\Re \lambda\leq 0$ by Theorem~\ref{Betsakos}.

Now, $|e^{\lam\eta(w)}|\asymp |e^{\lambda w}| |w|^{|\lambda|}$ as
$w\in \partial \C_+$ goes to infinity. Taking into account that the harmonic measure
of $\C^+$ is given by $d\omega_{\C_+}(w)=(|w|^2+1)^{-1}/\pi$,
one gets that $e^{\lam z}\in H^1(\Om_0)$ if and only if $e^{\lam\eta(w)}\asymp |e^{\lambda w}||w|^{|\lambda|}\in L^1(d\omega_{\C_+})$. From this it follows easily that if $e^{\lam z}\in H^1(\Om_0)$ then $\Im \lambda=0$ and $|\lambda|<1$. On the other hand, if  $\lambda\in (-1,0]$, the estimate
\[
\lam \Re \eta(w)\le |\lam| \log|w+3|
\]
implies that
\[
|e^{\lam\eta(w)}|\le |w+3|^{|\lam|}\le K\Re (w+3)^{|\lam|}
\]
in $\C_+$. Hence $\Re (w+3)^{|\lam|}$ is a harmonic majorant of $|e^{\lam\eta}|$ in $\C_+$ and $e^{\lambda z}\in H^1(\Om_0)$.

Note that $\Om_0$ is not a logarithmic starlike at infinity domain. 
\end{example}

Next, we see that the existence of two non-exceptional maximal contact arcs with the same final point is an obstruction to $p$-completeness of frequencies (and not only to $\infty$-weak-star completeness):

\begin{proposition}\label{contact arc}
Assume $(\varphi_t)_{t\geq 0}$ is a non-elliptic semigroup with Denjoy-Wolff point $\tau\in\partial\D$. If there exist two contact arcs $A, B$ with final point $\sigma=x_1(A)=x_1(B)\in\partial\D\setminus\{\tau\}$ then $(\varphi_t)_{t\geq 0}$ has no $p$-complete frequencies for every $p\in[1,\infty)$.
\end{proposition}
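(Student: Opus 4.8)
The plan is to use Lemma~\ref{Lem:cara-contact} to get a precise local picture of the Koenigs domain $\Omega$ near the common image point, and then construct a bounded linear functional on $H^p(\Omega)$ that annihilates every exponential $e^{\lambda z}$ with $\lambda\in\Lambda_p(\Omega)$ but does not vanish identically. By Lemma~\ref{Lem:cara-contact}, since $A,B$ are contact arcs with common final point $\sigma=x_1(A)=x_1(B)\neq\tau$, we have $w_0:=h^\ast(\sigma)\in\C$ and there exists $r>0$ such that
\[
D(w_0,r)\setminus\{z:\Re z\le\Re w_0,\ \Im z=\Im w_0\}=\Omega\cap D(w_0,r).
\]
After an affine change of variable $T(z)=z-w_0$ (which, by Lemma~\ref{Lem:under-affine}, preserves both the $p$-frequency spaces and the density question), we may assume $w_0=0$, so that near the origin $\Omega$ looks like a disc slit along the negative real half-line $(-r,0]$. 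In particular the origin is a boundary point of $\Omega$ which is accessible from both the upper and the lower half-disc, giving rise to \emph{two} distinct prime ends of $\Omega$ over $0$; equivalently, under a Riemann map $h:\D\to\Omega$, the two preimages are exactly the starting points of $A$ and $B$ and $h$ extends continuously and injectively on $\overline\D$ near $\sigma$ from each side, but $h^{-1}$ is \emph{two-valued} at $0$.

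The key step is the construction of the annihilating functional. The idea is a standard ``crescent''/slit-domain duality argument: since $\Omega\cap D(0,r)$ is a slit disc, pick a small disc $D(x_1,\rho)$ with $x_1\in(-r,0)$ on the slit and $\rho<\min\{|x_1|,r-|x_1|\}$ so that $D(x_1,\rho)$ straddles the slit and $D(x_1,\rho)\setminus[x_1-\rho,x_1]\subset\Omega$. On the two ``lips'' of the slit one has two boundary arcs of $\Omega$, and for a function $f\in H^p(\Omega)$ the two nontangential boundary values $f^+(x)$ and $f^-(x)$ for $x$ in the open slit need \emph{not} agree. The functional
\[
\Lambda(f):=\int_{x_1-\rho}^{x_1}\bigl(f^+(x)-f^-(x)\bigr)\,g(x)\,dx,
\]
for a suitable fixed smooth compactly supported weight $g$ on $(x_1-\rho,x_1)$, is bounded on $H^p(\Omega)$ (using that $f^\pm\in L^p$ locally on the boundary arcs and $g$ is bounded with compact support). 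Crucially, every exponential $e^{\lambda z}$ is entire, hence $\emph{single-valued}$: its two boundary traces on the slit coincide, so $\Lambda(e^{\lambda z})=0$ for \emph{all} $\lambda\in\C$, in particular for all $\lambda\in\Lambda_p(\Omega)$. On the other hand $\Lambda$ is not the zero functional: one exhibits some $f\in H^p(\Omega)$ — for instance a branch of $(z-x_1)^{1/2}$ (or $\log(z-x_1)$) cut along the slit, suitably modified away from $x_1$ to lie globally in $H^p(\Omega)$ via multiplication by a cutoff and a conformal correction — for which $f^+-f^-$ is nonzero on $(x_1-\rho,x_1)$, and then choose $g$ so that $\Lambda(f)\neq0$. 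Hence $\mathcal E_p(\Omega)$ is not dense in $H^p(\Omega)$.

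The main obstacle is the rigorous verification that the candidate function $f$ with a genuine jump across the slit can be taken globally in $H^p(\Omega)$, and that $\Lambda$ is genuinely bounded on all of $H^p(\Omega)$ (not merely on nice functions): this requires controlling the boundary behaviour of arbitrary $H^p$ functions on the two lips of the slit, which is where one invokes that $\Omega\cap D(0,r)$ is a slit disc so that locally $H^p(\Omega)$ is, via the square-root map $\zeta\mapsto\zeta^2$ straightening the slit, comparable to $H^p$ of a half-disc, where the boundary traces are well understood. An alternative, perhaps cleaner, route avoiding explicit jump functionals: transport the problem to $H^p(\D)$ via the Riemann map $h$, observe that the constraint ``$h^{-1}$ is two-valued at $\sigma$ with both branches being starting points of contact arcs'' forces a linear relation among the boundary values that all functions of the form $e^{\lambda z}\circ$ (something) automatically satisfy but generic $H^p(\Omega)$ functions do not, and conclude via a Hahn–Banach separation; the analytic content is the same. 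I expect the slit-disc local model plus the jump functional to give the shortest self-contained argument.
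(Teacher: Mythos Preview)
Your core insight is exactly the one the paper uses: exponentials $e^{\lambda z}$ are entire, hence single-valued across the slit, so they (and their linear combinations, and their $H^p$-limits) must take equal boundary values on the two lips $A'$ and $B'$, whereas a generic $H^p(\Omega)$ function need not. The difference lies in the execution. You try to encode this obstruction as a bounded linear functional $\Lambda(f)=\int(f^+-f^-)g$, which forces you to verify boundedness of the trace map on $H^p(\Omega)$ along each lip (involving harmonic measure estimates you acknowledge are delicate) and to manufacture a global $H^p$ function with a prescribed jump. The paper sidesteps both difficulties by working on the disc side: since $h(A')=h(B')=L$, every $e^{\lambda h}$ has identical boundary values on $A'$ and on $B'$; but the identity function $z$ on $\D$ obviously does not. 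If $\{P_n\}\subset\hbox{span}\{e^{\lambda h}\}$ converged to $z$ in $H^p(\D)$, the boundary values would converge a.e.\ on $\partial\D$; picking $q\in A'$ and $p\in B'$ with $h(q)=h(p)$ (hence $P_n(q)=P_n(p)$ for all $n$) in the a.e.\ set yields $q=p$, a contradiction. This exhibits directly the function $h^{-1}\in H^\infty(\Omega)\subset H^p(\Omega)$ that cannot be approximated, with no functional to bound and no global jump function to construct. Your ``alternative, perhaps cleaner, route'' in the last sentence is in fact precisely this, and it is indeed the shortest argument; I would lead with it and drop the jump-functional scaffolding.
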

\begin{proof}
Let $h$ be the  Koenigs function of $(\varphi_t)_{t\geq 0}$. By Lemma~\ref{Lem:cara-contact}, we can find $\epsilon>0$, $z_0\in \C$ and two non-empty open arcs $A'\subset A$ and $B'\subset B$ such that $h$ extends holomorphically through $A'$ and $B'$,
\[
\Omega\cap \{\zeta\in\C: |\zeta-z_0|<\epsilon\}=\{\zeta\in\C: |\zeta-z_0|<\epsilon\}\setminus L,
\]
where
\[
L:=\{w\in\C: |\Re w-\Re z_0|<\epsilon, \Im w=\Im z_0\},
\]
and $h(A')=h(B')=L$.

We claim that $h^{-1}\not\in\overline{\mathcal E(\Omega)}^{H^p(\Omega)}$. Since $h^{-1}\in H^\infty(\Omega)\subset H^p(\Omega)$, this proves the statement.

The claim, using conformal invariance,  is equivalent to $z\not\in\overline{\hbox{span}\{e^{\lambda h(z)}:\lambda\in \Lambda_p(\Omega)\}}^{H^p(\D)}$.

Indeed, for every $\lambda\in \Lambda_p(\Omega)$, it follows that  $e^{\lambda h(A')}=e^{\lambda h(B')}$. Hence, any function $f$ in $\mathcal E_p(\Omega)$ extends holomorphically through $A'$ and $B'$ and $f(A')=f(B')$.

Now, if  $\{P_n\}\subset \mathcal E_p(\Omega)$  converges in $H^p(\D)$ to $z$, since the sequence of boundary values $\{P_n^\ast\}$ converges in $L^p(\partial\D)$ to the boundary values of $z$, it follows that $\{P_n^\ast(e^{i\theta})\}$  converges to $e^{i\theta}$ for almost every $\theta\in[0,2\pi)$.

Recall that $P_n$ extends holomorphically through $A'\cup B'$ and let $A''\subset A'$ be the set of full measure such that $\{P_n(e^{i\theta})\}$  converges to $e^{i\theta}$ and similarly define $B''\subset B'$.

Let $V$ be the (bounded) Jordan domain whose boundary is given by $A'$ and the segment joining the two end points of $A'$. By the Riesz-Privalov theorem, applied to $h|_V$ (see, {\sl e.g.}, \cite[Theorem~6.8]{Pom}), we see that $h(A'')$ has full measure in $h(A')=L$. Similarly, $h(B'')$ has full measure in $h(B')=L$. Therefore, there exist   $q\in A''$ and $p\in B''$ such that $P_n(q)=P_n(p)$ for all $n$, but $\{P_n(p)\}$ converges to $p$ and $\{P_n(q)\}$ converges to $q$, a contradiction.
Therefore, $z\not\in\overline{\hbox{span}\{e^{\lambda h(z)}:\lambda\in \Lambda_p(\Omega)\}}^{H^p(\D)}$, and we are done.
\end{proof}

\end{document}